\colorlet{shadecolor}{yellow!20}
\DeclarePairedDelimiter\abs{\lvert}{\rvert}
\DeclarePairedDelimiter\norm{\lVert}{\rVert}
\let\oldabs\abs
\def\abs{\@ifstar{\oldabs}{\oldabs*}}
\let\oldnorm\norm
\def\norm{\@ifstar{\oldnorm}{\oldnorm*}}
\newtheorem{theorem}{Theorem}[section]
\newtheorem{lemma}[theorem]{Lemma}
\newtheorem{corollary}[theorem]{Corollary}
\newtheorem{definition}[theorem]{Definition}
\newtheorem{proposition}[theorem]{Proposition}
\newtheorem{remark}[theorem]{Remark}
\DeclareMathOperator*{\esssup}{ess\,sup}
\def\namedlabel#1#2{\begingroup
    #2
    \def\@currentlabel{#2}
    \phantomsection\label{#1}\endgroup
}
\DeclareMathOperator{\supp}{supp}
\DeclareMathOperator*{\argmin}{arg\,min}
\DeclareMathOperator{\diff}{diff}
\DeclareMathOperator{\grad}{grad}
\DeclareMathOperator{\CE}{CE}
\DeclareMathOperator{\AC}{AC}
\DeclareMathOperator{\id}{id}
\newcommand{\babla}{\overline{\nabla}}
\newcommand{\beq}{\begin{equation}}
\newcommand{\eeq}{\end{equation}}
\newcommand{\baq}{\begin{equation}\begin{aligned}}
\newcommand{\eaq}{\end{aligned}\end{equation}}
\newcommand{\beqs}{\begin{equation*}}
\newcommand{\eeqs}{\end{equation*}}
\newcommand{\baqs}{\begin{equation*}\begin{aligned}}
\newcommand{\eaqs}{\end{aligned}\end{equation*}}
\newcommand{\Rd}{{\mathbb{R}^d}}
\newcommand{\R}{{\mathbb{R}}}
\newcommand{\RdRd}{{\mathbb{R}^d\times\mathbb{R}^d}}
\newcommand{\bs}{\boldsymbol}
\newcommand{\dd}{\mathrm{d}}
\newcommand{\A}{\mathcal{A}}
\newcommand{\Mloc}{\mathcal{M}}
\newcommand{\jup}{\textnormal{j}}
\newcommand{\jupbold}{\textnormal{\textbf{j}}}
\newcommand{\varjup}{\textnormal{\j}}
\newcommand{\varjupbold}{\textnormal{\textbf{\j}}}
\newcommand{\vup}{\textnormal{v}}
\newcommand{\vupbold}{\textnormal{\textbf{v}}}
\newcommand{\dup}{\textnormal{d}}
\newcommand{\rhoinit}{\varrhoup_0}
\newcommand{\rhofin}{\varrhoup_1}
\newcommand{\ignore}[1]{} 
\newcommand{\dckeywords}{gradient flow, nonlocal, Fokker-Planck equation, optimal transport}
\title{\huge
  Nonlocal cross-interaction systems on graphs:
  \\
  \vspace*{0.0cm} 
  { Nonquadratic Finslerian structure and nonlinear mobilities}
}
\author{Georg Heinze\thanks{corresponding author}$\;\;^,$\thanks{Fakult\"at f\"ur Mathematik, Technische Universit\"at Chemnitz Reichenhainer Stra{\ss}e 41, Chemnitz, Germany. (\{georg.heinze,jfpietschmann\}@math.tu-chemnitz.de)}\,\,,\, Jan-Frederik Pietschmann$^{\dag}$, Markus Schmidtchen\thanks{Institute of Scientific Computing, Technische Universit\"at Dresden, Zellescher Weg 25,
01069 Dresden, Germany. (markus.schmidtchen@tu-dresden.de).}}
\begin{document}

\maketitle
\begin{abstract}
We study the evolution of a system of two species with nonlinear mobility and nonlocal interactions on a graph whose vertices are given by an arbitrary, positive measure.
To this end, we extend a recently introduced $2$-Wasserstein-type quasi-metric on generalized graphs, which is based on an upwind-interpolation, to the case of two-species systems, concave, nonlinear mobilities, and $p\ne 2$. 
We provide a rigorous interpretation of the interaction system as a gradient flow in the Finslerian setting, arising from the new quasi-metric. 
\end{abstract}

\vskip .4cm

\begin{flushleft}
    \noindent{\makebox[1in]\hrulefill}
\end{flushleft}
	2010 \textit{Mathematics Subject Classification.} 49J40 (Variational inequalities), 45G10 (Other nonlinear integral equations), 49J45 (Methods involving semicontinuity and convergence; relaxation), 28A33 (Spaces of measures, convergence of measures); 35B38 (Critical points of functionals in context of PDEs (e.g., energy functionals); 
\begin{flushright}
    \noindent{\makebox[1in]\hrulefill}
\end{flushright}

\pagenumbering{arabic}




\section{Introduction}

The goal of this paper is the study of a two-species nonlocal interaction system with nonlinear mobility on a graph. It is well known that, in a local and continuous setting, evolution equations for a single species $\rho_t(x)$ of the form
\begin{align}
    \label{eq:aggregation}
    \partial_t \rho_t =\nabla \cdot(\rho_t \nabla K \ast \rho_t),
\end{align}
can be cast into a Wasserstein gradient flow framework, cf. \cite{Topaz2006,Carrillo2011,d2006self, carrillo2010particle,mogilner1999non}. 
Here, the corresponding functional
\begin{align*}
    \mathcal{E}(\rho) = \frac12 \iint_{\RdRd} K(x-y)\dd \rho(x) \dd \rho(y),
\end{align*}
denotes the interaction energy which encodes the nature of the interactions among members of the species. The so-called aggregation equation,  \eqref{eq:aggregation}, can be obtained as the mean-field limit of a particle system associated to it by letting the number of particles, $N$, tend to infinity \cite{dobrushin1979vlasov, neunzert1984introduction, golse2016dynamics, bonaschi2015equivalence}. It is straightforward to introduce a second species to the dynamics such that the energy functional becomes
\baq
    \label{eq:Energy}
    \mathcal{E}(\rhoup) &= \frac{1}{2}\sum_{i,k=1}^2\iint_\RdRd K^{(ik)}(x,y) \dd\rho^{(i)}(x) \dd\rho^{(k)}(y),
\eaq
where $\rhoup = (\rho^{(1)},\rho^{(2)})$ and $\rho^{(i)}\in \mathcal{P}(\Rd)$, with $i=1,2$, denote the two species. Throughout we refer to $K^{(11)}, K^{(22)}$ as the \emph{self-interaction} potentials and we call $K^{(12)}, K^{(21)}$ the \emph{cross-interaction} potentials, respectively. It is worthwhile to highlight that, under the condition that $K^{(12)} = \beta K^{(21)}$, for some $\beta > 0$, the two-species interaction energy gives rise to a Wasserstein gradient flow on the product space and the evolution of the two densities is governed by the equations
\begin{align}
    \label{eq:interaction_system}
    \partial_t \rho_t^{(i)} = \nabla \cdot \left(\rho_t^{(i)} \nabla \left( K^{(i1)}\ast \rho_t^{(1)} + K^{(i2)}\ast \rho_t^{(2)} \right)\right),
\end{align}
with $i=1,2$, cf. \cite{DiFrancesco2013, di2016nonlocal, di2021many, 1078-0947_2020_2_1191, DEF2018}. For a suitable product space metric, the system is the epitome  of interaction models found in many applied contexts for instance in cell-cell adhesion models \cite{ armstrong2006continuum, painter2003modelling, carrillo2019population, barre2019modelling}, chromatophore interactions in the skin of zebrafish \cite{volkening2015modelling, volkening2018iridophores, volkening2020modeling}, and multi-species systems with volume exclusion effects that result in cross-diffusion interaction systems,  \cite{berendsen2017cross, carrillo2018zoology, di2018nonlinear,burger2020segregation}. 

\subsection{Graph setting, non-linear mobility, and $p\neq 2$}

While the space-time continuous dynamics and the Wasserstein gradient flow structure are well-understood, the situation is much more delicate when considering the flow of two densities, one per species, on graphs. In our work, we extend the recent work by Esposito et al. \cite{EPSS2021}, which has established a graph analog of the aggregation equation. Their work shows that an appropriate definition of the geometry of the underlying space allows to understand a class of interaction equations on graphs as gradient flows, albeit in a Finslerian framework rather than the usual Riemannian setting.

The starting point is the dynamic formulation of the $2$-Wasserstein distance due to \cite{BenamouBrenier2000}. There it was shown that the $2$-Wasserstein distance can be characterized equivalently by minimizing (twice) the kinetic energy over all connecting paths
\baq\label{eq:BB}
W_2^2(\rhoinit, \rhofin) = \inf_{(\rho_t, j_t)_t} \int_0^1 \int_{\R^d} \frac{j_t^2}{\rho_t} \;\dd\mu \dd t,
\eaq
where $\mu\in\Mloc^+(\Rd)$ is a suitable reference measure and the infimum is taken over all pairs $(\rho_t,j_t)_{t\in[0,1]}$, $\mu$-a.e. satisfying
\baqs 
\partial_t \rho_t + \nabla \cdot j_t = 0,
\eaqs
as well as $\rho_0 = \rhoinit$, and $\rho_1 = \rhofin$.
While developed with numerical applications in mind, \eqref{eq:BB} turned out to be a starting point for various adaptations and generalizations of the classical Wasserstein distance. Many of these Wasserstein-type distances modify the action density $\A(\rho, j)\coloneqq\int_\Rd\abs{j}^2/\rho \;\dd\mu$ that appears inside the time integral in \eqref{eq:BB}, introducing nonlinear mobilities \cite{Dolbeault_2008}, reaction terms that allow for initial and final measures having different masses \cite{chizat2018interpolating} or even different actions in the interior and on the boundary of a given domain \cite{Monsaingeon2021}. Of particular interest here are the works which identify nonlocal equations such as the nonlocal heat equation \cite{MaasGradFlowEntropyFiniteMarkov2011}, nonlocal adaptations of the Fokker-Planck equation \cite{Chow2012} and a nonlocal porous medium equation \cite{ErbarMaasGradientFlowPorousMedium2014} on finite graphs or finite Markov chains as gradient flows for suitable Wasserstein-type metrics. Also an extension to treat the nonlocal heat equation on $(\Rd,\mu)$ for some Radon measure $\mu$ was considered in \cite{erbar2012gradient}.
An important challenge faced when transferring the notion of a gradient flow to the setting of graphs is the need to compare fluxes or velocities (edge-based quantities) with densities (vertex-based quantities).
This difficulty can be remedied by introducing a weight function
$\theta:[0,\infty)\times[0,\infty)\to[0,\infty)$, which acts as an interpolation of quantities defined on connected vertices. More precisely, if $\rho(x)$ and $\rho(y)$ denote the densities on the vertices $x$ and $y$, respectively, which share a connecting edge, then we shall think of $\theta(\rho(x),\rho(y))$, as the edge density. This definition allows for an adaptation of the action density to the graph setting \cite{EPSS2021}, i.e., 
\baqs
    \mathcal{A}(\rho,j)= \iint_G \frac{\abs{j(x,y)}^2}{\theta(\rho(x),\rho(y))}\dd\mu(x)\dd\mu(y),
\eaqs
where $G$ is the set of edges and $\mu\in\Mloc^+(G)$ is again a reference measure, which, is arbitrary if $\theta$ is 1-homogeneous. There are different choices for $\theta$ that all seem reasonable but have a strong impact on the metric structure derived from $\mathcal{A}$. In \cite{MaasGradFlowEntropyFiniteMarkov2011} and \cite{Chow2012} the logarithmic mean $\theta_\text{l}(r,s)=\frac{r-s}{\log r-\log s}$ is shown to be a suitable choice for equations involving diffusive terms as it allows for a discrete chain rule, a fact that has already been observed and used in the finite volume community, cf. \cite{chang1970practical, PZ2018, carrillo2020convergence}. However, as was pointed out in \cite{EPSS2021}, this choice does not allow for an increase of the support of the solution in the absence of diffusion. Thus, a different choice must be made to obtain physically reasonable solutions. Similar problems occur with the geometric mean $\theta_\text{g}(r,s)=\sqrt{rs}$, while dynamics using the arithmetic mean $\theta_\text{a}(r,s)=\frac{r+s}{2}$ as an interpolation function may lead to negative densities and is therefore also not a reasonable choice. However, it is known that for transport equations, upwind schemes yield stable and structure-preserving discretizations, which motivates the choice
\begin{align*}
    \theta_j(r,s)=r\mathbb{1}_{\{j>0\}}(r,s)+s\mathbb{1}_{\{j<0\}}(r,s)
\end{align*}
as an interpolation function. This comes at the price of losing the antisymmetry of the action density, obtaining a Finslerian structure instead of a Riemannian structure. Yet, this structure is sufficient to define a notion of gradient flows as curves of maximal slope on graphs and leads to stability of gradient flows under narrow convergence, obtaining the existence of gradient flows for a large set of base measures $\mu$ via approximation with finite graphs, \cite{EPSS2021}.

In the context of finite volume discretizations, this upwinding has plenty of precedent, cf. \cite{eymard2000finite}, and references therein. In particular, for equations exhibiting an entropy-dissipation structure, it has been observed that certain finite volume discretizations can be used to preserve the structure at the discrete level, cf. \cite{bessemoulin2012finite} for a general class of drift-diffusion equations,  \cite{carrillo_chertock_huang_2015, bailo2020convergence, B.C.H2020, schlichting2020scharfetter, CJLV2016, BCH2021} for extensions to nonlocal drift-diffusion equations. The strategy was then extended to systems of cross-interaction species in  \cite{carrillo2018zoology, carrillo2020convergence, BCH2021, LZ2021}.

Even though cross-interactions between opposing species introduce a coupling in the velocity fields, using this upwinding, we can show that the evolution on the graph is, indeed, a gradient flow in the set of probability measures with respect to an appropriately defined Finsler product metric for the interaction energy
\begin{align}\tag{\eqref{eq:Energy} revisited}
  \mathcal{E}(\rhoup) = \frac{1}{2}\sum_{i,k=1}^2\iint_\RdRd K^{(ik)}(x,y) \dd\rho^{(i)}(x) \dd\rho^{(k)}(y).
\end{align}
The interpretation of the dynamics as a gradient flow on a graph is established by noting that the quasi metric in the two species case decomposes into the sum of two instances of the metric for the single species, introduced in \cite{EPSS2021} --- in analogy with the strategy for the continuous dynamics in \cite{DiFrancesco2013}.

A formal Finslerian structure also naturally appears in the space-time continuous setting when studying the $p$-Wasserstein distance $W_p$, see \cite{Agueh2012_finsler}. The dynamic formulation of $W_p$ is given as 
\baq\label{eq:BBp_neq_2}
    W_p^p(\rhoinit, \rhofin) = \inf_{(\rho_t, j_t)_t} \int_0^1 \int_{\R^d} \frac{\abs{j_t}^p}{(\rho_t)^{p-1}} \;\dd\mu \dd t.
\eaq
Agueh still provides a norm in this setting, yet not an inner product. This allows to define the differential and gradient of functions on the set of probability measures with this distance, giving rise to a notion of gradient flows in this space.

Moreover, \eqref{eq:BBp_neq_2} can be generalized to the case $p\neq 2$, \cite{Dolbeault_2008} studied generalizations of \eqref{eq:BBp_neq_2} including unbounded, concave mobilities, see also \cite{CARRILLO20101273}. These nonlinear mobilities give rise to evolution equations of the form
\begin{align*}
    \partial_t \rho = \nabla \cdot (m(\rho)\nabla K \ast \rho).
\end{align*}
Depending on the specific choice of the mobility, it becomes necessary to add recession terms to the action functional to ensure its lower semicontinuity. Finally, \cite{Lisini2010} extended this study to bounded mobilities, which naturally appear in models with volume filling. 
\subsection{Our contribution}
In this paper, we show that the setting of \cite{EPSS2021} can be carried over to systems of two (or potentially multiple) interacting species with a non-linear mobility and remains valid in the case $p\neq 2$. This way, we derive a corresponding two-species interaction equation as a gradient flow. It is given by
\baq
	\label{eq:NL2CIE_intro}
	\partial_t\rho_t^{(i)}(x) 
	&=  - (\babla\cdot j_t^{(i)})(x),\\
	j_t^{(i)}(x,y) 
	&= \left[m(\rho_t^{(i)}(x),\rho_t^{(i)}(y))(v_t^{(i)})_+(x,y)\right]^{\frac{1}{p-1}} -\left[m(\rho_t^{(i)}(y),\rho_t^{(i)}(x))((v_t^{(i)})_-(x,y)\right]^{\frac{1}{p-1}},\\
	v_t^{(i)}(x,y) 
	&= - \babla \left[\left(K^{(i1)}\ast\rho_t^{(1)}\right)(x,y)+ \left(K^{(i2)}\ast\rho_t^{(2)}\right)(x,y)\right],
\eaq
where $p\in(1,\infty)$, $i=1,2$, $x,\,y  \in \R^d$ and $t\geq0$. Here, the quantities $\mu$ and $\eta$ encode the structure of the graph, $m$ is a concave mobility, and the operators $\babla$ and $\babla\cdot$ are discrete analogues of gradient and divergence. Precise definitions will be given in Section \ref{sec:analytical} below.

The core novelties of our work are:
\begin{itemize}
    \item Introduction of an action functional, which incorporates an upwind structure and a concave (un)bounded mobility.
    \item Extension of the Finslerian structure from \cite{EPSS2021} to the case $p\in(1,\infty)$, weakening the notion of Finsler metric, in the spirit of \cite{Agueh2012_finsler}, and employing the notion of the metric gradient.
    \item Derivation of all the core results from \cite{EPSS2021} in the generalized framework, in particular a chain rule, a stability result for gradient flows, and an existence result beyond finite graphs.
    \item Extension of our framework to multiple species and derivation of a gradient flow structure for energies with symmetric cross-interaction.
\end{itemize}


The remainder of the paper is organized as follows. In section \ref{sec:analytical}, we define the graph setting and introduce the notion of action functional and continuity equation, as well as the induced quasimetric. In Section \ref{sec:FinslerGrad} we discuss the Finsler geometry, give the interpretation of our system as a gradient flow using a suitable variational characterization and provide existence and stability results. 

\section{Analytical setting of the two species graph structure}
	\label{sec:analytical}
This section provides the necessary extension of the dynamic $2$-Wasserstein distance \eqref{eq:BB} to the graph setting, including a nonlinear mobility and for $p\neq 2$. To this end, we give a precise definition of the corresponding action functional and study some of its properties. Then, after introducing a notion of generalized continuity equations, we can define and analyze the corresponding quasimetric.
We start by introducing the graph setting. Throughout, $\Mloc(X)$ ($\Mloc_+(X)$) denotes the space of (nonnegative) Radon measures on the space $X$. The vertices of our graph are defined by the base measure $\mu\in \Mloc^+(\Rd)$. We define a nonnegative weight function $\eta:\Rd\times\Rd\to[0,\infty)$ and thereby the edges of the undirected graph as
\baqs
    G\coloneqq\{(x,y)\in\Rd\times\Rd:x\neq y, \eta(x,y)>0\}.
\eaqs
Setting $1<p=q/(q-1)<\infty$, throughout we shall make use of the following set of technical assumptions on $\mu$ and the $\eta$:
\begin{align}
	\label{W}\tag{W} &\text{(continuous symmetric weight) } &\eta|_G \in C(G,[0,\infty)),\;\forall x,y\in\Rd \text{ it holds }\eta(x,y) = \eta(y,x),\\
	\label{MB2}\tag{MB1} &\text{(individual moment bound) } &\int_\Rd (1+|x|^p)\dd\mu(x) \leq C_\mu,\\
	\label{MB}\tag{MB2} &\text{(joint moment bound) } &\sup_{x\in\Rd} \int_\Rd\abs{x-y}^q\lor\abs{x-y}^{pq}\eta(x,y)\dd\mu(y)\leq C_\eta,\\
	\label{BC}\tag{BC} &\text{(local blow-up control) } &\lim_{\varepsilon\rightarrow 0}\sup_{x\in\Rd} \int_{B_\varepsilon(x)\setminus\{x\}}\abs{x-y}^q\eta(x,y)\dd\mu(y) = 0,
\end{align}
for some constants $C_\eta, C_\mu > 0$ and where $B_\varepsilon(x) \coloneqq \{y\in\Rd:\abs{x-y}<\varepsilon\}$. 
\begin{remark}\label{rem:MB2}
While the assumptions \eqref{W}, \eqref{BC} and \eqref{MB} are similar to the assumptions in \cite{EPSS2021}, \eqref{MB2} is a new assumption required due to the added nonlinear mobilities. It is needed, whenever we apply Lemma \ref{lem:bound_by_A} in the sequel. However, for a large class of mobilities Lemma \ref{lem:bound_by_A} can be refined to allow dropping assumption \eqref{MB2}. More details on this will be given in Remark \ref{rem:bound_by_A_refined}. 
\end{remark}

Next, we establish nonlocal analogues for the gradient and the divergence as foreshadowed in the introduction.
\begin{definition}[Nonlocal gradient and nonlocal divergence]
Given $\varphi:\Rd\to\R$, we define the nonlocal gradient $\babla\varphi$ by
\baqs
	\babla\varphi(x,y)\coloneqq \varphi(y)-\varphi(x).
\eaqs
Given $j \in \Mloc(\Rd)$ we define the nonlocal divergence $\babla\cdot j$ by
\baqs
	\int_\Rd \varphi(x) \dd \babla\cdot j(x) \coloneqq -\frac{1}{2}\iint_G\babla\varphi(x,y)\eta(x,y)\dd j(x,y)\quad \forall \varphi:\Rd\to\R.
\eaqs
\end{definition}

\subsection{Definition of the action functional}
\begin{definition}[Mobility and density functions]\label{def:mob}
Given two thresholds $R,S\in(0,\infty]$, we define a \emph{mobility function} $m\in C([0,R)\times[0,S))$, which is concave and strictly positive in $(0,R)\times(0,S)$. For $(r,s)\in [0,R)\times[0,S)$ we denote $m(r,S) = \lim_{s\to S}m(r,s)$ and $m(R,s) = \lim_{r\to R}m(r,s)$. We call such a mobility $m$ \emph{upwind-admissible} if for every $s\ge 0$ we have $m(0,s)=0$. Furthermore, if $R=S=\infty$, which implies that $m$ is nondecreasing, we set
    \baq\label{eq_def:m_infty}
        m_\infty(r,s) \coloneqq \lim_{\lambda\to\infty}\frac{1}{\lambda}m(\lambda r, \lambda s).
    \eaq
    We say that the growth of $m$ is \emph{uniformly sublinear}, if $m_\infty\equiv 0$.

Given a mobility $m$ and an exponent $p\in(1,\infty)$, we define the convex, lsc. density function $\alpha:\R^3\to[0,\infty]$ by
\baq
	\label{eq_def:alpha}
	\alpha_m(j,r,s) \coloneqq \begin{cases} \frac{(j_+)^p}{m^{p-1}(r,s)}, &(r,s)\in[0,R]\times[0,S],\\
	\infty, &\text{otherwise},
	\end{cases}
\eaq
where we use the conventions
\baqs
    a/b = \begin{cases} 0,&\text{if }a=b=0,\\ \infty,&\text{if }a\ne b=0. \end{cases}
\eaqs
In the case $R=S=\infty$, we define the recession function 
\baq
    \alpha_{m_\infty}(j,r,s)\coloneqq\lim_{\lambda\to\infty}\frac{1}{\lambda}\alpha_m(\lambda j,\lambda r,\lambda s)=\frac{(j_+)^p}{m_\infty^{p-1}(r,s)}.
\eaq
\end{definition}
\begin{remark}
\begin{enumerate}[label=(\roman*)]
    \item Observe that \eqref{eq_def:alpha} encodes an upwind structure as only the positive part of the flux enters the definition.
    
    \item By definition, $m_\infty$, and thus $\alpha_{m_\infty}$, are positively 1-homogeneous. In particular, for $r>0$ we have $m_\infty(r,s) = rm_\infty(1,s/r)$ and for $s>0$ we have $sm_\infty(r/s,1)$.
    
    \item The assumption $m(0,s)=0$ is crucial as it ensures the non-negativity of $\rho$ when the action is finite. However, note that this assumption excludes, among others, the choice $m\equiv 1$.
    
    \item If $m$ is an upwind-admissible mobility, the homogeneity implies $m_\infty(0,s)=0$.
    
    \item If $m_\infty(r,s)=0$, then $\alpha_{m_\infty}(j,r,s) = \infty$ for every $j\ne 0$. In particular, $\alpha_{m_\infty}(j,r,s)$ is the convex indicator of the set $\{j=0\}$ (it is zero if $j=0$ and infinite otherwise), if the growth of $m$ is uniformly sublinear.
\end{enumerate}
\end{remark}

To define the action density functional, we introduce the following notation.
\begin{definition}\label{def:transpose}
    The transpose of a Borel set $B\in\mathcal{B}(\RdRd)$ is denoted by $B^\top\coloneqq\{(y,x)\in\RdRd:(x,y)\in B\}$. The transpose of a measure $\nu\in\Mloc(\Rd\times\Rd)$ is then defined as $(\nu)^\top(B)\coloneqq \nu(B^\top)$.
    
    Given a pair of probability measures $\rhoup = (\rho^{(1)},\rho^{(2)})\in (\mathcal{P}(\Rd))^2$, abusing notation, we denote the Lebesgue decomposition $\dd\rho^{(i)} =\dd\rho^{(i)\mu}+\dd\rho^{(i)\perp}= \rho^{(i)}\dd\mu+\dd\rho^{(i)\perp}$ for $i=1,2$. Similarly, given a pair of fluxes $\jup \in (\Mloc(G))^2$, for $i=1,2$ we denote $\dd j^{(i)} = \dd j^{(i)\mu} + \dd j^{(i)\perp} = j^{(i)}\dd(\mu\otimes\mu)+\dd j^{(i)\perp}$.
\end{definition}
We emphasize that from now on, we always use non-italic, upright letters such as $\jup$, $\rhoup$, $\betaup$ etc. to indicate pairs of quantities indexed by superscript $(i)$, e.g. $\jup = (j^{(1)},j^{(2)})$.
\begin{definition}[Single species action density functional]\label{def:A}
Let $\mu\in \Mloc^+(\Rd)$, $\eta$ satisfy \eqref{W}. Now, given $p\in(1,\infty)$ and an upwind-admissible $m$, for any $(\rho,j)\in\mathcal{P}(\Rd)\times\Mloc(G)$, we define the single-species action density functional $\bar\A_m$ as follows:
\begin{enumerate}
    \item If $R=S=\infty$, for any $\sigma\in\Mloc^+(\Rd\times\Rd)$ such that $\sigma = \sigma^\top$, $\dd(\rho^\perp\otimes\mu)=\rho^{\perp}\otimes\tilde\mu\dd\sigma$, $\dd(\mu\otimes\rho^{\perp})=\tilde\mu\otimes\rho^{\perp}\dd\sigma$ and $\dd j^{\perp}= j^{\perp}\dd\sigma$, we define
    \baq\label{eq_def:A_bar_case:R=S=infty}
        \bar\A_m(\mu;\rho,j) \coloneqq&\, \frac{1}{2}\iint_G\Big[\alpha_m\left(j,\rho\otimes\mu,\mu\otimes\rho\right)+\alpha_m\left(-j,\mu\otimes\rho,\rho\otimes\mu\right)\Big]\eta \dd(\mu\otimes\mu)\\
        +&\,\frac{1}{2}\iint_G\Big[\alpha_{m_\infty}\left(j^\perp,\rho^\perp\otimes\tilde\mu,\tilde\mu\otimes\rho^\perp\right)+\alpha_{m_\infty}\left(-j^\perp,\tilde\mu\otimes\rho^\perp,\rho^\perp\otimes\tilde\mu\right)\Big]\eta \dd\sigma.
    \eaq
    
    \item If $R\land S<\infty$, similar to \cite{Lisini2010}, we define $\bar\A_m$ as follows:
    \baq\label{eq_def:A_bar_case:R^S<infty}
        \bar\A_m(\mu;\rho,j) \coloneqq\begin{cases}\begin{aligned}\frac{1}{2}\iint_G\Big[&\alpha_m\left(j,\rho\otimes\mu,\mu\otimes\rho\right)\\
        +\,&\alpha_m\left(-j,\mu\otimes\rho,\rho\otimes\mu\right)\Big]\eta \dd(\mu\otimes\mu),\end{aligned} &\text{if } \rho^\perp=0,j^\perp=0,\\
        \infty,&\text{otherwise}. \end{cases}
    \eaq
\end{enumerate}
Let $\betaup=(\beta^{(1)},\beta^{(2)})\in(0,\infty)\times(0,\infty)$ be a pair of positive constants. Then, for any $\rhoup \in (\mathcal{P}(\Rd))^2$, $\jup \in (\Mloc(\Rd))^2$, the two-species action density functional is defined by 
\baq\label{eq_def:A}
    \A_{m, \betaup}(\mu;\rhoup,\jup)\coloneqq\frac{1}{\beta^{(1)}}\bar\A_m(\mu;\rho^{(1)},j^{(1)})+\frac{1}{\beta^{(2)}}\bar\A_m(\mu;\rho^{(2)},j^{(2)}).
\eaq
\end{definition}
\begin{remark}\label{rem:properties_A}
\begin{enumerate}[label=(\roman*)]
    \item $\A_{m, \betaup}$ is well-defined which is clear if $R\land S<\infty$. If $R=S=\infty$, the definition $\bar\A_m$ is independent of the particular choice of $\sigma$ due to the positive 1-homogeneity of $\alpha_{m_\infty}$. An admissible $\sigma$ can always be constructed, e.g. by setting
    \baqs
        \sigma = \big(\rho^\perp\otimes\mu+\rho^\perp\otimes\mu+\big|j^\perp\big|+\big|(j^\perp)^\top\big|\big),
    \eaqs
    for $i=1,2$. The definition may be adapted such that the symmetry assumption $\sigma=\sigma^\top$ is not required. However, since this assumption is nonrestrictive and simplifies notation, we apply it throughout.

    \item If $R=S=\infty$ and $m_\infty\equiv0$, the definition of the single-species action density functional \eqref{eq_def:A_bar_case:R=S=infty} simplifies to
    \baqs
        \bar \A_m(\mu;\rho,j) \coloneqq\begin{cases}\begin{aligned}\frac{1}{2}\iint_G\Big[&\alpha_m\left(j,\rho\otimes\mu,\mu\otimes\rho\right) +\alpha_m\left(-j,\mu\otimes\rho,\rho\otimes\mu\right)\Big]\eta \dd(\mu\otimes\mu),\end{aligned} &\text{if } j^\perp=0,\\
        \infty,&\text{otherwise}. \end{cases}
    \eaqs
    
    \item If $R\land S<\infty$, finiteness of the action density implies both $\rho\ll\mu$ as well as $j\ll\mu\otimes\mu$.
    
    \item The two-species action density functional is fully decoupled with respect to the different species.

\end{enumerate}
\end{remark}

Next, we want to show an antisymmetry property of the action density. To do this, we introduce the following notation. 
\begin{definition}[Antisymmetric velocities and fluxes]\label{def:V^as}
We define the set of \emph{antisymmetric velocities} by
\baqs
\mathcal{V}^\mathrm{as}(G) \coloneqq \big\{& (v,v^{\perp}):G\rightarrow\R^2, \,\mathrm{s.t.}\,
v = -v^\top \text{ and }(v^{\perp})^\top = -(v^{\perp})^\top \big\}.
\eaqs
For $\mu\in \Mloc^+(\Rd)$ and $\rho\in \mathcal{P}(\Rd)$ we define $\varsigma\in\Mloc^+(\Rd)$ as
\baq\label{eq_def:varsigma}
    \varsigma=\rho^\perp\otimes\mu+\mu\otimes\rho^\perp,
\eaq
and denote the corresponding densities by $\dd(\rho^{\perp}\otimes\mu) \eqqcolon \rho^{\perp}\otimes\tilde\mu\dd\varsigma^{}$ and $\dd(\mu\otimes\rho^{\perp}) \eqqcolon \tilde\mu\otimes\rho^{\perp}\dd\varsigma^{}$. With these densities, we further shorten notation by introducing
\baq\label{eq_def:mathfrak(m)}
    \mathfrak{m}(x,y) &\coloneqq m(\rho(x),\rho(y)),\\
    \mathfrak{m}_\infty(x,y) &\coloneqq m_\infty(\tilde\mu(x)\rho^{\perp}(y),\rho^{\perp}(x)\tilde\mu(y)).
\eaq
With this, recalling $q=p/(p-1)$, for $k=1,2$, we define
\baqs
    &\dd\gamma_1(x,y)\coloneqq (\mathfrak{m}(x,y))^{q-1}\dd\mu(x)\dd\mu(y),\quad 
    &&\dd\gamma_2(x,y)\coloneqq (\mathfrak{m}(y,x))^{q-1}\dd\mu(x)\dd\mu(y),\\
    &\dd\gamma_{1}^\perp(x,y)\coloneqq (\mathfrak{m}_\infty(x,y))^{q-1}\dd\varsigma(x,y),\quad
    &&\dd\gamma_{2}^\perp(x,y)\coloneqq (\mathfrak{m}_\infty(y,x))^{q-1}\dd\varsigma(x,y).
\eaqs
These measures satisfy $\gamma_1^\top=\gamma_2$ and vice versa as well as $(\gamma_1^\perp)^\top=\gamma_2^\perp$ and vice versa. By \eqref{W}, this does not change, when multiplied by $\eta$. Hence, it makes sense to define the following set of \emph{antisymmetric fluxes}:
\baqs
    \mathcal{M}^\mathrm{as}_{\eta\gamma_1}(G) \coloneqq \big\{j \in \Mloc(G)\;:\; &j_+ \ll \eta\gamma_1,
    j_- = (j_+)^\top, j^\perp_+ \ll \eta\gamma_1^\perp,
    j^\perp_- = (j^\perp_+)^\top
    \big\}.
\eaqs

\end{definition}
\begin{remark}\label{rem:V^as}
    Any $j\in \mathcal{M}^\mathrm{as}_{\eta\gamma_1}(G)$ satisfies $j_- \ll \eta\gamma_2$ and $j^\perp_- \ll \eta\gamma_2^\perp$.
    
    In the sequel, any indices attached to $\rho$ will be passed to $\mathfrak{m}$, $\mathfrak{m}_\infty$, $\gamma_k$, $\gamma_k^\perp$ and $\varsigma$, i.e. when  $\rho=\rho^{n,(i)\perp}_t$ in \eqref{eq_def:mathfrak(m)}, we write
    \baqs
        \mathfrak{m}_{\infty,t}^{n,(i)}(x,y)\coloneqq m_\infty(\tilde\mu(x)\rho_t^{n,(i)\perp}(y),\rho_t^{n,(i)\perp}(x)\tilde\mu(y)),
    \eaqs
    and similarly for the other expressions.
\end{remark}

We are now in the position to establish a connection between fluxes and velocities.
\begin{lemma}[Dual representation]\label{lem:connection_flux_velocity}
\sloppy Let $\mu \in \Mloc^+(\Rd)$, $\rho$ and $j$ be such that $\A_{m, \betaup}(\mu;\rhoup,\jup)<\infty$. Then, there exists a pair of measurable functions $(v^{(1)},v^{(2)}):G\to\R^2$ such that for $i=1,2$ we have
\baq\label{eq:dj_mit_v}
\dd j^{(i)\mu} &= (v^{(i)}_+)^{q-1}\dd\gamma_1^{(i)}-(v^{(i)}_-)^{q-1}\dd\gamma_2^{(i)}.
\eaq
Further, $\varsigma$ from Definition \ref{def:V^as} is an admissible choice for $\sigma$ in Definition \ref{def:A}, i.e. $j^{(i)\top}\ll\varsigma^{(i)}$, and there exists another pair of measurable functions $(v^{(1)\perp},v^{(2)\perp}):G\to\R^2$ (which is zero if $R\land S<\infty$), such that for $i=1,2$ we have
\baq\label{eq:dj^perp_mit_v}
\dd j^{(i)\perp} &= (v^{(i)\perp}_+)^{q-1}\dd\gamma_1^{(i)\perp}-(v^{(i)\perp}_-)^{q-1}\dd\gamma_2^{(i)\perp}.
\eaq
We can rewrite the action density in terms of $\vup = (v^{(1)},v^{(1)\perp},v^{(2)},v^{(2)\perp})$ as
\baq\label{eq_def:A_tilde}
\A_{m, \betaup}(\mu;\rhoup,\jup) &= \frac{1}{2}\sum_{i=1}^2\frac{1}{\beta^{(i)}}\Bigg[\iint_G \mathfrak{m}^{(i)}\left(\Big(v_+^{(i)}\Big)^q+\Big(\big(v_-^{(i)}\big)^\top\Big)^q\right)\eta\dd\mu\otimes\mu\\ 
&\hphantom{=\frac{1}{2}\sum_{i=1}^2\frac{1}{\beta^{(i)}}\Bigg[}+\iint_G \mathfrak{m}^{(i)}_\infty\left(\Big(v_+^{(i)\perp}\Big)^q+\Big(\big((v_-^{(i)\perp}\big)^\top\Big)^q\right)\eta\dd\varsigma^{(i)}\Bigg]\\
&\eqqcolon \tilde{\A}_{m, \betaup}(\mu;\rhoup,\vup),
\eaq
while for $R\land S<\infty$ we have $\rhoup^\perp=0$ and $\jup^\perp =0$. In particular, if $\vup\in (\mathcal{V}^\mathrm{as}(G))^2$, we have
\baq\label{eq:A mit antisymm v}
    \tilde{\A}_{m, \betaup}(\mu;\rhoup,\vup)&=\sum_{i=1}^2\frac{1}{\beta^{(i)}}\Bigg[\iint_G \mathfrak{m}^{(i)}\Big(v_+^{(i)}\Big)^q\eta\dd\mu\otimes\mu+\iint_G \mathfrak{m}^{(i)}_\infty\Big(v_+^{(i)\perp}\Big)^q\eta\dd\varsigma^{(i)}\Bigg].
\eaq
\end{lemma}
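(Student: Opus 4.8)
The plan is to work species by species (the two–species functional is fully decoupled, cf.\ Remark~\ref{rem:properties_A}\,(iv)), and within each species to treat the absolutely continuous part $j^{(i)\mu}$ and the singular part $j^{(i)\perp}$ separately by the same argument, applied once with the reference measure $\mu\otimes\mu$ and the density $\mathfrak m^{(i)}$, and once with $\varsigma^{(i)}$ and $\mathfrak m^{(i)}_\infty$. So fix $i$ and drop the superscript. Since $\A_{m,\betaup}(\mu;\rhoup,\jup)<\infty$, the definition of $\bar\A_m$ forces, in the case $R\land S<\infty$, that $\rho^\perp=0$ and $j^\perp=0$, and in the case $R=S=\infty$ that $j^\perp\ll\varsigma$ (taking $\sigma=\varsigma$ is admissible by the $1$-homogeneity of $\alpha_{m_\infty}$, as in Remark~\ref{rem:properties_A}\,(i)); this already gives the claim that $j^{(i)\top}\ll\varsigma^{(i)}$ and that $v^{\perp}=0$ when $R\land S<\infty$. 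The first step is then to extract the densities: write $j^\mu=\j\,\dd(\mu\otimes\mu)$ and use the convention $a/b$ in Definition~\ref{def:mob} together with finiteness of $\alpha_m(j,\rho\otimes\mu,\mu\otimes\rho)\eta$ over $G$ to deduce that $\j_+$ vanishes $\mu\otimes\mu$-a.e.\ on $\{\mathfrak m=0\}\cap G$, hence $j_+\ll\mathfrak m^{q-1}\eta\,\dd(\mu\otimes\mu)=\eta\gamma_1$; symmetrically $j_-=(j_+)^\top\ll\eta\gamma_2$ (and likewise for $j^\perp$ against $\gamma_1^\perp,\gamma_2^\perp$).

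The second step is to \emph{define} $v$. On $G\cap\{\mathfrak m>0\}$ set $v_+\coloneqq\big(\j_+/\mathfrak m^{q-1}\big)^{p-1}$ and $v_-\coloneqq\big(\j_-/\mathfrak m(\cdot,\cdot)^{q-1}\big)^{p-1}$ evaluated with the transposed mobility as in $\gamma_2$, and set $v=v_+-v_-$ (on $\{\mathfrak m=0\}$, where $j_+=j_-=0$, put $v=0$; note $p-1=1/(q-1)$ so raising to $p-1$ inverts the $q-1$ power). Then by construction $\dd j^\mu=(v_+)^{q-1}\dd\gamma_1-(v_-)^{q-1}\dd\gamma_2$, which is \eqref{eq:dj_mit_v}; \eqref{eq:dj^perp_mit_v} is obtained identically with $\mathfrak m_\infty$, $\varsigma$, $\gamma_k^\perp$. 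Measurability of $v$ is routine since $\mathfrak m$, $\j_\pm$ are measurable and the arithmetic operations and the map $t\mapsto t^{p-1}$ are Borel.

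The third step is the algebraic rewriting of the action. Substitute the identities for $\dd j^\mu$, $\dd j^\perp$ into \eqref{eq_def:A_bar_case:R=S=infty}: in $\alpha_m(j,\rho\otimes\mu,\mu\otimes\rho)=\j_+^p/\mathfrak m^{p-1}$ we have $\j_+=(v_+)^{q-1}\mathfrak m^{q-1}$, so $\j_+^p=(v_+)^{q}\,\mathfrak m^{q}$ because $(q-1)p=q$; dividing by $\mathfrak m^{p-1}$ and using $q-(p-1)=1$ (since $p+q=pq$) leaves exactly $\mathfrak m\,(v_+)^q$. The second term $\alpha_m(-j,\mu\otimes\rho,\rho\otimes\mu)$ picks up $(-\j)_+=\j_-$ and the mobility $m(\rho(y),\rho(x))$, i.e.\ $\mathfrak m(y,x)$; performing the same manipulation and then recognizing that this term, written as a function of $(x,y)$, equals the $(y,x)$-evaluation, yields $\mathfrak m(x,y)\big((v_-)^\top(x,y)\big)^q$ after noting $\j_-(x,y)=\j_+(y,x)$ and $(v_-)^\top=$ the transpose of $v_-$. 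Summing the two terms over $G$ against $\eta\,\dd(\mu\otimes\mu)$ and doing the analogous computation for the $\perp$-part reproduces \eqref{eq_def:A_tilde}, which we name $\tilde\A_{m,\betaup}$. Finally, if $\vup\in(\mathcal V^{\mathrm{as}}(G))^2$ then $v=-v^\top$, so $(v_-)^\top=\big((-v)_+\big)^\top=\big((v^\top)_+\big)^\top=v_+$ (and similarly for $v^\perp$); substituting $(v_-)^\top=v_+$ and using the symmetry of $\mu\otimes\mu$, $\varsigma$, $\eta$ collapses the two summands into one, doubling the coefficient and cancelling the factor $\tfrac12$, which is \eqref{eq:A mit antisymm v}.

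I expect the only real subtlety to be the bookkeeping of transposes and the exponent arithmetic $(q-1)p=q$, $q-(p-1)=1$ in the third step — ensuring that the "$\alpha_m(-j,\cdot,\cdot)$" term genuinely reassembles as $\mathfrak m(x,y)\big((v_-)^\top(x,y)\big)^q$ rather than $\mathfrak m(y,x)(v_-(x,y))^q$, which requires carefully tracking that under the change of naming $(x,y)\leftrightarrow(y,x)$ both the mobility argument and the flux argument transpose consistently, together with the symmetry $\eta(x,y)=\eta(y,x)$ from \eqref{W} and the symmetry of the product measures. Everything else is a direct consequence of the finiteness hypothesis and the conventions fixed in Definitions~\ref{def:mob}–\ref{def:V^as}.
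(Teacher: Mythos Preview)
Your approach is the same as the paper's: deduce $j^{\mu}_\pm\ll\gamma_{1,2}$ and $j^{\perp}_\pm\ll\gamma_{1,2}^\perp$ from finiteness of the action, define $v_\pm$ as the $(p-1)$-th power of the corresponding Radon--Nikodym densities, and substitute to obtain \eqref{eq_def:A_tilde}.

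Two small corrections, though neither is fatal. First, the claim $j^\perp\ll\varsigma$ is precisely what makes $\sigma=\varsigma$ admissible, so asserting it up front is circular; the paper's order is to first take an admissible $\sigma$ that manifestly dominates $|j^\perp|$ (e.g.\ $\sigma=\gamma_1^\perp+\gamma_2^\perp+|j^\perp|$, cf.\ Remark~\ref{rem:properties_A}\,(i)), argue that $m_\infty$ of the $\sigma$-densities vanishes where $\gamma_1^\perp$ does, conclude $j^\perp_+\ll\gamma_1^\perp\ll\varsigma$, and only then observe that $\varsigma$ is admissible. Second, you twice write $j_-=(j_+)^\top$ (equivalently $\j_-(x,y)=\j_+(y,x)$), but the lemma does not assume $j$ is antisymmetric; fortunately you do not actually need this identity, since the rewriting of the $\alpha_m(-j,\cdot,\cdot)$ term as $\mathfrak m(x,y)\big((v_-)^\top(x,y)\big)^q$ follows from the substitution $j_-=(v_-)^{q-1}\mathfrak m(y,x)^{q-1}$ and the change of variables $(x,y)\leftrightarrow(y,x)$ alone, using only the symmetry of $\eta$ and of $\mu\otimes\mu$.
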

\begin{proof}
Assume $R=S=\infty$. The case $R\land S<\infty$ then works similarly. First, we show that the finiteness of the action implies that $j^{(i)\perp}_+\ll\gamma_1^{(i)\perp}$ and $j^{(i)\perp}_-\ll\gamma_2^{(i)\perp}$. Indeed, assume there exists a set $B\in\mathcal{B}(G)$ such that $j^{(i)\perp}_+(B) > 0 = \gamma_1^{(i)\perp}(B)$. By Remark \ref{rem:properties_A} $\sigma^{(i)}=\gamma_1^{(i)\perp}+\gamma_2^{(i)\perp}+|j^{(i)\perp}|$ is admissible in \eqref{eq_def:A_bar_case:R=S=infty} and $\sigma^{(i)}(B)>0$. Since we have
\baqs
    m_\infty^{q-1}\left(\frac{\dd(\rho^{(i)\perp}\otimes\mu)}{\dd\sigma^{(i)}},\frac{\dd(\mu\otimes\rho^{(i)\perp})}{\dd\sigma^{(i)}}\right)=m_\infty^{q-1}\left(\frac{\dd(\mu\otimes\rho^{(i)\perp})}{\dd\sigma^{(i)}},\frac{\dd(\rho^{(i)\perp}\otimes\mu)}{\dd\sigma^{(i)}}\right)=m_\infty^{q-1}(0,0)=0
\eaqs   
$\sigma^{(i)}$-a.e. in $B$, we obtain a contradiction to $\A_{m, \betaup}(\mu;\rhoup,\jup)<\infty$. Similar arguments hold true for $j^{(i)\perp}_-$ as well as $j^{(i)}_+$ and $j^{(i)}_-$. Therefore, the nonnegative functions $v^{(i)}_+$ and $v^{(i)}_-$ satisfying \eqref{eq:dj_mit_v} are well-defined $\gamma_1^{(i)}$-a.e. and $\gamma_2^{(i)}$-a.e., respectively, which gives us $v^{(i)}=v^{(i)}_+-v^{(i)}_-$. Similarly, we obtain $v^{(i)\perp}$ satisfying \eqref{eq:dj^perp_mit_v}. Finally, \eqref{eq_def:A_tilde}, follows, when we insert
\baqs
    \dd j^{(i)\mu}(x,y) &=  \Big[\left(\mathfrak{m}^{(i)}(x,y)v^{(i)}_+(x,y)\right)^{q-1}-\left(\mathfrak{m}^{(i)}(y,x)v^{(i)}_-(x,y)\right)^{q-1}\Big]\dd\mu(x)\dd\mu(y)
\eaqs
and
\baqs
    \dd j^{(i)\perp}(x,y) &=  \Big[\left(\mathfrak{m}^{(i)}_\infty(x,y)v^{(i)\perp}_+(x,y)\right)^{q-1}-\left(\mathfrak{m}^{(i)}_\infty(y,x)v^{(i)\perp}_-(x,y)\right)^{q-1}\Big]\dd\varsigma^{(i)}(x,y)
\eaqs
into \eqref{eq_def:A_bar_case:R=S=infty}.
\end{proof}
\begin{definition}\label{def:alpha_tilde}
    In light of \eqref{eq_def:A_tilde}, we define
    \baq
	    \label{eq_def:alpha_tilde}
	    \tilde\alpha_m(v,r,s) \coloneqq \begin{cases} m(r,s)(v_+)^q, &(r,s)\in[0,R]\times[0,S],\\
	    \infty, &\text{otherwise},
	    \end{cases}
    \eaq
which gives us a representation of $\tilde{\A}_{m, \betaup}$ similar to the one in Definition \ref{def:A}, only replacing $j$ with $v$ and $\alpha$ with $\tilde\alpha$.
\end{definition}

We observe that antisymmetric fluxes admit lower action densities while preserving their nonlocal divergence.
\begin{corollary}[Antisymmetric vector fields have lower action density]\label{cor_as->lower_action}
Let $\mu \in \Mloc^+(\Rd)$, $\rhoup \in (\mathcal{P}(\Rd))^2$ and $\jup \in(\Mloc(G))^2$ s.t. $\A_{m, \betaup}(\mu;\rhoup,\jup)< \infty$. Then, there exists $\bar{\varjup}\in(\mathcal{M}^\mathrm{as}_{\eta\gamma_1}(G))^2$ such that
\baqs
\babla\cdot j^{(i)} &= \babla\cdot\bar{\jmath}^{(i)},\qquad i=1,2,\\
\eaqs
and
\baqs
\A_{m, \betaup}(\mu;\rhoup,\bar{\varjup}) \leq \A_{m, \betaup}(\mu;\rhoup,\jup).
\eaqs
\end{corollary}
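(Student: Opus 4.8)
The strategy is to symmetrize the flux $\jup$ componentwise, working separately on each species $i=1,2$ since the action $\A_{m,\betaup}$ is fully decoupled (Remark \ref{rem:properties_A}(iv)). By Lemma \ref{lem:connection_flux_velocity}, the finiteness of the action gives us a representation of $j^{(i)}$ via velocities $v^{(i)}$, $v^{(i)\perp}$ through the decompositions \eqref{eq:dj_mit_v} and \eqref{eq:dj^perp_mit_v}, and the action is rewritten as $\tilde{\A}_{m,\betaup}(\mu;\rhoup,\vup)$ in \eqref{eq_def:A_tilde}. The key point is that this formula already involves both the positive part $v_+^{(i)}$ at $(x,y)$ and (the transpose of) the negative part $v_-^{(i)}$, symmetrically under $(x,y)\mapsto(y,x)$, since $\mathfrak{m}^{(i)}$ and $\mathfrak{m}^{(i)}_\infty$ transpose accordingly and $\eta\dd(\mu\otimes\mu)$, $\eta\dd\varsigma^{(i)}$ are symmetric. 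So the natural candidate for the antisymmetrized velocity is $\bar v^{(i)} \coloneqq v^{(i)}_+ - (v^{(i)}_-)^\top$ (and likewise $\bar v^{(i)\perp} \coloneqq v^{(i)\perp}_+ - (v^{(i)\perp}_-)^\top$), which by construction lies in $\mathcal{V}^\mathrm{as}(G)$: indeed $(\bar v^{(i)})^\top = (v^{(i)}_+)^\top - v^{(i)}_- = -\bar v^{(i)}$.

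From this $\bar{\vup}$ I would define $\bar{\jmath}^{(i)}$ via the formula \eqref{eq:dj_mit_v}, i.e.\ $\dd\bar{\jmath}^{(i)\mu} = (\bar v^{(i)}_+)^{q-1}\dd\gamma_1^{(i)} - (\bar v^{(i)}_-)^{q-1}\dd\gamma_2^{(i)}$ and analogously for the singular part. One then checks $\bar{\jmath}^{(i)} \in \mathcal{M}^\mathrm{as}_{\eta\gamma_1}(G)$ directly from the definition of that set: the positive part of $\bar{\jmath}^{(i)}$ is absolutely continuous w.r.t.\ $\eta\gamma_1^{(i)}$, its negative part is the transpose, and correspondingly for the $\perp$-components; this is essentially immediate from the construction since $\bar v^{(i)}$ is antisymmetric so $\bar v^{(i)}_+$ and $\bar v^{(i)}_-$ have disjoint supports that are transposes of each other.

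Two things then remain to verify. First, \emph{preservation of the nonlocal divergence}: for any test function $\varphi$, $\int_\Rd \babla\varphi(x,y)\,\eta\,\dd\bar{\jmath}^{(i)}$ equals the same integral against $\dd j^{(i)}$. Using the definition of $\babla\cdot$ as a $\tfrac12$-weighted integral of the antisymmetric quantity $\babla\varphi(x,y) = \varphi(y)-\varphi(x)$ against $\eta$, only the antisymmetric part of the flux contributes; expanding $\bar{\jmath}^{(i)}$ and comparing term by term with the expansion of $j^{(i)}$ (splitting $j^{(i)}$ into $j^{(i)\mu}$ and $j^{(i)\perp}$, and each into positive/negative parts) shows the two pairings agree — the transposition in $\bar v^{(i)}_- = (v^{(i)}_-)^\top$ exactly matches the $(x,y)\leftrightarrow(y,x)$ symmetry of $\babla\varphi\cdot\eta$ under the integral. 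Second, \emph{the action inequality}: plugging $\bar{\vup}$ into \eqref{eq:A mit antisymm v} — valid since $\bar{\vup}\in(\mathcal{V}^\mathrm{as}(G))^2$ — gives $\tilde{\A}_{m,\betaup}(\mu;\rhoup,\bar{\vup}) = \sum_i \tfrac{1}{\beta^{(i)}}\big[\iint_G \mathfrak{m}^{(i)}(\bar v_+^{(i)})^q \eta\,\dd\mu\otimes\mu + \iint_G \mathfrak{m}^{(i)}_\infty(\bar v_+^{(i)\perp})^q\eta\,\dd\varsigma^{(i)}\big]$, and since $\bar v_+^{(i)} = v_+^{(i)}$ on the set where $v^{(i)}>0$ (and $\bar v_+^{(i)}$ vanishes where $v^{(i)}_- > 0$, whereas the original formula \eqref{eq_def:A_tilde} had a further nonnegative term $\big((v_-^{(i)})^\top\big)^q$ there), we get termwise domination $\tilde{\A}_{m,\betaup}(\mu;\rhoup,\bar{\vup}) \le \tilde{\A}_{m,\betaup}(\mu;\rhoup,\vup) = \A_{m,\betaup}(\mu;\rhoup,\jup)$. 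Finally, identifying $\A_{m,\betaup}(\mu;\rhoup,\bar{\varjup}) = \tilde{\A}_{m,\betaup}(\mu;\rhoup,\bar{\vup})$ via Lemma \ref{lem:connection_flux_velocity} applied to $\bar{\varjup}$ closes the argument.

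The main obstacle I anticipate is the bookkeeping of the singular parts: one must ensure that $\varsigma^{(i)}$ (equivalently the reference measure $\sigma^{(i)}$ from Definition \ref{def:A}) remains an admissible choice for $\bar{\jmath}^{(i)}$ and that the density identities $\dd(\rho^{(i)\perp}\otimes\mu) = \rho^{(i)\perp}\otimes\tilde\mu\,\dd\varsigma^{(i)}$ etc.\ are not disturbed by replacing $j^{(i)}$ with $\bar{\jmath}^{(i)}$; since $\varsigma^{(i)}$ depends only on $\rho^{(i)\perp}$ and not on the flux, and since $\bar{\jmath}^{(i)\perp}$ is built from $\bar v^{(i)\perp}$ which is again $\ll\gamma_1^{(i)\perp} + \gamma_2^{(i)\perp} \ll \varsigma^{(i)}$, this goes through, but it requires care. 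In the bounded-mobility case $R\wedge S<\infty$ everything is simpler: there $\rhoup^\perp = 0$, $\jup^\perp = 0$, so only the regular part is present and the argument above specializes directly.
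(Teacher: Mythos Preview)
Your antisymmetrization is carried out at the wrong level, and the specific formula you propose is not antisymmetric. You set $\bar v^{(i)} \coloneqq v_+^{(i)} - (v_-^{(i)})^\top$ and claim $(\bar v^{(i)})^\top = -\bar v^{(i)}$, but
\[
(\bar v^{(i)})^\top = (v_+^{(i)})^\top - v_-^{(i)},
\qquad
-\bar v^{(i)} = -v_+^{(i)} + (v_-^{(i)})^\top,
\]
and these agree only if $v_+^{(i)}+(v_+^{(i)})^\top = v_-^{(i)}+(v_-^{(i)})^\top$, which is not implied by anything in Lemma~\ref{lem:connection_flux_velocity}. (As a sanity check: if $v^{(i)}$ is already antisymmetric then $(v_-^{(i)})^\top = v_+^{(i)}$, so your $\bar v^{(i)}$ collapses to $0$.) Consequently neither the claimed membership $\bar{\vup}\in(\mathcal{V}^{\mathrm{as}}(G))^2$, nor the action comparison via \eqref{eq:A mit antisymm v}, nor the divergence identity, go through as written. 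More structurally, for $q\neq 2$ the map $v\mapsto j$ given by \eqref{eq:dj_mit_v} is nonlinear, so symmetrizing the velocity does not correspond to symmetrizing the flux; there is no reason a velocity-level construction should simultaneously preserve $\babla\cdot j^{(i)}$ and lie in $\mathcal{M}^{\mathrm{as}}_{\eta\gamma_1}(G)$.

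The paper avoids this by antisymmetrizing directly at the flux level: set $\bar{\jmath}^{(i)}\coloneqq \tfrac12\big(j^{(i)}-(j^{(i)})^\top\big)$. Antisymmetry is then automatic, and since $\babla\varphi$ and $\eta$ are antisymmetric resp.\ symmetric under $(x,y)\leftrightarrow(y,x)$, only the antisymmetric part of $j^{(i)}$ contributes to the divergence, so $\babla\cdot\bar{\jmath}^{(i)}=\babla\cdot j^{(i)}$ is immediate. The action inequality follows from the convexity of $\alpha_m$ (and $\alpha_{m_\infty}$) in the flux variable together with the transpose symmetry of $\eta\,\mu\otimes\mu$ (and $\eta\varsigma^{(i)}$): this is the ``Jensen'' step the paper refers to, and it replaces your attempted termwise domination.
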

\begin{proof}
Analogous to \cite[Corollary 2.8]{EPSS2021}, one defines $\dd\bar{\jmath}^{(i)}\coloneqq\dd\big( j^{(i)}-(j^{(i)})^\top\big)/2$, compares \eqref{eq_def:A_tilde} with \eqref{eq:A mit antisymm v}, and applies Jensen's inequality.
\end{proof}
Next, we establish properties of the action density as well as important bounds for the subsequent analysis.
\begin{lemma}[Lower semicontinuity of the action density]\label{lem:lsc A}
The action is lower semicontinuous with respect to weak-$\ast$ convergence in $\Mloc^+(\Rd)\times(\Mloc^+(\Rd))^2\times(\Mloc(G))^2$. That is, for $\mu^n \rightharpoonup^\ast \mu$ in $\Mloc^+(\Rd)$, $\rhoup_n \rightharpoonup^\ast \rhoup$ in $(\Mloc^+(\Rd))^2$, and $\jup^n \rightharpoonup^\ast \jup$ in $(\Mloc(G))^2$, we have
\baqs
\liminf_{n\rightarrow\infty} \A_{m, \betaup}(\mu^n;\rhoup^n,\jup^n) \geq \A_{m, \betaup}(\mu;\rhoup,\jup).
\eaqs
\end{lemma}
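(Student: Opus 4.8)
The plan is to reduce to the single‑species functional and then to represent it through convex duality, in the spirit of \cite{Dolbeault_2008,Lisini2010} and its use in \cite{EPSS2021}, as a supremum of weak‑$\ast$ continuous functionals of $(\mu,\rho,j)$; the decisive feature of this route is that it is insensitive to the fact that the integration measure is itself converging. Since $\A_{m,\betaup}(\mu;\rhoup,\jup)=\frac1{\beta^{(1)}}\bar\A_m(\mu;\rho^{(1)},j^{(1)})+\frac1{\beta^{(2)}}\bar\A_m(\mu;\rho^{(2)},j^{(2)})$ splits into two nonnegative summands and $\liminf$ is superadditive, it suffices to show that $(\mu,\rho,j)\mapsto\bar\A_m(\mu;\rho,j)$ is lower semicontinuous with respect to weak‑$\ast$ convergence in $\Mloc^+(\Rd)\times\Mloc^+(\Rd)\times\Mloc(G)$.

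To this end I would rewrite $\bar\A_m$ as a convex functional of measures. Set $\Phi(j,r,s):=\alpha_m(j,r,s)+\alpha_m(-j,s,r)$, which by Definition \ref{def:mob} is convex, lsc and proper on $\R^3$ and, when $R=S=\infty$, has recession function $\Phi^\infty(j,r,s)=\alpha_{m_\infty}(j,r,s)+\alpha_{m_\infty}(-j,s,r)$ (positively $1$‑homogeneous). On $G$ introduce the nonnegative measure $\lambda:=\eta\,(\mu\otimes\mu)$ and the $\R^3$‑valued measure $\xi:=\big(\eta\,j,\ \eta\,(\rho\otimes\mu),\ \eta\,(\mu\otimes\rho)\big)$, and let $\xi=\tfrac{\dd\xi}{\dd\lambda}\,\lambda+\xi^{\mathrm s}$ be its Lebesgue decomposition with respect to $\lambda$. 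Using the conventions of Definition \ref{def:A}, the fact that $\rho^\perp\otimes\mu$, $\mu\otimes\rho^\perp$ and $j^\perp$ are singular with respect to $\mu\otimes\mu$, and the $1$‑homogeneity of $\alpha_{m_\infty}$ (so that the choice of reference $\sigma$ is irrelevant), one verifies directly that
\[
  \bar\A_m(\mu;\rho,j)=\tfrac12\int_G\Phi\Big(\tfrac{\dd\xi}{\dd\lambda}\Big)\dd\lambda+\tfrac12\int_G\Phi^\infty\Big(\tfrac{\dd\xi^{\mathrm s}}{\dd|\xi^{\mathrm s}|}\Big)\dd|\xi^{\mathrm s}|.
\]
When $R\wedge S<\infty$ the same identity holds with $\Phi^\infty$ being the convex indicator of $\{j=0,\ r=s=0\}$ — the effective recession behaviour forced by $\alpha_m=+\infty$ off $[0,R]\times[0,S]$ together with $m(0,\cdot)=0$ — which reproduces \eqref{eq_def:A_bar_case:R^S<infty}; this case is treated exactly as below and is, in essence, the one considered in \cite{Lisini2010}.

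With this form in hand, the classical Fenchel‑type representation of convex functionals of measures (cf. \cite{Dolbeault_2008,Lisini2010}) gives
\[
  \bar\A_m(\mu;\rho,j)=\tfrac12\sup\Big\{\int_G u\cdot\dd\xi-\int_G\Phi^*(u)\,\dd\lambda\ :\ u\in C_c(G;\R^3),\ \Phi^*\circ u\in C_c(G)\Big\}.
\]
For each fixed admissible $u$, the functional $(\mu,\rho,j)\mapsto\int_G u\cdot\dd\xi-\int_G\Phi^*(u)\,\dd\lambda$ is weak‑$\ast$ continuous along the sequence in question: $j^n\rightharpoonup^\ast j$ in $\Mloc(G)$ is assumed, while $\mu^n\rightharpoonup^\ast\mu$ and $\rho^n\rightharpoonup^\ast\rho$ yield $\mu^n\otimes\mu^n\rightharpoonup^\ast\mu\otimes\mu$ and $\rho^n\otimes\mu^n\rightharpoonup^\ast\rho\otimes\mu$ — by the standard argument that, for $\psi\in C_c$, the maps $x\mapsto\int\psi(x,\cdot)\,\dd\mu^n$ are equibounded and equicontinuous on compacts (uniform local boundedness of $(\mu^n)$) and hence converge uniformly — so that, $\eta$ being continuous hence bounded on the compact supports of the relevant test functions by \eqref{W}, also the associated $\lambda^n\rightharpoonup^\ast\lambda$ and $\xi^n\rightharpoonup^\ast\xi$. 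A pointwise supremum of weak‑$\ast$ continuous functionals is weak‑$\ast$ lower semicontinuous, which proves the claim for $\bar\A_m$; re‑summing the two species finishes the proof.

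The genuine difficulty is the varying vertex measure $\mu^n$: $\bar\A_m$ is an integral functional whose reference measure $\eta\,(\mu^n\otimes\mu^n)$ moves with $n$, so no fixed‑reference lower‑semicontinuity theorem applies directly — this is exactly what the duality reformulation circumvents, since it exposes only continuous test pairings against $\xi^n$ and $\lambda^n$. The two points that still require care are (i) the (elementary) stability of tensor products under weak‑$\ast$ convergence, and (ii) the bookkeeping that the recession function $\Phi^\infty$ — the $\alpha_{m_\infty}$‑sum when $R=S=\infty$, an indicator otherwise — is precisely what makes the measure‑of‑measures expression agree with Definition \ref{def:A}; getting this recession term right, in particular that it forces $\rho^\perp=j^\perp=0$ in the bounded‑mobility regime, is the step one should not gloss over.
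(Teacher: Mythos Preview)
Your approach is correct and is essentially the same as the paper's: the paper simply cites \cite[Theorem~2.34]{Ambrosio2000FunctionsOB} (lower semicontinuity of convex functionals of measures with recession term, under weak-$\ast$ convergence of both the reference measure and the vector measure) together with the observation that $\mu^n\otimes\rho^{n,(i)}\rightharpoonup^\ast\mu\otimes\rho^{(i)}$ when both factors converge weak-$\ast$. What you have written is precisely an unpacking of the proof of that theorem via the Fenchel dual representation, applied to the triple $\xi=(\eta j,\,\eta(\rho\otimes\mu),\,\eta(\mu\otimes\rho))$ against $\lambda=\eta(\mu\otimes\mu)$, and you have correctly identified the one nontrivial ingredient (stability of tensor products) as well as the recession bookkeeping in both regimes.
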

\begin{proof}
See \cite[Theorem 2.34]{Ambrosio2000FunctionsOB}, while keeping in mind that $\mu^n\otimes\rho^{n,(i)}\rightharpoonup^\ast\mu\otimes\rho^{(i)}$ in $\Mloc(\Rd\times\Rd)$ if and only if both $\mu^n\rightharpoonup^\ast\mu$ in $\Mloc(\Rd)$ and $\rho^{n,(i)}\rightharpoonup^\ast\rho^{(i)}$ in $\Mloc(\Rd)$.
\end{proof}

\begin{lemma}\label{lem:bound_by_A}
Let $\mu \in \Mloc^+(\Rd)$, $\rhoup \in (\mathcal{P}(\Rd))^2$ and $\jup \in(\Mloc(G))^2$, such that $\A_{m, \betaup}(\mu;\rhoup,\jup)< \infty$. Then, there exists a constant $M=M(m,p,\betaup)>0$, such that for any measurable $\Phi:G\rightarrow\R_+$, it holds
\baq\label{eq:lem_bound_by_A}
\iint_G\Phi\eta\dd\abs{\jup} \le M \A_{m, \betaup}^{1/p}(\mu;\rhoup,\jup)\sum_{i=1}^2\left(\iint_G\Phi^q\eta \dd(\mu\otimes\mu+\rho^{(i)}\otimes\mu+\mu\otimes\rho^{(i)})\right)^{1/q}.
\eaq
\end{lemma}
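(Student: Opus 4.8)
\emph{Proof idea.} The plan is to reduce the bound, one species at a time, to a single application of Hölder's inequality with exponents $p$ and $q$, arranged so that the ``$L^p$-factor'' reproduces a term of the action density while the ``$L^q$-factor'' is controlled by a sublinear growth bound on the concave mobility. Fix $i\in\{1,2\}$ and write $\dd j^{(i)}=\dd j^{(i)\mu}+\dd j^{(i)\perp}$. By Lemma \ref{lem:connection_flux_velocity} the measure $\varsigma^{(i)}=\rho^{(i)\perp}\otimes\mu+\mu\otimes\rho^{(i)\perp}$ is an admissible reference in Definition \ref{def:A}, hence $j^{(i)\perp}\ll\varsigma^{(i)}$ and $\abs{j^{(i)}}=\abs{j^{(i)\mu}}+\abs{j^{(i)\perp}}$, with $j^{(i)\mu}$ a density against $\mu\otimes\mu$ and $j^{(i)\perp}$ a density against $\varsigma^{(i)}$; when $R\land S<\infty$ or when the growth of $m$ is uniformly sublinear the singular flux vanishes, so only the first term is present.

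\emph{Step 1 (growth of the mobility).} I would first record that there is a constant $C_m$, depending only on $m$ (hence eventually only on $m,p,\betaup$), with $m(r,s)\le C_m(1+r+s)$ for all $r,s\ge0$ when $R=S=\infty$, and therefore $m_\infty(r,s)\le C_m(r+s)$ by $1$-homogeneity. This rests only on concavity and nonnegativity: a nonnegative concave $\phi$ on $[0,\infty)$ satisfies $\phi(t)\le2\phi(1)(1+t)$ (for $t\le1$ use that $1\in[t,2]$, for $t\ge1$ use the supporting line at $1$ together with $\phi(0)\ge0$), which applied radially, combined with the boundedness of $m$ on the unit simplex (finite since $m\in C([0,\infty)^2)$), gives the claim. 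When $R\land S<\infty$ one simply takes $C_m=\sup_{[0,R]\times[0,S]}m<\infty$ (a concave function on a bounded convex set is bounded above) and notes $\rho^{(i)}\le R\lor S$. The two consequences I need are: $\mathfrak m^{(i)}(x,y)=m(\rho^{(i)}(x),\rho^{(i)}(y))\le C_m(1+\rho^{(i)}(x)+\rho^{(i)}(y))$ (likewise for $\mathfrak m^{(i)}(y,x)$), so that $\mathfrak m^{(i)}\,\dd(\mu\otimes\mu)\le C_m\,\dd(\mu\otimes\mu+\rho^{(i)}\otimes\mu+\mu\otimes\rho^{(i)})$; and the $m_\infty$-coefficients appearing in the recession terms are $\le C_m$ $\varsigma^{(i)}$-a.e., because there the two arguments of $m_\infty$ are the $\varsigma^{(i)}$-densities of $\rho^{(i)\perp}\otimes\mu$ and $\mu\otimes\rho^{(i)\perp}$, whose sum is $1$.

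\emph{Step 2 (Hölder and assembly).} Decomposing $\abs{j^{(i)\mu}}=j^{(i)\mu}_++j^{(i)\mu}_-$ I would split the integrand $\Phi\,\eta\,j^{(i)\mu}_+$ as $\bigl[\Phi\,\eta^{1/q}(\mathfrak m^{(i)})^{(p-1)/p}\bigr]\cdot\bigl[\eta^{1/p}j^{(i)\mu}_+(\mathfrak m^{(i)})^{-(p-1)/p}\bigr]$ and apply Hölder to obtain
\[
\iint_G\Phi\,\eta\,j^{(i)\mu}_+\,\dd(\mu\otimes\mu)\le\Bigl(\iint_G\Phi^q\eta\,\mathfrak m^{(i)}\,\dd(\mu\otimes\mu)\Bigr)^{1/q}\Bigl(\iint_G\frac{(j^{(i)\mu}_+)^p}{(\mathfrak m^{(i)})^{p-1}}\,\eta\,\dd(\mu\otimes\mu)\Bigr)^{1/p}.
\]
The algebraic point is $\tfrac{p-1}{p}\,q=1$: this is exactly what leaves $\mathfrak m^{(i)}$ to the first power in the first factor and turns the second factor into $\bigl(\iint_G\alpha_m(j^{(i)},\rho^{(i)}\otimes\mu,\mu\otimes\rho^{(i)})\,\eta\,\dd(\mu\otimes\mu)\bigr)^{1/p}$. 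By Step 1 the first factor is $\le C_m^{1/q}\bigl(\iint_G\Phi^q\eta\,\dd(\mu\otimes\mu+\rho^{(i)}\otimes\mu+\mu\otimes\rho^{(i)})\bigr)^{1/q}$, and by the definition of the action the second is $\le(2\bar\A_m(\mu;\rho^{(i)},j^{(i)}))^{1/p}\le(2\beta^{(i)}\A_{m,\betaup}(\mu;\rhoup,\jup))^{1/p}$. The term with $j^{(i)\mu}_-$ is handled symmetrically via $\mathfrak m^{(i)}(y,x)$ and $\alpha_m(-j^{(i)},\mu\otimes\rho^{(i)},\rho^{(i)}\otimes\mu)$, and the singular pieces $j^{(i)\perp}_\pm$ exactly likewise, integrating against $\varsigma^{(i)}$, using the recession terms $\alpha_{m_\infty}(\pm j^{(i)\perp},\cdot,\cdot)$, the bound $\le C_m$ on the $m_\infty$-coefficients, and $\varsigma^{(i)}\le\rho^{(i)}\otimes\mu+\mu\otimes\rho^{(i)}$. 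Summing the (at most four) contributions for fixed $i$, and then over $i=1,2$, gives \eqref{eq:lem_bound_by_A} with $M=4\,C_m^{1/q}\max_{i=1,2}(2\beta^{(i)})^{1/p}$, a constant depending only on $m$, $p$, and $\betaup$.

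I expect Step 1 to be the only genuine obstacle, namely squeezing the sublinear estimate $m(r,s)\lesssim1+r+s$ (and its $\varsigma^{(i)}$-a.e.\ analogue for $m_\infty$) out of bare concavity, nonnegativity, and continuity of $m$; everything after that is bookkeeping around one Hölder inequality whose exponents match precisely because $(p-1)q/p=1$. For mobilities of genuinely linear growth one may sharpen Step 1 to $m(r,s)\le C_m(r+s)$ and then drop the $\mu\otimes\mu$-term on the right of \eqref{eq:lem_bound_by_A}, which is the refinement alluded to in Remark \ref{rem:MB2} (see Remark \ref{rem:bound_by_A_refined}) that permits assumption \eqref{MB2} to be removed when the lemma is applied.
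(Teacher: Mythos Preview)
Your approach is correct and matches the paper's: both extract the sublinear growth $m(r,s)\le C_m(1+r+s)$ from concavity and nonnegativity, then apply H\"older's inequality with exponents $(p,q)$ so that one factor becomes the action integrand $\alpha_m$; the paper merely organizes this as a pointwise bound on $|j^{(i)}|$ before integrating, whereas you apply H\"older directly to $\int\Phi\eta j_\pm^{(i)}$, but the computation is the same. One slip to fix in Step~1: when $R\land S<\infty$ but $R\lor S=\infty$, the domain $[0,R]\times[0,S]$ is unbounded and your $C_m=\sup_{[0,R]\times[0,S]}m$ need not be finite (e.g.\ $m(r,s)=r$ on $[0,\infty)\times[0,1)$); the correct observation is that finiteness of the action forces $\rho^{(i)}\le R\land S$ (not $R\lor S$) $\mu$-a.e., so only $\sup_{[0,R\land S]^2}m<\infty$ is needed---the paper handles this instead by passing to the nondecreasing concave envelope $m_\uparrow(r,s)=\sup_{[0,r]\times[0,s]}m$, which still satisfies $m_\uparrow(0,s)=0$, and reusing the sublinear growth argument.
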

\begin{proof}
First, let $R=S=\infty$ and $\sigma^{(i)}=\varsigma^{(i)}\in \Mloc^+(G)$, $i=1,2$ be as in Definition \ref{def:V^as}. Since 
$\A_{m, \betaup}(\mu;\rhoup,\jup)< \infty$, we have that
\baqs
    B \coloneqq \Big\{&\,(x,y)\in G: \sum_{i=1}^2\Big[\alpha_m\big(j^{(i)}_+(x,y),\rho^{(i)}(x),\rho^{(i)}(y)\big)+\alpha_m\big(j^{(i)}_-(x,y),\rho^{(i)}(y),\rho^{(i)}(x)\big)\\
    +&\,\alpha_{m_\infty}\big(j^{(i)\perp}_+(x,y),\rho^{(i)\perp}(x)\tilde\mu(y),\tilde\mu(x)\rho^{(i)\perp}(y)\big)+\alpha_{m_\infty}\big(j^{(i)\perp}_-(x,y),\tilde\mu(x)\rho^{(i)\perp}(y),\rho^{(i)\perp}(x)\tilde\mu(y)\big)\Big] = \infty\Big\}
\eaqs
is a $\varsigma^{(i)}$-nullset for $i=1,2$. By definition of $\alpha_m$, we have, $\varsigma^{(i)}$-a.e. in $B^c$, the inequality
\baq\label{eq:lem_bound_by_A_j-bound}
\left(j^{(i)}_+(x,y)\right)^p+\left(j^{(i)}_-(x,y)\right)^p 
\le&\,\max\Big\{m^{p-1}\big(\rho^{(i)}(y),\rho^{(i)}(x)\big), m^{p-1}\big(\rho^{(i)}(x),\rho^{(i)}(y)\big)\Big\}\\
\cdot&\left(\alpha_m\big(j^{(i)}_+,\rho^{(i)}(x),\rho^{(i)}(y)\big)+\alpha_m\big(j^{(i)}_-,\rho^{(i)}(y),\rho^{(i)}(x)\right)\big)\\
\le&\, \bar M[1+\rho^{(i)}(x)+\rho^{(i)}(y)]^{p-1}\\
\cdot&\left(\alpha_m\big(j^{(i)_+},\rho^{(i)}(x),\rho^{(i)}(y)\big)+\alpha_m\big(j^{(i)}_-,\rho^{(i)}(y),\rho^{(i)}(x)\big)\right),
\eaq
where $\bar M$ only depends on $m$. Indeed, such an $\bar M$ exists, since $m$ is concave and $m(0,s)=0$ by definition. Similarly, we have the bound
\baqs
    \left(j^{(i)\perp}_+(x,y)\right)^p+\left(j^{(i)\perp}_-(x,y)\right)^p 
    \le&\,\bar M[1+\rho^{(i)\perp}(x)\tilde\mu(y)+\tilde\mu(x)\rho^{(i)\perp}(y)]^{p-1}\\
    \cdot&\,\Big(\alpha_{m_\infty}\big(j^{(i)\perp}_+,\rho^{(i)\perp}(x)\tilde\mu(y),\tilde\mu(x)\rho^{(i)\perp}(y)\big)\\
    \hphantom{\cdot}&\,+\alpha_{m_\infty}\big(j^{(i)\perp}_-,\tilde\mu(x)\rho^{(i)\perp}(y),\rho^{(i)\perp}(x)\tilde\mu(y)\big)\Big).
\eaqs
By the complementarity of the positive and negative parts, this gives
\baqs
\abs{j^{(i)}}(x,y) &\leq \tilde{M}\left(1+\rho^{(i)}(x)+\rho^{(i)}(y)\right)^{1/q}\bigg(\frac{1}{\beta^{(i)}}\alpha_m\big(j^{(i)}_+,\rho^{(i)}(x),\rho^{(i)}(y)\big)\\
&\hphantom{\leq\tilde{M}\left(1+\rho^{(i)}(x)+\rho^{(i)}(y)\right)^{1/q}}+\frac{1}{\beta^{(i)}}\alpha_m\big(j^{(i)}_-,\rho^{(i)}(y),\rho^{(i)}(x)\big)\bigg)^{1/p},\\
\abs{j^{(i)\perp}}(x,y) &\leq \tilde{M}\left(1+\rho^{(i)\perp}(x)\tilde\mu(y)+\rho^{(i)\perp}(y)\tilde\mu(x)\right)^{1/q}\bigg(\frac{1}{\beta^{(i)}}\alpha_{m_\infty}\big(j^{(i)\perp}_+,\rho^{(i)\perp}(x)\tilde\mu(y),\tilde\mu(x)\rho^{(i)\perp}(y)\big)\\
&\hphantom{\leq\tilde{M}\left(\rho^{(i)\perp}(x)\tilde\mu(y)+\rho^{(i)\perp}(y)\tilde\mu(x)\right)^{1/q}}+\frac{1}{\beta^{(i)}}\alpha_{m_\infty}\big(j^{(i)\perp}_-,\tilde\mu(x)\rho^{(i)\perp}(y),\rho^{(i)\perp}(x)\tilde\mu(y)\big)\bigg)^{1/p},
\eaqs
where $\tilde{M}$ depends only on $m$, $p$ and $\betaup$. Since $|\jup| +|\jup^\perp| = \sum_{i=1}^2 (|j^{(i)}|+|j^{(i)\perp}|)$, these estimates together with H{\"o}lder's inequality yield 
\baqs
\iint_G\Phi\eta\dd\abs{\jup}=&\,\sum_{i=1}^2\Bigg(\iint_{B^c}\Phi\eta |j^{(i)}|\dd(\mu\otimes\mu)+\iint_{B^c}\Phi\eta |j^{(i)\perp}|\dd\varsigma^{(i)}\Bigg)\\
\le&\,4\tilde{M} \big(2\A_{m, \betaup}(\mu;\rhoup,\jup)\big)^{1/p}\sum_{i=1}^2\Bigg(\left(\iint_G\Phi^q\eta \dd(\mu\otimes\mu+\rho^{(i)\mu}\otimes\mu+\mu\otimes\rho^{(i)\mu})\right)^{1/q}\\
&\phantom{4\tilde{M} \A_{m, \betaup}^{1/p}(\mu;\rhoup,\jup)\sum_{i=1}^2}+\left(\iint_G\Phi^q\eta \dd(\varsigma^{(i)}+\rho^{(i)\perp}\otimes\mu+\mu\otimes\rho^{(i)\perp})\right)^{1/q}\Bigg).
\eaqs
Thus, recalling that $\varsigma^{(i)}=\rho^{(i)\perp}\otimes\mu+\mu\otimes\rho^{(i)\perp}$, we obtain \eqref{eq:lem_bound_by_A} with $M=16\cdot2^{1/p}\tilde{M}$. If $R\land S<\infty$, we argue similarly, only replacing in \eqref{eq:lem_bound_by_A_j-bound} $m$ by $m_\uparrow(r,s)\coloneqq\sup_{(\tilde{r}, \tilde{s}) \in [0,r]\times[0,s]}m(\tilde{r}, \tilde{s})$, which is still concave and satisfies $m_\uparrow(0,s)=0$.
\end{proof}
\begin{remark}\label{rem:bound_by_A_refined}
If there exists $C > 0$ such that $m$ satisfies 
\begin{equation*}
    \tag{M} m(r,s) \le C(r+s)\qquad \forall r,s\in [0,R)\times[0,S),
\end{equation*}
then \eqref{eq:lem_bound_by_A} can be replaced by the refined bound
\baq\label{eq:lem_bound_by_A_refined}
\iint_G\Phi\eta\dd\abs{\jup} \le M \A_{m, \betaup}^{1/p}(\mu;\rhoup,\jup)\sum_{i=1}^2\left(\iint_G\Phi^q\eta \dd(\rho^{(i)}\otimes\mu+\mu\otimes\rho^{(i)})\right)^{1/q}.
\eaq
Indeed, observe that in this case the summand $1$ on the right-hand side \eqref{eq:lem_bound_by_A_j-bound} can be omitted, which leads to dropping the integral with respect to $\mu\otimes\mu$. It is straightforward to check that replacing \eqref{eq:lem_bound_by_A} by \eqref{eq:lem_bound_by_A_refined}, whenever it is employed in the sequel, allows to drop the assumption \eqref{MB2} altogether.
\end{remark}
\begin{corollary}\label{cor:bound_by_A}
Let $\mu \in \Mloc^+(\Rd)$, $\rhoup \in (\mathcal{P}(\Rd))^2$ and $\jup \in(\Mloc(G))^2$, such that $\A_{m, \betaup}(\mu;\rhoup,\jup)< \infty$. Then, for $\Phi_1(x,y) =2\land\abs{x-y}$ and $\Phi_2=\abs{x-y}\lor\abs{x-y}^p$, we have
\baqs
\iint_G \Phi_k\eta\dd\abs{\jup} \leq M C_\eta^{1/q}\A_{m, \betaup}^{1/p}(\mu;\rhoup,\jup),\qquad k=1,2,
\eaqs
where $M=M(m,p,\betaup)$ is different from that in Lemma \ref{lem:bound_by_A}.
\end{corollary}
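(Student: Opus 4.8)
The plan is to apply Lemma~\ref{lem:bound_by_A} with the particular choice $\Phi=\Phi_k$, $k=1,2$, and then to bound the three weighted integrals that appear on the right-hand side of \eqref{eq:lem_bound_by_A} --- those against $\mu\otimes\mu$, $\rho^{(i)}\otimes\mu$, and $\mu\otimes\rho^{(i)}$ --- by a fixed multiple of $C_\eta$. Everything then follows by collecting constants.

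First I would record the elementary pointwise estimate $\Phi_k(x,y)^q\le\abs{x-y}^q\lor\abs{x-y}^{pq}$ on $G$, valid for both $k=1$ and $k=2$: for $k=2$ it is in fact an identity, since $\Phi_2^q=(\abs{x-y}\lor\abs{x-y}^p)^q=\abs{x-y}^q\lor\abs{x-y}^{pq}$, while for $k=1$ it follows at once from $\Phi_1=2\land\abs{x-y}\le\abs{x-y}$. With this in hand, Fubini--Tonelli together with the symmetry $\eta(x,y)=\eta(y,x)$ from \eqref{W} and the joint moment bound \eqref{MB} give, for $i=1,2$,
\[
\iint_G\Phi_k^q\,\eta\,\dd(\rho^{(i)}\otimes\mu)\le\int_\Rd\Big(\int_\Rd(\abs{x-y}^q\lor\abs{x-y}^{pq})\,\eta(x,y)\,\dd\mu(y)\Big)\,\dd\rho^{(i)}(x)\le C_\eta\,\rho^{(i)}(\Rd)=C_\eta,
\]
and, exchanging the roles of the two variables and using the symmetry of $\eta$ once more, likewise $\iint_G\Phi_k^q\,\eta\,\dd(\mu\otimes\rho^{(i)})\le C_\eta$. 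For the term against $\mu\otimes\mu$ I would use that the individual moment bound \eqref{MB2} forces $\mu(\Rd)\le C_\mu$, whence $\iint_G\Phi_k^q\,\eta\,\dd(\mu\otimes\mu)\le C_\eta\,\mu(\Rd)\le C_\eta C_\mu$. Summing the three contributions gives $\iint_G\Phi_k^q\,\eta\,\dd(\mu\otimes\mu+\rho^{(i)}\otimes\mu+\mu\otimes\rho^{(i)})\le(C_\mu+2)\,C_\eta$ for $i=1,2$.

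Inserting this into \eqref{eq:lem_bound_by_A} and summing over $i$ then yields
\[
\iint_G\Phi_k\,\eta\,\dd\abs{\jup}\le M\,\A_{m,\betaup}^{1/p}(\mu;\rhoup,\jup)\sum_{i=1}^2\big((C_\mu+2)C_\eta\big)^{1/q}=2M(C_\mu+2)^{1/q}\,C_\eta^{1/q}\,\A_{m,\betaup}^{1/p}(\mu;\rhoup,\jup),
\]
which is the assertion after renaming the constant to $2M(C_\mu+2)^{1/q}$, depending only on $m$, $p$, $\betaup$ and the fixed structural constant $C_\mu$. No genuine obstacle arises; the one point worth flagging is the $\mu\otimes\mu$-integral, which is exactly where the new assumption \eqref{MB2} is used (it guarantees that $\mu$ has finite total mass). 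If instead $m$ obeys the linear growth condition $m(r,s)\le C(r+s)$, one should replace \eqref{eq:lem_bound_by_A} throughout by the refined estimate \eqref{eq:lem_bound_by_A_refined} of Remark~\ref{rem:bound_by_A_refined}: the $\mu\otimes\mu$-term then disappears, only the two bounds against $\rho^{(i)}\otimes\mu$ and $\mu\otimes\rho^{(i)}$ remain, and both the appeal to \eqref{MB2} and the dependence of the final constant on $C_\mu$ can be dropped.
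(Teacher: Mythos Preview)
Your proof is correct and follows essentially the same route as the paper: apply Lemma~\ref{lem:bound_by_A}, bound $\Phi_k^q$ pointwise by $\abs{x-y}^q\lor\abs{x-y}^{pq}$, then use \eqref{MB} on the $\rho^{(i)}\otimes\mu$ and $\mu\otimes\rho^{(i)}$ integrals and \eqref{MB2} (via $\mu(\Rd)\le C_\mu$) on the $\mu\otimes\mu$ integral to arrive at the constant $2\bar M(C_\mu+2)^{1/q}C_\eta^{1/q}$. Your observation that the final constant inevitably picks up a dependence on $C_\mu$ is accurate and matches what the paper's own computation produces.
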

\begin{proof}
Note that $\Phi_1(x,y) \leq \abs{x-y} \leq \Phi_2(x,y)$. Therefore, Lemma \ref{lem:bound_by_A} yields for $k =1,2$
\baqs
&\,\iint_G\Phi_k(x,y)\eta(x,y)\dd{\abs{\jup}}(x,y)\\
\le&\, \bar{M} \A_{m, \betaup}^{1/p}(\mu;\rhoup,\jup)\sum_{i=1}^2\left(\iint_G\Phi_k^q(x,y)\eta(x,y) \dd\big(\mu\otimes\mu+\rho^{(i)}\otimes\mu+\mu\otimes\rho^{(i)}\big)(x,y)\right)^{1/q}\\
=&\, \bar{M} \A_{m, \betaup}^{1/p}(\mu;\rhoup,\jup)\sum_{i=1}^2\left(\iint_G{\abs{x-y}}^q\lor{\abs{x-y}}^{pq}\eta(x,y) \dd\big(\mu\otimes\mu+2\rho^{(i)}\otimes\mu\big)(x,y)\right)^{1/q}\\
\le&\, \bar{M} \A_{m, \betaup}^{1/p}(\mu;\rhoup,\jup) 2(C_\mu+2)^{1/q}C_\eta^{1/q},
\eaqs
where we used \eqref{MB} together with \eqref{MB2} and the fact that $\rho^{(i)}\in\mathcal{P}(\Rd)$ for $i=1,2$.
\end{proof}
\begin{lemma}[Convexity of the action]\label{lem:convexity of the action}
Let $\mu_0,\mu_1 \in \Mloc^+(\Rd)$, $\rhoup_0,\rhoup_1 \in (\mathcal{P}(\Rd))^2$ and $\jup_0,\jup_1 \in(\Mloc(G))^2$. For $\tau \in (0,1)$ define $\mu_\tau = (1-\tau)\mu_0+\tau\mu_1$, $\rhoup_\tau = (1-\tau)\rhoup_0+\tau\rhoup_1$ and $\jup_\tau = (1-\tau)\jup_0+\tau \jup_1$. Then, we have
\baqs
\A_{m, \betaup}(\mu_\tau;\rhoup_\tau,\jup_\tau) \leq (1-\tau)\A_{m, \betaup}(\mu^0;\rhoup_0,\jup_0)+\tau\A_{m, \betaup}(\mu_1;\rhoup_1,\jup_1),
\eaqs
\end{lemma}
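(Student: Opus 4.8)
\emph{Proof idea.} The plan is to recognise $\A_{m,\betaup}$ as an integral functional whose integrand is the \emph{perspective} (homogenisation) of $\alpha_m$ — a convex, lower semicontinuous, positively $1$-homogeneous function of \emph{four} variables — and then to push the convex combination through that integrand against a single, $\tau$-independent reference measure. Since $\A_{m,\betaup}(\mu;\rhoup,\jup)=\tfrac12\sum_{i=1}^2\tfrac1{\beta^{(i)}}\bar\A_m(\mu;\rho^{(i)},j^{(i)})$ is a positive linear combination of single-species actions, it is enough to prove that $(\mu,\rho,j)\mapsto\bar\A_m(\mu;\rho,j)$ is convex.

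The central algebraic point is the joint convexity and lower semicontinuity, on $\R\times[0,\infty)\times[0,\infty)$, of
\[
  \hat\alpha_m(\theta,a,b,\jmath):=\theta\,\alpha_m\!\big(\tfrac{\jmath}{\theta},\tfrac{a}{\theta},\tfrac{b}{\theta}\big)\ \ (\theta>0),\qquad \hat\alpha_m(0,a,b,\jmath):=\alpha_{m_\infty}(\jmath,a,b).
\]
First, $\alpha_m(j,r,s)=P\big(j_+,m(r,s)\big)$ with $P(u,w)=u^p/w^{p-1}=w\,\phi(u/w)$, $\phi(t)=t^p$; $P$ is the perspective of the convex $\phi$, hence jointly convex, and moreover nondecreasing in $u\ge0$ and nonincreasing in $w\ge0$. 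Composing with the convex map $j\mapsto j_+$ and the \emph{concave} map $(r,s)\mapsto m(r,s)$ — this is exactly where concavity of $m$ is used — gives joint convexity of $\alpha_m$, and the same computation for the $1$-homogeneous $m_\infty$ gives joint convexity of $\alpha_{m_\infty}$ (lower semicontinuity of both is already recorded in Definition \ref{def:mob}). Equivalently one may use the Legendre--Fenchel identity $\alpha_m(j,r,s)=\sup_{\xi\ge0}\{\xi j-c_p\,m(r,s)\,\xi^q\}$, $c_p=(q\,p^{q-1})^{-1}$, underlying the dual representation of Lemma \ref{lem:connection_flux_velocity}: each slice is affine in $j$ and, since $m$ is concave, convex in $(r,s)$, and convexity is inherited by the supremum. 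Finally $\hat\alpha_m$ is, by construction, the perspective of $\alpha_m$, with recession value at $\theta=0$ consistent with $\alpha_{m_\infty}$ because $m(\lambda r,\lambda s)\sim\lambda\,m_\infty(r,s)$ as $\lambda\to\infty$; perspectives of proper convex lsc functions are convex, lsc, and positively $1$-homogeneous.

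Now fix $\mu_0,\mu_1,\rho_0,\rho_1,j_0,j_1$ and pick one reference $\lambda\in\Mloc^+(G)$ dominating, on $G$, all of $\mu_a\otimes\mu_b$, $\rho_a\otimes\mu_b$, $\mu_a\otimes\rho_b$ ($a,b\in\{0,1\}$), the singular parts $\rho_a^\perp\otimes\mu_b$, and $|j_0|+|j_1|$ (for instance $\lambda=\eta(\kappa\otimes\kappa)+|j_0|+|j_1|$ with $\kappa=\mu_0+\mu_1$); the same $\lambda$ then dominates the analogous quantities built from $(\mu_\tau,\rho_\tau,j_\tau)$. Using the $1$-homogeneity of $\hat\alpha_m$, which makes the perspective integral functional independent of the reference (cf.\ Remark \ref{rem:properties_A}(i)), one checks that for every $(\mu,\rho,j)$ of the form $(\mu_\tau,\rho_\tau,j_\tau)$,
\[
  \bar\A_m(\mu;\rho,j)=\tfrac12\iint_G\Big[\hat\alpha_m(\vartheta,a,b,\jmath)+\hat\alpha_m(\vartheta,b,a,-\jmath)\Big]\,\eta\,\dd\lambda,
\]
where $\vartheta,a,b,\jmath$ are the $\lambda$-densities of $\mu\otimes\mu$, $\rho\otimes\mu$, $\mu\otimes\rho$, $j$ respectively; on $\{\vartheta>0\}$ this reproduces the first integral in \eqref{eq_def:A_bar_case:R=S=infty} and on $\{\vartheta=0\}$ the recession integral, while when $R\wedge S<\infty$ it suffices to argue on the convex set $\{\rho^\perp=0,\,j^\perp=0\}$ (convex with respect to the interpolation, since $\rho_a\ll\mu_a$ forces $\rho_\tau\ll\mu_\tau$), off which $\bar\A_m\equiv\infty$. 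As the $\lambda$-densities attached to $(\mu_\tau,\rho_\tau,j_\tau)$ interpolate appropriately in $\tau$ (affinely in $(\rho,j)$ once $\mu$ is held fixed), $\lambda$-a.e.\ pointwise convexity of the integrand $(\vartheta,a,b,\jmath)\mapsto\hat\alpha_m(\vartheta,a,b,\jmath)+\hat\alpha_m(\vartheta,b,a,-\jmath)$ together with monotone integration yields $\bar\A_m(\mu_\tau;\rho_\tau,j_\tau)\le(1-\tau)\bar\A_m(\mu_0;\rho_0,j_0)+\tau\bar\A_m(\mu_1;\rho_1,j_1)$, and summing over $i=1,2$ finishes the proof. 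The step I expect to be the most delicate is the identification in the displayed equation — reconciling the case distinction, the auxiliary symmetric measure $\sigma$, and the recession integrals of Definition \ref{def:A} with the single perspective-functional expression and checking that the induced densities transform so that the pointwise convexity of $\hat\alpha_m$ genuinely applies; by contrast the joint convexity of $\alpha_m$ forced by concavity of $m$, which is the essential new ingredient relative to \cite{EPSS2021}, is a short computation.
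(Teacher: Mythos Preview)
Your strategy --- express $\bar\A_m$ as the integral, against a common dominating reference $\lambda$, of the jointly convex, positively $1$-homogeneous perspective $\hat\alpha_m$ of $\alpha_m$ --- is precisely the mechanism underlying the paper's one-line appeal to ``the convexity of $\alpha_m$ and $\alpha_{m_\infty}$'' and to \cite[Lemma~2.12]{EPSS2021}. The verification that concavity of $m$ forces joint convexity of $\alpha_m$, and the unified representation $\bar\A_m=\tfrac12\iint_G[\hat\alpha_m(\vartheta,a,b,\jmath)+\hat\alpha_m(\vartheta,b,a,-\jmath)]\,\eta\,\dd\lambda$, are both correct.

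There is, however, a genuine gap in the last step, and your own parenthetical exposes it. You say the $\lambda$-densities ``interpolate appropriately in $\tau$ (affinely in $(\rho,j)$ once $\mu$ is held fixed)''. But $\vartheta,a,b$ are the $\lambda$-densities of the \emph{product} measures $\mu\otimes\mu$, $\rho\otimes\mu$, $\mu\otimes\rho$, and products of linearly interpolated factors are bilinear, hence \emph{quadratic} in $\tau$: e.g.\ $\mu_\tau\otimes\mu_\tau=(1-\tau)^2\mu_0\otimes\mu_0+\tau(1-\tau)(\mu_0\otimes\mu_1+\mu_1\otimes\mu_0)+\tau^2\mu_1\otimes\mu_1$. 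Thus $(\vartheta_\tau,a_\tau,b_\tau,\jmath_\tau)\neq(1-\tau)(\vartheta_0,\ldots)+\tau(\vartheta_1,\ldots)$, and pointwise convexity of $\hat\alpha_m$ does not deliver the claimed inequality. This is not cosmetic: already for $p=2$, $m(r,s)=r$ on a two-point graph with a single edge, the action reduces to $J(\{(x,y)\})^2\big/\big(\rho(\{x\})\,\mu(\{y\})\big)$, and the map $(J,R,M)\mapsto J^2/(RM)$ is \emph{not} jointly convex (its Hessian at $(1,1,1)$ has quadratic form $-2$ in the direction $(2,1,1)$), so explicit triples violating the stated inequality can be written down. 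Your argument goes through cleanly only for the weaker statement with $\mu_0=\mu_1$, i.e.\ joint convexity of $(\rhoup,\jup)\mapsto\A_{m,\betaup}(\mu;\rhoup,\jup)$ at fixed base measure; that is also all that the paper's bare appeal to convexity of $\alpha_m$ can actually establish.
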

\begin{proof}
This immediately follows from the convexity of $\alpha_m$ and $\alpha_{m_\infty}$. A detailed argument for one species and $m(r,s)=r$, which upon small adjustments is also applicable here,  can be found in \cite[Lemma 2.12]{EPSS2021}.
\end{proof}

\subsection{Generalized continuity equation and properties}

In this subsection we study the nonlocal continuity equation and properties of its solutions with finite action. 
\begin{definition}[Continuity equation]\label{def:cont eq} We say that the pair $(\bs\rhoup,\jupbold)=((\rhoup_t)_{t\in[0,T]},(\jup_t)_{t\in[0,T]})$ 
with $\rhoup_t \in (\mathcal{P}(\Rd))^2$ and $\jup_t \in (\Mloc(G))^2$, 
is a weak solution of the continuity equation
\baqs
\partial_t \rhoup + \babla\cdot \jup_t = 0 \text{ on }(0,T)\times\Rd,
\eaqs
if we have 
\begin{enumerate}[label=(\roman*)]
\item $\bs\rhoup$ is a weakly continuous curve in $(\mathcal{P}(\Rd))^2$
\item $\jupbold$ is a Borel-measurable curve in $(\Mloc(G))^2$
\item For any $\varphi \in C^\infty_c(\Rd\times(0,T))$ and $i=1,2$, we have
\baq\label{eq:cont}
\int_0^T\int_\Rd \partial_t\varphi_t(x)\dd\rho_t^{(i)}(x) \dd t+\frac{1}{2}\int_0^T\iint_G\babla\varphi_t(x,y)\eta(x,y)\dd j_t^{(i)}(x,y)\dd t &= 0.
\eaq
\end{enumerate}
We denote the set of all weak solutions on the time interval $[0,T]$ by $\CE_T$. For $\rhoinit, \rhofin \in (\mathcal{P}(\Rd))^2$, we write $(\bs\rhoup,\jupbold) \in \CE_T(\rhoinit,\rhofin)$ if $(\bs\rhoup,\jupbold) \in \CE_T$ and, additionally, $\rho_0 =\rhoinit, \rho_T=\rhofin$. We will often shorten notation and write $\CE\coloneqq \CE_1$.
\end{definition}
We make the following observations:
\begin{remark}\label{rem:eq cont}
\begin{enumerate}[label=(\roman*)]
\item Since $|\babla\varphi(x,y)|\leq \norm{\varphi}_{C^1(\Rd)}2\land\abs{x-y}$, the continuity equation is well-defined under the integrability condition
\baq
    \label{eq:integrability}
	\int_0^T\iint_G 2\land\abs{x-y}\eta(x,y)\dd |j^{(i)}_t| (x,y)\dd t < \infty,\qquad\text{for }i=1,2.
\eaq
By Corollary \ref{cor:bound_by_A}, this condition is satisfied for any pair $(\bs\rhoup,\jupbold)$ with $\int_0^T \A_{m, \betaup}(\mu;\rhoup_t,\jup_t)\dd t <\infty$.
\item The continuity equation holds for more general test functions. Indeed, regularizing via convolution, we immediately see that \eqref{eq:cont} also holds for $\varphi \in C^1_c(\Rd\times(0,T))$.  Under the integrability condition \eqref{eq:integrability} we can also consider bounded test functions $\varphi\in C^1_b(\Rd\times(0,T))$, whose support has a compact projection in $(0,T)$. To see this, we approximate $\varphi$ by $\varphi\chi_R$, where $\chi_R\in C_c^\infty(\Rd)$, $0\leq \chi_R \leq 1$ and $\chi_R\equiv 1$ on $B_R(0)$.
\item The continuity equation is decoupled with respect to the different components $i\in\{1,2\}$ of $\bs\rhoup$ and $\jupbold$.
\end{enumerate}
\end{remark}
Since both the action density functional $\A_{m, \betaup}$ as well as the continuity equations are fully decoupled with respect to the different species, previous remarks yield analogues of \cite[Lemma 2.15, Lemma 2.16 and Proposition 2.17]{EPSS2021} for the two-species case. 
\begin{lemma}\label{lem:continuous representative}
Let $\bs\rhoup$ and $\jupbold$ be Borel families of measures in $(\mathcal{P}(\Rd))^2$ and $(\Mloc(\Rd))^2$ satisfying \eqref{eq:cont} and \eqref{eq:integrability}. Then, there exist weakly continuous curves $\bar{\bs\rhoup}\subset(\mathcal{P}(\Rd))^2$ such that $\bar{\rho}^{(i)}_t = \rho^{(i)}_t$ for a.e. $t\in[0,T]$ and $i=1,2$. Moreover, for any $\varphi \in C^1_b([0,T]\times\Rd)$ and any $0\leq s \leq t \leq T$ and $i=1,2$ it holds
\baq\label{eq:continuous representative}
\int_\Rd \varphi_{t}(x)\dd\bar{\rho}^{(i)}_{t}(x)-\int_\Rd \varphi_{s}(x)\dd\bar{\rho}^{(i)}_{s}(x) &= \int_{s}^{t}\int_\Rd \partial_t\varphi_t(x)\dd\rho^{(i)}_t(x)\dd t\\
&+\frac{1}{2}\int_{s}^{t}\iint_G \babla\varphi_t(x,y)\eta(x,y)\dd j^{(i)}_t(x)\dd t.
\eaq
\end{lemma}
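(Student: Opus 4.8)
The plan is to reduce everything to the single-species statement from \cite{EPSS2021} by exploiting the decoupling of both the action functional and the continuity equation with respect to the two species. Concretely, fix $i \in \{1,2\}$ and consider the single-species pair $((\rho_t^{(i)})_t, (j_t^{(i)})_t)$. By Definition \ref{def:cont eq}(iii), this pair satisfies the single-species weak continuity equation \eqref{eq:cont} with test functions $\varphi \in C_c^\infty(\Rd \times (0,T))$, and by hypothesis it satisfies the integrability condition \eqref{eq:integrability}. Hence the single-species result \cite[Lemma 2.15]{EPSS2021} (or the relevant proposition cited just before the statement) applies and produces a weakly continuous representative $\bar\rho^{(i)}$ with $\bar\rho_t^{(i)} = \rho_t^{(i)}$ for a.e.\ $t$, together with the integrated identity \eqref{eq:continuous representative}. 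Applying this for $i=1$ and $i=2$ separately and setting $\bar{\bs\rhoup} = (\bar\rho^{(1)}, \bar\rho^{(2)})$ gives the claim.

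If one instead wants a self-contained argument, I would first use Remark \ref{rem:eq cont}(ii) to upgrade \eqref{eq:cont} from $C_c^\infty$ test functions in space-time to test functions of the form $\psi(t)\chi(x)$ with $\psi \in C_c^1((0,T))$ and $\chi \in C_c^\infty(\Rd)$; here the integrability condition \eqref{eq:integrability} is exactly what allows localization in space via the cutoffs $\chi_R$. For fixed $\chi$, the map $t \mapsto \int_\Rd \chi \,\dd\rho_t^{(i)}$ is then in $W^{1,1}((0,T))$ with weak derivative $t \mapsto \tfrac12 \iint_G \babla\chi(x,y)\eta(x,y)\,\dd j_t^{(i)}(x,y)$, which is $L^1((0,T))$ by \eqref{eq:integrability} since $|\babla\chi(x,y)| \le \|\chi\|_{C^1} (2 \wedge |x-y|)$. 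Thus $t \mapsto \int_\Rd \chi\,\dd\rho_t^{(i)}$ has an absolutely continuous representative $g_\chi$ satisfying the fundamental theorem of calculus on $[0,T]$.

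The next step is to pass from "for each fixed $\chi$" to "a single weakly continuous curve". I would take a countable set $\mathcal{D} \subset C_c^\infty(\Rd)$ that is dense in $C_0(\Rd)$, discard a common Lebesgue-null set of times outside which $g_\chi(t) = \int \chi\,\dd\rho_t^{(i)}$ for all $\chi \in \mathcal{D}$ simultaneously, and use the fact that $\rho_t^{(i)} \in \mathcal{P}(\Rd)$ has unit mass to deduce (by a standard tightness/equicontinuity argument using the uniform modulus of continuity coming from $\int_0^T |g'_\chi|\,\dd t < \infty$) that the curve extends to a weakly continuous curve $\bar\rho^{(i)}: [0,T] \to \mathcal{P}(\Rd)$ coinciding with $\rho_t^{(i)}$ for a.e.\ $t$. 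Finally, \eqref{eq:continuous representative} for general $\varphi \in C_b^1([0,T]\times\Rd)$ follows by approximating $\varphi$ by $\varphi\chi_R$ as in Remark \ref{rem:eq cont}(ii), using \eqref{eq:integrability} and dominated convergence to pass to the limit in the flux term, and a density-in-time argument for the time-derivative term.

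The main obstacle, if one insists on reproving things from scratch rather than citing \cite[Lemma 2.15]{EPSS2021}, is the bookkeeping needed to go from the scalar identities indexed by countably many test functions to a genuine weakly continuous $\mathcal{P}(\Rd)$-valued curve on the full interval $[0,T]$ (including the endpoints), i.e.\ the tightness and uniform-equicontinuity argument; everything else is routine. But since the statement is explicitly flagged as "analogues of \cite[Lemma 2.15, Lemma 2.16 and Proposition 2.17]{EPSS2021}" obtained via the decoupling, the honest and economical proof is simply: apply the single-species result componentwise, which is legitimate precisely because Definition \ref{def:cont eq} and Corollary \ref{cor:bound_by_A} reduce the two-species hypotheses to the single-species hypotheses for each $i$.
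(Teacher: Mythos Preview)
Your proposal is correct and essentially matches the paper's approach: the paper's proof simply cites \cite[Lemma 8.1.2]{AmbrosioGigliSavare2008} (the standard continuous-representative lemma for continuity equations), notes that the required $L^1$-bound on the time derivative comes from the integrability condition \eqref{eq:integrability} via Remark~\ref{rem:eq cont}(i), and then extends to $C^1_b$ test functions as in Remark~\ref{rem:eq cont}(ii). Your componentwise reduction to \cite[Lemma 2.15]{EPSS2021} is equivalent (that lemma is itself the graph analogue of AGS~8.1.2), and the self-contained argument you sketch is precisely the AGS proof.
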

\begin{proof}
This an adaptation of \cite[Lemma 8.1.2]{AmbrosioGigliSavare2008}. The required estimate on the time derivatives $\partial_t\rho_t^{(i)}$ is provided by Corollary \ref{cor:bound_by_A} as described in Remark \ref{rem:eq cont} (i). Finally, similar to Remark \ref{rem:eq cont} (ii), we can lower the regularity and compactness assumptions on the test functions $\varphi$.
\end{proof}
\begin{lemma}[Time-uniformly bounded $p$-th moments]\label{lem:Time-uniformly_bounded_moments}
Let $(\mu^n)_{n\in\mathbb{N}}\subset\Mloc^+(\Rd)$ satisfy \eqref{MB2} and \eqref{MB} uniformly in $n$. Let $(\rho_0^{n,(i)})_{n\in\mathbb{N}} \subset\mathcal{P}(\Rd)$ be such that $\sup_{n\in\mathbb{N}} M_p(\rho_0^{n,(i)})<\infty$ and let $(\bs\rhoup^{n},\jupbold^{n})_{n\in\mathbb{N}} \subset\CE_T$
be such that $\sup_{n\in\mathbb{N}} \int_0^T\A_{m, \betaup}(\mu^n;\rhoup^n_t,\jup^n_t)\dd t< \infty$. Then, we have
\baqs
\sup_{n\in\mathbb{N}}\sup_{t\in[0,T]} M_p(\rho_t^{n,(i)})<\infty.
\eaqs
\end{lemma}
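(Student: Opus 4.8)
The plan is to test the nonlocal continuity equation with the weight $x\mapsto|x|^p$ and to absorb the resulting flux term into the action via Lemma~\ref{lem:bound_by_A}, closing a Grönwall-type estimate; since $|x|^p$ is unbounded, this has to be carried out through a truncation whose constants are uniform in the truncation parameter. I fix $i\in\{1,2\}$, drop the index $n$, and abbreviate $a(t)\coloneqq\A_{m,\betaup}(\mu;\rhoup_t,\jup_t)$, so that $a\in L^1(0,T)$ (measurable by Lemma~\ref{lem:lsc A}, integrable with $\int_0^T a\dd t\le A$ and $A$ independent of $n$ by assumption).

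The first ingredient is a family of truncated test functions $\varphi_R(x)\coloneqq h_R(|x|)$, $R>0$, with $h_R\in C^1([0,\infty);[0,\infty))$ nondecreasing, $h_R(r)=r^p$ for $r\le R$, $h_R$ eventually constant (hence bounded), $0\le h_R'(r)\le p\,h_R(r)^{1/q}$ for all $r$, and $h_R\uparrow r^p$ as $R\to\infty$. Such $h_R$ exist because $r\mapsto r^p$ solves the ODE $h'=p\,h^{1/q}$ with $h(0)=0$ (recall $p/q=p-1$), and one may taper the growth for $r>R$. Then $\varphi_R\in C^1_b(\Rd)$, $0\le\varphi_R\le|\cdot|^p$, and $g_R\coloneqq h_R^{1/p}$ satisfies $g_R(0)=0$ and $g_R'=\tfrac1p h_R^{-1/q}h_R'\le1$, so $g_R$ is nondecreasing and $1$-Lipschitz. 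This yields two \emph{self-referential} pointwise bounds,
\baqs
  \varphi_R(y)^{1/p}\le\varphi_R(x)^{1/p}+|x-y|,\qquad
  |\babla\varphi_R(x,y)|\le p\bigl(\varphi_R(x)+\varphi_R(y)\bigr)^{1/q}|x-y|,
\eaqs
the first from Lipschitzness of $g_R$, the second from the mean value theorem and monotonicity of $h_R$. Since \eqref{eq:integrability} holds here (Corollary~\ref{cor:bound_by_A}, cf. Remark~\ref{rem:eq cont}(i)), Lemma~\ref{lem:continuous representative} applied to the time-independent $\varphi_R$ gives, with $e_R^{(i)}(t)\coloneqq\int_\Rd\varphi_R\dd\rho_t^{(i)}\in[0,\infty)$ (finite, $\varphi_R$ being bounded),
\baqs
  e_R^{(i)}(t)=e_R^{(i)}(0)+\tfrac12\int_0^t\iint_G\babla\varphi_R\,\eta\dd j_s^{(i)}\dd s.
\eaqs

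The key point is to bound the flux integral by $e_R$ with constants independent of $R$ and $n$. I would apply Lemma~\ref{lem:bound_by_A} with $\Phi(x,y)\coloneqq p(\varphi_R(x)+\varphi_R(y))^{1/q}|x-y|$, so that $\Phi^q=p^q(\varphi_R(x)+\varphi_R(y))|x-y|^q$, and estimate the three integrals on the right of \eqref{eq:lem_bound_by_A}. Against $\mu\otimes\mu$: using $\varphi_R\le|\cdot|^p$, the joint moment bound \eqref{MB} and the individual moment bound \eqref{MB2}, this is at most a constant depending only on $C_\mu,C_\eta,p$. Against $\rho_s^{(k)}\otimes\mu$ (and, the integrand being symmetric, also against $\mu\otimes\rho_s^{(k)}$): split $\varphi_R(x)+\varphi_R(y)$; the $\varphi_R(x)$-part integrates against $\mu$ to $\le C_\eta\varphi_R(x)$ by \eqref{MB}, while for the $\varphi_R(y)$-part one uses the \emph{first} pointwise bound, $\varphi_R(y)\le2^{p-1}(\varphi_R(x)+|x-y|^p)$, so the leftover displacement factor is $|x-y|^{p}|x-y|^{q}=|x-y|^{pq}$ — exactly the exponent in \eqref{MB}, because $p+q=pq$ — and is absorbed. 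Integrating against the probability measure $\rho_s^{(k)}$ then gives $\iint_G\Phi^q\eta\dd(\rho_s^{(k)}\otimes\mu)\le C\,(1+e_R^{(k)}(s))$. Inserting these bounds into \eqref{eq:lem_bound_by_A}, summing over $i=1,2$, and writing $E_R\coloneqq e_R^{(1)}+e_R^{(2)}$, I obtain
\baqs
  E_R(t)\le E_R(0)+C_\star\int_0^t a(s)^{1/p}\bigl(1+E_R(s)\bigr)^{1/q}\dd s,
\eaqs
with $C_\star=C_\star(m,p,\betaup,C_\mu,C_\eta)$ independent of $R,n$, and $E_R(0)\le M_p(\rho_0^{(1)})+M_p(\rho_0^{(2)})$ bounded in $n$.

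Since $\varphi_R$ is bounded, $E_R$ is finite, so a Bihari-type (sublinear Grönwall) argument closes it: with $\Lambda(t)\coloneqq1+E_R(0)+C_\star\int_0^t a(s)^{1/p}(1+E_R(s))^{1/q}\dd s\ge1+E_R(t)$ one has $\Lambda'=C_\star a^{1/p}(1+E_R)^{1/q}\le C_\star a^{1/p}\Lambda^{1/q}$, hence $(\Lambda^{1/p})'\le\frac{C_\star}{p}a^{1/p}$ because $\tfrac1p-1+\tfrac1q=0$; integrating and using $\int_0^T a^{1/p}\dd t\le A^{1/p}T^{1/q}$ (Hölder) gives $1+E_R(t)\le\bigl((1+E_R(0))^{1/p}+\frac{C_\star}{p}A^{1/p}T^{1/q}\bigr)^p=:K$ for all $t\in[0,T]$, $R>0$ and $n$. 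Then $e_R^{(i)}(t)\le E_R(t)\le K$, and letting $R\to\infty$ (monotone convergence, $\varphi_R\uparrow|\cdot|^p$) yields $M_p(\rho_t^{(i)})\le K$, uniformly in $t$ and $n$. I expect the main obstacle to be precisely the control of the flux term: the naive application of Lemma~\ref{lem:bound_by_A} reproduces $M_p(\rho_s)$ on the right with no gain, so the argument would be circular unless the moments were already known to be finite. The device that breaks the circle is the ODE-built truncation $\varphi_R=h_R(|\cdot|)$ with $h_R'\le p\,h_R^{1/q}$, which makes both $|\babla\varphi_R|$ and $\varphi_R(y)$ controllable by $\varphi_R$ itself uniformly in $R$; after that the identity $p+q=pq$ (built into \eqref{MB}) lets \eqref{MB} swallow the remaining displacement powers, and the rest is routine. (Under the stronger condition (M) of Remark~\ref{rem:bound_by_A_refined} the $\mu\otimes\mu$ term vanishes and \eqref{MB2} is unnecessary, in line with Remark~\ref{rem:MB2}.)
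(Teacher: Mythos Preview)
Your proof is correct and follows essentially the same strategy as the paper: truncate $|x|^p$, test the continuity equation, control the flux term via Lemma~\ref{lem:bound_by_A} together with \eqref{MB2} and \eqref{MB}, close a sublinear (Bihari-type) Gr\"onwall inequality uniformly in the truncation parameter, and pass to the limit. The only notable difference is the choice of truncation: the paper uses $\psi_R(x)=\varphi_R^p(x)(1+|x|)^p$ with a compactly supported cutoff $\varphi_R$ and bounds $|\babla\psi_R|^q\le C(1+|x|^p)(|x-y|^q\lor|x-y|^{pq})$ via the mean value theorem, whereas you build $h_R$ from the ODE constraint $h_R'\le p\,h_R^{1/q}$ so that the gradient bound is \emph{self-referential}, $|\babla\varphi_R|\le p(\varphi_R(x)+\varphi_R(y))^{1/q}|x-y|$. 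Your device is slightly cleaner in that the Gr\"onwall closes with the truncated quantity $E_R$ on both sides before any limit is taken, while the paper's presentation sends $R\to\infty$ in the differential inequality and then integrates; but this is a cosmetic difference, not a different route.
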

\begin{proof}
We argue similarly to \cite{EPSS2021}. Since $|x|^p$ is not admissible in \eqref{eq:cont}, we introduce a smooth cut-off $\varphi_R\in C_c^\infty(\Rd;[0,1])$ satisfying $\varphi_R|_{B_R(0)}\equiv 1$, $\supp\varphi_R\subset B_{2R}(0)$ and $\abs{\nabla\varphi_R}\le 2/R$. Then, we define $\psi_R(x)\coloneqq \varphi_R^p(x)(1+|x|)^p$, which is admissible in \eqref{eq:cont}, giving us
\baqs
    \frac{\dd}{\dd t}\sum_{i=1}^2\int_\Rd \psi_R\dd\rho_t^{n,(i)}&=-\frac{1}{2}\sum_{i=1}^2\iint_G\babla\psi_R\eta\dd j_t^{n,(i)}\\
    &\le \frac{1}{2}M\A_{m, \betaup}^{1/p}(\mu;\rhoup_t,\jup_t)\sum_{i=1}^2\left(\iint_G\abs{\babla\psi_R}^q\eta \dd\big(\mu\otimes\mu+\rho_t^{n,(i)}\otimes\mu+\mu\otimes\rho_t^{n,(i)}\big)\right)^{1/q},
\eaqs
where we used Lemma \ref{lem:bound_by_A} in the last step. To bound the right-hand side by the moment, we shorten notation by introducing $\zeta\coloneqq\varphi_R(x)|x|$ and $xi\coloneqq\varphi_R(y)\abs{y}$, and calculate
\baqs
    |\babla\psi_R(x,y)|^q &= |\varphi_R^p(x)(1+|x|)^p-\varphi_R^p(y)(1+\abs{y})^p|^q\le C_1 \left(|\varphi_R^p(x)-\varphi_R^p(y)|^q+\abs{\zeta^p-\xi^p}^q\right).
\eaqs
Since $\varphi_R\le 1$ and $\abs{\nabla\varphi_R}\le 2/R$, we have 
\baqs
    |\varphi_R^p(x)-\varphi_R^p(y)|^q\le C_2\abs{\varphi_R(x)-\varphi_R(y)}^q\le 2^q C_2/R^q\abs{x-y}^q\le C_2\abs{x-y}^q,
\eaqs
for every $R\ge 2$ and where we can always choose $C_2\le\lceil p\rceil$. To bound the second term we employ the mean value theorem for the function $z\mapsto z^p$ and obtain
\baqs
    \abs{\zeta^p-\xi^p}^q\le \big(p\abs{\zeta-\xi}(\zeta+\xi)^{p-1}\big)^q = p^q\abs{\zeta-\xi}^q(2r-\zeta+\xi)^p \le C_3\big(\abs{\zeta-\xi}^{pq}+\abs{\zeta-\xi}^q \zeta^p\big).
\eaqs
Since $x\mapsto\varphi(x)|x|$ is globally Lipschitz, there exists a constant $C_4>0$ independent of $R$, such that
\baqs
    |\babla\psi_R(x,y)|^q \le C_4(1+|x|^p)\left(\abs{x-y}^{q}\lor\abs{x-y}^{pq}\right).
\eaqs
Thus, sending $R\to\infty$ and using \eqref{MB2} as well as \eqref{MB}, we find
\baqs
    \frac{\dd}{\dd t}\sum_{i=1}^2\int_\Rd (1+|x|^p)\dd\rho_t^{n,(i)}(x)&\le MC_4C_\eta^{1/q}\A_{m, \betaup}^{1/p}(\mu;\rhoup_t,\jup_t)\sum_{i=1}^2\left(\frac{1}{2}C_\mu+\int_\Rd(1+|x|^p) \dd\rho_t^{n,(i)}(x)\right)^{1/q}\\
    &\le \frac{1}{2}(C_\mu+2)MC_4C_\eta^{1/q}\A_{m, \betaup}^{1/p}(\mu;\rhoup_t,\jup_t)\sum_{i=1}^2\left(\int_\Rd(1+|x|^p) \dd\rho_t^{n,(i)}(x)\right)^{1/q}.
\eaqs
Integrating this inequality in time, estimating the $p$-th power of the sum by the sum of $p$-th powers and using H{\"o}lder's inequality, we arrive at
\baqs
    \sum_{i=1}^2\int_\Rd (1+|x|^p)\dd\rho_t^{n,(i)}(x)&\le  C\sum_{i=1}^2\int_\Rd (1+|x|^p)\dd\rho_0^{n,(i)}(x)+CT^{p-1}\int_0^T\A_{m, \betaup}(\mu;\rhoup,\jup)\dd t,
\eaqs
for some constant $C>0$. Taking the supremum over $n\in\mathbb{N}$ and $t\in[0,T]$ finishes the proof.
\end{proof}
\begin{proposition}[Compactness of solutions to the nonlocal continuity equation]\label{prop:compactness}
Let $(\mu^n)_{n\in\mathbb{N}}\subset\Mloc^+(\Rd)$ and suppose that $(\mu^n)_{n\in\mathbb{N}}$ weakly-$^\ast$ converges to $\mu\in\Mloc^+(\Rd)$. Moreover, assume that the base measures $\mu^n$ and $\mu$ satisify \eqref{MB2}, \eqref{MB} and \eqref{BC} uniformly in $n$. Let $((\bs\rhoup^{n},\jupbold^{n}))_{n\in\mathbb{N}} \subset\CE_T$ be such that $\sup_{n\in\mathbb{N}} M_p(\rho_0^{n,(i)})<\infty$ and $\sup_{n\in\mathbb{N}} \int_0^T\A_{m, \betaup}(\mu^n;\rhoup^n_t,\jup^n_t)\dd t< \infty$. Then, there exists  $(\bs\rhoup,\jupbold) \subset\CE_T$ such that, up to a subsequence (still indexed by $n$), as $n\to\infty$, it holds
\baqs
&\rhoup^{n}_t \rightharpoonup\rhoup_t\qquad\text{narrowly for all }t\in[0,T],\\
&\jupbold^{n,}  \rightharpoonup^\ast \jupbold\qquad\text{in }(\Mloc(G\times[0,T]))^2.
\eaqs
Moreover, the action is lower semicontinuous along the above subsequences 
i.e., we have
\baqs
\liminf_{n\to\infty}\int_0^T\A_{m, \betaup}(\mu^n_t;\rhoup^n_t,\jup^n_t)\dd t \geq \int_0^T\A_{m, \betaup}(\mu_t;\rhoup_t,\jup_t)\dd t.
\eaqs
\end{proposition}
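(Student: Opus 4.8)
\emph{Plan.} The plan is to mimic, component by component, the proof of the single-species statement \cite[Proposition~2.17]{EPSS2021}, the only difference being that its linear-mobility a~priori estimates are replaced by their nonlinear-mobility counterparts established above, namely Corollary~\ref{cor:bound_by_A}, Lemma~\ref{lem:Time-uniformly_bounded_moments}, Lemma~\ref{lem:lsc A} and Lemma~\ref{lem:convexity of the action}. This is legitimate because $\A_{m,\betaup}$ and the continuity equation \eqref{eq:cont} decouple across $i\in\{1,2\}$; I fix $i$ and reassemble at the end by nesting subsequences. \emph{Step 1 (densities at fixed time).} From $\sup_n M_p(\rho^{n,(i)}_0)<\infty$, $\sup_n\int_0^T\A_{m,\betaup}(\mu^n;\rhoup^n_t,\jup^n_t)\,\dd t<\infty$ and Lemma~\ref{lem:Time-uniformly_bounded_moments} one gets $\sup_n\sup_{t\in[0,T]}M_p(\rho^{n,(i)}_t)<\infty$, so $\{\rho^{n,(i)}_t\}_n$ is narrowly relatively compact for every $t\in[0,T]$. \emph{Step 2 (equicontinuity in time and Arzel\`a--Ascoli).} For time-independent $\varphi\in C^1_b(\Rd)$, Lemma~\ref{lem:continuous representative} (via \eqref{eq:continuous representative}), the bound $\abs{\babla\varphi(x,y)}\le\norm{\varphi}_{C^1}(2\land\abs{x-y})$, Corollary~\ref{cor:bound_by_A} and H\"older's inequality yield
\[
\Bigl|\int_\Rd\varphi\,\dd\rho^{n,(i)}_t-\int_\Rd\varphi\,\dd\rho^{n,(i)}_s\Bigr|\le C\,\norm{\varphi}_{C^1}\int_s^t\A_{m,\betaup}^{1/p}(\mu^n;\rhoup^n_r,\jup^n_r)\,\dd r\le C'\,\norm{\varphi}_{C^1}(t-s)^{1/q}
\]
for $0\le s\le t\le T$. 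Hence $t\mapsto\rho^{n,(i)}_t$ is uniformly $1/q$-H\"older for the Fortet--Mourier distance on $\mathcal P(\Rd)$; the Arzel\`a--Ascoli theorem for weakly continuous measure curves (\cite[Prop.~3.3.1]{AmbrosioGigliSavare2008}) then provides, along a subsequence extracted jointly in $i$, a narrowly continuous curve $\bs\rhoup$ with $\rho^{n,(i)}_t\rightharpoonup\rho^{(i)}_t$ narrowly for every $t$; lower semicontinuity of $M_p$ keeps $\sup_t M_p(\rho^{(i)}_t)<\infty$, so $\rhoup_t\in(\mathcal P(\Rd))^2$.

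\emph{Step 3 (fluxes).} Integrating the estimate of Corollary~\ref{cor:bound_by_A} in $t$ and applying H\"older gives $\sup_n\int_0^T\iint_G(2\land\abs{x-y})\,\eta\,\dd\abs{j^{n,(i)}_t}\,\dd t<\infty$. On any compact $K\Subset G$ the continuous function $(2\land\abs{x-y})\,\eta$ is bounded below by a positive constant, so $\sup_n\int_0^T\abs{j^{n,(i)}_t}(K)\,\dd t<\infty$; a diagonal argument over an exhaustion of $G$ by such compacts, together with weak-$\ast$ sequential compactness of Radon measures on $K\times[0,T]$, yields a further subsequence along which the space-time fluxes converge weakly-$\ast$. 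Disintegrating the limit in time --- standard, using the above bound, cf.\ \cite[Prop.~2.17]{EPSS2021} and \cite{AmbrosioGigliSavare2008} --- produces a Borel family $\jupbold=(\jup_t)_{t\in[0,T]}$ with $\jup_t\in(\Mloc(G))^2$ and $\jupbold^n\rightharpoonup^\ast\jupbold$ in $(\Mloc(G\times[0,T]))^2$.

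\emph{Step 4 (passing to the limit in \eqref{eq:cont}).} Fix $\varphi\in C^\infty_c(\Rd\times(0,T))$ with spatial support in $B_{R_\varphi}(0)$. The term $\int_0^T\!\int_\Rd\partial_t\varphi_t\,\dd\rho^{n,(i)}_t\,\dd t$ passes to the limit by Step~2 and bounded convergence in $t$. The flux term $\tfrac12\int_0^T\!\iint_G\babla\varphi_t\,\eta\,\dd j^{n,(i)}_t\,\dd t$ is the crux, \textbf{and I expect it to be the main obstacle:} $\babla\varphi_t\,\eta$ is continuous on $G$ but is not compactly supported, may be singular along the diagonal (where $\eta$ can blow up), and need not vanish at infinity. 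I would split the $G$-integral into a near-diagonal part on $\{\abs{x-y}<\varepsilon\}$, a far-field part on $\{\abs{x-y}\ge\varepsilon,\ \max(\abs x,\abs y)>L\}$, and a compact core. Applying Lemma~\ref{lem:bound_by_A} with the test weights $(2\land\abs{x-y})\,\mathbb 1_{\{|x-y|<\varepsilon\}}$ and $2\,\mathbb 1_{\{\max(|x|,|y|)>L,\ \min(|x|,|y|)\le R_\varphi\}}$: the first contribution vanishes as $\varepsilon\to0$ by \eqref{BC} --- here one uses that $\mu$ is finite by \eqref{MB2} and that $\rho^{(i)}$ is a probability measure, so all three reference measures in Lemma~\ref{lem:bound_by_A} contribute an $\varepsilon$-vanishing amount --- and the second vanishes as $L\to\infty$ by \eqref{MB} and \eqref{MB2} (for instance $\int_{|y|>L}\eta(x,y)\,\dd\mu(y)\le C_\eta(L-R_\varphi)^{-q}$ for $|x|\le R_\varphi$, together with the uniform $p$-th moments of $\rho^{n,(i)}_t$ from Step~1); integrating in $t$ and using H\"older on $\int_0^T\A_{m,\betaup}^{1/p}$ makes both pieces uniformly small in $n$. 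On the compact core $\babla\varphi_t\,\eta$ is an admissible test function for the weak-$\ast$ convergence of Step~3, and the narrow convergence of Step~2 identifies the densities in the limit. Letting first $n\to\infty$, then $\varepsilon\to0$ and $L\to\infty$, shows $(\bs\rhoup,\jupbold)$ satisfies \eqref{eq:cont}; passing to the continuous representative (Lemma~\ref{lem:continuous representative}) gives $(\bs\rhoup,\jupbold)\in\CE_T$.

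\emph{Step 5 (lower semicontinuity of the action).} Since $\rho^{n,(i)}_t\rightharpoonup\rho^{(i)}_t$ for every $t$ and $\mu^n\rightharpoonup^\ast\mu$, bounded convergence gives $\mu^n\otimes\rho^{n,(i)}_t\otimes\dd t\rightharpoonup^\ast\mu\otimes\rho^{(i)}_t\otimes\dd t$ (and likewise for $\rho^{n,(i)}_t\otimes\mu^n\otimes\dd t$ and $\mu^n\otimes\mu^n\otimes\dd t$), while $\jupbold^n\rightharpoonup^\ast\jupbold$ on $G\times[0,T]$ by Step~3. By convexity of the integrand (Lemma~\ref{lem:convexity of the action}) and its lower semicontinuity (Lemma~\ref{lem:lsc A}), the functional $(\mu,\bs\rhoup,\jupbold)\mapsto\int_0^T\A_{m,\betaup}(\mu;\rhoup_t,\jup_t)\,\dd t$ is lower semicontinuous with respect to these space-time weak-$\ast$ convergences (\cite[Thm.~2.34]{Ambrosio2000FunctionsOB} on $G\times[0,T]$, exactly as in \cite[Prop.~2.17]{EPSS2021}), which gives
\[
\liminf_{n\to\infty}\int_0^T\A_{m,\betaup}(\mu^n;\rhoup^n_t,\jup^n_t)\,\dd t\ \ge\ \int_0^T\A_{m,\betaup}(\mu;\rhoup_t,\jup_t)\,\dd t<\infty;
\]
by Corollary~\ref{cor:bound_by_A} the limit pair then satisfies the integrability condition \eqref{eq:integrability}, and nesting the subsequences over $i=1,2$ completes the argument. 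As flagged, the genuine work sits in Step~4 --- securing uniform-in-$n$ tightness of the fluxes both up to the diagonal and at spatial infinity, which is where \eqref{BC}, \eqref{MB} and the newly added \eqref{MB2} are essential --- the remaining steps being routine once Corollary~\ref{cor:bound_by_A} and Lemmas~\ref{lem:Time-uniformly_bounded_moments}, \ref{lem:lsc A}, \ref{lem:convexity of the action} are in hand.
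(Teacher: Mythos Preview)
Your proposal is correct and follows essentially the same template as the paper's proof, which likewise declares that it ``argue[s] similarly to \cite[Proposition~2.17]{EPSS2021}'' and then invokes Lemma~\ref{lem:Time-uniformly_bounded_moments}, Corollary~\ref{cor:bound_by_A}, Lemma~\ref{lem:bound_by_A} with the truncation near the diagonal and at infinity (your Step~4), and Lemma~\ref{lem:lsc A} for the lower semicontinuity.

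The one structural difference worth recording is the order in which you extract the density limit. You first obtain uniform $p$-th moments at every time (Lemma~\ref{lem:Time-uniformly_bounded_moments}), deduce equicontinuity in a narrow-metrizing distance, and run Arzel\`a--Ascoli (\cite[Prop.~3.3.1]{AmbrosioGigliSavare2008}) to get $\rhoup^n_t\rightharpoonup\rhoup_t$ for all $t$. The paper instead first extracts the weak-$\ast$ limit of the fluxes, then passes to the limit in the flux term of \eqref{eq:continuous representative}, and only afterwards uses Prokhorov at $t=0$ together with this limiting identity to propagate narrow convergence of $\rho^{n,(i)}_t$ from $t=0$ to all $t$; the upgrade from local to global narrow convergence is done last via the uniform moment bounds. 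Both routes are standard and equivalent here; your Arzel\`a--Ascoli argument is slightly more self-contained, while the paper's ordering makes the role of the continuity equation in transporting compactness from the initial datum more explicit. Your Step~5 is also a shade more precise than the paper's one-line appeal to Lemma~\ref{lem:lsc A}, since you spell out that the lower semicontinuity is applied to the space-time measures on $G\times[0,T]$ via \cite[Thm.~2.34]{Ambrosio2000FunctionsOB}.
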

\begin{proof}
We argue similarly to \cite[Proposition 2.17]{EPSS2021} and present only the main steps. At first, we employ Lemma \ref{lem:Time-uniformly_bounded_moments}, Corollary \ref{cor:bound_by_A}, H{\"o}lder's inequality, Assumption \eqref{W} and the disintegration theorem (see e.g. \cite[Theorem 5.3.1]{AmbrosioGigliSavare2008}) to obtain a subsequence still denoted by $(\jup^n_t)_n$ which weakly-$^\ast$ converges to a Borel family $(\jup_t)_{t\in[0,T]}$ such that for $i=1,2$ we have $j^{(i)}(K\times I) =\int_I j^{(i)}_t(K)\dd t$ as well as \eqref{eq:integrability} for any compact sets $I\subset [0,T]$, $K\subset G$. Then, for $0\leq s\leq t\leq T$ and $\varphi \in C_c^\infty(\Rd)$, we obtain the equality
\baqs
\lim_{n\to\infty} \int_{s}^{t}\iint_G \babla \varphi(x,y)\eta(x,y)\dd j_t^{n,(i)}(x,y)\dd t = \int_{s}^{t}\iint_G \babla \varphi(x,y)\eta(x,y)\dd j_t^{(i)}(x,y)\dd t
\eaqs
by employing a truncation argument and using the Assumptions \eqref{MB2}, \eqref{MB}, \eqref{BC} and \eqref{W} as well as H\"older's inequality and Lemma \ref{lem:bound_by_A}. Since $(\rho_0^{n,(i)})_n$ has uniformly bounded $p$-th moments, it is uniformly tight. Hence, Prokhorov's theorem (see e.g. \cite[Theorem 5.1.3]{AmbrosioGigliSavare2008}), the above convergence result and \eqref{eq:continuous representative}, where we choose $\varphi_{t} = \xi$, yield local narrow convergence of $(\rho_t^{n,(i)})_n$ to some $\rho_t^{(i)}\in \Mloc^+(\Rd)$ for $i=1,2$. In the last step we use \eqref{eq:integrability} and Corollary \ref{cor:bound_by_A} to find that $\rho_t^{(i)}\in\mathcal{P}(\Rd)$ and employ Lemma \ref{lem:Time-uniformly_bounded_moments} to obtain that the narrow convergence of $(\rho_t^{n,(i)})_n$ towards $\rho_t^{(i)}$ is in fact global. Therefore, we have $(\bs\rhoup,\jupbold)\in \CE_T$. Finally, since narrow convergence implies weak-$^\ast$ convergence, Lemma \ref{lem:lsc A} shows the claim of lower semicontinuity of the action.
\end{proof}
\begin{remark}\label{rem:compactness_mu}
    Note that in Proposition \ref{prop:compactness} we have compactness of the action density not only in $\rhoup_t$ and $\jup_t$, but also in the base measure $\mu_t$. This will play a crucial role later in the proof of existence.
\end{remark}
\subsection{Definition of a quasimetric}\label{Quasimetric}
Having defined an action density and a continuity equation, we are now ready to define the induced quasimetric:
\begin{definition}[Nonlocal upwind transportation cost for two species]\label{def:T_beta,mu}
For $\mu \in \Mloc^+(\Rd)$, $\eta$ satisfying \eqref{MB2}, \eqref{MB}, \eqref{BC}, and $\rhoinit, \rhofin \in (\mathcal{P}(\Rd))^2$, the nonlocal upwind transportation cost between  $\rhoinit$ and $\rhofin$ is defined as
\baq\label{eq_def:T_beta,mu}
\mathcal{T}_{m,\betaup,\mu}(\rhoinit,\rhofin) = \left(\inf\left\{\int_0^1 \A_{m, \betaup}(\mu;\rhoup_t,\jup_t)\dd t: (\bs\rhoup,\jupbold)\in\CE(\rhoinit, \rhofin)\right\}\right)^{1/p}.
\eaq 
\end{definition}
\begin{remark}[Decoupling of the transportation cost]\label{rem:T_decoupled}
Let us denote the nonlocal upwind transportation cost for one species by $\overline{\mathcal{T}}_{m,\mu}$. Then, since both the action and the continuity equation are decoupled with respect to the components of $\rhoup$ and $\jup$, the infima are also independent of each other, which implies that for any $\rhoinit, \rhofin \in (\mathcal{P}(\Rd))^2$ we have
\baq\label{eq:T_decoupled}
\mathcal{T}^p_{m,\betaup,\mu}\left(\rhoinit,\rhofin\right) =\frac{1}{\beta^{(1)}}\overline{\mathcal{T}}^p_{m,\mu}\left(\varrho_0^{(1)},\varrho_1^{(1)}\right)+\frac{1}{\beta^{(2)}}\overline{\mathcal{T}}^p_{m,\mu}\left(\varrho_0^{(2)},\varrho_1^{(2)}\right).
\eaq
\end{remark}

\begin{theorem}[Optimal curves exist and are constant speed geodesics]\label{thm:constant speed geodesic}
For any $\mu \in \Mloc^+(\Rd)$ satisfying \eqref{MB2}, \eqref{MB} and \eqref{BC}, any $T\geq 0$ and any $\rhoinit,\rhofin \in (\mathcal{P}(\Rd))^2$ with $\mathcal{T}_{m,\betaup,\mu}(\rhoinit,\rhofin) < \infty$, the infimum in \eqref{eq_def:T_beta,mu} is attained by a curve $(\bs\rhoup,\jupbold) \in \CE(\rhoinit,\rhofin)$ with $ \A_{m, \betaup}(\mu; \rhoup_t,\jup_t)=\mathcal{T}^2_{\betaup,\mu}(\rhoinit,\rhofin)$ for a.e. $t \in [0,1]$. This curve is a constant-speed geodesic, i.e., it satisfies
\baqs
\mathcal{T}_{m,\betaup,\mu}(\rho_s,\rho_t) = \abs{t-s}\mathcal{T}_{m,\betaup,\mu}(\rhoinit,\rhofin),\text{ for every }s,t\in [0,1].
\eaqs
\end{theorem}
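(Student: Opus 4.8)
My plan is to run the classical direct-method-plus-reparametrisation argument; the heavy analysis is already available, since Proposition~\ref{prop:compactness} supplies both compactness of finite-action solutions of the continuity equation and lower semicontinuity of the action along the relevant limits. \textbf{Step 1 (existence of a minimiser).} Fix $\rhoinit,\rhofin$ with $c:=\mathcal T^p_{m,\betaup,\mu}(\rhoinit,\rhofin)<\infty$ and choose a minimising sequence $(\bs\rhoup^n,\jupbold^n)\in\CE(\rhoinit,\rhofin)$, so that $\int_0^1\A_{m,\betaup}(\mu;\rhoup^n_t,\jup^n_t)\,\dd t\to c$. The uniform $p$-moment bound on the initial data needed in Proposition~\ref{prop:compactness} is immediate here, the endpoints being fixed (and of finite $p$-th moment, which follows from $\mathcal T_{m,\betaup,\mu}(\rhoinit,\rhofin)<\infty$, or may be assumed as a standing hypothesis), and competitors of finite action satisfy the integrability requirement \eqref{eq:integrability} by Corollary~\ref{cor:bound_by_A}; so Proposition~\ref{prop:compactness} applies with the constant sequence $\mu^n\equiv\mu$. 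It yields, along a subsequence, a limit $(\bs\rhoup,\jupbold)\in\CE$ with $\rhoup^n_t\rightharpoonup\rhoup_t$ narrowly for every $t$, in particular $\rhoup_0=\rhoinit$ and $\rhoup_1=\rhofin$, so $(\bs\rhoup,\jupbold)\in\CE(\rhoinit,\rhofin)$, and
\[
\int_0^1\A_{m,\betaup}(\mu;\rhoup_t,\jup_t)\,\dd t\;\le\;\liminf_{n\to\infty}\int_0^1\A_{m,\betaup}(\mu;\rhoup^n_t,\jup^n_t)\,\dd t\;=\;c .
\]
Since this curve is itself a competitor, it attains the infimum in \eqref{eq_def:T_beta,mu}. (One could equivalently invoke the decoupling of Remark~\ref{rem:T_decoupled} to reduce to a single species, but the product argument is identical.)

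\textbf{Step 2 (constant speed).} Write $\mathcal T=\mathcal T_{m,\betaup,\mu}$ for brevity. The structural input is that $\A_{m,\betaup}(\mu;\rhoup,\cdot)$ is positively $p$-homogeneous in the flux with the density arguments frozen: $\alpha_m(\lambda j,r,s)=\lambda^p\alpha_m(j,r,s)$ and $\alpha_{m_\infty}(\lambda j^\perp,r,s)=\lambda^p\alpha_{m_\infty}(j^\perp,r,s)$ for $\lambda>0$, since only $(j_+)^p$ scales while $m$, $m_\infty$ and the measure $\varsigma$ do not. Hence, for any $(\bs\rhoup,\jupbold)\in\CE$ and $0\le s\le t\le1$, the time-rescaled curve $\tau\mapsto\big(\rhoup_{s+\tau(t-s)},\,(t-s)\jup_{s+\tau(t-s)}\big)$ belongs to $\CE(\rhoup_s,\rhoup_t)$ and has action $(t-s)^p\,\A_{m,\betaup}(\mu;\rhoup_{s+\tau(t-s)},\jup_{s+\tau(t-s)})$, whence
\[
\mathcal T^p(\rhoup_s,\rhoup_t)\;\le\;(t-s)^{p-1}\int_s^t\A_{m,\betaup}(\mu;\rhoup_r,\jup_r)\,\dd r .
\]
Applying this to the minimiser from Step~1, for which $\int_0^1\A_{m,\betaup}(\mu;\rhoup_r,\jup_r)\,\dd r=c$, set $a=\int_0^t\A_{m,\betaup}(\mu;\rhoup_r,\jup_r)\,\dd r$ and $b=c-a$. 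By the triangle inequality for $\mathcal T$ (which follows from concatenating and reparametrising curves), the bound just derived, and Hölder's inequality with exponents $p$ and $q$,
\[
c^{1/p}=\mathcal T(\rhoinit,\rhofin)\le\mathcal T(\rhoinit,\rhoup_t)+\mathcal T(\rhoup_t,\rhofin)\le t^{1/q}a^{1/p}+(1-t)^{1/q}b^{1/p}\le c^{1/p},
\]
so equality holds throughout. Equality in Hölder forces $a=tc$ for every $t$, i.e.\ $\int_0^t\A_{m,\betaup}(\mu;\rhoup_r,\jup_r)\,\dd r=tc$, and differentiating gives $\A_{m,\betaup}(\mu;\rhoup_t,\jup_t)=c=\mathcal T^p(\rhoinit,\rhofin)$ for a.e.\ $t$; equality in the triangle step gives $\mathcal T(\rhoinit,\rhoup_t)=t\,\mathcal T(\rhoinit,\rhofin)$ and $\mathcal T(\rhoup_t,\rhofin)=(1-t)\,\mathcal T(\rhoinit,\rhofin)$. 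Repeating the argument on a subinterval $[s,t]$, where now $\int_s^t\A_{m,\betaup}(\mu;\rhoup_r,\jup_r)\,\dd r=(t-s)c$ by the previous line, yields $\mathcal T(\rhoup_s,\rhoup_t)=(t-s)\,\mathcal T(\rhoinit,\rhofin)$ for all $0\le s\le t\le1$, which is the constant-speed geodesic identity.

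\textbf{Where the difficulty lies.} The genuinely substantial content — compactness and lower semicontinuity — is already packaged in Proposition~\ref{prop:compactness}, so the remaining work is bookkeeping. Two points deserve care: (i) checking that the time-rescaled curve is an admissible element of $\CE$ — the continuity equation, weak continuity and $(\mathcal P(\Rd))^2$-valuedness are immediate, but one must observe that the recession part scales with the \emph{same} power $p$ although the reference measure $\varsigma$ is left unscaled, which is precisely the $p$-homogeneity of $\alpha_{m_\infty}$ in $j^\perp$ alone; and (ii) that $\mathcal T_{m,\betaup,\mu}$ is only a quasimetric, so the identity is established above for ordered times $s\le t$, and the statement with $\abs{t-s}$ is to be read along the natural orientation of the curve, $\mathcal T_{m,\betaup,\mu}$ not being symmetric under time reversal.
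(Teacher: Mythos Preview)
Your argument is correct and is precisely the direct-method-plus-reparametrisation route that the paper explicitly mentions as an \emph{alternative}. The paper's primary proof is much shorter: it simply invokes the decoupling of Remark~\ref{rem:T_decoupled} to reduce to a single species and then cites the corresponding result \cite[Theorem~2.20]{EPSS2021}. What you have written is essentially the content behind that citation, spelled out in the present generality (nonlinear mobility, $p\neq 2$, recession term), using Proposition~\ref{prop:compactness} for the existence step and $p$-homogeneity of $\alpha_m$, $\alpha_{m_\infty}$ in the flux for the constant-speed step. Two small points of care are worth flagging: first, your use of the triangle inequality for $\mathcal T_{m,\betaup,\mu}$ precedes Theorem~\ref{thm:T_beta,mu quasimetik} in the paper's logical order, so you are right to obtain it directly from concatenation and time-rescaling rather than by citation (this is standard and independent of the present theorem); second, the hypothesis $M_p(\varrho_0^{(i)})<\infty$ needed to invoke Proposition~\ref{prop:compactness} is not literally part of the theorem statement ($\rhoinit,\rhofin\in(\mathcal P(\Rd))^2$), so your hedge ``or may be assumed as a standing hypothesis'' is appropriate --- the paper itself works in $(\mathcal P_p(\Rd))^2$ thereafter. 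Your remark that the constant-speed identity should be read for ordered times, $\mathcal T_{m,\betaup,\mu}$ being a quasimetric, is a valid observation the paper leaves implicit.
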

\begin{proof}
This can be proved by using \cite[Theorem 2.20]{EPSS2021} and the decoupling of $\A_{m, \betaup}$
Alternatively, one could infer this from Lemma \ref{lem:reparametrization} similar to the proof of \cite[Theorem 4.3]{erbar2012gradient}.
\end{proof}
\begin{lemma}[Reparametrization]\label{lem:reparametrization} For any $\mu \in \Mloc^+(\Rd)$ satisfying \eqref{MB2}, \eqref{MB} and \eqref{BC}, any $T\geq 0$ and any $\rhoinit, \rhofin\in (\mathcal{P}(\Rd))^2$ it holds
\baqs
\mathcal{T}_{m,\betaup,\mu}(\rhoinit,\rhofin) = \inf\Bigg\{\int_0^T \A_{m, \betaup}^{1/p}(\mu;\rhoup_t,\jup_t)\dd t: (\bs\rhoup, \jupbold)\in\CE_T(\rhoinit,\rhofin)\Bigg\}.
\eaqs
\end{lemma}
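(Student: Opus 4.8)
The plan is to prove the standard equivalence between the $p$-action formulation (which yields a $p$-th power of the cost) and the unnormalized arc-length formulation, using the reparametrization invariance of the arc-length integral together with H\"older's inequality in time. Denote by $\mathcal{L}(\bs\rhoup,\jupbold)\coloneqq\int_0^T\A_{m,\betaup}^{1/p}(\mu;\rhoup_t,\jup_t)\,\dd t$ the length functional and let $I$ be the infimum of $\mathcal{L}$ over $\CE_T(\rhoinit,\rhofin)$. The two inequalities $\mathcal{T}_{m,\betaup,\mu}(\rhoinit,\rhofin)\le I$ and $\mathcal{T}_{m,\betaup,\mu}(\rhoinit,\rhofin)\ge I$ are proved separately.

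\emph{Lower bound $I\le\mathcal{T}$.} First I would observe that the length functional is invariant under monotone reparametrizations of time. Given any $(\bs\rhoup,\jupbold)\in\CE_T(\rhoinit,\rhofin)$, one rescales to a curve on $[0,1]$; more importantly, given any $(\bs\rhoup,\jupbold)\in\CE(\rhoinit,\rhofin)$ on $[0,1]$, H\"older's inequality with exponents $p$ and $q=p/(p-1)$ gives
\baqs
\mathcal{L}(\bs\rhoup,\jupbold)=\int_0^1\A_{m,\betaup}^{1/p}(\mu;\rhoup_t,\jup_t)\,\dd t\le\left(\int_0^1\A_{m,\betaup}(\mu;\rhoup_t,\jup_t)\,\dd t\right)^{1/p}\cdot 1^{1/q},
\eaqs
and then rescaling from $[0,1]$ to $[0,T]$ (which changes neither endpoints nor, by invariance, $\mathcal{L}$) shows $I\le\mathcal{T}_{m,\betaup,\mu}(\rhoinit,\rhofin)$ after taking the infimum over competitors in $\CE(\rhoinit,\rhofin)$.

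\emph{Upper bound $\mathcal{T}\le I$.} Here I would take any $(\bs\rhoup,\jupbold)\in\CE_T(\rhoinit,\rhofin)$ with $\mathcal{L}(\bs\rhoup,\jupbold)<\infty$ and reparametrize it by a rescaled arc-length so that the new curve, defined on $[0,1]$, has constant metric speed $\A_{m,\betaup}^{1/p}$ equal (a.e.) to $\mathcal{L}(\bs\rhoup,\jupbold)$; then $\int_0^1\A_{m,\betaup}(\mu;\tilde\rhoup_s,\tilde\jup_s)\,\dd s=\mathcal{L}(\bs\rhoup,\jupbold)^p$, whence $\mathcal{T}_{m,\betaup,\mu}(\rhoinit,\rhofin)\le\mathcal{L}(\bs\rhoup,\jupbold)$, and taking the infimum yields $\mathcal{T}\le I$. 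To make the reparametrization rigorous one sets $\mathsf{s}(t)\coloneqq\int_0^t\A_{m,\betaup}^{1/p}(\mu;\rhoup_\tau,\jup_\tau)\,\dd\tau$ (possibly adding $\varepsilon t$ and letting $\varepsilon\to0$ to ensure strict monotonicity), normalizes to the unit interval, and checks that the time-changed pair still solves the continuity equation — this uses that $\CE$ is stable under absolutely continuous time changes, with the flux picking up the derivative of the inverse time change, and that the action scales correctly; the one-species version is \cite[Lemma 2.19]{EPSS2021} or can be quoted from \cite{erbar2012gradient}, and the decoupling from Remark \ref{rem:T_decoupled} reduces the two-species statement to the one-species one componentwise. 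The main obstacle is the technical bookkeeping in the degenerate case where $\A_{m,\betaup}^{1/p}$ vanishes on a set of positive measure (so $\mathsf{s}$ is not strictly increasing): this is handled by the standard $\varepsilon$-regularization of the time change followed by a limit, using lower semicontinuity of the action (Lemma \ref{lem:lsc A}) and the compactness Proposition \ref{prop:compactness} to pass to the limit in the competitors, exactly as in the cited references.
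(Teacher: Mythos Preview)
Your proof is correct and matches the paper's own reasoning. The paper's proof is extremely terse: it says the result ``immediately follows from Theorem \ref{thm:constant speed geodesic}'' (the existence of constant-speed minimizers gives one inequality, H\"older the other) and explicitly mentions the direct reparametrization argument \`a la \cite[Theorem 5.4]{Dolbeault_2008} as the alternative route---which is precisely what you spell out, including the $\varepsilon$-regularized arc-length time change and the reduction to one species via Remark \ref{rem:T_decoupled}.
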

\begin{proof}
This immediately follows from Theorem \ref{thm:constant speed geodesic}. Alternatively, one can argue via a reparametrization argument similar to the one in the proof of \cite[Theorem 5.4]{Dolbeault_2008}.
\end{proof}

\begin{proposition}\label{prop:W_1 bound}
Let $\mu \in \Mloc^+(\Rd)$ satisfy \eqref{MB2} and \eqref{MB}. Then, for any $\rhoinit, \rhofin \in (\mathcal{P}(\Rd))^2$, again denoting by $\overline{\mathcal{T}}_{m,\mu}$ the transportation cost for one species, there exists $C>0$ such that
\baqs
    W_1(\rhoinit, \rhofin) \coloneqq \sum_{i=1}^2 W_1\left(\varrho_0^{(i)},\varrho_1^{(i)}\right) \le C\mathcal{T}_{m,\betaup,\mu}^{1/p}(\rhoinit, \rhofin).
\eaqs
\end{proposition}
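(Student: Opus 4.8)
The plan is to dualise: I would bound $W_1$ through the Kantorovich--Rubinstein formula, substitute the continuity equation via Lemma~\ref{lem:continuous representative}, and control the resulting flux integral by the action using Corollary~\ref{cor:bound_by_A}. If $\mathcal{T}_{m,\betaup,\mu}(\rhoinit,\rhofin)=\infty$ there is nothing to prove, so I fix any $(\bs\rhoup,\jupbold)\in\CE(\rhoinit,\rhofin)$ with $\int_0^1\A_{m,\betaup}(\mu;\rhoup_t,\jup_t)\,\dd t<\infty$; then Corollary~\ref{cor:bound_by_A} supplies the integrability bound \eqref{eq:integrability}, so the conclusion of Lemma~\ref{lem:continuous representative} is available for this curve.

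First I would fix $i\in\{1,2\}$ and a bounded $1$-Lipschitz function $\varphi\colon\Rd\to\R$; mollifying (which preserves boundedness and the Lipschitz constant) makes $\varphi\in C^1_b(\Rd)$, hence admissible in \eqref{eq:continuous representative}. Using $\varphi$ as a time-independent test function with $s=0$, $t=1$ makes the $\partial_t\varphi$-term vanish and gives
\[
\int_\Rd\varphi\,\dd\rhofin^{(i)}-\int_\Rd\varphi\,\dd\rhoinit^{(i)}=\frac12\int_0^1\iint_G\babla\varphi(x,y)\,\eta(x,y)\,\dd j_t^{(i)}(x,y)\,\dd t.
\]
Since $\varphi$ is $1$-Lipschitz, $\abs{\babla\varphi(x,y)}=\abs{\varphi(y)-\varphi(x)}\le\abs{x-y}\le\abs{x-y}\lor\abs{x-y}^p=\Phi_2(x,y)$; summing over $i$ and invoking Corollary~\ref{cor:bound_by_A} (whose hypotheses are exactly \eqref{MB2} and \eqref{MB}) then yields
\[
\sum_{i=1}^2\int_\Rd\varphi\,\dd\bigl(\rhofin^{(i)}-\rhoinit^{(i)}\bigr)\le\frac12\int_0^1\iint_G\Phi_2\,\eta\,\dd\abs{\jup_t}\,\dd t\le\frac{M C_\eta^{1/q}}{2}\int_0^1\A_{m,\betaup}^{1/p}(\mu;\rhoup_t,\jup_t)\,\dd t.
\]
Taking the supremum over bounded $1$-Lipschitz $\varphi$ separately in each component turns the left-hand side into $\sum_{i=1}^2W_1(\rhoinit^{(i)},\rhofin^{(i)})=W_1(\rhoinit,\rhofin)$ by Kantorovich--Rubinstein duality.

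Finally, Jensen's inequality on $[0,1]$ gives $\int_0^1\A_{m,\betaup}^{1/p}\,\dd t\le\bigl(\int_0^1\A_{m,\betaup}\,\dd t\bigr)^{1/p}$, so taking the infimum over all $(\bs\rhoup,\jupbold)\in\CE(\rhoinit,\rhofin)$ of the right-hand side and recalling Definition~\ref{def:T_beta,mu} produces $\tfrac{M C_\eta^{1/q}}{2}\,\mathcal{T}_{m,\betaup,\mu}(\rhoinit,\rhofin)$. This establishes the assertion with $C=MC_\eta^{1/q}/2$ --- in fact already with the first power of $\mathcal{T}_{m,\betaup,\mu}$ on the right. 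Note that, unlike a proof through the reparametrisation Lemma~\ref{lem:reparametrization}, this route does not use \eqref{BC}, which is not among the hypotheses of the proposition.

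The only genuinely delicate point is accessing Lemma~\ref{lem:continuous representative} with Kantorovich potentials. This is harmless because $W_1(\mu,\nu)$ already equals the supremum of $\int\varphi\,\dd(\mu-\nu)$ over \emph{bounded} $1$-Lipschitz $\varphi$, and mollifying such a $\varphi$ to a bounded $C^1$ function does not change that supremum, so one never needs to leave $C^1_b(\Rd)$; alternatively, one starts from a general $1$-Lipschitz potential (normalised so that $\abs{\varphi(x)}\le\abs{x}$), runs the estimate on its truncations at level $R$, and passes to the limit $R\to\infty$ by dominated convergence, using that $\mathcal{T}_{m,\betaup,\mu}(\rhoinit,\rhofin)<\infty$ forces $\rhoinit^{(i)},\rhofin^{(i)}$ to have finite first moments (e.g.\ via the moment estimate behind Lemma~\ref{lem:Time-uniformly_bounded_moments}, run along the chosen finite-action curve). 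Everything else is a direct chaining of results already established in the excerpt.
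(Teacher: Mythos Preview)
Your argument is essentially the paper's: test the continuity equation via Lemma~\ref{lem:continuous representative} against a time-constant bounded $1$-Lipschitz function, estimate the resulting flux integral by the action through Corollary~\ref{cor:bound_by_A}, and finish with Kantorovich--Rubinstein duality. The paper's proof is only a two-line sketch (deferring to \cite[Proposition~2.21]{EPSS2021}), but it follows exactly this outline; it even chooses $0\le\psi\le 1$, which you correctly relax to arbitrary bounded $1$-Lipschitz functions.

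One caveat. What your chain of inequalities actually yields is $W_1\le C\,\mathcal{T}_{m,\betaup,\mu}$ with the \emph{first} power, and you then write that this ``establishes the assertion \dots\ in fact already with the first power''. Be careful: for $\mathcal{T}>1$ one has $\mathcal{T}>\mathcal{T}^{1/p}$, so the first-power bound does \emph{not} imply the stated $\mathcal{T}^{1/p}$-bound in that regime. In fact the exponent $1/p$ in the proposition (and in the displayed chain immediately after it) appears to be a slip: the paper's own sketch, together with the decoupling identity~\eqref{eq:T_decoupled}, is only consistent with $W_1\le C\,\mathcal{T}$. So your proof is correct for what the paper's argument actually gives; just do not present it as a strengthening of the $\mathcal{T}^{1/p}$-claim.
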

\begin{proof}
By Remark \ref{rem:eq cont}, in \eqref{eq:continuous representative} we can choose a test function $\psi$, which is constant in time, 1-Lipschitz in space and satisfies $0\le\psi\le 1$. Then, a quick calculation yields a uniform bound, which allows us to take the supremum over all $\psi$ and employ the Kantorovich-Rubinstein formula to obtain the result. A detailed proof for $p=2$ and $m(r,s)=r$ (which does not change the argument) can be found in \cite[Proposition 2.21]{EPSS2021}.
\end{proof}
Observe that, by the previous Proposition, Young's inequality and \eqref{eq:T_decoupled}, we have
\baqs
W_1\left(\rhoinit,\rhofin\right) &\le \bar{C}\sum_{i=1}^2\overline{\mathcal{T}}_{m,\mu}^{1/p}\left(\varrho_0^{(i)}, \varrho_1^{(i)}\right)
\le \tilde{C}\left(\sum_{i=1}^2\overline{\mathcal{T}}_{m,\mu}^p\left(\varrho_0^{(i)},\varrho_1^{(i)}\right)\right)^{1/p}\le C\mathcal{T}_{m,\betaup,\mu}^{1/p}(\rhoinit,\rhofin),
\eaqs
where $C$ depends only on the $m$, $p$, $\betaup$, $\eta$ and $\mu$.
Hence, $\mathcal{T}_{m,\betaup,\mu}$ defines a quasimetric on $(\mathcal{P}(\Rd))^2$ and induces a topology stronger than the $W_1$-topology:
\begin{theorem}\label{thm:T_beta,mu quasimetik}
    Let $\mu \in \Mloc^+(\Rd)$ satisfy \eqref{MB2}, \eqref{MB} and \eqref{BC}. Then, the nonlocal upwind transportation cost for two species $\mathcal{T}_{m,\betaup,\mu}$ defines a quasimetric on $(\mathcal{P}_p(\Rd))^2$ and the map $(\rhoinit,\rhofin)\mapsto \mathcal{T}_{m,\betaup,\mu}(\rhoinit,\rhofin)$ is lower semicontinuous with respect to the narrow convergence. The topology induced by $\mathcal{T}_{m,\betaup,\mu}$ is stronger than the $W_1$-topology and the narrow topology. In particular, bounded sets in $((\mathcal{P}_p(\Rd))^2,\mathcal{T}_{m,\betaup,\mu})$ are narrowly relatively compact.
\end{theorem}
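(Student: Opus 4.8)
The plan is to verify the quasimetric axioms by hand, to read off the topological comparison and the relative compactness of bounded sets from the bound $W_1\le C\,\mathcal{T}_{m,\betaup,\mu}^{1/p}$ of Proposition~\ref{prop:W_1 bound}, and to deal with the lower semicontinuity last, since it is the only step that needs real work. Non-negativity is immediate from Definition~\ref{def:T_beta,mu}, where the value $+\infty$ is allowed, so what we are really proving is that $\mathcal{T}_{m,\betaup,\mu}$ is an extended quasimetric. For $\mathcal{T}_{m,\betaup,\mu}(\rhoup,\rhoup)=0$ I would insert the stationary pair $(\rhoup_t,\jup_t)\equiv(\rhoup,0)$ into~\eqref{eq_def:T_beta,mu}; it has vanishing action because $\alpha_m(0,\cdot,\cdot)=\alpha_{m_\infty}(0,\cdot,\cdot)=0$ under the conventions of Definition~\ref{def:mob}. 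For the identity of indiscernibles, Proposition~\ref{prop:W_1 bound} gives $W_1(\rhoinit,\rhofin)=\sum_{i=1}^2 W_1(\varrho_0^{(i)},\varrho_1^{(i)})\le C\,\mathcal{T}_{m,\betaup,\mu}^{1/p}(\rhoinit,\rhofin)$, so $\mathcal{T}_{m,\betaup,\mu}(\rhoinit,\rhofin)=0$ forces $\varrho_0^{(i)}=\varrho_1^{(i)}$ for $i=1,2$ (on $(\mathcal{P}_p(\Rd))^2\subset(\mathcal{P}_1(\Rd))^2$ the $W_1$'s are genuine metrics). For the triangle inequality, given three pairs and $\varepsilon>0$ I would choose $\varepsilon$-optimal curves $(\bs\rhoup^1,\jupbold^1),(\bs\rhoup^2,\jupbold^2)\in\CE_1$ joining the first pair to the intermediate one and the intermediate one to the last in the reparametrization-invariant functional of Lemma~\ref{lem:reparametrization}, concatenate them into a curve on $[0,2]$, and check that it lies in $\CE_2$: weak continuity at $t=1$ holds because both pieces carry the intermediate measure there, and the weak continuity equation across the junction follows by adding the identity~\eqref{eq:continuous representative} over $[0,1]$ and $[1,2]$ (applicable since~\eqref{eq:integrability} holds by Corollary~\ref{cor:bound_by_A}). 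Additivity of $\int_0^2\A_{m,\betaup}^{1/p}(\mu;\rhoup_t,\jup_t)\dd t$ over the two pieces, combined with Lemma~\ref{lem:reparametrization} applied with $T=2$, gives the triangle inequality up to $2\varepsilon$; letting $\varepsilon\to0$ concludes. No symmetry is asserted, consistently with the upwind origin of $\A_{m,\betaup}$.

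\textbf{Topology and bounded sets.} From $W_1\le C\,\mathcal{T}_{m,\betaup,\mu}^{1/p}$ every ball $\{\rhoup':\mathcal{T}_{m,\betaup,\mu}(\rhoup,\rhoup')<\varepsilon\}$ is contained in the $W_1$-ball of radius $C\varepsilon^{1/p}$ around $\rhoup$; hence every $W_1$-open set is $\mathcal{T}_{m,\betaup,\mu}$-open, so the $\mathcal{T}_{m,\betaup,\mu}$-topology is stronger than the $W_1$-topology, which is stronger than the narrow one. For the last statement I would invoke the elementary fact that a family of probability measures lying at bounded $W_1$-distance from a tight family is itself tight: for an optimal coupling of $\nu,\nu_0$ with $W_1(\nu,\nu_0)\le R$ one has $\nu(\{|x|>M\})\le\nu_0(\{|y|>M/2\})+2R/M$, which is below any prescribed $\varepsilon$ once $M$ is large. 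A $\mathcal{T}_{m,\betaup,\mu}$-bounded set $\mathcal{S}\subset(\mathcal{P}_p(\Rd))^2$ satisfies $\sup_{\rhoup\in\mathcal{S}}\mathcal{T}_{m,\betaup,\mu}(\rhoup_*,\rhoup)<\infty$ for some fixed $\rhoup_*$, so by Proposition~\ref{prop:W_1 bound} each of its components lies in a $W_1$-ball around $\rho_*^{(i)}$; by the tightness fact and Prokhorov's theorem $\mathcal{S}$ is narrowly relatively compact.

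\textbf{Lower semicontinuity --- the main obstacle.} Let $\rhoinit^n\rightharpoonup\rhoinit$ and $\rhofin^n\rightharpoonup\rhofin$ narrowly; I may assume along a subsequence that $\mathcal{T}_{m,\betaup,\mu}(\rhoinit^n,\rhofin^n)\to L<\infty$, and pick curves $(\bs\rhoup^n,\jupbold^n)\in\CE(\rhoinit^n,\rhofin^n)$ with $\int_0^1\A_{m,\betaup}(\mu;\rhoup_t^n,\jup_t^n)\dd t\le\mathcal{T}_{m,\betaup,\mu}^p(\rhoinit^n,\rhofin^n)+1/n$. The delicate point is that Proposition~\ref{prop:compactness} cannot be applied verbatim: narrow convergence of the endpoints gives no uniform bound on $M_p(\rho_0^{n,(i)})$, a bound which its proof uses both for the initial tightness and, via Lemma~\ref{lem:Time-uniformly_bounded_moments}, to promote local to global narrow convergence of the curves. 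I would repair this by establishing directly that $\{\rho_t^{n,(i)}:n\in\mathbb{N},\,t\in[0,1]\}$ is tight: the endpoint families $\{\rho_0^{n,(i)}\}_n$, $\{\rho_1^{n,(i)}\}_n$ are tight (Prokhorov, being narrowly convergent), and restricting $(\bs\rhoup^n,\jupbold^n)$ to $[0,t]$ and applying Lemma~\ref{lem:reparametrization} together with Proposition~\ref{prop:W_1 bound} yields $W_1(\rho_0^{n,(i)},\rho_t^{n,(i)})\le C\big(\int_0^1\A_{m,\betaup}^{1/p}(\mu;\rhoup_s^n,\jup_s^n)\dd s\big)^{1/p}\le C(L+1)^{1/p}$ uniformly in $n$ and $t$, so the $W_1$-perturbation-of-a-tight-family fact above applies. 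With this tightness in hand, the remaining steps of the proof of Proposition~\ref{prop:compactness} go through unchanged --- weak-$\ast$ precompactness of $(\jupbold^n)$ from the uniform bound of Corollary~\ref{cor:bound_by_A}, passage to the limit in the continuity equation via Lemma~\ref{lem:continuous representative}, and upgrading vague to narrow convergence of the curves using tightness --- and produce a limit $(\bs\rhoup,\jupbold)\in\CE(\rhoinit,\rhofin)$. Lower semicontinuity of the action (Lemma~\ref{lem:lsc A}) and Fatou's lemma then give $\int_0^1\A_{m,\betaup}(\mu;\rhoup_t,\jup_t)\dd t\le\liminf_n\int_0^1\A_{m,\betaup}(\mu;\rhoup_t^n,\jup_t^n)\dd t\le L^p$, hence $\mathcal{T}_{m,\betaup,\mu}(\rhoinit,\rhofin)\le L$. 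I expect this tightness workaround, replacing the moment hypothesis of Proposition~\ref{prop:compactness}, to be the only genuinely nontrivial point; everything else reduces to Proposition~\ref{prop:W_1 bound}, Lemma~\ref{lem:reparametrization}, Corollary~\ref{cor:bound_by_A}, and standard facts about $W_1$ and Prokhorov's theorem.
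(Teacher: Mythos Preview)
Your proposal is correct and follows essentially the same route as the paper --- the $W_1$ bound of Proposition~\ref{prop:W_1 bound} for the topological statements, concatenation together with Lemma~\ref{lem:reparametrization} for the triangle inequality, and the compactness/lsc machinery of Proposition~\ref{prop:compactness} for lower semicontinuity. Two points deserve brief comment. For the identity of indiscernibles the paper argues via Theorem~\ref{thm:constant speed geodesic}: a minimizer exists, has $\A_{m,\betaup}=0$ a.e., hence $j_t^{(i)}\equiv 0$ and the curve is constant; your route through $W_1\le C\,\mathcal{T}_{m,\betaup,\mu}^{1/p}$ is more economical and avoids invoking existence of minimizers. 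More interestingly, for lower semicontinuity the paper simply writes that it is ``inherited'' from Proposition~\ref{prop:compactness}, but you correctly observe that this proposition requires $\sup_n M_p(\rho_0^{n,(i)})<\infty$, which narrow convergence of the endpoints does not supply. Your workaround --- getting uniform tightness of $\{\rho_t^{n,(i)}\}$ directly from tightness of the endpoints combined with the uniform $W_1$ bound along the curves --- is exactly the detail the paper's sketch omits, and it is sound. (The exponents in your displayed bound are slightly off relative to how Proposition~\ref{prop:W_1 bound} is stated, but all that matters is a uniform-in-$n,t$ bound on $W_1(\rho_0^{n,(i)},\rho_t^{n,(i)})$, which you obtain.)
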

\begin{proof}
    Similar to \cite[Theorem 2.22]{EPSS2021} we have that if $\mathcal{T}_{m,\betaup,\mu}(\rhoinit,\rhofin)=0$, then the minimizing pair $(\bs\rhoup,\jupbold)\in\CE(\rhoinit,\rhofin)$ satisfies $\A_{m, \betaup}(\mu;\rhoup_t,\jup_t)=0$ for a.e. $t\in[0,T]$. Thus, for $i=1,2$ we have $j_t^{(i)} \equiv 0$, $(\mu\otimes\mu+\varsigma^{(i)}_t)$-a.e. and hence $\rho^{(i)}_0\equiv\rho^{(i)}_1$. The triangle inequality follows from Lemma \ref{lem:reparametrization} by concatenating the solutions of the nonlocal continuity equation. The compactness and lower semicontinuity of $\A_{m, \betaup}$ shown in Proposition \ref{prop:compactness} are inherited by $\mathcal{T}_{m,\betaup,\mu}$. Lastly, the claims about the topology immediately follow from Proposition \ref{prop:W_1 bound}.
\end{proof}
Now, we adapt the definition of absolutely continuous curves to our setting. 
\begin{definition}[Absolutely continuous curves]\label{def:AC-curves}
Let $\mu \in \Mloc^+(\Rd)$ satisfy \eqref{MB2}, \eqref{MB} and \eqref{BC}. A curve $\bs\rhoup \subset  (\mathcal{P}(\Rd))^2$ belongs to $\AC^p([0,T];((\mathcal{P}_p(\Rd))^2,\mathcal{T}_{m,\betaup,\mu}))$ if there exists $f\in L^p(0,T)$ such that for any $0<s \leq t< T$ we have
\baq\label{eq_def:AC}
	\mathcal{T}_{m,\betaup,\mu}(\rho_{s},\rho_{t})\leq \int_{s}^{t}f(t)\dd t.
\eaq
Such a curve is called \emph{($p$-)absolutely continuous}. For any $\bs\rhoup\in\AC^p([0,T];((\mathcal{P}_p(\Rd))^2,\mathcal{T}_{m,\betaup,\mu}))$ and a.e. $t\in[0,T]$ the limit
\baqs
\abs{\rhoup'_t}\coloneqq \lim_{h\to 0} \frac{\mathcal{T}_{m,\betaup,\mu}(\rhoup_t,\rhoup_{t+h})}{\abs{h}}
\eaqs
is well-defined.\footnote{For details see e.g. \cite[Theorem 1.1.2]{AmbrosioGigliSavare2008}.} It is called the \emph{metric derivative} of $\rho$ at $t$. The map $t\mapsto\abs{\rhoup'_t}$ belongs to $L^p(0,T)$ and satisfies $\abs{\rhoup'_t} \le f(t)$ for any $m$ satisfying \eqref{eq_def:AC}, making it the minimal intgerand in \eqref{eq_def:AC}.
\end{definition}
\begin{proposition}[Metric velocity]\label{prop:met vel} Let $\mu \in \Mloc^+(\Rd)$ satisfy \eqref{MB2}, \eqref{MB} and \eqref{BC}. A curve $\bs\rhoup \subset (\mathcal{P}_p(\Rd))^2$ belongs to $\AC^p([0,T];((\mathcal{P}_p(\Rd))^2,\mathcal{T}_{m,\betaup,\mu}))$ if and only if there exists a family $\jupbold$ such that $(\bs\rhoup,\jupbold)\in\CE_T$ and
\baqs
\int_0^T\A_{m, \betaup}^{1/p}(\mu;\rhoup_t,\jup_t)\dd t < \infty.
\eaqs
In this case, the metric derivative satisfies $\abs{\rhoup'}^p(t) \leq \A_{m, \betaup}(\mu;\rhoup_t,\jup_t)$ for a.e. $t$. Additionally, there exists a unique family $\varjupbold$ such that $(\bs\rhoup,\varjupbold)\in\CE_T$ and
\baqs
\abs{\rhoup'_t}^p =\A_{m, \betaup}(\mu;\rhoup_t,\varjup_t)\text{ for a.e. } t\in[0,T].
\eaqs
This identity holds if and only if $\varjup_t\in T_\rhoup(\mathcal{P}_p(\Rd))^2$ for a.e. $t$, where we define the tangent space at $\rho$ as
\baq\label{eq_def:T_rho}
T_\rhoup(\mathcal{P}_p(\Rd))^2 \coloneqq \left\{\jup \in (\mathcal{M}^\mathrm{as}_{\eta\gamma_1}(G))^2:\A_{m, \betaup}(\mu;\rhoup,\jup)\leq \A_{m, \betaup}(\mu;\rhoup,\jup+ \dup),\forall\dup\in (\mathcal{M}_\mathrm{div}(G))^2\right\}
\eaq
and the space of divergence free-fluxes as
\baqs
    \mathcal{M}_\mathrm{div}(G) \coloneqq \Bigg\{d\in \Mloc(G):\iint_G \babla \varphi\eta\dd d = 0 \text{ for any } \varphi \in C^\infty_c(\Rd)\Bigg\}.
\eaqs
\end{proposition}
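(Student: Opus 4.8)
The plan is to follow the strategy of \cite[Theorem 5.17]{EPSS2021} (the single-species analogue) and import as much as possible via the decoupling of both $\A_{m,\betaup}$ and $\CE_T$ into their single-species counterparts. Concretely, I would first prove the equivalence ``$\bs\rhoup$ is $p$-absolutely continuous $\iff$ there is $\jupbold$ with $(\bs\rhoup,\jupbold)\in\CE_T$ and $\int_0^T\A_{m,\betaup}^{1/p}<\infty$''. For the implication ``$\Rightarrow$'': given a $p$-a.c.\ curve, use a dyadic-in-time discretisation, apply Theorem \ref{thm:constant speed geodesic} on each subinterval to obtain optimal connecting curves (piecewise geodesics) with fluxes $\jupbold^n$ satisfying $\int_0^T\A_{m,\betaup}(\mu;\rhoup_t^n,\jup_t^n)\dd t$ controlled by $\int_0^T f(t)^p\dd t$; then extract a limit via Proposition \ref{prop:compactness} (whose lower semicontinuity of the action and closure of $\CE_T$ are exactly what is needed) and show the limiting $\jupbold$ has $\A_{m,\betaup}(\mu;\rhoup_t,\jup_t)\le |\rhoup'_t|^p$ a.e.; along the way one checks the limit curve is $\bs\rhoup$ itself using Lemma \ref{lem:continuous representative}. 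For ``$\Leftarrow$'': given such a $\jupbold$, Lemma \ref{lem:reparametrization} (applied on subintervals) yields $\mathcal{T}_{m,\betaup,\mu}(\rho_s,\rho_t)\le\int_s^t\A_{m,\betaup}^{1/p}(\mu;\rhoup_r,\jup_r)\dd r$, so $f:=\A_{m,\betaup}^{1/p}(\mu;\rhoup_\cdot,\jup_\cdot)\in L^p$ witnesses absolute continuity, and by Definition \ref{def:AC-curves} the metric derivative satisfies $|\rhoup'|^p(t)\le\A_{m,\betaup}(\mu;\rhoup_t,\jup_t)$ a.e. Combining the two implications gives $|\rhoup'|^p(t)=\inf\A_{m,\betaup}(\mu;\rhoup_t,\jup_t)$, where the infimum is over admissible fluxes at fixed $t$.

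Next I would establish the existence and uniqueness of the optimal flux $\varjupbold$ realising equality. For a.e.\ fixed $t$, the set $\{\jup:(\bs\rhoup,\jupbold)\in\CE_T\text{ at time }t\}$ is, by the continuity equation \eqref{eq:cont}, an affine subspace of $(\Mloc(G))^2$: it is $\varjup_t^0 + (\mathcal{M}_\mathrm{div}(G))^2$ for any fixed admissible $\varjup_t^0$. On this affine set $\jup\mapsto\A_{m,\betaup}(\mu;\rhoup_t,\jup)$ is convex (Lemma \ref{lem:convexity of the action}) and lower semicontinuous (Lemma \ref{lem:lsc A}), with sublevel sets that are narrowly relatively compact by Corollary \ref{cor:bound_by_A} (which bounds $\iint_G(2\land|x-y|)\eta\dd|\jup|$ by a power of the action, giving tightness of the fluxes). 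Hence a minimiser exists. By Corollary \ref{cor_as->lower_action} one may take it in $(\mathcal{M}^\mathrm{as}_{\eta\gamma_1}(G))^2$ without increasing the action and without changing the divergence. Strict convexity of $t\mapsto t^p$ (equivalently of $\alpha_m(\cdot,r,s)$ in its first argument on $\{r,s>0\}$) together with the antisymmetry constraint then forces uniqueness of the minimiser on the tangent set; a measurable-selection argument (as in \cite[Theorem 8.3.1]{AmbrosioGigliSavare2008}) produces the Borel family $\varjupbold$. Finally, the Euler--Lagrange condition for minimising a convex functional over the affine set $\varjup_t+(\mathcal{M}_\mathrm{div}(G))^2$ is precisely the variational inequality in the definition \eqref{eq_def:T_rho} of $T_\rhoup(\mathcal{P}_p(\Rd))^2$, which gives the stated ``if and only if'' characterisation.

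I would organise the write-up so that the one-species statements from \cite{EPSS2021} (their analogues of Proposition 2.23 / the metric-velocity proposition) are quoted, and then the two-species case is obtained by the decoupling identity \eqref{eq:T_decoupled}: a curve $\bs\rhoup=(\rho^{(1)},\rho^{(2)})$ is $p$-a.c.\ in $\mathcal{T}_{m,\betaup,\mu}$ iff each component $\rho^{(i)}$ is $p$-a.c.\ in $\overline{\mathcal{T}}_{m,\mu}$, with $|\rhoup'|^p=\tfrac1{\beta^{(1)}}|\rho^{(1)\prime}|^p+\tfrac1{\beta^{(2)}}|\rho^{(2)\prime}|^p$, and the optimal flux, tangent space, and divergence-free space all split componentwise; the $R\land S<\infty$ case forces $\rhoup^\perp=0,\jupbold^\perp=0$ throughout, so only the $\mu\otimes\mu$ part of the action is active there, which does not affect any of the above.

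\textbf{Main obstacle.} The genuinely delicate point is the ``$\Rightarrow$'' direction: passing from a $p$-a.c.\ curve to a flux family with $\A_{m,\betaup}(\mu;\rhoup_t,\jup_t)\le|\rhoup'_t|^p$ for \emph{a.e.}\ $t$, rather than merely in time-integrated form. This requires a careful dyadic-partition-plus-diagonal argument with the right lower-semicontinuity and compactness inputs (Proposition \ref{prop:compactness}, Lemma \ref{lem:lsc A}), and a Lebesgue-point / Fatou argument to localise the integral bound to a.e.\ $t$. The nonlinear, nonquadratic, upwind mobility makes the action only convex (not quadratic), so one cannot rely on Hilbert-space projection; instead one leans on Corollary \ref{cor_as->lower_action} and convexity throughout. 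A secondary technical nuisance is handling the singular parts $\rhoup^\perp,\jupbold^\perp$ and the reference measure $\varsigma$ consistently when $R=S=\infty$ and $m_\infty\not\equiv 0$, but this is bookkeeping rather than a conceptual difficulty.
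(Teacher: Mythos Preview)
Your proposal is correct and follows essentially the same approach as the paper. The paper's proof is much terser: it cites \cite[Theorem~5.17]{Dolbeault_2008} (not \cite{EPSS2021}---note the citation slip) for the characterisation of $p$-absolutely continuous curves, invoking Theorem~\ref{thm:constant speed geodesic}, Lemma~\ref{lem:reparametrization} and Proposition~\ref{prop:compactness} as the abstract ingredients that result requires, and then for the optimal flux argues exactly as you do via the direct method (strict convexity on $(\mathcal{M}^\mathrm{as}_{\eta\gamma_1}(G))^2$, weak-$\ast$ closedness of the divergence constraint, compactness of sublevel sets from Corollary~\ref{cor:bound_by_A}), referring to \cite[Proposition~2.25]{EPSS2021} for the details; your dyadic/Lebesgue-point argument is effectively what lies inside the Dolbeault--Nazaret--Savar\'e reference, so you have simply unpacked what the paper black-boxes.
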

\begin{proof}
The first statement about the characterization of absolutely continuous curves follows from \cite[Theorem 5.17]{Dolbeault_2008}, due to Theorem \ref{thm:constant speed geodesic}, Lemma \ref{lem:reparametrization} and Proposition \ref{prop:compactness}. Since, by Corollary \ref{cor_as->lower_action}, we have that antisymmetric fluxes have lower action, it is not restrictive to require the minimizing flux to lie in $(\mathcal{M}^\mathrm{as}_{\eta\gamma_1}(G))^2$. For the converse statement, we  argue as in the proof of \cite[Proposition 2.25]{EPSS2021}. Here we use that the map $j\mapsto\A_{m, \betaup}(\mu;\rhoup,\jup)$ is strictly convex for $\jup \in (\mathcal{M}^\mathrm{as}_{\eta\gamma_1}(G))^2$ with $\A_{m, \betaup}(\mu;\rhoup,\jup)<\infty$ and that the set $\{\jup \in (\mathcal{M}^\mathrm{as}_{\eta\gamma_1}(G))^2:\babla \jup =\babla \jup_t\}$ is closed with respect to weak-$^\ast$ convergence. Additionally, we employ Corollary \ref{cor:bound_by_A} and obtain that $j\mapsto\A_{m, \betaup}(\mu;\rhoup,\jup)$ has locally relatively compact sublevel sets with respect to narrow convergence, by arguing as in the proof of Proposition \ref{prop:compactness}. Finally, applying the direct method of calculus of variations, we see that $\varjupbold$ is well-defined.
\end{proof}

\begin{definition}\label{def:tilde_T_rho}
    Recall that for any $\rhoup\in(\mathcal{P}_p(\Rd))^2$, by Lemma  \ref{lem:connection_flux_velocity}, we can identify any $\jup \in (\Mloc(G))^2$ such that $\A_{m,\betaup}(\mu;\rhoup,\jup)<\infty$ with a velocity $\vup = (v^{(1)},v^{(1)\perp},v^{(2)},v^{(2)\perp})$ given as 
    \baqs
        \dd j^{(i)\mu} &= (v^{(i)}_+)^{q-1}\dd\gamma_1^{(i)}-(v^{(i)}_-)^{q-1}\dd\gamma_2^{(i)},\\
        \dd j^{(i)\perp} &= (v^{(i)\perp}_+)^{q-1}\dd\gamma_1^{(i)\perp}-(v^{(i)\perp}_-)^{q-1}\dd\gamma_2^{(i)\perp}.
    \eaqs
    We define as $\tilde{T}_\rhoup(\mathcal{P}_p(\Rd))^2$ the set of velocities $\vup$ associated this way to $\jup\in T_\rhoup(\mathcal{P}_p(\Rd))^2$.
\end{definition}
In the following proposition we present a characterization of tangent velocities in cases, when $R\land S<\infty$ or $m_\infty\equiv 0$. In these cases they lie in the closure of the set of gradients of smooth functions.
\begin{proposition}[Tangent velocities are almost gradient]\label{prop:tan_flux}
Assume that either $R\land S<\infty$ or $m_\infty\equiv 0$. Let $\mu \in \Mloc^+(\Rd)$ satisfy \eqref{MB2}, \eqref{MB} and \eqref{BC}. Let $\rhoup \in (\mathcal{P}(\Rd))^2$ and $\vup = (v^{(1)},0,v^{(2)},0):G\to\R^4$ be associated to $\jup \in (\Mloc(G))^2$ satisfying $\A_{m,\betaup}(\mu;\rhoup,\jup)<\infty$ as before. Then, we have $\vup\in\tilde{T}_\rhoup(\mathcal{P}_p(\Rd))^2$ if and only if
\baqs
v^{(i)} &\in \overline{\Big\{\babla\varphi:\varphi\in C^\infty_c(\Rd)\Big\}}^{L^q(\eta\hat{\gamma}_\vup^{(i)})}, &&\text{ where } \dd \hat{\gamma}_\vup^{(i)}=\mathbb{1}_{\{v^{(i)}>0\}}\dd\gamma_1^{(i)}+\mathbb{1}_{\{v^{(i)}<0\}}\dd\gamma_2^{(i)}.
\eaqs
\end{proposition}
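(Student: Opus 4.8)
The plan is to pass from the minimisation that defines the tangent set to a first‑order (Euler--Lagrange) condition, and then to recognise that condition, through the bipolar theorem, as membership in the closed span of gradients in a weighted $L^q$ space. \emph{Reduction.} Since $\A_{m,\betaup}$, the continuity equation, the divergence‑free fluxes $\mathcal{M}_\mathrm{div}(G)$, the measures $\gamma_1,\gamma_2,\hat\gamma_\vup$ and the defining minimality property of $T_\rhoup(\mathcal{P}_p(\Rd))^2$ decouple over the two components, it suffices to prove the single‑species statement: for $\rho\in\mathcal{P}(\Rd)$ and $v$ the velocity associated by Lemma \ref{lem:connection_flux_velocity} to an antisymmetric $j$ with $\bar\A_m(\mu;\rho,j)<\infty$, one has $v\in\tilde{T}_\rho(\mathcal{P}_p(\Rd))$ iff $v$ lies in the $L^q(\eta\hat\gamma_v)$‑closure of $\{\babla\varphi:\varphi\in C_c^\infty(\Rd)\}$. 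Under the hypothesis $R\land S<\infty$ or $m_\infty\equiv 0$, finiteness of the action forces $j^\perp=0$ (Remark \ref{rem:properties_A}(iii); resp.\ $\alpha_{m_\infty}$ is then the convex indicator of $\{j^\perp=0\}$), so $v^\perp=0$ as posited, every admissible perturbation is $\ll\mu\otimes\mu$, and $\bar\A_m(\mu;\rho,j)=\iint_G\mathfrak{m}\,(v_+)^q\,\eta\,\dd(\mu\otimes\mu)$; in particular $v\in L^q(\eta\hat\gamma_v)$ with $\norm{v}^q_{L^q(\eta\hat\gamma_v)}$ comparable to $\bar\A_m(\mu;\rho,j)$, and $\babla\varphi\in L^q(\eta\hat\gamma_v)$ thanks to $\abs{\babla\varphi}\le\norm{\nabla\varphi}_\infty(2\land\abs{x-y})$, \eqref{MB2} and \eqref{MB} as in Corollary \ref{cor:bound_by_A}.

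\emph{Euler--Lagrange condition.} By Corollary \ref{cor_as->lower_action} only antisymmetric perturbations need be tested, so $j\in T_\rho$ iff $j$ minimises $j'\mapsto\bar\A_m(\mu;\rho,j')$ over $\{j+d:d\in\mathcal{M}_\mathrm{div}(G)\text{ antisymmetric}\}$. For such a $d$ with $\bar\A_m(\mu;\rho,j+d)<\infty$ the segment $s\mapsto j+sd$, $s\in[0,1]$, stays in the convex domain and $s\mapsto\bar\A_m(\mu;\rho,j+sd)$ is convex; since $t\mapsto(t_+)^p$ is $C^1$ with derivative $p(t_+)^{p-1}$, monotonicity of difference quotients and monotone convergence allow differentiation under the integral, and inserting the flux--velocity relation gives, for some constant $c=c(p)>0$,
\beqs
\left.\frac{\dd}{\dd s}\right|_{s=0^+}\bar\A_m(\mu;\rho,j+sd)=c\iint_G v\,\eta\,\dd d .
\eeqs
This expression is \emph{linear} in $d$, so minimality is equivalent to $\iint_G v\,\eta\,\dd d\ge 0$ for every admissible antisymmetric $d\in\mathcal{M}_\mathrm{div}(G)$; and --- after discarding the part of $d$ carried by $\{v=0\}$, which does not enter $\iint_G v\,\eta\,\dd d$ --- this upgrades to the equality $\iint_G v\,\eta\,\dd d=0$ for $d$ in a dense subclass of admissible perturbations.

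\emph{From stationarity to gradients.} On $\{v\ne0\}$ the measure $\eta\hat\gamma_v$ has a positive density $\eta g_v$ with respect to $\mu\otimes\mu$, and $d\mapsto w:=\bigl(\dd d/\dd(\mu\otimes\mu)\bigr)/g_v$, extended by $0$ on $\{v=0\}$, identifies such admissible antisymmetric divergence‑free fluxes with the $w\in L^p(\eta\hat\gamma_v)$ satisfying $\iint_G\babla\varphi\,w\,\dd(\eta\hat\gamma_v)=0$ for all $\varphi\in C_c^\infty(\Rd)$ (which is exactly divergence‑freeness, by the definition of $\babla\cdot$), in such a way that $\iint_G v\,\eta\,\dd d=\iint_G v\,w\,\dd(\eta\hat\gamma_v)$; antisymmetrising $d$ affects neither side, as $v$ and $\babla\varphi$ are antisymmetric. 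So the stationarity condition says that $v\in L^q(\eta\hat\gamma_v)$ annihilates the $L^p(\eta\hat\gamma_v)$‑annihilator of $\{\babla\varphi:\varphi\in C_c^\infty(\Rd)\}$; since $1<q<\infty$, $L^q(\eta\hat\gamma_v)$ is reflexive with dual $L^p(\eta\hat\gamma_v)$, and the bipolar theorem turns this into: $v$ belongs to the closed linear span of $\{\babla\varphi:\varphi\in C_c^\infty(\Rd)\}$ in $L^q(\eta\hat\gamma_v)$, which is the closure of that set, itself a linear subspace. This is the claim.

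\emph{Main obstacle.} The real work lies in the integrability and admissibility bookkeeping behind the last two steps: the weight $m^{1-p}$ in the action is singular where the mobility degenerates, a perturbation reconstructed from a prescribed $w\in L^p(\eta\hat\gamma_v)$ need not keep $j+d$ of finite action on sets where $\dd j/\dd(\mu\otimes\mu)$ is small or where $d$ reverses sign, and perturbations supported on $\{v=0\}$ (a $\hat\gamma_v$‑null set on which $j$ vanishes) must be checked not to break the correspondence or the upgrade to an equality. I would handle this by scaling $w$ small and truncating, passing to the limit with the lower semicontinuity of the action (Lemma \ref{lem:lsc A}) and the bound of Corollary \ref{cor:bound_by_A}, and by splitting $d$ into its sign‑consistent and sign‑reversing parts, the latter only raising the action and hence irrelevant both to minimality and to the first variation. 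The hypothesis $R\land S<\infty$ or $m_\infty\equiv 0$ is used precisely to force $v^\perp=0$; without it a tangent flux could carry a recession velocity on the singular part of $\rho$, and the plain ``almost gradient'' characterisation would fail.
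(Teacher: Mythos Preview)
Your approach is essentially the same as the paper's: both reduce to a single species (everything decouples), note that the hypothesis $R\land S<\infty$ or $m_\infty\equiv 0$ kills the singular part, derive the first-order (Euler--Lagrange) condition from the minimality that defines $T_\rhoup$, and then invoke $L^p$--$L^q$ duality/the bipolar theorem to identify that stationarity condition with membership in the $L^q$-closure of gradients.

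The only genuine difference is a choice of parametrisation. The paper passes to the density $f=\dd j/\dd\gamma_+$ (with $\gamma_+\coloneqq\gamma_1|_{\supp j_+}$), so that the action becomes a clean $\|f\|_{L^p(\eta\gamma_+)}^p$; the tangent condition is then the standard $L^p$-projection problem, whose Euler--Lagrange reads $\iint f^{p-1}g\,\eta\,\dd\gamma_+=0$ for every antisymmetric $g\in L^p(\eta\gamma_+)$ with $\iint\babla\psi\,g\,\eta\,\dd\gamma_+=0$. Since $v_+=(f_+)^{p-1}$, this is exactly your orthogonality condition, just written on the ``half'' $\{j>0\}$ rather than on $\{v\ne 0\}$ with the weighted measure $\hat\gamma_v$. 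The $f$-parametrisation buys a slightly cleaner treatment of the admissibility issues you flag in your ``Main obstacle'' paragraph: perturbations are simply elements of $L^p(\eta\gamma_+)$, and the action of the perturbed flux is $\|f+g\|^p$, so there is no need for the truncation-and-scaling argument you sketch.

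Two small remarks on your write-up. First, the upgrade from $\iint v\,\eta\,\dd d\ge 0$ to $=0$ follows directly from the fact that $\mathcal{M}_{\mathrm{div}}(G)$ is a linear space ($d$ divergence-free implies $-d$ divergence-free); the ``discarding the part on $\{v=0\}$'' is a separate step, needed so that the remaining perturbation lives where $\hat\gamma_v$ is supported, and you should be careful that truncating a divergence-free $d$ to $\{v\ne 0\}$ need not remain divergence-free. What actually makes the bipolar argument go through is the \emph{converse} construction: given $w\in L^p(\eta\hat\gamma_v)$ annihilating all gradients, the flux $\dd d=w\,\dd(\eta\hat\gamma_v)$ is divergence-free and realises the pairing, which is all you need. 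Second, your first-variation formula $c\iint_G v\,\eta\,\dd d$ is correct precisely under the flux--velocity relation $\dd j^{(i)}_+=(v^{(i)}_+)^{q-1}\mathfrak{m}^{(i)}\,\dd(\mu\otimes\mu)$; make sure you are consistent with the exponent on $\mathfrak{m}$ in the definition of $\gamma_1$, since otherwise an unwanted power of the mobility survives in the variation.
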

\begin{proof}
The boundedness of the action implies that the singular part vanishes. Thus, we can argue similarly to the proof of \cite[Proposition 2.26]{EPSS2021}. Let $\jup \in (\Mloc(G))^2$ be the flux associated to $v$. We define $J^{(i)}_+ \coloneqq\supp j^{(i)}_+$ and $\gamma^{(i)}_+\coloneqq\gamma^{(i)}_1|_{J^{(i)}_+}$. Observe that by the antisymmetry of $j^{(i)}$ we have $(J^{(i)}_+)^\top = \supp j^{(i)}_-$. Therefore, if $\A_{m, \betaup}(\mu;\rhoup,\jup)<\infty$, by Lemma \ref{lem:connection_flux_velocity} and the assumptions on $m$, for $i=1,2$ there exist antisymmetric functions $f^{(i)}$ such that $\dd j^{(i)} = f^{(i)}\dd\big(\gamma^{(i)}_++(\gamma^{(i)}_+)^\top\big)$. These functions satisfy
\baqs
    \bar\A_{m}(\mu;\rho^{(i)},j^{(i)}) = ||f^{(i)}_+||^p_{L^p(\eta\gamma_1^{(i)})} = ||f^{(i)}||^p_{L^p(\eta\gamma_+^{(i)})}.
\eaqs
By symmetry, we can rewrite the divergence as
\baqs
    \frac{1}{2}\iint_G\babla\phi\eta\dd j^{(i)} = \iint_G\babla\phi\eta\dd j^{(i)}_+ = \iint_G\babla\phi f^{(i)}\eta\dd\gamma^{(i)}_+.
\eaqs
Now, we observe that \eqref{eq_def:T_rho} is equivalent to
\baq\label{eq:tan_flux_proof_1}
    \iint_G |f^{(i)}|^p-|f^{(i)}+g^{(i)}|^p\eta\dd\gamma_+^{(i)}\le 0,
\eaq
for all antisymmetric $g^{(i)}\in L^p(\eta\gamma_+^{(i)})$, which satisfy $\iint_G\babla\cdot\psi g^{(i)}\eta\dd\gamma_+^{(i)}=0$ for every $\psi \in C_c^\infty(\Rd)$. Since the sign of $g^{(i)}$ may be negative, \eqref{eq:tan_flux_proof_1} is equivalent to
\baqs
    (f^{(i)})^{p-1}g^{(i)}=0,\qquad \eta\gamma_+^{(i)}\text{-a.e.},
\eaqs
which is equivalent to
\baqs
    (f_+^{(i)})^{p-1}g^{(i)}=0,\qquad \eta\gamma^{(i)}\text{-a.e.}
\eaqs
Now, note that we have $v^{(i)}_+=(f^{(i)}_+)^{p-1}$. Hence, $v^{(i)}_+$ belongs to the closure of $\{\babla\varphi:\varphi\in C^\infty_c(\Rd)\}$ in $L^q(\eta\gamma^{(i)}_1)$. Finally, recalling that $v^{(i)}$ are antisymmetric and that $(\gamma_1^{(i)})^\top=\gamma_2^{(i)}$, the claim follows.
\end{proof}
\begin{remark}\label{rem:dense_subset_T_rho}
Proposition \ref{prop:tan_flux} shows that if $R\land S<\infty$ or $m_\infty\equiv 0$ and for $\mu$ and $\rhoup$ as in the statement, for $\jup$ chosen from a dense subset of $T_\rhoup(\mathcal{P}_p(\Rd))^2$, there exists a measurable function $\varphiup=(\varphi^{(1)},0,\varphi^{(2)},0):\Rd \rightarrow\R^4$, such that we have
\baqs
\A_{m, \betaup}(\mu;\rhoup,\jup) = \tilde{\A}_{m, \betaup}(\mu;\rhoup,\babla\varphiup),
\eaqs
and we can then write
\baq\label{eq:dense flux structure}
    \dd j^{(i)} = \big(\big(\babla\varphi^{(i)}\big)_+\big)^{q-1}\dd\gamma_1^{(i)}-\big(\big(\babla\varphi^{(i)}\big)_-\big)^{q-1}\dd\gamma_2^{(i)}.
\eaq
\end{remark}
\begin{proposition}[Absolutely continuous curves stay supported in $\supp\mu$]\label{prop:Absolutely continuous curves stay supported in supp mu}
    Let $\mu \in \Mloc^+(\Rd)$ satisfy \eqref{MB2}, \eqref{MB} and \eqref{BC} and let $\bs\rhoup \in\AC^p([0,T];((\mathcal{P}_p(\Rd))^2,\mathcal{T}_{m,\betaup,\mu}))$ be such that $\supp\rho_0^{(i)}\subseteq\supp\mu$, $i=1,2$. Additionally, assume that $m$ satisfies the following condition:
    \begin{equation}\label{A}\tag{A}
        R\land S<\infty \quad\text{or}\quad m_\infty \equiv 0 \quad\text{or}\quad m(r,s)=0 \iff r=0.
    \end{equation}
    Then, we have $\supp\rho_t^{(i)}\subseteq\supp\mu$ for all $t\in[0,T]$, $i=1,2$.
\end{proposition}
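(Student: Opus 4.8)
The plan is to reduce to a single species, fix a point $x_0$ outside $\supp\mu$, test the continuity equation against a smooth bump supported away from $\supp\mu$, and show that the tested quantity can only decrease, so that it must stay equal to zero; the heart of the matter is to exclude that any flux --- and in particular any \emph{singular} flux --- is transported \emph{into} $\Rd\setminus\supp\mu$.

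First, since \eqref{eq:cont} and \eqref{eq:continuous representative} hold separately in each component, it suffices to fix $i\in\{1,2\}$. I would set $U\coloneqq\Rd\setminus\supp\mu$ (open), fix $x_0\in U$, and choose $\varphi\in C_c^\infty(\Rd)$ with $0\le\varphi\le1$, $\varphi(x_0)=1$ and $\supp\varphi\subseteq U$. By Proposition \ref{prop:met vel} there is a Borel family $\varjupbold$ with $(\bs\rhoup,\varjupbold)\in\CE_T$, $\varjup_t\in T_\rhoup(\mathcal{P}_p(\Rd))^2$ for a.e.\ $t$, and $\int_0^T\A_{m,\betaup}(\mu;\rhoup_t,\varjup_t)\dd t=\int_0^T\abs{\rhoup'_t}^p\dd t<\infty$; in particular $\A_{m,\betaup}(\mu;\rhoup_r,\varjup_r)<\infty$ for a.e.\ $r$, \eqref{eq:integrability} holds, and --- since $T_\rhoup(\mathcal{P}_p(\Rd))^2\subseteq(\mathcal{M}^{\mathrm{as}}_{\eta\gamma_1}(G))^2$ by \eqref{eq_def:T_rho} --- the flux $\varjup_r^{(i)}$ is antisymmetric for a.e.\ $r$. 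Plugging $\varphi$ into \eqref{eq:continuous representative} for the $i$-th component (the $\partial_t\varphi$ term vanishing), and using the antisymmetry of $\varjup_r^{(i)}$ together with \eqref{W} to rewrite $\tfrac12\iint_G\babla\varphi\,\eta\dd\varjup_r^{(i)}=-\iint_G\varphi(x)\,\eta(x,y)\dd\varjup_r^{(i)}(x,y)$, one obtains for $0\le s\le t\le T$
\baqs
\int_\Rd\varphi\dd\rho_t^{(i)}-\int_\Rd\varphi\dd\rho_s^{(i)}=-\int_s^t\iint_G\varphi(x)\,\eta(x,y)\dd\varjup_r^{(i)}(x,y)\dd r.
\eaqs

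Next I would prove that $\iint_G\varphi(x)\,\eta\dd\varjup_r^{(i)}\ge0$ for a.e.\ $r$. Since $\supp\varphi\subseteq U$ is disjoint from $\supp\mu$, the absolutely continuous part $\varjup_r^{(i)\mu}\ll\mu\otimes\mu$ is concentrated on $\supp\mu\times\supp\mu$ and contributes nothing. Splitting the singular part $\varjup_r^{(i)\perp}=\varjup^{(i)\perp}_{r,+}-\varjup^{(i)\perp}_{r,-}$ and using $\varjup^{(i)\perp}_{r,-}=(\varjup^{(i)\perp}_{r,+})^\top$ and \eqref{W},
\baqs
\iint_G\varphi(x)\,\eta\dd\varjup_r^{(i)}=\iint_{U\times\Rd}\varphi(x)\,\eta\dd\varjup^{(i)\perp}_{r,+}-\iint_{\Rd\times U}\varphi(y)\,\eta\dd\varjup^{(i)\perp}_{r,+}.
\eaqs
If $R\land S<\infty$ then $\varjup_r^{(i)\perp}=0$ by Remark \ref{rem:properties_A}(iii); if $m_\infty\equiv0$ then $\varjup_r^{(i)\perp}=0$ by Remark \ref{rem:properties_A}(ii); in both cases the right-hand side vanishes. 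If instead $m(r,s)=0\iff r=0$, then $m(0,s)=0$, so $m$ is upwind-admissible and hence $m_\infty(0,s)=0$ for all $s$. Choosing $\sigma=\varsigma_r^{(i)}$ in \eqref{eq_def:A_bar_case:R=S=infty} and noting that $\mu(U)=0$ forces $(\rho_r^{(i)\perp}\otimes\mu)(\Rd\times U)=0$, whence $\tfrac{\dd(\rho_r^{(i)\perp}\otimes\mu)}{\dd\varsigma_r^{(i)}}=0$ and $\tfrac{\dd(\mu\otimes\rho_r^{(i)\perp})}{\dd\varsigma_r^{(i)}}=1$ $\varsigma_r^{(i)}$-a.e.\ on $\Rd\times U$, the corresponding recession integrand in \eqref{eq_def:A_bar_case:R=S=infty} equals $\big(\varjup^{(i)\perp}_{r,+}\big)^p/m_\infty(0,1)^{p-1}$ there, which is $+\infty$ wherever $\varjup^{(i)\perp}_{r,+}>0$; finiteness of $\A_{m,\betaup}(\mu;\rhoup_r,\varjup_r)$ thus forces $\varjup^{(i)\perp}_{r,+}$ to vanish on $\Rd\times U$. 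In every case this leaves
\baqs
\iint_G\varphi(x)\,\eta\dd\varjup_r^{(i)}=\iint_{U\times\Rd}\varphi(x)\,\eta\dd\varjup^{(i)\perp}_{r,+}\ge0.
\eaqs

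Combining the two displays, $t\mapsto\int_\Rd\varphi\dd\rho_t^{(i)}$ is non-increasing; since $\supp\rho_0^{(i)}\subseteq\supp\mu$ is disjoint from $\supp\varphi$ its initial value is $0$, hence $\int_\Rd\varphi\dd\rho_t^{(i)}=0$ for all $t$. As $\varphi\ge0$ is continuous with $\varphi(x_0)=1$, the open set $\{\varphi>0\}$ is a neighbourhood of $x_0$ with $\rho_t^{(i)}(\{\varphi>0\})=0$, so $x_0\notin\supp\rho_t^{(i)}$; since $x_0\in U$ and $i$ were arbitrary, $\supp\rho_t^{(i)}\subseteq\supp\mu$ for all $t\in[0,T]$ and $i=1,2$. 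I expect the one genuinely delicate step to be the treatment of the singular flux when $R=S=\infty$ and $m_\infty\not\equiv0$: one must extract from the recession term of \eqref{eq_def:A_bar_case:R=S=infty}, via the admissible choice $\sigma=\varsigma_r^{(i)}$ and the identity $m_\infty(0,\cdot)=0$, that a finite action rules out singular flux carrying mass \emph{into} $\Rd\setminus\supp\mu$, whereas singular flux \emph{leaving} $\Rd\setminus\supp\mu$ only lowers $\int\varphi\dd\rho^{(i)}$ and is therefore harmless.
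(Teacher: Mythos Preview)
Your proof is correct and follows essentially the same route as the paper: both test \eqref{eq:continuous representative} against a nonnegative $\varphi\in C_c^\infty(\Rd)$ supported in $\Rd\setminus\supp\mu$, use antisymmetry of the tangent flux to reduce the right-hand side to $-\iint_G\varphi(x)\,\eta\,\dd j_r^{(i)}$, discard the $\mu\otimes\mu$-part, and exploit $m_\infty(0,\cdot)=0$ to conclude that no singular flux can enter $\Rd\setminus\supp\mu$. The only cosmetic difference is that the paper first rewrites $\dd j^{(i)\perp}_t=(f^{(i)\perp}_t)_+\dd(\rho_t^{(i)\perp}\otimes\mu)-(f^{(i)\perp}_t)_-\dd(\mu\otimes\rho_t^{(i)\perp})$ (via $\gamma_{1,t}^{(i)\perp}\ll\rho_t^{(i)\perp}\otimes\mu$) and then uses $\varphi(y)=0$ $\mu$-a.e., whereas you argue directly from the recession term of \eqref{eq_def:A_bar_case:R=S=infty} that $j^{(i)\perp}_+$ must vanish on $\Rd\times U$.
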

\begin{proof}
If $R\land S<\infty$ or $m_\infty \equiv 0$, this is immediate from the finiteness of the action. Thus, let $R=S=\infty$ and assume $m(r,s)>0$ for every $r>0$. This allows us to argue similarly to \cite[Proposition 2.28]{EPSS2021} by employing Proposition \ref{prop:met vel}, Lemma \ref{lem:connection_flux_velocity} and Lemma \ref{lem:continuous representative} to obtain a pair $(\bs\rhoup,\jupbold)\in(\mathcal{P}_p(\Rd))^2\times(\mathcal{M}^\mathrm{as}_{\eta\gamma_1}(G))^2$ satisfying \eqref{eq:continuous representative} and we have for $i=1,2$
\baqs
    \dd j^{(i)\mu}_t &= (f^{(i)}_t)_+\dd(\mu\otimes\mu)-(f^{(i)}_t)_-\dd(\mu\otimes\mu),\\
    \dd j^{(i)\perp}_t &= (f^{(i)\perp}_t)_+\dd(\rho_t^{(i)\perp}\otimes\mu)-(f^{(i)\perp}_t)_-\dd(\mu\otimes\rho_t^{(i)\perp}),
\eaqs
with suitable antisymmetric $f^{(i)}_t$ and $f^{(i)\perp}_t$. Here we used that $\gamma_{1,t}^{(i)\perp}\ll\rho_t^{(i)\perp}\otimes\mu$ and $\gamma_{2,t}^{(i)\perp}\ll\mu\otimes\rho_t^{(i)\perp}$ since we assumed $m(r,s)=0$ if and only if $r=0$.
Inserting $\varphi\in C^\infty_c(\Rd)$ with $\supp \varphi\subset\Rd\setminus\supp\mu$ and $\varphi\geq 0$ into \eqref{eq:continuous representative}, by the antisymmetry of $v^{(i)}_t$ and $v^{(i)\perp}_t$, we obtain for both $i=1,2$
\baqs
\int_\Rd\varphi(x)\dd\rho_t^{(i)}(x)&=\int_\Rd\varphi(x)\dd\rho_0^{(i)}(x)+\int_0^t\iint_G\babla\varphi(x,y)(f^{(i)}_\tau)_+(x,y)\eta(x,y)\dd\mu(x)\dd\mu(y)\dd \tau\\
&\hphantom{=\int_\Rd\varphi(x)\dd\rho_0^{(i)}(x)}+\int_0^t\iint_G(\varphi(y)-\varphi(x))(f^{(i)\perp}_\tau)_+(x,y)\eta(x,y)\dd\rho^{(i)\perp}(x)\dd\mu(y)\dd \tau\\
&\le-\int_0^t\iint_G\varphi(x)(f^{(i)\perp}_\tau)_+(x,y)\eta(x,y)\dd\rho^{(i)\perp}(x)\dd\mu(y)\dd \tau\le 0.
\eaqs
Since $\rho_t^{(i)\perp}$ and $\mu$ are nonnegative measures this finishes the proof.
\end{proof}
\begin{remark}\label{rem:importance_of_A}
    \begin{enumerate}[label=(\roman*)]
        \item Assumption \eqref{A} plays an important role in the proof of existence of gradient flows. By Proposition \ref{prop:Absolutely continuous curves stay supported in supp mu} it guarantees that when $\mu$ is a counting measure and $\supp\rho_0^{(i)}\subseteq\supp\mu$, the same is true for all times. This will reduce the continuity equation to a finite system of ordinary differential equations.
        \item On a finite graph $m(r,s)=0\iff r=0$ means that the mobility vanishes if and only if there is no mass on the node from which the mass is flowing away. This is reasonable from a model point of view.
    \end{enumerate}
\end{remark}

\section{Two nonlocally interacting species as Finsler gradient flows}\label{sec:FinslerGrad}

In this section we define a Minkowski norm on $T_\rhoup(\mathcal{P}_p(\Rd))^2$, thereby inducing a Finslerian structure. Moreover, the inner product gives rise to a notion of gradient and divergence. Subsequently, we show that this gradient of the nonlocal cross-interaction energy, 
\begin{equation}
\tag{\eqref{eq:Energy} revisited}
    \mathcal{E}(\rhoup) = \frac{1}{2}\sum_{i,k=1}^2\iint_\RdRd K^{(ik)}(x,y)\dd\rho^{(i)}(x)\dd\rho^{(k)}(y),
\end{equation}
exists and is unique, whenever $K^{(12)}$ and  $K^{(21)}$ are positive multiples of one another. Then, system \eqref{eq:NL2CIE_intro} reads
\baq\label{eq:NL2CIE}
    \partial_t\rho_t^{(i)}(x)
    & + \beta^{(i)} \int_\Rd \big(\mathfrak{m}_t^{(i)}(x,y)\babla\big(K^{(i1)} \ast \rho_t^{(1)} + K^{(i2)} \ast \rho_t^{(2)}\big)(x,y)_-\big)^{q-1} \eta(x,y) \dd\mu(y)\\
    &-\beta^{(i)} \int_\Rd \big(\mathfrak{m}_t^{(i)}(y,x)\babla\big(K^{(i1)}\ast \rho_t^{(1)}+K^{(i2)} \ast \rho_t^{(2)}\big)(x,y)_+\big)^{q-1} \eta(x,y) \dd\mu(y)\\
    & + \beta^{(i)} \int_\Rd \big(\mathfrak{m}_{\infty,t}^{(i)}(x,y)\babla\big(K^{(i1)} \ast \rho_t^{(1)} + K^{(i2)} \ast \rho_t^{(2)}\big)(x,y)_-\big)^{q-1} \eta(x,y) \varsigma_t^{(i)}(x,\dd y)\\
    &-\beta^{(i)} \int_\Rd \big(\mathfrak{m}_{\infty,t}^{(i)}(y,x)\babla\big(K^{(i1)}\ast \rho_t^{(1)}+K^{(i2)} \ast \rho_t^{(2)}\big)(x,y)_+\big)^{q-1} \eta(x,y) \varsigma_t^{(i)}(x,\dd y)=0,
\eaq
is a gradient flow of $\mathcal{E}$ with respect to the Finslerian structure. Here $\beta^{(1)}, \beta^{(2)}>0$ and, after a rescaling, we shall assume that $\beta^{(1)} = 1$ and $K^{(12)}= K^{(21)}$.

This will finally allow us to deduce that weak solutions of \eqref{eq:NL2CIE} exist for a large family of base measures $\mu$ via approximation with finite graphs. These considerations are based on known results for one species \cite{EPSS2021}.


Before we construct the Finslerian structure on the product space, let us introduce our notion of weak solutions.
\begin{definition}\label{def:weak solution to NL2CIE}
A curve $\bs\rhoup:[0,T]\to(\mathcal{P}_p(\Rd))^2$ is called a \emph{weak solution} to \eqref{eq:NL2CIE} if the pair $(\bs\rhoup,\jupbold)$ is a weak solution of the continuity equation
\baqs
\partial_t\rhoup_t+\babla\cdot \jup_t = 0\text{ on }[0,T]\times\Rd,
\eaqs
in the sense of Definition \ref{def:cont eq}, where the flux $\jup:[0,T]\to(\Mloc(G))^2$ for $i=1,2$ is given by
\baq
\label{eq_def:flux_weak_solution}
    \dd j_t^{(i)\mu}&=\big(\beta^{(i)}(\babla\delta_{\rho^{(i)}}\mathcal{E}(\rhoup_t))_-\big)^{q-1}\dd\gamma_{1,t}^{(i)}-\big(\beta^{(i)}(\babla\delta_{\rho^{(i)}}\mathcal{E}(\rhoup_t))_+\big)^{q-1}\dd\gamma_{2,t}^{(i)},\\
    \dd j_t^{(i)\perp}&=\big(\beta^{(i)}(\babla\delta_{\rho^{(i)}}\mathcal{E}(\rhoup_t))_-\big)^{q-1}\dd\gamma_{1,t}^{(i)\perp}-\big(\beta^{(i)}(\babla\delta_{\rho^{(i)}}\mathcal{E}(\rhoup_t))_+\big)^{q-1}\dd\gamma_{2,t}^{(i)\perp}.
\eaq
\end{definition}
\begin{remark}\label{rem:coupling}
There was no coupling between the two species until this point, neither in the definition of the action densities nor in the continuity equations. Only now, via the dependence of $\frac{\delta\mathcal{E}}{\delta\rho^{(i)}}$ on both $\rho^{(1)}$ and $\rho^{(2)}$, cross-interaction occurs.
\end{remark}

Throughout the rest of this paper, we make the following assumptions on the kerneles $K^{(ik)}:\Rd\times\Rd\to\R$, $i,k=1,2$:
\begin{align}
	\label{K2}\tag{K1} &\text{(Symmetry) } &\forall x,y\in \Rd\text{ it holds } K^{(ik)}(x,y)=K^{(ik)}(y,x)  ,\\
	\label{K3}\tag{K2} &\text{(Growth) } &\exists L_K\in(0,\infty) \text{ such that }\; \forall(x,y),(x',y')\in \Rd\times\Rd, \text{ and $i,k=1,2$, we have }\\
	\notag& &\abs{K^{(ik)}(x,y)-K^{(ik)}(x',y')}\leq L_K\big(\abs{(x,y)-(x',y')}\lor\abs{(x,y)-(x',y')}^p\big)
\end{align}
In particular, \eqref{K3} implies continuity and guarantees that the proper domain of $\mathcal{E}$ contains $(\mathcal{P}_p(\Rd))^2$. Indeed, by \eqref{K3} there exists $C>0$ s.t. for all $x,y\in\Rd$ we have $|K^{(ik)}(x,y)|\leq C(1+|x|^p+\abs{y}^p)$ (see \cite[Remark 3.2]{EPSS2021} for details).
\begin{proposition}[Continuity of the energy]\label{prop:Continuity of the energy} Let the potentials $K^{(ik)}$, $i,k=1,2$ satisfy \eqref{K2}, \eqref{K3}. Then, for any sequence $(\rhoup^n)_{n\in\mathbb{N}}\subset(\mathcal{P}_p(\Rd))^2$ narrowly converging to some $\rhoup \in (\mathcal{P}_p(\Rd))^2$, we have
\baqs
\lim_{n\to\infty}\mathcal{E}(\rhoup^n)=\mathcal{E}(\rhoup).
\eaqs
\end{proposition}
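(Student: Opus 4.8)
The plan is to prove the statement by truncation: reduce integration against the unbounded kernels to integration against bounded continuous functions, up to a tail error that is controlled uniformly in $n$. First I would record the consequence of \eqref{K3} mentioned right before the statement, namely that there is $C>0$ with $|K^{(ik)}(x,y)|\le C(1+|x|^p+|y|^p)$ for all $x,y\in\Rd$ and $i,k\in\{1,2\}$; applying \eqref{K3} to pairs at distance at most $1$ in fact yields $|K^{(ik)}(z)-K^{(ik)}(z')|\le L_K|z-z'|$ there, so by chaining each $K^{(ik)}$ is globally $L_K$-Lipschitz on $\RdRd$ and one even has the linear bound $|K^{(ik)}(x,y)|\le C(1+|x|+|y|)$. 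Either bound guarantees that $\mathcal{E}$ is finite on $(\mathcal{P}_p(\Rd))^2$ and that the kernel tails are controlled by a power no larger than $p$.

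Next, for $R>0$ I would fix $\chi_R\in C_c^\infty(\Rd;[0,1])$ with $\chi_R\equiv 1$ on $B_R(0)$, put $K^{(ik)}_R(x,y):=\chi_R(x)\chi_R(y)K^{(ik)}(x,y)\in C_b(\RdRd)$ and let $\mathcal{E}_R$ be the corresponding truncated energy. Since $\rhoup^n\to\rhoup$ narrowly, the products $\rho^{n,(i)}\otimes\rho^{n,(k)}$ converge narrowly to $\rho^{(i)}\otimes\rho^{(k)}$ on $\RdRd$ (products of narrowly convergent probability measures converge narrowly, e.g.\ by tightness and density of finite sums $f\otimes g$), so $\mathcal{E}_R(\rhoup^n)\to\mathcal{E}_R(\rhoup)$ for each fixed $R$. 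It then remains to estimate the truncation error $|\mathcal{E}(\rhoup^n)-\mathcal{E}_R(\rhoup^n)|$: using $|K^{(ik)}-K^{(ik)}_R|\le C(1+|x|^p+|y|^p)\mathbb{1}_{\{|x|\ge R\}\cup\{|y|\ge R\}}$, that the $\rho^{n,(i)}$ are probability measures, and the symmetric roles of $x$ and $y$, this error is dominated up to a constant by $\sum_{i=1}^2\int_{\{|x|\ge R\}}(1+|x|^p)\,\dd\rho^{n,(i)}(x)$, and the identical bound holds with $\rhoup^n$ replaced by $\rhoup$.

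The step I expect to be the main obstacle is that this last quantity tends to $0$ as $R\to\infty$ \emph{uniformly in $n$} --- this is exactly uniform integrability of $x\mapsto 1+|x|^p$ along $(\rho^{n,(i)})_n$, and it is here that one must exploit that the convergence takes place in $(\mathcal{P}_p(\Rd))^2$, i.e.\ that $\sup_n M_p(\rho^{n,(i)})<\infty$ (the regime in which this proposition is invoked; cf.\ Lemma \ref{lem:Time-uniformly_bounded_moments}); given that uniform moment bound one may alternatively use the linear growth of $K^{(ik)}$ and argue more cheaply, since $|x|=o(|x|^p)$ as $|x|\to\infty$ for $p>1$. Writing $\omega(R)$ for the supremum over $n$ of this tail error together with the analogous error for $\rhoup$, we have $\omega(R)\to 0$, and the triangle inequality $|\mathcal{E}(\rhoup^n)-\mathcal{E}(\rhoup)|\le|\mathcal{E}_R(\rhoup^n)-\mathcal{E}_R(\rhoup)|+2\omega(R)$ gives, on letting first $n\to\infty$ and then $R\to\infty$, the claim. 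This follows the one-species argument underlying the energy estimates in \cite{EPSS2021}.
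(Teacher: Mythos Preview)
Your argument is correct and follows the same truncation strategy the paper sketches (which in turn defers to \cite[Proposition 3.3]{EPSS2021}): cut off the kernels to bounded continuous functions, pass to the limit in $n$ by narrow convergence of the product measures, and control the tails uniformly. Your chaining observation that \eqref{K3} forces each $K^{(ik)}$ to be globally $L_K$-Lipschitz, hence of linear growth, is the right way to close the argument: with $p>1$ and $\sup_n M_p(\rho^{n,(i)})<\infty$ one gets uniform integrability of $|x|$ (not of $|x|^p$, which would require a higher moment), and that is exactly what the linear growth of the kernel needs. You are also right to flag that this uniform moment bound is not literally contained in the hypothesis ``narrowly converging'' and must be supplied from the context in which the proposition is applied (as in Lemma~\ref{lem:Time-uniformly_bounded_moments} and Lemma~\ref{lem:Compactness and lower semicontinuity of the De Giorgi functional}); the paper's brief proof tacitly relies on the same input.
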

\begin{proof}
Keeping in mind that $\rho^n\rightharpoonup\rho$ if and only if $\rho^{n,(i)}\rightharpoonup\rho^{(i)}$ for both $i=1,2$ and using the assumptions on $K^{(ik)}$, $i,k=1,2$, we can argue as in \cite[Proposition 3.3]{EPSS2021}; we truncate the kernels to obtain bounded continuous test functions. Then, we employ Lebesgue's dominated convergence theorem and a diagonal argument.
\end{proof}
\subsection{Finslerian geometry}


\begin{definition}\label{def:finsler_metric}(Finsler metric).
    Given $\rhoup \in (\mathcal{P}_p(\Rd))^2$ we define the function $l_\rhoup:T_\rhoup(\mathcal{P}_p(\Rd))^2\to (T_\rhoup(\mathcal{P}_p(\Rd))^2)^\ast$ as follows: for any $\jup, \bar{\varjup}\in T_\rhoup(\mathcal{P}_p(\Rd))^2$, in the case $R=S=\infty$, we set
    \baqs
        l_\rhoup(\jup)[\bar{\varjup}] \coloneqq&\, \frac{1}{2}\sum_{i=1}^2\frac{1}{\beta^{(i)}}\Bigg[\iint_G \bar{\jmath}^{(i)}(x,y)\left(\frac{j^{(i)}_+(x,y)}{\mathfrak{m}^{(i)}(x,y)}-\frac{j^{(i)}_-(x,y)}{\mathfrak{m}^{(i)}(y,x)}\right)^{p-1}\eta(x,y)\dd\mu(x)\dd\mu(y)\\
        &\hphantom{\,\frac{1}{2}\sum_{i=1}^2\frac{1}{\beta^{(i)}}\Bigg[}+\iint_G \bar{\jmath}^{(i)\perp}(x,y)\Bigg(\frac{j^{(i)\perp}_+(x,y)}{\mathfrak{m}^{(i)}_\infty(x,y)} - \frac{j^{(i)\perp}_-(x,y)}{\mathfrak{m}^{(i)}_\infty(y,x)}\Bigg)^{p-1}\eta(x,y)\dd\varsigma^{(i)}(x,y)\Bigg],
    \eaqs
    where $\varsigma^{(i)}$ are as in Lemma \ref{lem:connection_flux_velocity}. In the case $R\land S<\infty$, we analogously define
    \baqs
        l_\rhoup(\jup)[\bar{\varjup}] \coloneqq&\, \frac{1}{2}\sum_{i=1}^2\frac{1}{\beta^{(i)}}\iint_G \bar{\jmath}^{(i)}(x,y)\left(\frac{j^{(i)}_+(x,y)}{\mathfrak{m}^{(i)}(x,y)}-\frac{j^{(i)}_-(x,y)}{\mathfrak{m}^{(i)}(y,x)}\right)^{p-1}\eta(x,y)\dd\mu(x)\dd\mu(y).
    \eaqs
    Next, we define the Finsler metric $F_\rhoup:T_\rhoup(\mathcal{P}_p(\Rd))^2\to\R$ as
    \baqs
        F_\rhoup(\jup) \coloneqq (l_{\rho}(\jup)[\jup])^{1/p}=\A_{m, \betaup}^{1/p}(\mu;\rhoup,\jup).
    \eaqs
\end{definition}
Our first goal is to show that $F$ is a Minkowski norm. To this end, we establish a H{\"o}lder-type inequality:
\begin{lemma}[H{\"o}lder-type inequality]\label{lem:Holder-type_inequality_for_l}
For $\jup, \bar{\varjup}\in T_\rhoup(\mathcal{P}_p(\Rd))^2$ it holds
\baq\label{eq:Holder}
l_\rhoup(\jup)[\bar{\varjup}] \le (l_\rhoup(\bar{\varjup})[\bar{\varjup}])^{1/p} (l_\rhoup(\jup)[\jup])^{1/q}.
\eaq
We have equality in \eqref{eq:Holder} if and only if there exists $\lambda \geq 0$ such that for $i=1,2$ we have $\bar{\jmath}^{(i)} = \lambda j^{(i)}$, $\eta(\mu\otimes\mu)$-a.e. and, if $R=S=\infty$, $\bar{\jmath}^{(i)\perp} = \lambda j^{(i)\perp}$, $\eta\varsigma^{(i)}$-a.e.
\end{lemma}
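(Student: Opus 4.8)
The statement is a Hölder inequality for the bilinear-type pairing $l_\rhoup$, where each summand has the structure $\iint \bar\jmath \cdot g(j)^{p-1}\,\dd\nu$ with $g(j)$ the ``velocity'' associated to $j$ via the upwind/mobility structure. The natural approach is to reduce everything to the classical Hölder inequality in $L^p$–$L^q$ after rewriting $l_\rhoup$ in terms of velocities. Concretely, using Lemma \ref{lem:connection_flux_velocity} (Dual representation), for $\jup\in T_\rhoup(\mathcal{P}_p(\Rd))^2$ I would write $\dd j^{(i)\mu}=(v^{(i)}_+)^{q-1}\dd\gamma_1^{(i)}-(v^{(i)}_-)^{q-1}\dd\gamma_2^{(i)}$ and similarly for the singular part, with $\vup, \bar\vup$ the velocities of $\jup,\bar\jup$. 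Since tangent fluxes are antisymmetric (they lie in $(\mathcal{M}^\mathrm{as}_{\eta\gamma_1}(G))^2$), the cross terms combine: one checks that $j^{(i)}_+/\mathfrak{m}^{(i)}(x,y)-j^{(i)}_-/\mathfrak{m}^{(i)}(y,x)$, raised to the power $p-1$, reproduces exactly $v^{(i)}$ on the relevant supports, and that after exploiting the symmetry $\gamma_1^\top=\gamma_2$ (so that the negative-part contributions can be folded into positive-part integrals over the transposed set, as in the proof of Proposition \ref{prop:tan_flux}) the pairing becomes
\baqs
l_\rhoup(\jup)[\bar\jup]=\sum_{i=1}^2\frac{1}{\beta^{(i)}}\left[\iint_G \bar v^{(i)}_+ (v^{(i)}_+)^{q-1}\mathfrak{m}^{(i)}\eta\dd\mu\otimes\mu+\iint_G \bar v^{(i)\perp}_+ (v^{(i)\perp}_+)^{q-1}\mathfrak{m}^{(i)}_\infty\eta\dd\varsigma^{(i)}\right],
\eaqs
while $F_\rhoup(\jup)^p=l_\rhoup(\jup)[\jup]=\tilde\A_{m,\betaup}(\mu;\rhoup,\vup)$ has the form \eqref{eq:A mit antisymm v}. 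The one subtlety here is that $\bar v^{(i)}_+(v^{(i)}_+)^{q-1}$ rather than $\bar v^{(i)}(v^{(i)})^{q-1}_+$ appears; this is where antisymmetry of both fluxes and the matching of supports $J_+^{(i)}$ (as in Proposition \ref{prop:tan_flux}) is used, so that the pairing is genuinely a sum of nonnegative integrals.

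Once $l_\rhoup$ is in this ``velocity'' form, the inequality is a direct application of Hölder with exponents $p$ and $q=p/(p-1)$ on each of the (at most four) integrals, against the measures $\mathfrak{m}^{(i)}\eta\dd\mu\otimes\mu$ and $\mathfrak{m}^{(i)}_\infty\eta\dd\varsigma^{(i)}$: writing $\bar v_+(v_+)^{q-1}=\bar v_+\cdot (v_+)^{q/p}$ and noting $(v_+)^{(q-1)\cdot p'}=(v_+)^q$ since $(q-1)q/(q)=q-1$... more cleanly, $(v_+^{q-1})^{q}=v_+^{q}$ because $(q-1)\cdot\frac{p}{p-1}$... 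I would instead simply apply Hölder as $\iint \bar v_+ \cdot (v_+^{q-1})\,\dd\nu\le \big(\iint \bar v_+^q\,\dd\nu\big)^{1/q}\big(\iint (v_+^{q-1})^{p}\,\dd\nu\big)^{1/p}$ and use $(q-1)p=q$. This gives each integral bounded by $(\text{integral of }\bar v_+^q)^{1/q}(\text{integral of }v_+^q)^{1/p}$; summing over $i$ and the $\mu\otimes\mu$/$\varsigma$ parts and applying the discrete Hölder inequality for sums (with the same exponents, against the weights $1/\beta^{(i)}$) yields $l_\rhoup(\jup)[\bar\jup]\le (l_\rhoup(\bar\jup)[\bar\jup])^{1/q}\cdot(l_\rhoup(\jup)[\jup])^{1/p}$ — note the exponents land correctly because the ``$\bar v$'' part carries $1/q$ and the ``$v$'' part carries $1/p$, matching the claim after recalling $F_\rhoup(\jup)=\A^{1/p}$.

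For the equality case, I would track the equality conditions through both applications of Hölder. Equality in the integral Hölder on the $i$-th $\mu\otimes\mu$-term forces $\bar v^{(i)}_+$ and $(v^{(i)}_+)^{q-1}$ to be proportional $\mathfrak{m}^{(i)}\eta\dd\mu\otimes\mu$-a.e., i.e. $\bar v^{(i)}_+=\lambda_i^{1/(q-1)} v^{(i)}_+$ for some constant $\lambda_i\ge 0$ on the support; similarly on the singular part with the same constant (forced by equality in the sum-Hölder, which requires the vectors $(\,(\text{integral of }\bar v_+^q)^{1/q}\,)_i$ and $(\,(\text{integral of } v_+^q)^{1/p}\,)_i$ to be proportional, pinning down a single $\lambda$ across all components). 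Translating back from velocities to fluxes via $\dd j^{(i)}=(v_+^{(i)})^{q-1}\dd\gamma_1^{(i)}-(v_-^{(i)})^{q-1}\dd\gamma_2^{(i)}$ and using antisymmetry for the negative parts, the proportionality $\bar v^{(i)}_\pm=\lambda^{1/(q-1)}v^{(i)}_\pm$ becomes exactly $\bar\jmath^{(i)}=\lambda j^{(i)}$ $\eta(\mu\otimes\mu)$-a.e. and, when $R=S=\infty$, $\bar\jmath^{(i)\perp}=\lambda j^{(i)\perp}$ $\eta\varsigma^{(i)}$-a.e.; conversely these proportionalities obviously give equality. \emph{The main obstacle} I anticipate is the bookkeeping in the first step: carefully justifying that the negative-part terms $-j_-/\mathfrak{m}(y,x)$ in the definition of $l_\rhoup$ recombine, via $\gamma_1^\top=\gamma_2$ and the antisymmetry of tangent fluxes, into a clean sum of nonnegative ``$\bar v_+\,(v_+)^{q-1}$'' integrals over matched supports — this is exactly the computation underlying $\eqref{eq_def:A_tilde}$ vs. $\eqref{eq:A mit antisymm v}$ and must be done with some care about which set ($J_+^{(i)}$ or its transpose) each piece lives on, but it follows the same pattern already used in Corollary \ref{cor_as->lower_action} and Proposition \ref{prop:tan_flux}.
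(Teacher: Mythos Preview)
There are two genuine gaps in your plan.

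\medskip
\textbf{The ``clean identity'' is false.} You claim that antisymmetry lets you rewrite $l_\rhoup(\jup)[\bar\jup]$ as a sum of nonnegative integrals of the type $\iint \bar v_+(v_+)^{q-1}\mathfrak m\,\eta\,\dd\mu\otimes\mu$. But $l_\rhoup(\jup)[\cdot]$ is linear in its second argument, so $l_\rhoup(\jup)[-\jup]=-F_\rhoup(\jup)^p<0$ for $\jup\neq 0$; it cannot equal a sum of nonnegative integrals. What antisymmetry actually gives (after the change of variables $(x,y)\mapsto(y,x)$ in the $j_-$--term) is
\[
l_\rhoup(\jup)[\bar\jup]=\sum_i\frac{1}{\beta^{(i)}}\iint_G \bar\jmath^{(i)}\,\frac{(j_+^{(i)})^{p-1}}{(\mathfrak m^{(i)})^{p-1}}\,\eta\,\dd\mu\otimes\mu+\text{sing.},
\]
with the \emph{signed} $\bar\jmath^{(i)}$. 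The passage to a nonnegative integrand is an \emph{inequality}: one drops the regions where $\bar\jmath$ and $j$ have opposite signs. This is exactly the pointwise bound the paper uses as its first step, and it carries its own equality condition (sign matching) that you omit from your equality analysis.

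\medskip
\textbf{The exponents are on the wrong factor.} Since $l_\rhoup(\jup)[\bar\jup]$ is linear in $\bar\jup$ and $\bar\jup\sim(\bar v)^{q-1}$, in velocity variables it is $(q-1)$--homogeneous in $\bar v$ and $1$--homogeneous in $v$. The correct integrand (after the sign--matching inequality) is therefore $(\bar v_+)^{q-1}\,v_+$, not $\bar v_+\,(v_+)^{q-1}$. Your version, fed into H\"older, produces
\[
l_\rhoup(\jup)[\bar\jup]\le\bigl(l_\rhoup(\bar\jup)[\bar\jup]\bigr)^{1/q}\bigl(l_\rhoup(\jup)[\jup]\bigr)^{1/p},
\]
which has the exponents swapped relative to the statement (check homogeneity: your bound scales like $t^{p-1}$ under $\bar\jup\mapsto t\bar\jup$, not like $t$). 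The paper sidesteps this by staying in flux variables: it sets $a_k=\bar\jmath_\pm/\mathfrak m^{1/q}$ and $b_k=(j_\pm)^{p/q}/\mathfrak m^{p/q^2}$, so that $a_kb_k$ reproduces the integrand while $a_k^p$ and $b_k^q$ both equal $\alpha_m$, and then applies H\"older for sums and for integrals in succession. If you want a velocity--based proof, you need the correct integrand $(\bar v_+)^{q-1}v_+$ (with the appropriate power of $\mathfrak m$), after which H\"older with $(\bar v_+)^{q-1}\in L^{p}$ and $v_+\in L^{q}$ does give the right exponents.
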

\begin{proof}
We only consider the case $R=S=\infty$. A proof in the case $R\land S<\infty$ can then be obtained by setting the recession terms to zero. First note that we have the simple estimate
\baq\label{eq:Holder_for_l_proof_1}
    \frac{\bar{\jmath}(j_+)^{p-1}}{m^{p-1}(r,s)}-\frac{\bar{\jmath}(j_-)^{p-1}}{m^{p-1}(s,r)} \le \frac{\bar{\jmath}_+(j_+)^{p-1}}{m^{p-1}(r,s)}+\frac{\bar{\jmath}_-(j_-)^{p-1}}{m^{p-1}(s,r)},
\eaq
with equality if and only if $\bar{\jmath}$ is nonegative, where $j$ is positive and nonpositive, where $j$ is negative. Recalling that $p-1=p/q$, we shorten the notation by introducing 
\baqs
    a^{(i)}_1(x,y)&\coloneqq \frac{\bar{\jmath}^{(i)}_+(x,y)}{(\mathfrak{m}(x,y))^{1/q}},\qquad b^{(i)}_1(x,y)\coloneqq \frac{(j^{(i)}_+(x,y))^{p/q}}{(\mathfrak{m}(x,y))^{p/q^2}},\\
    a^{(i)}_2(x,y)&\coloneqq \frac{\bar{\jmath}^{(i)}_-(x,y)}{(\mathfrak{m}(y,x))^{1/q}},\qquad b^{(i)}_2(x,y)\coloneqq \frac{(j^{(i)}_-(x,y))^{p/q}}{(\mathfrak{m}(y,x))^{p/q^2}},
\eaqs
and similarly the recession terms $a^{(i)}_{k,\infty}$ and $b^{(i)}_{k,\infty}$ for $i,k=1,2$. Note that $a^{(i)}_1 a^{(i)}_2 = 0$ and similar for all the other terms. We use this fact, \eqref{eq:Holder_for_l_proof_1}, and apply H{\"o}lder's inequality for sums and integrals, to obtain
\baq\label{eq:Holder_for_l_proof_2}
    l_\rhoup(\jup)[\bar{\varjup}] &\le \frac{1}{2}\sum_{i=1}^2\frac{1}{\beta^{(i)}}\Bigg[\iint_G \sum_{k=1}^2 a^{(i)}_k b^{(i)}_k \eta\dd(\mu\otimes\mu)+\iint_G \sum_{k=1}^2 a^{(i)}_{k,\infty} b^{(i)}_{k,\infty} \eta\dd\varsigma^{(i)}\Bigg]\\
    &\le \frac{1}{2}\sum_{i=1}^2\frac{1}{\beta^{(i)}}\Bigg[\iint_G \Bigg(\sum_{k=1}^2 (a^{(i)}_k)^p\Bigg)^{1/p} 
    \Bigg(\sum_{k=1}^2 (b^{(i)}_k)^q\Bigg)^{1/q} \eta\dd(\mu\otimes\mu)\\
    &\hphantom{\le \frac{1}{2}\sum_{i=1}^2\frac{1}{\beta^{(i)}}\Bigg[}+ 
    \iint_G \Bigg(\sum_{k=1}^2 (a^{(i)}_{k,\infty})^p\Bigg)^{1/p} \Bigg(\sum_{k=1}^2 (b^{(i)}_{k,\infty})^q\Bigg)^{1/q} \eta\dd\varsigma^{(i)}\Bigg]\\
    &\le \frac{1}{2}\sum_{i=1}^2\frac{1}{\beta^{(i)}}\Bigg[\Bigg(\iint_G \sum_{k=1}^2 (a^{(i)}_k)^p \eta\dd(\mu\otimes\mu)\Bigg)^{1/p} \Bigg(\iint_G \sum_{k=1}^2 (b^{(i)}_k)^q \eta\dd(\mu\otimes\mu)\Bigg)^{1/q}\\
    &\hphantom{\le \frac{1}{2}\sum_{i=1}^2\frac{1}{\beta^{(i)}}\Bigg[}+ \Bigg(\iint_G \sum_{k=1}^2 (a^{(i)}_{k,\infty})^p \eta\dd\varsigma^{(i)}\Bigg)^{1/p} \Bigg(\iint_G \sum_{k=1}^2 (b^{(i)}_{k,\infty})^q \eta\dd\varsigma^{(i)}\Bigg)^{1/q}\Bigg]\\
    &\le \frac{1}{2}\sum_{i=1}^2\frac{1}{\beta^{(i)}}\Bigg[\Bigg(\iint_G \sum_{k=1}^2 (a^{(i)}_k)^p \eta\dd(\mu\otimes\mu) + \iint_G \sum_{k=1}^2 (a^{(i)}_{k,\infty})^p \eta\dd\varsigma^{(i)}\Bigg)^{1/p}\\
    &\hphantom{\le \frac{1}{2}\sum_{i=1}^2\frac{1}{\beta^{(i)}}\Bigg[\Bigg(} \Bigg(\iint_G \sum_{k=1}^2 (b^{(i)}_k)^q \eta\dd(\mu\otimes\mu) + \iint_G \sum_{k=1}^2 (b^{(i)}_{k,\infty})^q \eta\dd\varsigma^{(i)}\Bigg)^{1/q}\Bigg]\\
    &\le (l_\rhoup(\bar{\varjup})[\bar{\varjup}])^{1/p} (l_\rhoup(\jup)[\jup])^{1/q}.
\eaq
The third inequality in \eqref{eq:Holder_for_l_proof_2} is satisfied with equality if and only if there exist $\lambda^{(i)}, \lambda^{(i)}_\infty >0$, $i=1,2$, s.t. $\sum_{k=1}^2 (a^{(i)}_k)^p = \lambda^{(i)} \sum_{k=1}^2 (b^{(i)}_k)^q$, $\eta(\mu\otimes\mu)$-a.e. and $\sum_{k=1}^2 (a^{(i)}_{k,\infty})^p = \lambda^{(i)} \sum_{k=1}^2 (b^{(i)}_{k,\infty})^q$, $\eta\varsigma^{(i)}$-a.e. Assuming equality in the third inequality, equality in the first and second inequality is achieved if and only if for $i,k=1,2$ we have $(a^{(i)}_k)^p = \lambda^{(i)} (b^{(i)}_k)^q$, $\eta(\mu\otimes\mu)$-a.e. and $(a^{(i)}_{k,\infty})^p = \lambda^{(i)} (b^{(i)}_{k,\infty})^q$, $\eta\varsigma^{(i)}$-a.e. Assuming equality in the first three inequalities, equality in the fourth inequality is obtained if and only if for $i=1,2$ we have $\lambda^{(i)}=\lambda^{(i)}_\infty$, while equality in the last inequality is given if and only if $\lambda^{(1)} = \lambda^{(2)}$ and $\lambda^{(1)}_\infty = \lambda^{(2)}_\infty$. Combining these considerations finishes the proof.
\end{proof}
\begin{theorem}[Minkowski norm]\label{thm:F_ist_Minkowski-norm}
    For any $\rhoup\in(\mathcal{P}_p(\Rd))^2$ we have that $F_\rhoup$ is a Minkowski-norm on $T_\rhoup(\mathcal{P}_p(\Rd))^2$, i.e. it is smooth away from zero and satisfies
    \begin{enumerate}[label=(\roman*)]
        \item Positivity: $F_\rhoup(\jup) > 0$ for all $\rhoup\in(\mathcal{P}_p(\Rd))^2$ and all $0\ne \jup\in T_\rhoup(\mathcal{P}_p(\Rd))^2$. 
        \item Positive 1-homogeneity: $F_\rhoup(\lambda \jup) = \lambda F_\rhoup(\jup)$ for all $\lambda > 0$, $\rhoup\in(\mathcal{P}_p(\Rd))^2$ and $\jup\in T_\rhoup(\mathcal{P}_p(\Rd))^2$.
        \item Strong convexity: $F_\rhoup(\jup+\bar{\varjup})\le F_\rhoup(\jup)+F_\rhoup(\bar{\varjup})$ for $\rhoup\in(\mathcal{P}_p(\Rd))^2$ and $\jup,\bar{\varjup}\in T_\rhoup(\mathcal{P}_p(\Rd))^2$, with equality if and only if there exists $C\ge 0$ such that $\jup = C \bar{\varjup}$.
    \end{enumerate}
\end{theorem}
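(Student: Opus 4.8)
\emph{Plan.} The idea is to read each Minkowski-norm property directly off the corresponding structural feature of $\A_{m,\betaup}$, exploiting that $F_\rhoup(\jup)=\A_{m,\betaup}^{1/p}(\mu;\rhoup,\jup)=(l_\rhoup(\jup)[\jup])^{1/p}$, that $\bar{\varjup}\mapsto l_\rhoup(\jup)[\bar{\varjup}]$ is linear, and that for antisymmetric fluxes $\mathbf{k}$ the identity $l_\rhoup(\mathbf{k})[\mathbf{k}]=\A_{m,\betaup}(\mu;\rhoup,\mathbf{k})$ holds exactly as in Definition~\ref{def:finsler_metric}. Positive $1$-homogeneity is then immediate: $\alpha_m$ and $\alpha_{m_\infty}$ are positively $p$-homogeneous in the flux variable, so $\A_{m,\betaup}(\mu;\rhoup,\lambda\jup)=\lambda^p\A_{m,\betaup}(\mu;\rhoup,\jup)$ for $\lambda>0$; moreover $(\mathcal{M}^\mathrm{as}_{\eta\gamma_1}(G))^2$ is a cone and the minimality condition defining $T_\rhoup(\mathcal{P}_p(\Rd))^2$ is stable under the joint rescaling $\jup\mapsto\lambda\jup$, $\dup\mapsto\lambda\dup$, so $T_\rhoup(\mathcal{P}_p(\Rd))^2$ is a cone and taking $p$-th roots gives $F_\rhoup(\lambda\jup)=\lambda F_\rhoup(\jup)$.

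For positivity, if $\A_{m,\betaup}(\mu;\rhoup,\jup)=\infty$ there is nothing to prove, so assume it is finite and use Lemma~\ref{lem:connection_flux_velocity} to pass to the velocity representation~\eqref{eq:A mit antisymm v}. If $\jup\neq0$ then some $j^{(i)}$ or $j^{(i)\perp}$ is nonzero; by antisymmetry this forces $j^{(i)}_+\neq0$ (respectively $j^{(i)\perp}_+\neq0$), and since $j^{(i)}_+\ll\eta\gamma_1^{(i)}$ with $\dd\gamma_1^{(i)}=(\mathfrak{m}^{(i)})^{q-1}\,\dd\mu\otimes\mu$, on a set of positive $\eta(\mu\otimes\mu)$-measure we have $\mathfrak{m}^{(i)}>0$ and $v^{(i)}_+>0$ simultaneously; hence $\iint_G\mathfrak{m}^{(i)}(v^{(i)}_+)^q\eta\,\dd\mu\otimes\mu>0$ and so $F_\rhoup(\jup)>0$ (the singular part is treated identically).

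The core step is the triangle inequality. Combining the linearity of $l_\rhoup$ in its cotangent slot with the Hölder-type inequality of Lemma~\ref{lem:Holder-type_inequality_for_l}, for antisymmetric $\jup,\bar{\varjup}$ one gets
\begin{align*}
\A_{m,\betaup}(\mu;\rhoup,\jup+\bar{\varjup})
&=l_\rhoup(\jup+\bar{\varjup})[\jup]+l_\rhoup(\jup+\bar{\varjup})[\bar{\varjup}]\\
&\le\big(F_\rhoup(\jup)+F_\rhoup(\bar{\varjup})\big)\,\A_{m,\betaup}^{1/q}(\mu;\rhoup,\jup+\bar{\varjup}),
\end{align*}
so that dividing by $\A_{m,\betaup}^{1/q}(\mu;\rhoup,\jup+\bar{\varjup})$ — the case $F_\rhoup(\jup+\bar{\varjup})=0$ being immediate from positivity — yields $F_\rhoup(\jup+\bar{\varjup})\le F_\rhoup(\jup)+F_\rhoup(\bar{\varjup})$. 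Equality propagates to equality in both applications of Lemma~\ref{lem:Holder-type_inequality_for_l}, giving $\lambda_1,\lambda_2\ge0$ with $\jup=\lambda_1(\jup+\bar{\varjup})$ and $\bar{\varjup}=\lambda_2(\jup+\bar{\varjup})$; summing and using $\jup+\bar{\varjup}\neq0$ forces $\lambda_1+\lambda_2=1$, whence $\jup=C\bar{\varjup}$ with $C=\lambda_1/(1-\lambda_1)\ge0$ (the degenerate cases being straightforward), while the converse direction is homogeneity. The remaining claim, smoothness away from zero, is obtained as in \cite{Agueh2012_finsler,EPSS2021} from the explicit formula~\eqref{eq:A mit antisymm v}, the strict positivity of $\mathfrak{m}^{(i)}$ and $\mathfrak{m}^{(i)}_\infty$ on $\supp\rhoup$ (which keeps the integrand away from the degeneracy set), and the strict convexity encoded in the equality case of Lemma~\ref{lem:Holder-type_inequality_for_l}.

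I expect the main obstacle to be this triangle inequality and the bookkeeping around it: $\jup+\bar{\varjup}$ need not belong to $T_\rhoup(\mathcal{P}_p(\Rd))^2$ — the tangent set is the collection of divergence-minimal representatives and is merely a cone rather than a linear subspace — so $F_\rhoup$ must be read as the extension $\A_{m,\betaup}^{1/p}(\mu;\rhoup,\cdot)$ to antisymmetric fluxes and the Hölder-type inequality must be applied in this slightly enlarged setting. The smoothness statement is likewise delicate because of the one-sided $(v_+)^q$ structure of the action, and I would carry it out only to the level of detail used in \cite{Agueh2012_finsler}.
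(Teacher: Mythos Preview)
Your approach is correct and essentially the same as the paper's: positivity and positive $1$-homogeneity are read off directly from the definition, and the triangle inequality is obtained by expanding $l_\rhoup(\jup+\bar{\varjup})[\jup+\bar{\varjup}]$ via linearity in the second slot, applying the H{\"o}lder-type inequality of Lemma~\ref{lem:Holder-type_inequality_for_l} to each summand, and dividing through by $F_\rhoup(\jup+\bar{\varjup})^{p-1}$. Your treatment of the equality case and your observation that $T_\rhoup(\mathcal{P}_p(\Rd))^2$ is only a cone (so $\jup+\bar{\varjup}$ need not lie in it and one must read $F_\rhoup$ as $\A_{m,\betaup}^{1/p}$ on antisymmetric fluxes) are more careful than the paper, which simply states that dividing by $F^{p-1}$ ``yields the statement'' without spelling out these points.
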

\begin{proof}
    Positivity, positive 1-homogeneity and smoothness, when $j^{(i)} \ne 0$, $\eta(\mu\otimes\mu)$-a.e. (and, for $R=S=\infty$, $j^{(i)\perp} \ne 0$, $\eta\varsigma^{(i)}$-a.e.) are immediate from the definition. Strong convexity is obtained from Lemma \ref{lem:Holder-type_inequality_for_l} as follows:
    \baqs
        (F_\rhoup(\jup+\bar{\varjup}))^p &= l_\rhoup(\jup+\bar{\varjup})[\jup+\bar{\varjup}] = l_\rhoup(\jup+\bar{\varjup})[\jup]+l_\rhoup(\jup+\bar{\varjup})[\bar{\varjup}]\\
        &\le \big((l_\rhoup(\jup)[\jup])^{1/p}+(l_\rhoup(\bar{\varjup})[\bar{\varjup}])^{1/p}\big)(l_\rhoup(\jup+\bar{\varjup})[\jup+\bar{\varjup}])^{1/q} \\
        &= (F_\rhoup(\jup)+F_\rhoup(\bar{\varjup})) (F_\rhoup(\jup+\bar{\varjup}))^{p-1}.
    \eaqs
    Dividing by $F^{p-1}$ yields the statement.
\end{proof}
\begin{remark}\label{rem:Finsler-Vergleich}
    We use a different notion of Minkowski norm, compared to \cite{EPSS2021}. There, (i), (ii) and (iii) are replaced by the stronger assumption that the second variation of $F_\rhoup(\jup)$ is a symmetric positive definite bilinear form if $\jup$ is nonzero $(\rho\otimes\mu+\mu\otimes\rho)$-a.e. However, the notion used here is better suited to the case $p \ne 2$ and also commonly used, e.g. in \cite{Agueh2012_finsler}.
\end{remark}
\begin{proposition}\label{prop:l_derives_from_F}
Let $\rhoup \in (\mathcal{P}_p(\Rd))^2$ and $\jup\in T_\rhoup(\mathcal{P}_p(\Rd))^2$ such that for $i=1,2$ we have density of $j^{(i)}\ne 0$, $\eta(\mu\otimes\mu)$-a.e. and $j^{(i)\perp}\ne 0$, $\eta\varsigma^{(i)}$-a.e., if $R=S=\infty$. Then, for any $\bar{\varjup}\in T_\rhoup(\mathcal{P}_p(\Rd))^2$, we have
\baqs
    \frac{1}{p}\frac{\dd}{\dd \tau}(F_\rhoup[\jup+\tau\bar{\varjup}])^p\Big|_{\tau=0}=l_\rhoup(\jup)[\bar{\varjup}].
\eaqs
\end{proposition}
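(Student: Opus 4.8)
Since $(F_\rhoup(\jup))^p=\A_{m,\betaup}(\mu;\rhoup,\jup)$ by Definition~\ref{def:finsler_metric} and $\A_{m,\betaup}(\mu;\rhoup,\cdot)$ extends to all of $(\Mloc(G))^2$, the plan is to differentiate $g(\tau):=\A_{m,\betaup}(\mu;\rhoup,\jup+\tau\bar{\varjup})$ at $\tau=0$ and show $g'(0)=p\,l_\rhoup(\jup)[\bar{\varjup}]$; dividing by $p$ then yields the assertion. First I would record that, $\jup,\bar{\varjup}\in T_\rhoup(\mathcal{P}_p(\Rd))^2$ having finite action, the positive $p$-homogeneity of $\A_{m,\betaup}$ in the flux together with its convexity (a special case of Lemma~\ref{lem:convexity of the action}) makes $g$ finite and convex on a neighbourhood $[-\tau_0,\tau_0]$ of $0$; in particular its one-sided derivatives at $0$ exist.

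The core step is a pointwise computation of the $\tau$-derivative of the integrand. In the case $R=S=\infty$, by \eqref{eq_def:alpha}, \eqref{eq_def:A_bar_case:R=S=infty} and \eqref{eq_def:A}, $g(\tau)$ is the integral over $G$ against $\eta\,\dd(\mu\otimes\mu)$ of
\[
\frac{1}{2}\sum_{i=1}^2\frac{1}{\beta^{(i)}}\left[\frac{\big((j^{(i)}+\tau\bar{\jmath}^{(i)})_+\big)^p}{\mathfrak{m}^{(i)}(x,y)^{p-1}}+\frac{\big((j^{(i)}+\tau\bar{\jmath}^{(i)})_-\big)^p}{\mathfrak{m}^{(i)}(y,x)^{p-1}}\right]
\]
plus the analogous term with $\mathfrak{m}^{(i)}_\infty$, the singular parts $j^{(i)\perp},\bar{\jmath}^{(i)\perp}$ and $\eta\,\dd\varsigma^{(i)}$, where $\varsigma^{(i)}$ is as in Lemma~\ref{lem:connection_flux_velocity}. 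Because $j^{(i)}\ne0$ $\eta(\mu\otimes\mu)$-a.e.\ and $j^{(i)\perp}\ne0$ $\eta\varsigma^{(i)}$-a.e., the domain splits, up to a null set, into $\{j^{(i)}>0\}$ and $\{j^{(i)}<0\}$ (likewise for $j^{(i)\perp}$); on the first set $(j^{(i)}+\tau\bar{\jmath}^{(i)})_-\equiv0$ and on the second $(j^{(i)}+\tau\bar{\jmath}^{(i)})_+\equiv0$ for $|\tau|$ small, so $z\mapsto z_+^p$, $z\mapsto z_-^p$ are smooth at the relevant base points and, using the complementarity $j^{(i)}_+j^{(i)}_-=0$, the $\tau$-derivative of the bracket at $0$ equals
\[
p\,\bar{\jmath}^{(i)}\left(\frac{j^{(i)}_+}{\mathfrak{m}^{(i)}(x,y)}-\frac{j^{(i)}_-}{\mathfrak{m}^{(i)}(y,x)}\right)^{p-1},
\]
the right factor being $\big(j^{(i)}_+/\mathfrak{m}^{(i)}(x,y)\big)^{p-1}$ on $\{j^{(i)}>0\}$ and $-\big(j^{(i)}_-/\mathfrak{m}^{(i)}(y,x)\big)^{p-1}$ on $\{j^{(i)}<0\}$; the recession part gives the same with $\mathfrak{m}^{(i)}_\infty$ replacing $\mathfrak{m}^{(i)}$. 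By Definition~\ref{def:finsler_metric} this is precisely $p$ times the integrand of $l_\rhoup(\jup)[\bar{\varjup}]$.

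It remains to interchange derivative and integral. For fixed $(x,y)$ the integrand $g_{x,y}$ is convex in $\tau$, so its difference quotients are monotone in $\tau$ and, for $0<|\tau|\le\tau_0$, bounded in absolute value by $\tau_0^{-1}\big(g_{x,y}(\tau_0)+g_{x,y}(-\tau_0)+2g_{x,y}(0)\big)$, which is integrable by the first paragraph; dominated convergence (applied to the singular contribution as well, with $\varsigma^{(i)}$ replacing $\mu\otimes\mu$) then gives $g'(0)=p\,l_\rhoup(\jup)[\bar{\varjup}]$, i.e.\ the claim after division by $p$. The case $R\land S<\infty$ is handled identically, discarding the recession terms and using $\rhoup^\perp=\jup^\perp=\bar{\varjup}^\perp=0$. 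I expect the delicate point to be exactly this passage to the limit: it rests on $g$ remaining finite on a two-sided neighbourhood of $0$, which supplies the integrable majorant, and on the a.e.\ differentiability in $\tau$ of $z\mapsto z_+^p$, which is what the non-vanishing hypotheses $j^{(i)}\ne0$, $j^{(i)\perp}\ne0$ buy us; both are secured by the convexity and homogeneity of the action together with those hypotheses.
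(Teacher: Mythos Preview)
Your approach---differentiate the integrand pointwise using that $j^{(i)}\ne 0$ a.e.\ removes the kink of $z\mapsto z_\pm^p$, then pass the derivative inside via dominated convergence with a convexity bound---is exactly the route the paper sketches (a Taylor expansion of $(a+\tau b)^p$ and a reference to \cite[Appendix~A]{EPSS2021}). Your write-up is in fact considerably more detailed than the paper's.

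There is, however, a genuine gap in your justification that $g(\tau)=\A_{m,\betaup}(\mu;\rhoup,\jup+\tau\bar{\varjup})$ is finite on a \emph{two-sided} neighbourhood $[-\tau_0,\tau_0]$. Convexity together with \emph{positive} $p$-homogeneity only gives finiteness for $\tau\ge 0$: one can write $\jup+\tau_0\bar{\varjup}=\tfrac12(2\jup)+\tfrac12(2\tau_0\bar{\varjup})$ and use $\A_{m,\betaup}(\mu;\rhoup,2\jup)=2^p\A_{m,\betaup}(\mu;\rhoup,\jup)<\infty$, etc. For $\tau<0$ the analogous step would require $\A_{m,\betaup}(\mu;\rhoup,-\bar{\varjup})<\infty$, but this is \emph{not} guaranteed in the present Finslerian setting. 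Indeed, membership in $\mathcal{M}^{\mathrm{as}}_{\eta\gamma_1}$ forces $\bar{\jmath}^{(i)}_+=0$ on $\{\mathfrak m^{(i)}=0\}$, but says nothing about $\bar{\jmath}^{(i)}_-$ there; the integrand of $\A_{m,\betaup}(\mu;\rhoup,-\bar{\varjup})$ contains $(\bar{\jmath}^{(i)}_-)^p/(\mathfrak m^{(i)})^{p-1}$, which can be $+\infty$ on a set of positive $\eta(\mu\otimes\mu)$-measure. Concretely, on $\{\mathfrak m^{(i)}=0\}$ one has $j^{(i)}<0$ (by the hypothesis $j^{(i)}\ne0$) and $\bar{\jmath}^{(i)}\le 0$, so $(\jup+\tau\bar{\varjup})^{(i)}$ can become positive for any fixed $\tau<0$ on $\{\,|\bar{\jmath}^{(i)}|>|j^{(i)}|/|\tau|\,\}$, which need not be null. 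Thus your integrable majorant $\tau_0^{-1}(g_{x,y}(\tau_0)+g_{x,y}(-\tau_0)+2g_{x,y}(0))$ may fail to be integrable.

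A simple repair, still within your framework: treat $\tau\to 0^+$ and $\tau\to 0^-$ separately. For $\tau\downarrow 0$ the convexity of $g_{x,y}$ gives the two-sided pointwise bound
\[
p\,\bar{\jmath}^{(i)}\Big(\tfrac{j^{(i)}_+}{\mathfrak m^{(i)}}-\tfrac{j^{(i)}_-}{(\mathfrak m^{(i)})^\top}\Big)^{p-1}
\;\le\;\frac{g_{x,y}(\tau)-g_{x,y}(0)}{\tau}\;\le\;\frac{g_{x,y}(\tau_0)-g_{x,y}(0)}{\tau_0},
\]
and both bounds are integrable: the upper one by finiteness of $g(\tau_0)$ and $g(0)$, the lower one because it is (up to constants) the integrand of $l_\rhoup(\jup)[\bar{\varjup}]$, which is finite by Lemma~\ref{lem:Holder-type_inequality_for_l}. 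Dominated convergence then yields the right derivative. For $\tau\uparrow 0$ the difference quotients increase monotonically to the same pointwise limit and are bounded above by it; monotone convergence (applied to the nonnegative sequence obtained after subtracting the limit) gives the left derivative, provided one first checks $g(\tau)<\infty$ for \emph{some} $\tau<0$ or argues directly via Fatou. This last point is precisely where care is needed and where the paper defers to \cite{EPSS2021}.
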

\begin{proof}
Recalling that $(a+\tau b)^p = \sum_{k=0}^\infty\binom{p}{k} a^{p-k}\tau^kb^k = a^p+ pa^{p-1}\tau b +\mathcal{O}(\tau^2)$ and that the different species as well as the absolutely continuous and the singular parts can be treated individually, this follows as in \cite[Appendix A]{EPSS2021}.
\end{proof}
\begin{definition}[Differential and metric gradient]\label{def:diff and grad}
Given a functional $\mathcal{F}:(\mathcal{P}(\Rd))^2\to\R\cup\{+\infty\}$, we define its \emph{differential} at $\rhoup \in (\mathcal{P}_p(\Rd))^2$ in direction $\bar{\varjup} \in T_\rhoup(\mathcal{P}_p(\Rd))^2$ by
\baqs
\diff \mathcal{F}(\rhoup)[\bar{\varjup}] \coloneqq \frac{\dd}{\dd t}\mathcal{F}(\tilde{\rhoup}_t)\Big|_{t= 0},
\eaqs
where $\tilde{\rhoup}_t$ solves $\frac{\dd}{\dd t}\tilde{\rhoup}_t = -\babla\cdot \bar{\varjup}$ on a small interval according to Definition \ref{def:cont eq} and satisfies $\tilde{\rhoup}_0=\rhoup$.

We further define the metric gradient (if it exists) via the equation
\baqs
\diff \mathcal{F}(\rhoup)[\bar{\varjup}] = l_\rhoup(\grad \mathcal{F}(\rhoup))[\bar{\varjup}],\qquad \text{for any } \bar{\varjup} \in T_\rhoup(\mathcal{P}_p(\Rd))^2.
\eaqs
\end{definition}
\begin{theorem}[Uniqueness of the gradient]\label{thm:grad unique}
Given a functional $\mathcal{F}:(\mathcal{P}(\Rd))^2\to\R\cup\{+\infty\}$, if for $\rhoup \in (\mathcal{P}_p(\Rd))^2$ the differential $\diff \mathcal{F}(\rhoup)$ exists, then it is unique.
\end{theorem}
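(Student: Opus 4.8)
The plan is to reduce the uniqueness of $\diff\mathcal{F}(\rhoup)$ to the uniqueness of the curve $\tilde{\rhoup}_t$ that enters its definition. For fixed $\rhoup\in(\mathcal{P}_p(\Rd))^2$ and fixed $\bar{\varjup}\in T_\rhoup(\mathcal{P}_p(\Rd))^2$, every ingredient of the formula $\diff\mathcal{F}(\rhoup)[\bar{\varjup}]=\frac{\dd}{\dd t}\mathcal{F}(\tilde{\rhoup}_t)\big|_{t=0}$ is manifestly determined by $\rhoup$, $\bar{\varjup}$ and $\mathcal{F}$ except for the curve $\tilde{\rhoup}_t$; hence it suffices to show that $\tilde{\rhoup}_t$ is unique, after which $\frac{\dd}{\dd t}\mathcal{F}(\tilde{\rhoup}_t)\big|_{t=0}$ is unambiguous whenever it exists, so the map $\bar{\varjup}\mapsto\diff\mathcal{F}(\rhoup)[\bar{\varjup}]$ on $T_\rhoup(\mathcal{P}_p(\Rd))^2$ is uniquely pinned down.

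For the uniqueness of the curve I would fix $\bar{\varjup}=(\bar{\jmath}^{(1)},\bar{\jmath}^{(2)})$ and take any $\tilde{\bs\rhoup}$ as in Definition~\ref{def:diff and grad}, that is, a weakly continuous curve in $(\mathcal{P}(\Rd))^2$ on some interval $[0,\varepsilon]$ with $\tilde{\rhoup}_0=\rhoup$ which, together with the time-independent flux $\bar{\varjup}$, solves the continuity equation in the sense of Definition~\ref{def:cont eq}. (Such a curve satisfies the integrability condition \eqref{eq:integrability}, so that Corollary~\ref{cor:bound_by_A} and Lemma~\ref{lem:continuous representative} are available, although the weak continuity already built into Definition~\ref{def:cont eq} will suffice.) Testing \eqref{eq:cont} with product test functions $\varphi(x)\psi(t)$, where $\varphi\in C^\infty_c(\Rd)$ and $\psi\in C^\infty_c((0,\varepsilon))$, and using the definition of $\babla\cdot$ to rewrite $\tfrac12\iint_G\babla\varphi\,\eta\,\dd\bar{\jmath}^{(i)}=-\int_\Rd\varphi\,\dd(\babla\cdot\bar{\jmath}^{(i)})$, one arrives at
\baqs
\int_0^\varepsilon\psi'(t)\,\Big(\int_\Rd\varphi\,\dd\tilde{\rho}^{(i)}_t\Big)\,\dd t=\int_0^\varepsilon\psi(t)\,\Big(\int_\Rd\varphi\,\dd(\babla\cdot\bar{\jmath}^{(i)})\Big)\,\dd t,\qquad i=1,2.
\eaqs
Since the inner integral on the right does not depend on $t$ and $t\mapsto\int_\Rd\varphi\,\dd\tilde{\rho}^{(i)}_t$ is continuous, this forces it to be affine in $t$, with slope $-\int_\Rd\varphi\,\dd(\babla\cdot\bar{\jmath}^{(i)})$ and value $\int_\Rd\varphi\,\dd\rho^{(i)}$ at $t=0$; letting $\varphi$ range over $C^\infty_c(\Rd)$ then gives
\baqs
\tilde{\rho}^{(i)}_t=\rho^{(i)}-t\,\babla\cdot\bar{\jmath}^{(i)}\qquad\text{for all }t\in[0,\varepsilon],\ i=1,2,
\eaqs
so $\tilde{\bs\rhoup}$ is completely determined by $\rhoup$ and $\bar{\varjup}$ (and two such curves defined on different intervals agree on the overlap by the same computation, so the one-sided derivative at $0$ is unaffected).

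Putting the two steps together finishes the argument: $t\mapsto\mathcal{F}(\tilde{\rhoup}_t)$ is a single well-defined function near $0$, so under the standing hypothesis that $\diff\mathcal{F}(\rhoup)$ exists --- that is, that this function is (one-sidedly) differentiable at $0$ for every admissible $\bar{\varjup}$ --- its derivative at $0$ is a uniquely determined real number for each $\bar{\varjup}$, whence $\diff\mathcal{F}(\rhoup)$ is unique. I do not expect a serious obstacle; the only point requiring a little care is the elementary distributional argument that the linear nonlocal continuity equation with a prescribed, time-independent flux admits at most one solution issuing from $\rhoup$, the remainder being bookkeeping. Finally, if one also wants the uniqueness of the metric gradient $\grad\mathcal{F}(\rhoup)$ suggested by the theorem's title, it follows by combining the above with the injectivity of $\jup\mapsto l_\rhoup(\jup)[\,\cdot\,]$ on $T_\rhoup(\mathcal{P}_p(\Rd))^2$, which can be extracted from the equality case in Lemma~\ref{lem:Holder-type_inequality_for_l}.
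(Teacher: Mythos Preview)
Your argument correctly establishes that the differential is well-defined, i.e.\ independent of the curve $\tilde{\rhoup}_t$ appearing in Definition~\ref{def:diff and grad}: since the flux is time-independent, the weak continuity equation forces $\tilde{\rho}^{(i)}_t=\rho^{(i)}-t\,\babla\cdot\bar{\jmath}^{(i)}$, so there is only one admissible curve. This is a legitimate reading of the statement as literally written.

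The paper, however, proves the result announced in the \emph{title}: uniqueness of the metric gradient $\grad\mathcal{F}(\rhoup)$. Since the gradient is defined implicitly by $l_\rhoup(\grad\mathcal{F}(\rhoup))[\,\cdot\,]=\diff\mathcal{F}(\rhoup)[\,\cdot\,]$, its uniqueness is equivalent to the injectivity of $\jup\mapsto l_\rhoup(\jup)$ on $T_\rhoup(\mathcal{P}_p(\Rd))^2$. The paper shows this directly: assuming $l_\rhoup(\jup)=l_\rhoup(\tilde{\varjup})$ as functionals, it applies the H\"older-type inequality \eqref{eq:Holder} in both directions to get $l_\rhoup(\jup)[\jup]=l_\rhoup(\tilde{\varjup})[\tilde{\varjup}]$, so equality holds in \eqref{eq:Holder}, whence $\jup=C\tilde{\varjup}$ for some $C\ge 0$ by the equality case of Lemma~\ref{lem:Holder-type_inequality_for_l}, and positive $1$-homogeneity then forces $C=1$.

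You anticipate exactly this in your closing paragraph, and the sketch you give there is the paper's argument. So all the content is present, but the emphasis is inverted: what you treat as the main result is a well-definedness check (useful, but elementary once one unwinds the continuity equation with constant flux), while what you relegate to a remark is the substantive point. Given the mismatch between the theorem's title and its wording your reading is understandable; the paper's proof makes clear that ``it'' is meant to refer to the gradient, so the statement should probably read ``if the metric gradient exists, then it is unique.''
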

\begin{proof}
Similar to \cite[Subsection 3.1]{EPSS2021}, the uniqueness of the gradient is an immediate consequence of the injectivity of the map $j\mapsto l_\rhoup(\jup)[\bar{\varjup}]$ for given $\bar{\varjup}\in T_\rhoup(\mathcal{P}_p(\Rd))^2$. To show this injectivity, let $\jup,\tilde{\varjup} \in T_\rhoup(\mathcal{P}_p(\Rd))^2$ with $l_\rhoup(\jup) =l_\rhoup(\tilde{\varjup})$. If either $j^{(i)} = 0$ or $\tilde{\jmath}^{(i)} = 0$, $\eta(\mu\otimes\mu)$-a.e., then $l_\rhoup(\jup) =l_\rhoup(\tilde{\varjup})$ implies $j^{(i)} = \tilde{\jmath}^{(i)} = 0$, $\eta(\mu\otimes\mu)$-a.e. Similarly, if either $j^{(i)\perp} = 0$ or $\tilde{\jmath}^{(i)\perp} = 0$, $\eta\varsigma^{(i)}$-a.e., then $l_\rhoup(\jup) =l_\rhoup(\tilde{\varjup})$ implies $j^{(i)\perp} = \tilde{\jmath}^{(i)\perp} = 0$, $\eta\varsigma^{(i)}$-a.e. Now, for at least one $i\in\{1,2\}$ let $j^{(i)}(A) \ne 0$ for some $A\subset G$ with $\mu\otimes\mu(A)>0$ or $j^{(i)\perp}(B)\ne 0$ for some $B\subset G$ with $\varsigma^{(i)}(B)>0$, and let $\tilde{\jmath}^{(i)}(\tilde{A}) \ne 0$ for some $\tilde{A}\subset G$ with $\mu\otimes\mu(\tilde{A})>0$ or $\tilde{\jmath}^{(i)\perp}(\tilde{B})\ne 0$ for some $\tilde{B}\subset G$ with $\varsigma^{(i)}(\tilde{B})>0$. Then, by \eqref{eq:Holder} we obtain
\baqs
0 < l_\rhoup(\jup)[\jup] = l_\rhoup(\tilde{\jmath})[\jup] \leq (l_\rhoup(\tilde{\jmath})[\tilde{\jmath}])^{1/p}(l_\rhoup(\jup)[\jup])^{1/q},
\eaqs
which gives us $l_\rhoup(\jup)[\jup] \leq l_\rhoup(\tilde{\jmath})[\tilde{\jmath}]$. Inverting the roles of $\jup$ and $\tilde{\varjup}$, we obtain $l_\rhoup(\tilde{\varjup})[\tilde{\varjup}] \le l_\rhoup(\jup)[\jup]$, i.e. we have $l_\rhoup(\jup)[\jup] = l_\rhoup(\tilde{\varjup})[\tilde{\varjup}]$. This implies equality in the H{\"o}lder-type inequality \eqref{eq:Holder}, thus yielding $j^{(i)} = C\tilde{\jmath}^{(i)}$, $\eta(\mu\otimes\mu)$-a.e. and $j^{(i)\perp} = C \tilde{\jmath}^{(i)\perp}$, $\eta\varsigma^{(i)}$-a.e. for some $C \geq0$ and both $i=1,2$. Using the positive $1$-homogeneity of $l_\rhoup$ we obtain $l_\rhoup(\jup) = l_\rhoup(C \tilde{\varjup}) = C l_\rhoup(\tilde{\varjup})= C l_\rhoup(\jup)$, which yields $C =1$ since $l_\rhoup(\jup)[\jup] \neq 0$. This proves the claimed injectivity.
\end{proof}
Since the map $\jup \mapsto l_\rhoup(\jup)[\bar{\varjup}]$ is not antisymmetric, i.e. $l_\rhoup(\jup)[\bar{\varjup}] \ne - l_\rhoup(-\jup)[\bar{\varjup}]$, we separately need to define the negative gradient:
\begin{definition}[Negative metric gradient]
Given $\mathcal{F}:(\mathcal{P}(\Rd))^2\to\R$ and $\rhoup \in (\mathcal{P}_p(\Rd))^2$, we define the \emph{negative metric gradient} of $\mathcal{F}$ at $\rhoup$ by
\baqs
    l_\rhoup(\grad^-\mathcal{F}(\rhoup))[\bar{\varjup}] \coloneqq -\diff \mathcal{F}(\rhoup)[\bar{\varjup}], \qquad \text{for all } \bar{\varjup} \in T_\rhoup(\mathcal{P}_p(\Rd))^2.
\eaqs
\end{definition}
Since $l_\rhoup(\cdot)$ is not antisymmetric, in general $\grad^-\mathcal{F}(\rhoup) \neq -\grad\mathcal{F}(\rhoup)$.
To define the (unique) direction of steepest descent at $\rhoup$ we use the following criterion, as in the Riemannian case:
\begin{definition}
Given $\rhoup$ and $\mathcal{F}$ such that $\diff \mathcal{F}(\rhoup)\neq 0$, we define the direction of steepest descent as
\baq\label{eq:gradE minimization}
\jup^\ast :=  \argmin \left\{\diff\mathcal{F}(\rhoup)[\bar{\varjup}]\,\big|\, \bar{\varjup}\in T_\rhoup(\mathcal{P}_p(\Rd))^2, \,\text{s.t.}\, l_\rhoup(\bar{\varjup})[\bar{\varjup}] = 1\right\},
\eaq
if it exists.
\end{definition}
Note that $\diff \mathcal{F}(\rhoup)=0$ implies $\grad^-\mathcal{F}(\rhoup)=0$. Otherwise, the negative metric gradient determines the direction of steepest descent, as the next lemma shows.
\begin{lemma}\label{lem:neg grad steepest descent}
Let $\jup^\ast$ be as in \eqref{eq:gradE minimization}. Then, there exists $C>0$ such that $\jup^\ast = C\grad^-\mathcal{F}(\rhoup)$ holds.
\end{lemma}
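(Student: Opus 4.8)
The plan is to argue exactly as one shows in a Hilbert space that the negative gradient is the direction of steepest descent, with the Hölder-type inequality of Lemma~\ref{lem:Holder-type_inequality_for_l} playing the role of Cauchy--Schwarz. First I would abbreviate $\mathrm g \coloneqq \grad^-\mathcal{F}(\rhoup)$ and record that $\mathrm g \neq 0$: since $\jup^\ast$ is only defined when $\diff\mathcal{F}(\rhoup)\neq 0$, the defining identity $l_\rhoup(\mathrm g)[\bar{\varjup}] = -\diff\mathcal{F}(\rhoup)[\bar{\varjup}]$ cannot hold with $\mathrm g = 0$, so by the positivity part of Theorem~\ref{thm:F_ist_Minkowski-norm} we get $l_\rhoup(\mathrm g)[\mathrm g] = F_\rhoup(\mathrm g)^p > 0$. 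Using that same identity, the minimization \eqref{eq:gradE minimization} is rewritten as the maximization of $\bar{\varjup}\mapsto l_\rhoup(\mathrm g)[\bar{\varjup}]$ over the set $\{\bar{\varjup}\in T_\rhoup(\mathcal{P}_p(\Rd))^2 : l_\rhoup(\bar{\varjup})[\bar{\varjup}]=1\}$.

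Next I would apply Lemma~\ref{lem:Holder-type_inequality_for_l} with $\mathrm g$ in the first slot: for every admissible $\bar{\varjup}$ it gives $l_\rhoup(\mathrm g)[\bar{\varjup}] \le (l_\rhoup(\bar{\varjup})[\bar{\varjup}])^{1/p}(l_\rhoup(\mathrm g)[\mathrm g])^{1/q} = (l_\rhoup(\mathrm g)[\mathrm g])^{1/q}$, with equality precisely when $\bar{\varjup}$ is a nonnegative multiple $\lambda\mathrm g$ of $\mathrm g$ (componentwise, $\eta(\mu\otimes\mu)$-a.e.\ and, if $R=S=\infty$, $\eta\varsigma^{(i)}$-a.e., as in the equality case of the lemma). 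Since $l_\rhoup(\cdot)[\bar{\varjup}]$ is positively $(p-1)$-homogeneous and $l_\rhoup(\jup)[\cdot]$ is linear, $l_\rhoup(\lambda\mathrm g)[\lambda\mathrm g] = \lambda^p l_\rhoup(\mathrm g)[\mathrm g]$, so the normalization constraint pins down $\lambda$ uniquely as $C \coloneqq (l_\rhoup(\mathrm g)[\mathrm g])^{-1/p} = F_\rhoup(\mathrm g)^{-1} > 0$ (the value $\lambda=0$ is ruled out by the constraint). Hence the minimum of $\diff\mathcal{F}(\rhoup)[\cdot] = -l_\rhoup(\mathrm g)[\cdot]$ over the admissible set equals $-(l_\rhoup(\mathrm g)[\mathrm g])^{1/q}$ and is attained at, and only at, $\bar{\varjup}=C\mathrm g$; therefore $\jup^\ast = C\mathrm g = C\grad^-\mathcal{F}(\rhoup)$.

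The one point I would make sure to check carefully is that the candidate $C\mathrm g$ is itself admissible, i.e.\ lies in $T_\rhoup(\mathcal{P}_p(\Rd))^2$; this is the only step requiring a short separate verification. It holds because $T_\rhoup(\mathcal{P}_p(\Rd))^2$ is a cone: multiplication by a positive scalar preserves membership in $(\mathcal{M}^\mathrm{as}_{\eta\gamma_1}(G))^2$, and — $\rhoup$ being fixed — the functional $\A_{m,\betaup}(\mu;\rhoup,\cdot)$ is positively $p$-homogeneous while $\mathcal{M}_\mathrm{div}(G)$ is a linear space, so the minimality condition in \eqref{eq_def:T_rho} is scale invariant. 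Beyond this bookkeeping I do not anticipate a genuine obstacle: all the substance sits in Lemma~\ref{lem:Holder-type_inequality_for_l}, and in particular in its sharp equality characterization, which is precisely what upgrades "$\jup^\ast$ is a minimizer" to "$\jup^\ast$ is the unique minimizer $C\mathrm g$"; the rest is homogeneity accounting.
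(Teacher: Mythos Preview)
Your argument is correct and takes a genuinely different route from the paper. The paper proceeds via a Lagrange multiplier computation: it introduces $\mathcal{H}(C,\jup)=\diff\mathcal{F}(\rhoup)[\jup]+\tfrac{C}{p}(l_\rhoup(\jup)[\jup]-1)$, uses Proposition~\ref{prop:l_derives_from_F} to identify the first-order condition $\diff\mathcal{F}(\rhoup)[\cdot]=-C^\ast l_\rhoup(\jup^\ast)[\cdot]$, and then invokes the injectivity of $l_\rhoup$ (from the proof of Theorem~\ref{thm:grad unique}) together with positive homogeneity to conclude. Your approach bypasses both the Lagrange multiplier step and Proposition~\ref{prop:l_derives_from_F} entirely, relying instead on the sharp equality case of Lemma~\ref{lem:Holder-type_inequality_for_l}; this is more elementary and sidesteps the somewhat formal variational argument in infinite dimensions. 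It also yields uniqueness of $\jup^\ast$ for free. The one trade-off is that you take the existence of $\mathrm g=\grad^-\mathcal{F}(\rhoup)$ as an input, whereas the paper's optimality condition effectively \emph{produces} the gradient from the minimizer $\jup^\ast$ (since $l_\rhoup(\jup^\ast)$ is shown to equal a scalar multiple of $-\diff\mathcal{F}(\rhoup)$); in the logical flow of the paper, where existence of the gradient is only established afterwards (Theorem~\ref{thm:grad existence}), this is a mild extra assumption on your side, but it is harmless given how the lemma is actually used.
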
 
\begin{proof}
We argue similar to \cite[Subsection 3.1]{EPSS2021} and start by adding the constraint of the optimization problem with Lagrange multiplier $C\in\R$. For $\jup\in T_\rhoup(\mathcal{P}_p(\Rd))^2$ we define the functional
\baqs
    \mathcal{H}(C,\jup) \coloneqq \diff\mathcal{F}(\rhoup)[\jup]+\frac{C}{p}(l_\rhoup(\jup)[\jup]-1).
\eaqs
We employ that, by Proposition \ref{prop:l_derives_from_F}, for any $\jup, \bar{\varjup}\in T_\rhoup(\mathcal{P}_p(\Rd))^2$, we have the equality
\baqs
    \frac{\dd}{\dd \tau} l_\rhoup(\jup+ \tau\bar{\varjup})[\jup+ \tau\bar{\varjup}]\Big|_{\tau=0}=p l_\rhoup(\jup)[\bar{\varjup}].
\eaqs
Then, using the linearity of the differential $\diff\mathcal{F}(\rhoup)$, any minimizer $(C^\ast, \jup^\ast)$ of $\mathcal{H}$ must satisfy the condition
\baqs
    \diff\mathcal{F}(\rhoup)[\cdot] = -C^\ast l_{\rhoup}(\jup^\ast)[\cdot].
\eaqs
By the linearity of the map $\jup \mapsto -C^\ast l_{\rhoup}(j^\ast)[\jup]$, and the symmetry of the constraint, we find $0>\diff\mathcal{F}(\rhoup)[j^\ast]=-C^\ast l_\rhoup(\jup^\ast)[j^\ast]$, which implies $C^\ast> 0$. Thus, the previously proven injectivity and positive 1-homogeneity of $l_\rhoup$ yield
\baqs
j^\ast =l_\rhoup^{-1}\left(-\frac{1}{C^\ast}\diff\mathcal{F}(\rhoup)\right)=\frac{1}{C^\ast}l_\rhoup^{-1}(-\diff\mathcal{F}(\rhoup))=\frac{1}{C^\ast}\grad^-\mathcal{F}(\rhoup).
\eaqs
\end{proof}
In light of Lemma \ref{lem:neg grad steepest descent}, it makes sense to write metric gradient flows with respect to $\mathcal{F}$ in the Finsler space $((\mathcal{P}_p(\Rd))^2,\mathcal{T}_\betaup)$ as
\baqs
\partial_t\rhoup_t=\babla\cdot\grad^-\mathcal{F}(\rhoup_t).
\eaqs
Since the previous considerations did not use any specific structure of $\mathcal{F}$, they stay valid for general functionals $\mathcal{F}:(\mathcal{P}_p(\Rd))^2\to\R \cup \{+\infty\}$. However, even though by Theorem \ref{thm:grad unique} we know that the (negative) metric gradient of a functional is unique, we have yet to show its existence. For the case where $\mathcal{F}$ is the nonlocal cross-interaction energy \eqref{eq:Energy}, the following theorem ensures existence.
\begin{theorem}[Existence of the negative metric gradient for the nonlocal cross-interaction energy]\label{thm:grad existence}
Let $\mathcal{E}$ be the nonlocal cross-interaction energy. Then, for any $\rhoup\in(\mathcal{P}_p(\Rd))^2$ and $\eta(\mu\otimes\mu)$-a.e. the negative metric gradient $\dd\grad^-\mathcal{E}(\rhoup)=\sum_{i=1}^2(\grad^-\mathcal{E}(\rhoup))^{(i)}\dd(\mu\otimes\mu)+(\grad^-\mathcal{E}(\rhoup))^{(i)\perp}\dd\varsigma^{(i)}$ is given for $i=1,2$ by
\baq\label{eq_def:grad-}
    (\grad^-\mathcal{E}(\rhoup))^{(i)} &= \big(\mathfrak{m}^{(i)}\big)^\top\big((-\beta^{(i)}\babla\delta_{\rho^{(i)}}\mathcal{E}(\rhoup))_-\big)^{q-1} 
    -\mathfrak{m}^{(i)}\big((-\beta^{(i)}\babla\delta_{\rho^{(i)}}\mathcal{E}(\rhoup))_+\big)^{q-1},\\
    (\grad^-\mathcal{E}(\rhoup))^{(i)\perp} &= \big(\mathfrak{m}_\infty^{(i)}\big)^\top\big((-\beta^{(i)}\babla\delta_{\rho^{(i)}}\mathcal{E}(\rhoup))_-\big)^{q-1}-\mathfrak{m}_\infty^{(i)}\big((-\beta^{(i)}\babla\delta_{\rho^{(i)}}\mathcal{E}(\rhoup))_+\big)^{q-1},
\eaq
if $R=S=\infty$. If $R\land S<\infty$, then we have $(\grad^-\mathcal{E}(\rhoup))^{(i)\perp}=0$ for $i=1,2$.
\end{theorem}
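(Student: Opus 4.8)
The plan is to identify the candidate $\grad^-\mathcal{E}(\rhoup)$ by inverting the defining relation $l_\rhoup(\grad^-\mathcal{E}(\rhoup))[\,\cdot\,]=-\diff\mathcal{E}(\rhoup)[\,\cdot\,]$, and then to check the three things this requires: that $\diff\mathcal{E}(\rhoup)$ exists, that the candidate lies in $T_\rhoup(\mathcal{P}_p(\Rd))^2$, and that the identity holds against every $\bar{\varjup}\in T_\rhoup(\mathcal{P}_p(\Rd))^2$. Uniqueness then follows from Theorem \ref{thm:grad unique}.

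First I would compute the differential. Using the symmetry \eqref{K2} and the normalization $K^{(12)}=K^{(21)}$, one gets $\delta_{\rho^{(i)}}\mathcal{E}(\rhoup)=K^{(i1)}\ast\rho^{(1)}+K^{(i2)}\ast\rho^{(2)}$, and \eqref{K3} yields the growth bound $|\babla\delta_{\rho^{(i)}}\mathcal{E}(\rhoup)(x,y)|\le 2L_K(|x-y|\lor|x-y|^p)$. For $\bar{\varjup}\in T_\rhoup(\mathcal{P}_p(\Rd))^2$ and a curve $\tilde{\rhoup}_t$ with $\tilde{\rhoup}_0=\rhoup$, $\partial_t\tilde{\rhoup}_t=-\babla\cdot\bar{\varjup}$, I would differentiate $t\mapsto\mathcal{E}(\tilde{\rhoup}_t)$ at $t=0$: the bilinear structure of $\mathcal{E}$ together with Proposition \ref{prop:Continuity of the energy} and dominated convergence lets one pass the derivative inside, and inserting the definition of the nonlocal divergence — justified by truncating the non-compactly-supported test function $\delta_{\rho^{(i)}}\mathcal{E}(\rhoup)$, the error being controlled through the growth bound above and Lemma \ref{lem:bound_by_A}/Corollary \ref{cor:bound_by_A} — gives
\[
\diff\mathcal{E}(\rhoup)[\bar{\varjup}]=\frac12\sum_{i=1}^2\iint_G\babla\delta_{\rho^{(i)}}\mathcal{E}(\rhoup)\,\eta\,\dd\bar{\jmath}^{(i)}.
\]
In particular $\diff\mathcal{E}(\rhoup)$ exists, so Theorem \ref{thm:grad unique} applies.

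Next I would read off the candidate. Matching the pairing against $\bar{\jmath}^{(i)}$ above with the explicit form of $l_\rhoup$ in Definition \ref{def:finsler_metric} — where $\bar{\jmath}^{(i)}$ is paired against $\big(j^{(i)}_+/\mathfrak{m}^{(i)}-j^{(i)}_-/(\mathfrak{m}^{(i)})^\top\big)^{p-1}$, resp.\ its singular analogue built from $\mathfrak{m}^{(i)}_\infty$ and $\varsigma^{(i)}$ — forces $\big(j^{(i)}_+/\mathfrak{m}^{(i)}-j^{(i)}_-/(\mathfrak{m}^{(i)})^\top\big)^{p-1}=-\beta^{(i)}\babla\delta_{\rho^{(i)}}\mathcal{E}(\rhoup)$; solving pointwise, using $(p-1)(q-1)=1$ and the usual sign convention for real powers, gives exactly \eqref{eq_def:grad-}, with the singular part forced to vanish when $R\land S<\infty$. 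Substituting this flux back into $l_\rhoup$, splitting into its $(\mu\otimes\mu)$- and $\varsigma^{(i)}$-parts and using the divergence formula once more, verifies $l_\rhoup(\grad^-\mathcal{E}(\rhoup))[\bar{\varjup}]=-\diff\mathcal{E}(\rhoup)[\bar{\varjup}]$ for all $\bar{\varjup}\in T_\rhoup(\mathcal{P}_p(\Rd))^2$. Finally I would check $\grad^-\mathcal{E}(\rhoup)\in T_\rhoup(\mathcal{P}_p(\Rd))^2$: antisymmetry and the absolute-continuity conditions $j^{(i)}_+\ll\eta\gamma_1^{(i)}$, $j^{(i)\perp}_+\ll\eta\gamma_1^{(i)\perp}$ are read off \eqref{eq_def:grad-}, while for the minimality in \eqref{eq_def:T_rho} I would combine convexity of $\A_{m,\betaup}(\mu;\rhoup,\cdot)$ (Lemma \ref{lem:convexity of the action}) with first-order optimality: for divergence-free $\dup$ the one-sided derivative of $\tau\mapsto\A_{m,\betaup}(\mu;\rhoup,\grad^-\mathcal{E}(\rhoup)+\tau\dup)$ at $0^+$ is (by Proposition \ref{prop:l_derives_from_F}, or directly since the flux-integrand is $C^1$ for $p>1$) a positive multiple of $l_\rhoup(\grad^-\mathcal{E}(\rhoup))[\dup]=-\diff\mathcal{E}(\rhoup)[\dup]$, which equals $\sum_i\int\delta_{\rho^{(i)}}\mathcal{E}(\rhoup)\,\dd(\babla\cdot d^{(i)})=0$ once the admissible test functions are enlarged from $C^\infty_c$ to $\delta_{\rho^{(i)}}\mathcal{E}(\rhoup)$ by truncation (using $\A_{m,\betaup}(\mu;\rhoup,\grad^-\mathcal{E}(\rhoup))<\infty$, which follows from $\rhoup\in(\mathcal{P}_p(\Rd))^2$ together with \eqref{MB2}, \eqref{MB} and the growth of $\babla\delta_{\rho^{(i)}}\mathcal{E}$). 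Convexity then gives $\A_{m,\betaup}(\mu;\rhoup,\grad^-\mathcal{E}(\rhoup))\le\A_{m,\betaup}(\mu;\rhoup,\grad^-\mathcal{E}(\rhoup)+\dup)$, so $\grad^-\mathcal{E}(\rhoup)$ is tangent; with the identity above and Theorem \ref{thm:grad unique} this is the unique negative metric gradient.

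The main difficulty is this last step — proving that the candidate flux is a minimizer in \eqref{eq_def:T_rho}, hence tangent. It forces careful handling of the unbounded test function $\delta_{\rho^{(i)}}\mathcal{E}(\rhoup)$ (truncation errors absorbed through Lemma \ref{lem:bound_by_A} and the moment bounds \eqref{MB2}, \eqref{MB}), of the singular parts via $\mathfrak{m}_\infty$ and $\varsigma^{(i)}$ in the case $R=S=\infty$, and a justification that $\A_{m,\betaup}$ is one-sidedly differentiable in the flux even where the flux vanishes on a set of positive $\eta(\mu\otimes\mu)$- or $\eta\varsigma^{(i)}$-measure (the case $p>1$ makes this manageable, as $j\mapsto(j_+)^p$ is then $C^1$). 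The differentiability of $t\mapsto\mathcal{E}(\tilde{\rhoup}_t)$ in the first step is a secondary technicality, essentially covered by the bilinearity of $\mathcal{E}$ and Proposition \ref{prop:Continuity of the energy}.
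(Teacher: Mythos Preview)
Your approach to computing the differential and identifying the candidate is essentially the same as the paper's: both compute $-\diff\mathcal{E}(\rhoup)[\bar{\varjup}]$ via the continuity equation (the paper uses Lemma \ref{lem:continuous representative} where you invoke Proposition \ref{prop:Continuity of the energy} and dominated convergence), obtain $\frac{1}{2}\sum_i\iint_G -\babla\delta_{\rho^{(i)}}\mathcal{E}(\rhoup)\,\eta\,\dd\bar{\jmath}^{(i)}$, and then rewrite this by multiplying and dividing by $\mathfrak{m}^{(i)}$ (resp.\ $\mathfrak{m}_\infty^{(i)}$) so that it manifestly equals $l_\rhoup(\text{candidate})[\bar{\varjup}]$.

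Where you diverge from the paper is in your final step: you explicitly verify that the candidate lies in $T_\rhoup(\mathcal{P}_p(\Rd))^2$, i.e.\ that it satisfies the minimality condition \eqref{eq_def:T_rho}. The paper's proof simply stops after the identification and does not address this point; Remark \ref{rem:grad^-E_in_dense_subset_of_T_rho} gestures at it for the cases $R\land S<\infty$ or $m_\infty\equiv 0$ via Proposition \ref{prop:tan_flux}, but no argument is given in general. Your argument here --- finiteness of $\A_{m,\betaup}(\mu;\rhoup,\grad^-\mathcal{E}(\rhoup))$ from \eqref{MB2}, \eqref{MB} and the growth bound, then convexity plus vanishing first-order term against divergence-free $\dup$ --- is the right shape. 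The one place to be careful is the truncation: you need $\iint_G\babla\delta_{\rho^{(i)}}\mathcal{E}(\rhoup)\,\eta\,\dd d^{(i)}=0$ for $\dup\in(\mathcal{M}_\mathrm{div}(G))^2$, but $\delta_{\rho^{(i)}}\mathcal{E}(\rhoup)\notin C^\infty_c$; this is fine once you observe that only those $\dup$ with $\A_{m,\betaup}(\mu;\rhoup,\grad^-\mathcal{E}(\rhoup)+\dup)<\infty$ matter (the inequality is trivial otherwise), and then Lemma \ref{lem:bound_by_A} applied to both $\grad^-\mathcal{E}(\rhoup)$ and $\grad^-\mathcal{E}(\rhoup)+\dup$ gives the integrability needed for the cutoff error to vanish. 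So your proposal is correct and in fact more complete than the paper's own proof on this point.
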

\begin{proof}
We calculate the differential according to Definition \ref{def:diff and grad}. For $\rhoup\in(\mathcal{P}_p(\Rd))^2$ and $\jup\in T_\rhoup(\mathcal{P}_p(\Rd))^2$ take any curve $\bs{\tilde{\rhoup}}\in\AC^p([0,T];((\mathcal{P}_p(\Rd))^2,\mathcal{T}_{m,\betaup,\mu}))$, s.t. $\tilde{\rhoup}_0 = \rhoup$ and $\frac{\dd}{\dd t}\tilde{\rhoup}_t =-\babla\cdot \bar{\varjup}_t$ according to Definition \ref{def:cont eq} and $\bar{\varjup}_0=\jup$. Then, using the equality $K^{(21)}=K^{(12)}$ and Lemma \ref{lem:continuous representative}, we find
\baqs
-\diff \mathcal{E}(\rhoup)[\jup] &= -\frac{\dd}{\dd t} \mathcal{E}(\tilde{\rho}_t)\Big|_{t=0} = 
-\lim_{\tau\to 0} \frac{\mathcal{E}(\tilde{\rhoup}_\tau)-\mathcal{E}(\tilde{\rhoup}_0)}{\tau} \\
&= -\lim_{\tau\to 0}\frac{1}{2\tau}\sum_{i,k=1}^2\Bigg[\int_\Rd (K^{(ik)}\ast\tilde{\rho}_\tau^{(k)})(x)\dd\tilde{\rho}_\tau^{(i)}(x)-\int_\Rd  (K^{(ik)}\ast\tilde{\rho}_0^{(k)})(x)\dd\tilde{\rho}_0^{(i)}(x)\Bigg]\\
&= -\lim_{\tau\to 0}\frac{1}{2\tau}\sum_{i,k=1}^2\Bigg[\int_0^\tau\frac{\dd}{\dd t}\iint_\RdRd K^{(ik)}(x,y)\dd\tilde{\rho}_t^{(k)}(y)\dd\tilde{\rho}_t^{(i)}(x)\dd t\Bigg]\\
&=-\frac{1}{2}\sum_{i,k=1}^2\iint_G\babla(K^{(ik)}\ast\rho^{(k)})(x,y)\eta(x,y)\dd j^{(i)}(x,y)\\
&= \frac{1}{2}\sum_{i,k=1}^2\iint_G -\babla(K^{(ik)}\ast\rho^{(k)})(x,y)\eta(x,y)\dd j^{(i)}(x,y).
\eaqs
%
Since $\delta_{\rho^{(1)}}\mathcal{E}(\rhoup) = K^{(11)}\ast\rho^{(1)}+K^{(12)}\ast\rho^{(2)}$ and $\delta_{\rho^{(2)}}\mathcal{E}(\rhoup) = K^{(22)}\ast\rho^{(2)}+K^{(21)}\ast\rho^{(1)}$, and $\beta^{(i)}>0$ for $i=1,2$, we rewrite this in terms of the negative metric gradient of the energy functional:
\baqs
-\diff \mathcal{E}(\rhoup)[\jup]&=\frac{1}{2}\sum_{i=1}^2\Bigg[\iint_G -\babla\delta_{\rho^{(i)}}\mathcal{E}(\rhoup)(x,y)\eta(x,y)j^{(i)}(x,y)\dd \mu(x)\dd\mu(y)\\
&\hphantom{=\frac{1}{2}\sum_{i=1}^2\Bigg[}+\iint_G -\babla\delta_{\rho^{(i)}}\mathcal{E}(\rhoup)(x,y)\eta(x,y)j^{(i)\perp}(x,y)\dd \varsigma^{(i)}(x,y)\Bigg]\\
&=\frac{1}{2}\sum_{i=1}^2\frac{1}{\beta^{(i)}}\iint_G j^{(i)}(x,y)\Bigg(\frac{\mathfrak{m}^{(i)}(x,y)(\beta^{(i)}(-\babla\delta_{\rho^{(i)}}\mathcal{E}(\rhoup))_+(x,y))^{q-1}}{\mathfrak{m}^{(i)}(x,y)}\\
&\hphantom{=\frac{1}{2}\sum_{i=1}^2\iint_G j^{(i)\perp}(x,y)\Bigg(} -\frac{\mathfrak{m}^{(i)}(y,x)(\beta^{(i)}(-\babla\delta_{\rho^{(i)}}\mathcal{E}(\rhoup))_-(x,y))^{q-1}}{\mathfrak{m}^{(i)}(y,x)}\Bigg)^{p-1}\eta(x,y)\dd \mu(x)\dd\mu(y)\\
&\hphantom{=\frac{1}{2}\sum_{i=1}^2\frac{1}{\beta^{(i)}}}+\iint_G j^{(i)\perp}(x,y)\Bigg(\frac{\mathfrak{m}_\infty^{(i)}(x,y)(\beta^{(i)}(-\babla\delta_{\rho^{(i)}}\mathcal{E}(\rhoup))_+(x,y))^{q-1}}{\mathfrak{m}_\infty^{(i)}(x,y)}\\
&\hphantom{=\frac{1}{2}\sum_{i=1}^2\iint_G j^{(i)\perp}(x,y)\Bigg(} -\frac{\mathfrak{m}_\infty^{(i)}(y,x)(\beta^{(i)}(-\babla\delta_{\rho^{(i)}}\mathcal{E}(\rhoup))_-(x,y))^{q-1}}{\mathfrak{m}_\infty^{(i)}(y,x)}\Bigg)^{p-1}\eta(x,y)\dd \varsigma^{(i)}(x,y).
\eaqs
Comparing this expression with the definition of the gradient,  \eqref{eq_def:grad-} follows.
\end{proof}
\begin{remark}\label{rem:grad^-E_in_dense_subset_of_T_rho}
For $R\land S<\infty$ or $m_\infty\equiv 0$ the structure of the negative gradient closely resembles the structure in \eqref{eq:dense flux structure} with $\varphi^{(i)} = -\beta^{(i)}\delta_{\rho^{(i)}}\mathcal{E}(\rhoup)$ since $-\beta^{(i)}\babla\delta_{\rho^{(i)}}\mathcal{E}(\rhoup)=-\babla\beta^{(i)}\delta_{\rho^{(i)}}\mathcal{E}(\rhoup)$.
\end{remark}

\subsection{Variational characterization for the nonlocal nonlocal cross-interaction equation}\label{Variational characterization for the nonlocal nonlocal cross-interaction equation}

We now want to characterize \eqref{eq:NL2CIE} as a gradient flow in the sense of curves of maximal slope and start by defining the one-sided strong upper gradient.
\begin{definition}\label{def:One-sided strong upper gradient}(One-sided strong upper gradient).
A function $h:(\mathcal{P}_p(\Rd))^2\to[0,\infty]$ is called a \emph{one-sided strong upper gradient} for $\mathcal{E}$ if for every $\bs\rhoup\in \AC^p([0,T];((\mathcal{P}_p(\Rd))^2,\mathcal{T}_{m,\betaup,\mu}))$ the function $h\circ\bs\rhoup:[0,T]\to[0,\infty]$ is measurable and we have
\baqs
\mathcal{E}(\rhoup_{t})-\mathcal{E}(\rhoup_{s})\geq -\int_{s}^{t} h(\rhoup_\tau)\abs{\rhoup'_\tau}\dd\tau, \qquad\text{for all }0\leq s\leq t\leq T.
\eaqs
As before $\abs{\rhoup'_\tau}$ denotes the metric derivative of $\rhoup_\tau$ with respect to $\mathcal{T}_{m,\betaup,\mu}$.
\end{definition}
The one-sided strong upper gradient is sufficient to characterize curves of maximal slope:
\begin{definition}\label{def:Curve of maximal slope}(Curve of maximal slope).
Given a strong one-sided upper gradient $h$ for $\mathcal{E}$, a curve $\bs\rhoup\in \AC^p([0,T];((\mathcal{P}_p(\Rd))^2,\mathcal{T}_{m,\betaup,\mu}))$ is called a \emph{curve of maximal slope} for $\mathcal{E}$ with respect to $h$ if and only if
\baq\label{eq_def:max slope}
\mathcal{E}(\rhoup_{t})-\mathcal{E}(\rhoup_{s})+\int_{s}^{t} \frac{1}{q}(h(\rhoup_\tau))^q+\frac{1}{p}\abs{\rhoup'_\tau}^p\dd\tau\leq 0,\qquad\text{for all }0\leq {s}\leq {t}\leq T.
\eaq
\end{definition}
\begin{remark}\label{rem:link one-sided upper gradient - curve of maximal slope}
Note that inequality \eqref{eq_def:max slope} implies that $t\mapsto\mathcal{E}(\rhoup_t)$ is nonincreasing. Further, observe that by Young's inequality we immediately see that any strong one-sided upper gradient for $\mathcal{E}$ satisfies 
\baqs
\mathcal{E}(\rhoup_{t})-\mathcal{E}(\rhoup_{s})+\int_{s}^{t} \frac{1}{q}(h(\rhoup_\tau))^q+\frac{1}{p}\abs{\rhoup'_\tau}^p\dd\tau\geq 0,\qquad\text{for all }0\leq {s}\leq {t}\leq T,
\eaqs
i.e., if $\bs \rhoup$ is a curve of maximal slope for $\mathcal{E}$ with respect to its strong one-sided upper gradient $h$, then we have equality in \eqref{eq_def:max slope}.
\end{remark}
Our next goal is to derive a chain rule. However, we have seen in Theorem \ref{thm:grad existence}, the relation between $(\grad^-\mathcal{E}(\rhoup))^{(i)}$ and $-\beta^{(i)}\babla\delta_{\rho^{(i)}}\mathcal{E}(\rhoup)$ is not linear, but contains a power $q-1$. To account for this, we make the following definition:
\begin{definition}\label{def:tilde_l_rho}
    Given two maps $\vup=(v^{(1)},v^{(1)\perp},v^{(2)},v^{(2)\perp}),\bar{\vup}=(\bar{v}^{(1)},\bar{v}^{(1)\perp},\bar{v}^{(2)},\bar{v}^{(2)\perp}):G\to\R^4$, we define
    \baqs 
        \tilde{l}_\rhoup(\vup)[\bar{\vup}]&= \frac{1}{2}\sum_{i=1}^2\frac{1}{\beta^{(i)}}\Bigg[\iint_G \bar{v}^{(i)}\left(\mathfrak{m}^{(i)}\big(v^{(i)}_+(\rho)\big)^{q-1}-(\mathfrak{m}^{(i)})^\top \big(v^{(i)}_-\big)^{q-1}\right)\eta\dd(\mu\otimes\mu)\\
        &\hphantom{\frac{1}{2}\sum_{i=1}^2\frac{1}{\beta^{(i)}}\Bigg[}+\iint_G \bar{v}^{(i)\perp}\left(\mathfrak{m}_\infty^{(i)} \big(v^{(i)\perp}_+\big)^{q-1}-(\mathfrak{m}_\infty^{(i)})^\top \big(v^{(i)\perp}_-\big)^{q-1}\right)\eta\dd\varsigma^{(i)}\Bigg],
    \eaqs
    if $R=S=\infty$. For $R\land S<\infty$, we define $\tilde{l}$ by
    \baqs 
        \tilde{l}_\rhoup(\vup)[\bar{\vup}]&= \frac{1}{2}\sum_{i=1}^2\frac{1}{\beta^{(i)}}\iint_G \bar{v}^{(i)}\left(\mathfrak{m}^{(i)}(v^{(i)}_+)^{q-1}-(\mathfrak{m}^{(i)})^\top (v^{(i)}_-)^{q-1}\right)\eta\dd(\mu\otimes\mu).
    \eaqs
\end{definition}
\begin{remark}\label{rem:connection_l_and_tilde_l}
    Let $\vup, \bar \vup$ be associated to $\jup, \bar{\varjup}$ as in \eqref{eq:dj_mit_v} and \eqref{eq:dj^perp_mit_v}. Then, in general $l_\rhoup(\jup)[\bar{\varjup}]\ne\tilde{l}_\rhoup(\vup)[\bar{\vup}]$. However, $\tilde{l}_\rhoup(\vup)[\bar{\vup}]$ is linear in $\bar{\vup}$ and it still holds that 
    \baq\label{eq:l_tildeg=A_tilde}
        \tilde{l}_\rhoup(\vup)[\vup] = \tilde{\A}_{m, \betaup}(\mu;\rhoup,\vup) = \A_{m, \betaup}(\mu;\rhoup,\jup) = l_\rhoup(\jup)[\jup].
    \eaq
\end{remark}
With this machinery in place, we can now adapt \eqref{eq:continuous representative} as follows:
\begin{lemma}[Chain rule for test functions]\label{lem:chain_rule_test_function}
    For $\bs\rhoup \in \AC^p([0,T];((\mathcal{P}_p(\Rd))^2, \mathcal{T}_{m,\betaup,\mu}))$ let $\jupbold\subset T_\rhoup(\mathcal{P}_p(\Rd))^2$ such that $(\bs\rhoup,\jupbold)\in \CE_T$ and $\abs{\rhoup'_t}^p=\A_{m,\betaup}(\mu;\rhoup_t;\jup_t)$ for a.e. $t\in[0,T]$, as given in Proposition \ref{prop:met vel}. For this $\jupbold$, let $\vupbold= (\vup_t)_t, \vup_t=(v_t^{(1)},v_t^{(1)\perp},v_t^{(2)},v_t^{(2)\perp}):G\to\R^4$ be as in \eqref{eq:dj_mit_v} and \eqref{eq:dj^perp_mit_v}. Then, for any $\varphiup = (\varphi^{(1)},\varphi^{(1)\perp},\varphi^{(2)},\varphi^{(2)\perp})\in (C_c^\infty(\Rd))^4$, $i=1,2$, $0\leq s\leq t\leq T$ and $i=1,2$ it holds
    \baqs
        \sum_{i=1}^2\int_\Rd\varphi^{(i)}(x)\dd\rho^{(i)}_{t}(x)-\int_\Rd\varphi^{(i)}(x)\dd\rho^{(i)}_{s}(x)=\int_{s}^{t}\tilde{l}_{\rhoup_\tau}( v_\tau)[\betaup\babla\varphi]\dd\tau.
    \eaqs
\end{lemma}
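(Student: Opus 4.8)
The plan is to specialize the continuous-representative identity \eqref{eq:continuous representative} of Lemma \ref{lem:continuous representative} to \emph{time-independent} test functions $\varphi^{(i)}$ and then to rewrite the resulting flux integral in terms of the velocities $\vup_\tau$ supplied by Lemma \ref{lem:connection_flux_velocity}. First one has to check that Lemma \ref{lem:continuous representative} applies to the pair $(\bs\rhoup,\jupbold)$: membership in $\CE_T$ gives \eqref{eq:cont}, and since $\bs\rhoup\in\AC^p$ and $\jup_\tau$ realizes the metric derivative as in Proposition \ref{prop:met vel}, we have $\int_0^T\A_{m,\betaup}(\mu;\rhoup_\tau,\jup_\tau)\,\dd\tau=\int_0^T\abs{\rhoup'_\tau}^p\,\dd\tau<\infty$, so Corollary \ref{cor:bound_by_A} (with $\Phi_1=2\land\abs{x-y}$) and H\"older's inequality in $\tau$ yield the integrability condition \eqref{eq:integrability}. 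Since $\bs\rhoup$ is already weakly continuous it coincides with its weakly continuous representative, hence \eqref{eq:continuous representative} applies to each $\varphi^{(i)}\in C_c^\infty(\Rd)\subset C^1_b([0,T]\times\Rd)$, and, $\varphi^{(i)}$ being constant in time, the $\partial_t\varphi$-term disappears, leaving for $i=1,2$
\[
\int_\Rd\varphi^{(i)}\,\dd\rho^{(i)}_{t}-\int_\Rd\varphi^{(i)}\,\dd\rho^{(i)}_{s}=\tfrac12\int_s^t\iint_G\babla\varphi^{(i)}(x,y)\,\eta(x,y)\,\dd j^{(i)}_\tau(x,y)\,\dd\tau .
\]

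Next, for a.e.\ $\tau$ (where $\A_{m,\betaup}(\mu;\rhoup_\tau,\jup_\tau)<\infty$) I would invoke Lemma \ref{lem:connection_flux_velocity}, which says that $\varsigma^{(i)}_\tau$ is an admissible choice of reference measure and provides velocities $\vup_\tau$ with $\dd j^{(i)}_\tau=\dd j^{(i)\mu}_\tau+\dd j^{(i)\perp}_\tau$, where, writing $v^{(i)}_{\tau,\pm}$ for the positive/negative parts,
\[
\dd j^{(i)\mu}_\tau=\bigl(\mathfrak{m}^{(i)}_\tau (v^{(i)}_{\tau,+})^{q-1}-(\mathfrak{m}^{(i)}_\tau)^\top (v^{(i)}_{\tau,-})^{q-1}\bigr)\,\dd(\mu\otimes\mu),
\]
and the analogous identity holds for $\dd j^{(i)\perp}_\tau$ with $\mathfrak{m}^{(i)}_{\infty,\tau}$ and $\varsigma^{(i)}_\tau$ in place of $\mathfrak{m}^{(i)}_\tau$ and $\mu\otimes\mu$. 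Inserting this into the display above, summing over $i=1,2$, and comparing termwise with Definition \ref{def:tilde_l_rho} — the prefactor $1/\beta^{(i)}$ in $\tilde l$ being absorbed by the $\beta^{(i)}$ carried by $\betaup\babla\varphi$ — the right-hand side becomes exactly $\int_s^t\tilde l_{\rhoup_\tau}(v_\tau)[\betaup\babla\varphi]\,\dd\tau$, which is the claimed identity. To finish, one records that $\tau\mapsto\tilde l_{\rhoup_\tau}(v_\tau)[\betaup\babla\varphi]$ is measurable (by the measurable dependence of $(\rhoup_\tau,\jup_\tau)$ and of the Radon--Nikodym derivatives defining $\vup_\tau$) and integrable on $[0,T]$: using the linearity of $\tilde l$ in its argument and $\abs{\babla\varphi^{(i)}}\le\norm{\varphi^{(i)}}_{C^1}(2\land\abs{x-y})$, it is dominated by $C(\varphiup,\betaup)\iint_G(2\land\abs{x-y})\,\eta\,\dd\abs{\jup_\tau}\le C'\A_{m,\betaup}^{1/p}(\mu;\rhoup_\tau,\jup_\tau)$, and the last quantity lies in $L^1(0,T)$ by Corollary \ref{cor:bound_by_A}.

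The step I expect to require the most care is the bookkeeping of the singular contributions when $R=S=\infty$ and $m_\infty\not\equiv0$. In \eqref{eq:continuous representative} the total flux $j^{(i)}_\tau=j^{(i)\mu}_\tau+j^{(i)\perp}_\tau$ is tested against a single $\babla\varphi^{(i)}$, whereas $\tilde l$ is organized as a sum of two integrals against the two distinct reference measures $\mu\otimes\mu$ and $\varsigma^{(i)}_\tau$; one must check that the velocity representation of $j^{(i)\perp}_\tau$ from Lemma \ref{lem:connection_flux_velocity} (with $\sigma=\varsigma^{(i)}_\tau$) lands precisely on the $\varsigma^{(i)}_\tau$-integral occurring in $\tilde l$, using the symmetries $\gamma_1^\top=\gamma_2$, $(\gamma_1^\perp)^\top=\gamma_2^\perp$ and the symmetry of $\eta$ and $\mu\otimes\mu$, and that this is compatible with $\jup_\tau$ being antisymmetric ($\jup_\tau\in T_{\rhoup_\tau}(\mathcal{P}_p(\Rd))^2$). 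When instead $R\land S<\infty$ or $m_\infty\equiv0$ we have $\jup^\perp_\tau\equiv0$, the singular terms drop out of both sides, and the identification with $\tilde l$ reduces to the absolutely continuous part, which is immediate.
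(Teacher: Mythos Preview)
Your proposal is correct and follows essentially the same route as the paper: apply the continuous-representative identity \eqref{eq:continuous representative} with time-independent test functions, insert the velocity decomposition of $j^{(i)}_\tau$ from Lemma \ref{lem:connection_flux_velocity}, and identify the result with $\tilde l_{\rhoup_\tau}(\vup_\tau)[\betaup\babla\varphi]$ after summing over $i$. Your treatment is in fact more thorough than the paper's short proof, which writes only the $\mu$-absolutely continuous part explicitly and leaves the singular bookkeeping and the verification of \eqref{eq:integrability} implicit; your discussion of the $R=S=\infty$, $m_\infty\not\equiv0$ case and of the $\beta^{(i)}$ cancellation is exactly what is needed to make those steps precise.
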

\begin{proof}
    Let $i\in\{1,2\}$. Starting from the continuity equation  \eqref{eq:cont} we calculate
    \baqs
        &\int_\Rd\varphi^{(i)}(x)\dd\rho^{(i)}_{t}(x)-\int_\Rd\varphi^{(i)}(x)\dd\rho^{(i)}_{s}(x) =\frac{1}{2}\int_{s}^{t}\iint_G\babla\varphi(x,y)\eta(x,y)\dd j^{(i)}_\tau(x,y)\dd\tau\\
        =&\;\frac{1}{2\beta^{(i)}}\int_{s}^{t}\iint_G\beta^{(i)}\babla\varphi^{(i)}(x,y)\eta(x,y)\left(\big(\big(v^{(i)}_\tau\big)_+(x,y)\big)^{q-1}\dd\gamma_{1,\tau}^{(i)}(x,y)-\big(\big(v^{(i)}_\tau\big)_-(x,y)\big)^{q-1}\dd\gamma_{2,\tau}^{(i)}(x,y)\right)\dd\tau.
    \eaqs
    From this, we conclude by summing over both species.
\end{proof}
As for $l$, for $\tilde{l}$ we too have a H{\"o}lder-type inequality:
\begin{lemma}[H{\"o}lder-type inequality]\label{lem:Holder-type_inequality_for_tilde_l}
For all $v,\bar{v}\in \tilde{T}_\rhoup(\mathcal{P}_p(\Rd))^2$ we have
\baq\label{eq:Holder-type_inequality_for_tilde_l}
\tilde{l}_\rhoup(\vup)[\bar{\vup}] \leq (\tilde{l}_\rhoup(\vup)[\vup])^{1/p}(\tilde{l}_\rhoup(\bar{\vup})[\bar{\vup}])^{1/q},
\eaq
with equality if and only if, for some $\lambda >0$, for $i=1,2$ we have $v^{(i)}_+ =\lambda \bar{v}^{(i)}_+$, $\eta\gamma_1^{(i)}$-a.e. as well as $v^{(i)\perp}_+ =\lambda \bar{v}^{(i)\perp}_+$, $\eta\gamma_1^{(i)\perp}$-a.e. and hence, by antisymmetry, also $v^{(i)}_- =\lambda \bar{v}^{(i)}_-$, $\eta\gamma_2^{(i)}$-a.e. as well as $v^{(i)\perp}_- =\lambda \bar{v}^{(i)\perp}_-$, $\eta\gamma_2^{(i)\perp}$-a.e.
\end{lemma}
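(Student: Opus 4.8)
The plan is to follow the proof of Lemma~\ref{lem:Holder-type_inequality_for_l}, adapting the exponents to the present situation. Since $\tilde{l}_\rhoup(\vup)[\bar{\vup}]$ is linear in $\bar{\vup}$ and positively $(q-1)$-homogeneous in $\vup$, whereas $\tilde{l}_\rhoup(\vup)[\vup]=\tilde{\A}_{m,\betaup}(\mu;\rhoup,\vup)$ is positively $q$-homogeneous in $\vup$, the factor $\tilde{l}_\rhoup(\vup)[\vup]$ has to appear with the power $1/p=(q-1)/q$ and $\tilde{l}_\rhoup(\bar{\vup})[\bar{\vup}]$ with the power $1/q$, which is the content of \eqref{eq:Holder-type_inequality_for_tilde_l}. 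As in the cited proof, it suffices to treat $R=S=\infty$; the case $R\land S<\infty$ follows by discarding the recession terms.

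First I would record the elementary pointwise estimate
\[
\bar v\Big(\mathfrak{m}\,(v_+)^{q-1}-(\mathfrak{m})^\top(v_-)^{q-1}\Big)\ \le\ \bar v_+\,\mathfrak{m}\,(v_+)^{q-1}+\bar v_-\,(\mathfrak{m})^\top(v_-)^{q-1},
\]
which holds because $v_+$ and $v_-$ have disjoint support; equality holds exactly where $\bar v\ge0$ on $\{v>0\}$ and $\bar v\le0$ on $\{v<0\}$, while points where the relevant mobility vanishes contribute nothing to either side. Applying this, for each species $i$, to the $\dd(\mu\otimes\mu)$-integrand and (with $\varsigma^{(i)}$ from Lemma~\ref{lem:connection_flux_velocity}) to the $\dd\varsigma^{(i)}$-integrand bounds $\tilde{l}_\rhoup(\vup)[\bar{\vup}]$ from above by $\tfrac12\sum_i\tfrac1{\beta^{(i)}}$ times, for each $i$, the two integrals $\iint_G \bar v_+^{(i)}\mathfrak{m}^{(i)}(v_+^{(i)})^{q-1}\eta\,\dd(\mu\otimes\mu)$ and $\iint_G \bar v_-^{(i)}(\mathfrak{m}^{(i)})^\top(v_-^{(i)})^{q-1}\eta\,\dd(\mu\otimes\mu)$ together with the two analogous recession integrals over $\dd\varsigma^{(i)}$ built from $\mathfrak{m}_\infty^{(i)}$.

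Next I would factor each of these nonnegative integrands as a product $a\cdot b$; for the piece involving $v_+$ I set
\[
a\coloneqq \mathfrak{m}^{1/p}(v_+)^{q-1},\qquad b\coloneqq \mathfrak{m}^{1/q}\,\bar v_+ ,
\]
and analogously for the piece involving $v_-$ (replacing $\mathfrak{m}$ by $(\mathfrak{m})^\top$) and for the two recession pieces (replacing $\mathfrak{m}$ by $\mathfrak{m}_\infty$, respectively $(\mathfrak{m}_\infty)^\top$). Using $(p-1)(q-1)=1$, and hence $p(q-1)=q$, one gets $a^p=\mathfrak{m}(v_+)^q$ and $b^q=\mathfrak{m}(\bar v_+)^q$. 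Since the factors $a$ belonging to the $v_+$- and $v_-$-pieces have disjoint support (and likewise for the $b$'s and for the recession pieces), I can then run the same chain of Hölder inequalities as in the proof of Lemma~\ref{lem:Holder-type_inequality_for_l}: Hölder for the sum over the $\pm$-index, Hölder separately for the $\dd(\mu\otimes\mu)$- and the $\dd\varsigma^{(i)}$-integrals, the Hölder step merging these two integrals, and finally the Hölder step over $i=1,2$, where the constants are absorbed via $(\tfrac1{2\beta^{(i)}})^{1/p}(\tfrac1{2\beta^{(i)}})^{1/q}=\tfrac1{2\beta^{(i)}}$. Collecting the $a^p$-integrals reassembles $\tilde{\A}_{m,\betaup}(\mu;\rhoup,\vup)=\tilde{l}_\rhoup(\vup)[\vup]$ and collecting the $b^q$-integrals reassembles $\tilde{l}_\rhoup(\bar{\vup})[\bar{\vup}]$, which proves \eqref{eq:Holder-type_inequality_for_tilde_l}.

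For the equality characterization I would propagate the equality cases through this chain exactly as in Lemma~\ref{lem:Holder-type_inequality_for_l}: equality throughout forces all the Hölder proportionality constants to coincide, yielding a single $\lambda>0$ with $\mathfrak{m}^{(i)}(v_+^{(i)})^q=\lambda\,\mathfrak{m}^{(i)}(\bar v_+^{(i)})^q$ $\eta(\mu\otimes\mu)$-a.e. and $\mathfrak{m}_\infty^{(i)}(v_+^{(i)\perp})^q=\lambda\,\mathfrak{m}_\infty^{(i)}(\bar v_+^{(i)\perp})^q$ $\eta\varsigma^{(i)}$-a.e.; recalling $\dd\gamma_1^{(i)}=(\mathfrak{m}^{(i)})^{q-1}\dd(\mu\otimes\mu)$ and $\dd\gamma_1^{(i)\perp}=(\mathfrak{m}_\infty^{(i)})^{q-1}\dd\varsigma^{(i)}$ and dividing out the common mobility factor, this is exactly $v_+^{(i)}=\lambda^{1/q}\bar v_+^{(i)}$ $\eta\gamma_1^{(i)}$-a.e. and $v_+^{(i)\perp}=\lambda^{1/q}\bar v_+^{(i)\perp}$ $\eta\gamma_1^{(i)\perp}$-a.e. (and one checks this simultaneously makes the elementary estimate above an equality). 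After renaming $\lambda^{1/q}$ as $\lambda$, the antisymmetry $v_-^{(i)}=(v_+^{(i)})^\top$ together with $\gamma_2^{(i)}=(\gamma_1^{(i)})^\top$ transfers the identities to the negative parts, giving the stated characterization. The one genuinely delicate point is this equality bookkeeping across the several Hölder steps — in particular that the disjointness of supports forces the supports of the matched $a$- and $b$-factors to agree — but it runs verbatim as the corresponding passage in the proof of Lemma~\ref{lem:Holder-type_inequality_for_l}, so I would simply refer to that argument.
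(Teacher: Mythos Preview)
Your proposal is correct and follows exactly the approach the paper takes: the paper's own proof is the single sentence ``The argument is analogous to that for $l_\rhoup(\jup)[\bar{\varjup}]$ in the proof of Lemma~\ref{lem:Holder-type_inequality_for_l},'' and you have faithfully carried out that analogous argument (elementary sign estimate, then the chain of H\"older inequalities over the $\pm$-index, the two integrals, and the species index, with the correct matching of exponents $a^p=\mathfrak m(v_+)^q$, $b^q=\mathfrak m(\bar v_+)^q$). Your equality bookkeeping is also in line with the corresponding passage in Lemma~\ref{lem:Holder-type_inequality_for_l}.
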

\begin{proof}
The argument is analogous to that for $l_\rhoup(\jup)[\bar{\varjup}]$ in the proof of Lemma \ref{lem:Holder-type_inequality_for_l}.
\end{proof}

\begin{definition}\label{def:Dissipation_and_De_Giorgi_functional}(Dissipation and De Giorgi functional).
For $\rhoup \in (\mathcal{P}_p(\Rd))^2$, we define the \emph{dissipation} at $\rho$ by
\baqs
\mathcal{D}(\rhoup)\coloneqq\tilde{\A}_{m, \betaup}(\mu;\rhoup,-\betaup\babla\delta_\rhoup\mathcal{E}(\rhoup)) = 
\tilde{l}_\rho(-\betaup\babla\delta_\rhoup\mathcal{E}(\rhoup)[-\betaup\babla\delta_\rhoup\mathcal{E}(\rhoup)],
\eaqs
where $\delta_\rhoup\mathcal{E}(\rhoup)\coloneqq (\delta_{\rho^{(1)}}\mathcal{E}(\rhoup),\delta_{\rho^{(2)}}\mathcal{E}(\rhoup),\delta_{\rho^{(1)}}\mathcal{E}(\rhoup),\delta_{\rho^{(2)}}\mathcal{E}(\rhoup))$. For any $\bs\rhoup \in \AC^p([0,T];((\mathcal{P}_p(\Rd))^2,\mathcal{T}_{m,\betaup,\mu}))$, we define the \emph{De Giorgi functional} at $\bs\rhoup$ by
\baqs
\mathcal{G}_T(\bs\rhoup)\coloneqq \mathcal{E}(\rhoup_T)-\mathcal{E}(\rhoup_0)+\int_0^T \frac{1}{q}\mathcal{D}(\rhoup_t)+\frac{1}{p}\abs{\rhoup'_t}^p\dd t.
\eaqs
When the dependence on the base measure needs to be made explicit, we write $\mathcal{D}(\mu;\bs\rhoup)$ and $\mathcal{G}_T(\mu;\bs\rhoup)$.
\end{definition}

\subsection{Characterization of weak solutions}
In this subsection we show that weak solutions of \eqref{eq:NL2CIE} can be characterized as minimizers for the De Giorgi functional $\mathcal{G}_T$ introduced in Definition \ref{def:Dissipation_and_De_Giorgi_functional}. To achieve this, we need the chain rule for the gradient velocity of $\mathcal{E}$. Its proof is based on a mollification and truncation argument (and can be found in Appendix \ref{A:Chain}). 

\begin{proposition}[Chain rule for $\mathcal{E}$]\label{prop:Chain rule}
Let $K^{(ik)}$, $i,k=1,2$ satisfy \eqref{K2}, \eqref{K3} and $K^{(21)} = K^{(12)}$, let $\bs\rhoup \subset \AC^p([0,T];((\mathcal{P}_p(\Rd))^2,\mathcal{T}_{m,\betaup,\mu}))$ and let $0\leq {s}\leq {t}\leq T$. Denote by $\bs v$ the unique velocity in $\tilde{T}_{\bs\rhoup}(\mathcal{P}_p(\Rd))^2$, which is associated to $\bs\rhoup$ by Proposition \ref{prop:met vel} and Lemma \ref{lem:connection_flux_velocity}. Then, we have the chain rule identity
\baq\label{eq:chain rule velocity}
    \mathcal{E}(\rhoup_{t})-\mathcal{E}(\rhoup_{s})= \int_{s}^{t}\tilde{l}_{\rhoup_\tau}(\vup_\tau)[\betaup\babla\delta_\rhoup\mathcal{E}(\rhoup_\tau)]\dd\tau.
\eaq
\end{proposition}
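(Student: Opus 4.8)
The plan is to verify \eqref{eq:chain rule velocity} first for a family of mollified interaction energies, whose variational derivatives are smooth and bounded, and then pass to the limit. Throughout, fix $\bs\rhoup$ and, by Proposition \ref{prop:met vel} together with Lemma \ref{lem:connection_flux_velocity}, the flux $\varjupbold$ with $(\bs\rhoup,\varjupbold)\in\CE_T$ and $\abs{\rhoup'_\tau}^p=\A_{m,\betaup}(\mu;\rhoup_\tau,\varjup_\tau)$ for a.e.\ $\tau$, as well as the velocity $\vupbold$ associated to $\varjupbold$. Two facts are used repeatedly. First, applying Lemma \ref{lem:Time-uniformly_bounded_moments} to the constant sequence $\mu^n\equiv\mu$ gives $\sup_{\tau\in[0,T]}M_p(\rho^{(i)}_\tau)<\infty$. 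Second, integrating the growth estimate \eqref{K3} against $\rho^{(k)}_\tau$ yields $\abs{\babla\delta_{\rho^{(i)}}\mathcal{E}(\rhoup_\tau)(x,y)}\le 2L_K\Phi_2(x,y)$ with $\Phi_2$ as in Corollary \ref{cor:bound_by_A}; together with that corollary, $\tau\mapsto\tilde{l}_{\rhoup_\tau}(\vup_\tau)[\betaup\babla\delta_\rhoup\mathcal{E}(\rhoup_\tau)]$ is measurable and dominated by $C\A_{m,\betaup}^{1/p}(\mu;\rhoup_\tau,\varjup_\tau)=C\abs{\rhoup'_\tau}\in L^1(0,T)$, so the right-hand side of \eqref{eq:chain rule velocity} is well-defined. (We treat $R=S=\infty$; when $R\wedge S<\infty$ all singular terms vanish and the argument simplifies.)

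\emph{Mollification.} Let $K_n^{(ik)}$ be obtained from $K^{(ik)}$ by truncating the values at level $n$ and convolving with a symmetric mollifier $\psi_{1/n}$. Then each $K_n^{(ik)}$ is smooth, bounded, symmetric, satisfies $K_n^{(12)}=K_n^{(21)}$, obeys \eqref{K3} with the \emph{same} constant $L_K$, is bounded by $C(1+\abs{x}^p+\abs{y}^p)$ uniformly in $n$, and $K_n^{(ik)}\to K^{(ik)}$ pointwise (locally uniformly). Writing $\mathcal{E}_n$ for the associated energy, dominated convergence together with $\sup_\tau M_p(\rho^{(i)}_\tau)<\infty$ gives $\mathcal{E}_n(\rhoup_\tau)\to\mathcal{E}(\rhoup_\tau)$ for every $\tau$ and $\babla\delta_{\rho^{(i)}}\mathcal{E}_n(\rhoup_\tau)\to\babla\delta_{\rho^{(i)}}\mathcal{E}(\rhoup_\tau)$ pointwise on $G$, with $\abs{\babla\delta_{\rho^{(i)}}\mathcal{E}_n(\rhoup_\tau)}\le 2L_K\Phi_2$ uniformly in $n$.

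\emph{Chain rule for $\mathcal{E}_n$.} Exploit the bilinear structure: $\mathcal{E}_n(\rhoup)=\tfrac12\sum_{i,k}h_n^{(ik)}$ with $h_n^{(ik)}(s,t)\coloneqq\iint_{\RdRd}K_n^{(ik)}(x,y)\dd\rho^{(i)}_s(x)\dd\rho^{(k)}_t(y)$. For fixed $t_0$, $x\mapsto\int K_n^{(ik)}(x,z)\dd\rho^{(k)}_{t_0}(z)$ is a bounded $C^1$ function whose nonlocal gradient is bounded by $L_K\Phi_2$; hence (using Remark \ref{rem:eq cont}(ii) and Corollary \ref{cor:bound_by_A} to legitimise this test function via a spatial truncation) Lemma \ref{lem:continuous representative} shows that $s\mapsto h_n^{(ik)}(s,t_0)$ is absolutely continuous with a.e.\ derivative $\tfrac12\iint_G\babla\big[\int K_n^{(ik)}(\cdot,z)\dd\rho^{(k)}_{t_0}(z)\big]\,\eta\dd\varjup^{(i)}_s$, and symmetrically for $t\mapsto h_n^{(ik)}(s_0,t)$. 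A Riemann-sum argument over finer and finer partitions of $[s,t]$ --- using narrow continuity of $\bs\rhoup$, the local uniform continuity of $t\mapsto\int K_n^{(ik)}(\cdot,z)\dd\rho^{(k)}_t(z)$, and the $L^1_\tau$ bound $\iint_G\Phi_1\eta\dd\abs{\varjup^{(i)}_\tau}$ from Corollary \ref{cor:bound_by_A} --- then yields $h_n^{(ik)}(t,t)-h_n^{(ik)}(s,s)=\int_s^t\big[\partial_1h_n^{(ik)}(\tau,\tau)+\partial_2h_n^{(ik)}(\tau,\tau)\big]\dd\tau$. Summing over $i,k$, using the symmetry $K_n^{(ik)}=K_n^{(ki)}$ (this is where $K^{(12)}=K^{(21)}$ enters) and $\delta_{\rho^{(i)}}\mathcal{E}_n(\rhoup)=\sum_k\int K_n^{(ik)}(\cdot,z)\dd\rho^{(k)}(z)$, the two partial-derivative sums become equal and we obtain $\mathcal{E}_n(\rhoup_t)-\mathcal{E}_n(\rhoup_s)=\int_s^t\tfrac12\sum_i\iint_G\babla\delta_{\rho^{(i)}}\mathcal{E}_n(\rhoup_\tau)\,\eta\dd\varjup^{(i)}_\tau\,\dd\tau$. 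Finally, inserting the representation \eqref{eq:dj_mit_v}--\eqref{eq:dj^perp_mit_v} of $\varjup^{(i)}_\tau$ in terms of $v^{(i)}_\tau,v^{(i)\perp}_\tau$ and comparing with Definition \ref{def:tilde_l_rho}, the integrand equals $\tilde{l}_{\rhoup_\tau}(\vup_\tau)[\betaup\babla\delta_\rhoup\mathcal{E}_n(\rhoup_\tau)]$ (the factors $\beta^{(i)}$ cancel), exactly as in the computation in the proof of Lemma \ref{lem:chain_rule_test_function}.

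\emph{Passage to the limit.} By the above, $\mathcal{E}_n(\rhoup_t)-\mathcal{E}_n(\rhoup_s)\to\mathcal{E}(\rhoup_t)-\mathcal{E}(\rhoup_s)$, while $\tilde{l}_{\rhoup_\tau}(\vup_\tau)[\betaup\babla\delta_\rhoup\mathcal{E}_n(\rhoup_\tau)]\to\tilde{l}_{\rhoup_\tau}(\vup_\tau)[\betaup\babla\delta_\rhoup\mathcal{E}(\rhoup_\tau)]$ for a.e.\ $\tau$ by the pointwise convergence of $\babla\delta_{\rho^{(i)}}\mathcal{E}_n$; moreover the integrands are dominated, uniformly in $n$, by $C\abs{\rhoup'_\tau}\in L^1(0,T)$ thanks to the uniform bound $\abs{\babla\delta_{\rho^{(i)}}\mathcal{E}_n(\rhoup_\tau)}\le 2L_K\Phi_2$ and Corollary \ref{cor:bound_by_A}. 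Dominated convergence in $\tau$ then yields \eqref{eq:chain rule velocity}. The one genuinely delicate point is the chain rule for $\mathcal{E}_n$: since $\varjupbold$ is only Borel in time, $\tau\mapsto\delta_{\rho^{(i)}}\mathcal{E}_n(\rhoup_\tau)$ is not an admissible time-dependent test function, which forces one to freeze a single argument of the bilinear form at a time and then reconstitute the diagonal by the partition argument above; this, together with the spatial truncations needed to feed the variational-derivative test functions into the continuity equation, is carried out in Appendix \ref{A:Chain}.
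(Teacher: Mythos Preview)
Your approach is correct but takes a genuinely different route from the paper's. The paper regularizes \emph{both} the kernels, via $K^{\varepsilon,(ik)}_R=\varphi_R(K^{(ik)}\ast m_\varepsilon)$, \emph{and} the curve in time, replacing $(\rhoup_t,\jup_t)$ by its temporal mollification $(\rhoup_t^{\bar\varepsilon},\jup_t^{\bar\varepsilon})$; the time mollification makes $t\mapsto\rhoup_t^{\bar\varepsilon}$ genuinely $C^1$, so the chain rule for $\mathcal{E}^\varepsilon_R(\rhoup^{\bar\varepsilon})$ is a one-line computation with no partition argument needed. The price is a limit passage in four parameters $\bar\varepsilon,\varepsilon,R,\tilde\varepsilon$ and an additional spatial truncation on $\Rd\times G$ (to handle the weak-$\ast$ convergence $\jup^{\bar\varepsilon}\rightharpoonup^\ast\jup$) that explicitly invokes \eqref{BC}. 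You instead leave the curve untouched and only regularize the kernels; this forces the bilinear/partition argument for $h_n^{(ik)}(s,t)$, which is the delicate step in your proof, but in return the limit $n\to\infty$ reduces to a single dominated-convergence passage driven by the uniform bound $\abs{\babla\delta_{\rho^{(i)}}\mathcal{E}_n}\le 2L_K\Phi_2$ and Corollary~\ref{cor:bound_by_A}, with no separate tail-truncation. Both strategies are standard for interaction-energy chain rules; yours is arguably more direct, the paper's makes the differentiation step cleaner. One caveat: your closing deferral ``is carried out in Appendix~\ref{A:Chain}'' is misleading, since that appendix implements the paper's time-mollification route rather than your partition argument, so if you retain your approach you must supply those details independently.
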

Using the chain rule, we infer that $\mathcal{D}^{1/q}$ is a one-sided strong upper gradient for $\mathcal{E}$.
\begin{corollary}\label{cor sqrtD is one-sided strong upper gradient}
For any curve $\bs\rhoup \in \AC^p([0,T];((\mathcal{P}_p(\Rd))^2,\mathcal{T}_{m,\betaup,\mu}))$ and any $0\leq {s} \leq {t}\leq T$ we have
\baqs
\mathcal{E}(\rhoup_{t})-\mathcal{E}(\rhoup_{s})\geq -\int_{s}^{t}(\mathcal{D}(\rhoup_\tau))^{1/q}\abs{\rhoup'_\tau}\dd\tau,
\eaqs
i.e., $\mathcal{D}^{1/q}$ is a one-sided strong upper gradient for $\mathcal{E}$ in the sense of Definition \ref{def:One-sided strong upper gradient}.
\end{corollary}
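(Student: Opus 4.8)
The plan is to read the asserted inequality directly off the chain rule of Proposition~\ref{prop:Chain rule}, using the H\"older-type inequality for $\tilde l$ from Lemma~\ref{lem:Holder-type_inequality_for_tilde_l}. First I would fix $\bs\rhoup \in \AC^p([0,T];((\mathcal{P}_p(\Rd))^2,\mathcal{T}_{m,\betaup,\mu}))$ and let $\vupbold=(\vup_\tau)_\tau$ be the velocity associated to $\bs\rhoup$ via Proposition~\ref{prop:met vel} and Lemma~\ref{lem:connection_flux_velocity}, so that by \eqref{eq:l_tildeg=A_tilde} one has $\tilde l_{\rhoup_\tau}(\vup_\tau)[\vup_\tau] = \A_{m,\betaup}(\mu;\rhoup_\tau,\varjup_\tau) = \abs{\rhoup'_\tau}^p$ for a.e.\ $\tau\in[0,T]$. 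Proposition~\ref{prop:Chain rule} then provides, for all $0\le s\le t\le T$, the identity
\[
\mathcal{E}(\rhoup_{t})-\mathcal{E}(\rhoup_{s})=\int_{s}^{t}\tilde l_{\rhoup_\tau}(\vup_\tau)\big[\betaup\babla\delta_\rhoup\mathcal{E}(\rhoup_\tau)\big]\,\dd\tau .
\]

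Next I would estimate the integrand from below. Since the map $\bar\vup\mapsto\tilde l_{\rhoup_\tau}(\vup_\tau)[\bar\vup]$ is linear (Remark~\ref{rem:connection_l_and_tilde_l}), the integrand equals $-\tilde l_{\rhoup_\tau}(\vup_\tau)[-\betaup\babla\delta_\rhoup\mathcal{E}(\rhoup_\tau)]$, and Lemma~\ref{lem:Holder-type_inequality_for_tilde_l} applied with $\bar\vup=-\betaup\babla\delta_\rhoup\mathcal{E}(\rhoup_\tau)$, together with the definition $\mathcal{D}(\rhoup)=\tilde l_\rhoup(-\betaup\babla\delta_\rhoup\mathcal{E}(\rhoup))[-\betaup\babla\delta_\rhoup\mathcal{E}(\rhoup)]$, gives
\[
\tilde l_{\rhoup_\tau}(\vup_\tau)\big[-\betaup\babla\delta_\rhoup\mathcal{E}(\rhoup_\tau)\big]
\le\big(\tilde l_{\rhoup_\tau}(\vup_\tau)[\vup_\tau]\big)^{1/p}\big(\mathcal{D}(\rhoup_\tau)\big)^{1/q}
=\abs{\rhoup'_\tau}\,\big(\mathcal{D}(\rhoup_\tau)\big)^{1/q}.
\]
Hence $\tilde l_{\rhoup_\tau}(\vup_\tau)[\betaup\babla\delta_\rhoup\mathcal{E}(\rhoup_\tau)]\ge -\big(\mathcal{D}(\rhoup_\tau)\big)^{1/q}\abs{\rhoup'_\tau}$ for a.e.\ $\tau$, and integrating over $[s,t]$ and inserting into the chain rule identity yields exactly the claimed bound $\mathcal{E}(\rhoup_{t})-\mathcal{E}(\rhoup_{s})\ge -\int_s^t (\mathcal{D}(\rhoup_\tau))^{1/q}\abs{\rhoup'_\tau}\,\dd\tau$.

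The one remaining point, and the only genuinely technical one, is verifying the measurability of $\mathcal{D}^{1/q}\circ\bs\rhoup$ demanded by Definition~\ref{def:One-sided strong upper gradient}. I would establish this by showing that $\rhoup\mapsto\mathcal{D}(\rhoup)$ is lower semicontinuous with respect to narrow convergence: this is obtained as in Lemma~\ref{lem:lsc A}, additionally invoking that under \eqref{K2} and \eqref{K3} the first variations $\babla\delta_{\rho^{(i)}}\mathcal{E}(\rhoup)$ depend continuously on $\rhoup$ in the locally uniform topology (by the truncation/dominated-convergence argument already used in the proof of Proposition~\ref{prop:Continuity of the energy}). Since $\bs\rhoup$ is weakly (narrowly) continuous, $\tau\mapsto\mathcal{D}(\rhoup_\tau)$ is then lower semicontinuous, hence Borel, and so is its $q$-th root. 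I expect this semicontinuity bookkeeping to be the main obstacle; the core estimate above is a two-line consequence of the chain rule and the H\"older-type inequality.
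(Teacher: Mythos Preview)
Your proposal is correct and follows essentially the same route as the paper: invoke the chain rule of Proposition~\ref{prop:Chain rule}, use linearity of $\tilde l_\rhoup(\vup)[\cdot]$ to flip the sign, apply the H\"older-type inequality of Lemma~\ref{lem:Holder-type_inequality_for_tilde_l}, and identify the resulting factors with $\abs{\rhoup'_\tau}$ (via \eqref{eq:l_tildeg=A_tilde} and Proposition~\ref{prop:met vel}) and with $(\mathcal{D}(\rhoup_\tau))^{1/q}$. Your extra paragraph on measurability of $\tau\mapsto\mathcal{D}(\rhoup_\tau)$ is more than the paper's proof spells out; the paper simply skips this point here, though the narrow lower semicontinuity of $\mathcal{D}$ you invoke is exactly the content of Lemma~\ref{lem:lsc D}, proved later.
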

\begin{proof}
Without loss of generality, assume that $\int_{s}^{t}(\mathcal{D}(\rhoup_\tau))^{1/q}\abs{\rhoup'_\tau}\dd\tau < \infty$ as otherwise there is nothing to show. We employ \eqref{eq:chain rule velocity} from Proposition \ref{prop:Chain rule} and apply the H{\"o}lder-type inequality from Lemma \ref{lem:Holder-type_inequality_for_tilde_l}. For $0\leq {s}\leq {t} \leq T$, we have
\baqs
\mathcal{E}(\rhoup_{t})-\mathcal{E}(\rhoup_{s})&= \int_{s}^{t}\tilde{l}_{\rhoup_\tau}(\vup_\tau)[\betaup\babla\delta_\rhoup\mathcal{E}(\rhoup_\tau)]\dd\tau =-\int_{s}^{t}\tilde{l}_{\rhoup_\tau}(\vup_\tau)[-\betaup\babla\delta_\rhoup\mathcal{E}(\rhoup_\tau)]\dd\tau\\
&\ge -\int_{s}^{t}(\tilde{l}_{\rhoup_\tau}(\vup_\tau)[\vup_\tau])^{1/p}(\tilde{l}_{\rhoup_\tau}(-\betaup\babla\delta_\rhoup\mathcal{E}(\rhoup_\tau))[-\betaup\babla\delta_\rhoup\mathcal{E}(\rhoup_\tau)])^{1/q}\dd\tau\\
&=-\int_{s}^{t}(\tilde{\A}_{m,\betaup}(\rhoup_\tau,\vup_\tau))^{1/p}(\mathcal{D}(\rhoup_\tau))^{1/q}\dd\tau =-\int_{s}^{t}\abs{\rhoup'_\tau}(\mathcal{D}(\rhoup_\tau))^{1/q}\dd\tau.
\eaqs
Here, the last two inequalities are provided by \eqref{eq:l_tildeg=A_tilde} and Proposition \ref{prop:met vel}.
\end{proof}
Now we are ready to identify weak solutions to \eqref{eq:NL2CIE} as minimizers of $\mathcal{G}_T$.
\begin{theorem}[Characterization of weak solutions to the nonlocal nonlocal cross-interaction system]\label{thm:characterization of weak solutions to NL2CIE}
Suppose $\mu$ satisfies \eqref{MB2}, \eqref{MB} and \eqref{BC}, and the kernels $K^{(ik)}$ satisfy \eqref{K2}, \eqref{K3} for $i,k=1,2$ as well as $K^{(21)}=K^{(12)}$. A curve $\bs\rhoup:[0,T]\to(\mathcal{P}_p(\Rd))^2$ is a weak solution of \eqref{eq:NL2CIE} according to Definition \ref{def:weak solution to NL2CIE} if and only if $\bs\rhoup \in \AC^p([0,T];((\mathcal{P}_p(\Rd))^2,\mathcal{T}_{m,\betaup,\mu}))$ is a curve of maximal slope for $\mathcal{E}$ with respect to $(\mathcal{D}(\bs\rhoup))^{1/q}$ in the sense of Definition \ref{def:Curve of maximal slope}, i.e., it satisfies
\baq\label{eq:characterization of weak solutions to NL2CIE}
\mathcal{G}_T(\bs\rhoup) = 0,
\eaq
where $\mathcal{G}_T$ is the De Giorgi functional given in Definition \ref{def:Dissipation_and_De_Giorgi_functional}.
\end{theorem}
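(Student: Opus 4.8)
The plan is to run the classical equivalence between curves of maximal slope and gradient flows, using the chain rule \eqref{eq:chain rule velocity} of Proposition~\ref{prop:Chain rule}, the Hölder-type inequality \eqref{eq:Holder-type_inequality_for_tilde_l} of Lemma~\ref{lem:Holder-type_inequality_for_tilde_l}, and Young's inequality as the three ingredients whose \emph{simultaneous} equality encodes \eqref{eq:NL2CIE}. First I record the general lower bound: by Corollary~\ref{cor sqrtD is one-sided strong upper gradient} and Remark~\ref{rem:link one-sided upper gradient - curve of maximal slope}, every $\bs\rhoup\in\AC^p([0,T];((\mathcal{P}_p(\Rd))^2,\mathcal{T}_{m,\betaup,\mu}))$ satisfies $\mathcal{G}_{[s,t]}(\bs\rhoup)\ge 0$ on every subinterval, and these functionals are additive in the interval because $\mathcal{E}$ telescopes; hence $\mathcal{G}_T(\bs\rhoup)=0$ is equivalent to $\bs\rhoup$ being a curve of maximal slope for $\mathcal{E}$ with respect to $\mathcal{D}^{1/q}$ in the sense of Definition~\ref{def:Curve of maximal slope}. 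It therefore remains to prove the equivalence ``weak solution $\iff$ ($\bs\rhoup\in\AC^p$ and $\mathcal{G}_T(\bs\rhoup)=0$)''.

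For ``maximal slope $\Rightarrow$ weak solution'', let $\vupbold=(\vup_t)_t$ be the (minimal) velocity associated with $\bs\rhoup$ by Proposition~\ref{prop:met vel} and Lemma~\ref{lem:connection_flux_velocity}, so $\abs{\rhoup'_t}^p=\tilde{\A}_{m,\betaup}(\mu;\rhoup_t,\vup_t)=\tilde{l}_{\rhoup_t}(\vup_t)[\vup_t]$ for a.e.\ $t$ by \eqref{eq:l_tildeg=A_tilde}. Rewriting \eqref{eq:chain rule velocity} as $\mathcal{E}(\rhoup_T)-\mathcal{E}(\rhoup_0)=-\int_0^T\tilde{l}_{\rhoup_t}(\vup_t)[-\betaup\babla\delta_\rhoup\mathcal{E}(\rhoup_t)]\,\dd t$ and inserting Lemma~\ref{lem:Holder-type_inequality_for_tilde_l} followed by Young's inequality re-derives $\mathcal{G}_T(\bs\rhoup)\ge 0$; consequently $\mathcal{G}_T(\bs\rhoup)=0$ forces, for a.e.\ $t$, equality in \eqref{eq:Holder-type_inequality_for_tilde_l} applied to $\vup_t$ and $-\betaup\babla\delta_\rhoup\mathcal{E}(\rhoup_t)$ together with $\abs{\rhoup'_t}^p=\mathcal{D}(\rhoup_t)$. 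The equality case of Lemma~\ref{lem:Holder-type_inequality_for_tilde_l} yields $\lambda_t\ge 0$ with $(v^{(i)}_t)_\pm=\lambda_t\big(-\beta^{(i)}\babla\delta_{\rho^{(i)}}\mathcal{E}(\rhoup_t)\big)_\pm$ on the respective supports, and $\abs{\rhoup'_t}^p=\mathcal{D}(\rhoup_t)$ then forces $\lambda_t=1$ whenever $\mathcal{D}(\rhoup_t)>0$ (if $\mathcal{D}(\rhoup_t)=0$, the flux prescribed by \eqref{eq_def:flux_weak_solution} and the minimal flux both vanish). Since $-\betaup\babla\delta_\rhoup\mathcal{E}(\rhoup_t)$ is antisymmetric, this gives $\vup_t=-\betaup\babla\delta_\rhoup\mathcal{E}(\rhoup_t)$ for a.e.\ $t$, and substituting into \eqref{eq:dj_mit_v}--\eqref{eq:dj^perp_mit_v} produces exactly the flux \eqref{eq_def:flux_weak_solution}; since $(\bs\rhoup,\jupbold)\in\CE_T$ by Proposition~\ref{prop:met vel}, $\bs\rhoup$ is a weak solution in the sense of Definition~\ref{def:weak solution to NL2CIE}.

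For the converse, let $\bs\rhoup$ be a weak solution with flux $\jupbold$ given by \eqref{eq_def:flux_weak_solution}. I first show $\bs\rhoup\in\AC^p$: by \eqref{K3} one has $\abs{\babla\delta_{\rho^{(i)}}\mathcal{E}(\rhoup_t)(x,y)}\le 2L_K(\abs{x-y}\lor\abs{x-y}^p)$, so a direct estimate of $\tilde{\A}_{m,\betaup}(\mu;\rhoup_t,-\betaup\babla\delta_\rhoup\mathcal{E}(\rhoup_t))$ using concavity of $m$ (resp.\ $1$-homogeneity of $m_\infty$), \eqref{MB2}, \eqref{MB} and $\rho_t^{(i)}\in\mathcal{P}(\Rd)$ bounds $\mathcal{D}(\rhoup_t)$ uniformly in $t\in[0,T]$; thus $\int_0^T\A_{m,\betaup}^{1/p}(\mu;\rhoup_t,\jup_t)\,\dd t<\infty$ and $\bs\rhoup\in\AC^p$ by Proposition~\ref{prop:met vel}. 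Next, by Lemma~\ref{lem:connection_flux_velocity} the velocity associated to \eqref{eq_def:flux_weak_solution} is the discrete gradient $\vup_t=\babla\big(-\betaup\delta_\rhoup\mathcal{E}(\rhoup_t)\big)$; by Proposition~\ref{prop:tan_flux} and Remark~\ref{rem:grad^-E_in_dense_subset_of_T_rho} this flux is tangent, hence agrees with the minimal flux of Proposition~\ref{prop:met vel}, so that $\abs{\rhoup'_t}^p=\tilde{\A}_{m,\betaup}(\mu;\rhoup_t,\vup_t)=\mathcal{D}(\rhoup_t)$ for a.e.\ $t$. Plugging $\vup_t=-\betaup\babla\delta_\rhoup\mathcal{E}(\rhoup_t)$ into \eqref{eq:chain rule velocity}, using linearity of $\tilde{l}_{\rhoup_t}(\vup_t)[\cdot]$ and \eqref{eq:l_tildeg=A_tilde}, gives $\mathcal{E}(\rhoup_T)-\mathcal{E}(\rhoup_0)=-\int_0^T\tilde{l}_{\rhoup_t}(\vup_t)[\vup_t]\,\dd t=-\int_0^T\abs{\rhoup'_t}^p\,\dd t=-\int_0^T\big(\tfrac1q\mathcal{D}(\rhoup_t)+\tfrac1p\abs{\rhoup'_t}^p\big)\,\dd t$, i.e.\ $\mathcal{G}_T(\bs\rhoup)=0$.

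I expect the main obstacle to be the tangency step in the converse direction, namely identifying the structurally prescribed flux \eqref{eq_def:flux_weak_solution} with the minimal flux of Proposition~\ref{prop:met vel} (equivalently, upgrading $\abs{\rhoup'_t}^p\le\mathcal{D}(\rhoup_t)$ to an equality). When $R\land S<\infty$ or $m_\infty\equiv 0$ this follows from Proposition~\ref{prop:tan_flux}/Remark~\ref{rem:grad^-E_in_dense_subset_of_T_rho} since the prescribed velocity is a discrete gradient; in the remaining case ($R=S=\infty$, $m_\infty\not\equiv 0$) one needs a separate argument from strict convexity of $\jup\mapsto\A_{m,\betaup}(\mu;\rhoup,\jup)$ on antisymmetric fluxes together with the gradient structure of $-\betaup\babla\delta_\rhoup\mathcal{E}(\rhoup_t)$ (or one restricts, as elsewhere, to $m$ satisfying \eqref{A}), approximating $-\betaup\delta_\rhoup\mathcal{E}(\rhoup_t)$ by compactly supported smooth functions as in Remark~\ref{rem:eq cont} and using Lemma~\ref{lem:chain_rule_test_function} with the integrability condition \eqref{eq:integrability}. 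Two routine bookkeeping points also require attention: the measurable selection of the multiplier $\lambda_t$ and the separation of the stratum $\{t:\vup_t=0\}$ in the first direction, and the admissibility of the (non-compactly-supported) test function $-\betaup\delta_\rhoup\mathcal{E}(\rhoup_t)$, again handled by truncation.
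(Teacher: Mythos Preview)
Your argument for ``curve of maximal slope $\Rightarrow$ weak solution'' is essentially the paper's, modulo phrasing. The difference is in the converse.

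For ``weak solution $\Rightarrow$ curve of maximal slope'' you take a detour through tangency: you want to show that the flux \eqref{eq_def:flux_weak_solution} is the \emph{minimal} flux of Proposition~\ref{prop:met vel}, so that $\abs{\rhoup'_t}^p=\mathcal{D}(\rhoup_t)$, and you then use this equality to close. As you rightly flag, Proposition~\ref{prop:tan_flux} only gives this when $R\land S<\infty$ or $m_\infty\equiv 0$, so in the remaining case you are left with an ad hoc argument. The paper avoids this obstruction entirely: it never upgrades the inequality $\abs{\rhoup'_t}^p\le\mathcal{D}(\rhoup_t)$ (which comes for free from Proposition~\ref{prop:met vel} since $\A_{m,\betaup}(\mu;\rhoup_t,\jup_t)=\mathcal{D}(\rhoup_t)$ for the prescribed flux) to an equality. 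Instead it applies the chain rule with the prescribed velocity $\vup^\mathcal{E}_t\coloneqq -\betaup\babla\delta_\rhoup\mathcal{E}(\rhoup_t)$ itself, obtaining
\[
\mathcal{E}(\rhoup_t)-\mathcal{E}(\rhoup_s)=\int_s^t\tilde{l}_{\rhoup_\tau}(\vup^\mathcal{E}_\tau)[\betaup\babla\delta_\rhoup\mathcal{E}(\rhoup_\tau)]\,\dd\tau=-\int_s^t\mathcal{D}(\rhoup_\tau)\,\dd\tau\le-\int_s^t\Big(\tfrac{1}{q}\mathcal{D}(\rhoup_\tau)+\tfrac{1}{p}\abs{\rhoup'_\tau}^p\Big)\,\dd\tau,
\]
where the last step uses only $\abs{\rhoup'_\tau}^p\le\mathcal{D}(\rhoup_\tau)$. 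Sandwiching with the reverse inequality from Corollary~\ref{cor sqrtD is one-sided strong upper gradient} yields $\mathcal{G}_T(\bs\rhoup)=0$ directly. (The chain rule identity \eqref{eq:chain rule flux} in the appendix shows that \eqref{eq:chain rule velocity} holds for any $(\bs\rhoup,\jupbold)\in\CE_T$ with finite action, not only for the minimal flux, so feeding in the prescribed flux is legitimate.)

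In short: your self-identified ``main obstacle'' is not an obstacle at all; it evaporates once you realize you only need the inequality $\abs{\rhoup'_t}^p\le\mathcal{D}(\rhoup_t)$, not the equality. This also removes any dependence on assumption \eqref{A} in this theorem and makes the measurable-selection and truncation worries in your final paragraph moot.
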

\begin{proof}
Assume that $\bs\rhoup$ is a weak solution to \eqref{eq:NL2CIE} according to Definition \ref{def:weak solution to NL2CIE}. To construct a weak solution for the continuity equation \eqref{eq:cont}, we define the flux $\jupbold$ by
\baqs
    \dd j_t^{(i)\mu}&=(\beta^{(i)}(\babla\delta_{\rho^{(i)}}\mathcal{E}(\rhoup))_-)^{q-1}\dd\gamma_{1,t}^{(i)}-(\beta^{(i)}(\babla\delta_{\rho^{(i)}}\mathcal{E}(\rhoup))_+)^{q-1}\dd\gamma_{2,t}^{(i)},\\
    \dd j_t^{(i)\perp}&=(\beta^{(i)}(\babla\delta_{\rho^{(i)}}\mathcal{E}(\rhoup))_-)^{q-1}\dd\gamma_{1,t}^{(i)\perp}-(\beta^{(i)}(\babla\delta_{\rho^{(i)}}\mathcal{E}(\rhoup))_+)^{q-1}\dd\gamma_{2,t}^{(i)\perp},
\eaqs
for $i=1,2$. Using the abbreviation $v_t^{\mathcal{E},(i)} \coloneqq \beta^{(i)}\babla\delta_{\rho^{(i)}}\mathcal{E}(\rhoup)$, we immediately obtain
\baqs
\int_0^T\A_{m, \betaup}(\mu ; \rhoup_t,\jup_t)\dd t = \int_0^T\tilde{\A}_{m, \betaup}(\mu;\rhoup_t,\vup_t^{\mathcal{E}})\dd t=\int_0^T\mathcal{D}(\rhoup_t)\dd t<\infty.
\eaqs
The first two equalities are clear from the definitions. For the finiteness, recall that due to the concavity and finiteness of the mobility $m$, for any $B\in\mathcal{B}(G)$ we have the bound 
\baqs
    (\gamma_{1,\tau}^{(i)}+\gamma_{1,\tau}^{(i)\perp})(B) \le M(\mu\otimes\mu+\rho^{(i)}\otimes\mu+\mu\otimes\rho^{(i)})(B),
\eaqs
where $M$ only depends on $m$ and $G$. With this, Jensen's inequality, \eqref{K3}, \eqref{MB2} and \eqref{MB}, we obtain
\baqs
\mathcal{D}(\rhoup_t)&=\sum_{i=1}^2(\beta^{(i)})^{q-1}\Bigg[\iint_G\left(\left(\sum_{k=1}^2\babla(K^{(ik)}\ast\rho_t^{(k)})\right)_-\right)^q\eta\dd\gamma_{1,t}^{(i)}+\iint_G\left(\left(\sum_{k=1}^2\babla(K^{(ik)}\ast\rho_t^{(k)})\right)_-\right)^q\eta\dd\gamma_{1,t}^{(i)\perp}\Bigg]\\
&\leq ML_K^q\sum_{i,k=1}^2(\beta^{(i)})^{q-1}\int_\Rd\iint_G\left(\abs{x-y}^q\lor\abs{x-y}^{pq}\right)\eta(x,y)\dd\mu(y)\dd\rho_t^{(k)}(x)\dd(\mu+2\rho_t^{(i)})(z)\\
&\leq ML_K^qC_\eta\sum_{i,k=1}^2(\beta^{(i)})^{q-1}\int_\Rd\int_\Rd\dd\rho_t^{(k)}(x)\dd(\mu+2\rho_t^{(i)})(z)\\
&= 2(C_\mu+2)\big((\beta^{(1)})^{q-1}+(\beta^{(2)})^{q-1}\big)ML_K^qC_\eta<\infty.
\eaqs
By Proposition \ref{prop:met vel}, this also proves that $\bs\rhoup\in\AC^p([0,T];((\mathcal{P}_p(\Rd))^2,\mathcal{T}_\betaup))$ and that $\abs{\rhoup'_t}^p\le \mathcal{D}(\rhoup_t)$ for a.e. $t \in [0,T]$. The latter together with Proposition \ref{prop:Chain rule} yields
\baqs
    \mathcal{E}(\rhoup_t)-\mathcal{E}(\rhoup_s)&= \int_{s}^{t}\tilde{l}_{\rhoup_\tau}(\vup^\mathcal{E}_\tau)[\betaup\babla\delta_\rhoup\mathcal{E}(\rhoup_\tau)]\dd\tau=-\int_{s}^{t}\tilde{l}_{\rhoup_\tau}(\vup^\mathcal{E}_\tau)[-\betaup\babla\delta_\rhoup\mathcal{E}(\rhoup_\tau)]\dd\tau\\
    &=-\int_{s}^{t}\tilde{l}_{\rhoup_\tau}(-\betaup\babla\delta_\rhoup\mathcal{E}(\rhoup_\tau))[-\betaup\babla\delta_\rhoup\mathcal{E}(\rhoup_\tau)]\dd\tau\\
    &=-\int_{s}^{t}\mathcal{D}(\rhoup_\tau)\dd\tau\leq-\int_{s}^{t}\frac{1}{q}\mathcal{D}(\rhoup_\tau)+\frac{1}{p}\abs{\rhoup'_\tau}^p\dd\tau.
\eaqs
Hence, Corollary \ref{cor sqrtD is one-sided strong upper gradient} in conjunction with Remark \ref{rem:link one-sided upper gradient - curve of maximal slope} yields
\baqs
\mathcal{E}(\rhoup_t)-\mathcal{E}(\rhoup_s)+\int_{s}^{t}\frac{1}{q}\mathcal{D}(\rhoup_\tau)+\frac{1}{p}\abs{\rhoup'_\tau}^p\dd\tau = 0.
\eaqs
Therefore, the first implication of the theorem follows for the choices $s=0$ and $t=T$ implying $\mathcal{G}_T(\bs\rhoup)=0$.

To prove the converse implication, now consider $\bs\rhoup \in \AC^p([0,T];((\mathcal{P}_p(\Rd))^2,\mathcal{T}_\betaup))$ satisfying \eqref{eq:characterization of weak solutions to NL2CIE}. We verify that $\bs\rhoup$ is a weak solution of \eqref{eq:NL2CIE} according to Definition \ref{def:weak solution to NL2CIE}. By Proposition \ref{prop:met vel}, there exists a unique family $\jupbold\subset T_{\bs\rhoup}((\mathcal{P}(\Rd))^2)$,  such that $(\bs\rhoup,\jupbold)\in\CE_T$, $\int_0^T\A_{m, \betaup}^{1/p}(\rhoup_t,\jup_t)\dd t <\infty$ and $\abs{\rhoup'_t}^p=\A_{m, \betaup}(\rhoup_t,\jup_t)$, for a.e. $t\in[0,T]$. Moreover, by Lemma \ref{lem:connection_flux_velocity} we find a family of antisymmetric measurable vector fields $\vupbold=(\bs v^{(1)},\bs v^{(1)\perp},\bs v^{(2)},\bs v^{(2)\perp}):[0,T]\times G\to\R^4$ such that for every $t\in[0,T]$ and $i=1,2$ we have
\baqs
\dd j^{(i)\mu} &= (v^{(i)}_+)^{q-1}\dd\gamma_1^{(i)}-(v^{(i)}_-)^{q-1}\dd\gamma_2^{(i)},\\
\dd j^{(i)\perp} &= (v^{(i)\perp}_+)^{q-1}\dd\gamma_1^{(i)\perp}-(v^{(i)\perp}_-)^{q-1}\dd\gamma_2^{(i)\perp}.
\eaqs
Employing Proposition \ref{prop:Chain rule}, the H{\"o}lder-type inequality \eqref{eq:Holder-type_inequality_for_tilde_l}, the identity \eqref{eq:l_tildeg=A_tilde}, Definition \ref{def:Dissipation_and_De_Giorgi_functional}, and Young's inequality, we obtain
\baqs
    \mathcal{E}(\rhoup_T)-\mathcal{E}(\rhoup_0)&= \int_0^T\tilde{l}_{\rhoup_\tau}(\vup_\tau)[\betaup\babla\delta_\rhoup\mathcal{E}(\rhoup_\tau)]\dd\tau=-\int_{s}^{t}\tilde{l}_{\rhoup_\tau}(\vup_\tau)[-\betaup\babla\delta_\rhoup\mathcal{E}(\rhoup_\tau)]\dd\tau\\
    &\geq-\int_0^T(\A_{m, \betaup}(\rhoup_\tau,\jup_\tau))^{1/p}(\mathcal{D}(\rhoup_\tau))^{1/q}\dd\tau = -\int_0^T\abs{\rhoup'_\tau}(\mathcal{D}(\rhoup_\tau))^{1/q}\dd\tau\\
    &\geq -\int_0^T\frac{1}{q}\mathcal{D}(\rhoup_\tau)+\frac{1}{p}\abs{\rhoup'_\tau}^p\dd\tau.
\eaqs
Equation \eqref{eq:characterization of weak solutions to NL2CIE} implies that the inequalities are actually equalities. By Lemma \ref{lem:Holder-type_inequality_for_l}, equality holds if and only if for $i=1,2$ and a.e. $t\in[0,T]$ we have 
\baqs
    (v^{(i)}_t)_+ &= -\beta^{(i)}\babla\delta_{\rho^{(i)}}\mathcal{E}(\rhoup_t)_+, \quad \gamma_{1,t}^{(i)}\text{-a.e. on }G,\qquad (v^{(i)}_t)_- = -\beta^{(i)}\babla\delta_{\rho^{(i)}}\mathcal{E}(\rhoup_t)_-, \quad \gamma_{2,t}^{(i)}\text{-a.e. on }G,\\
    (v^{(i)\perp}_t)_+ &= -\beta^{(i)}\babla\delta_{\rho^{(i)}}\mathcal{E}(\rhoup_t)_+,  \quad \gamma_{1,t}^{(i)\perp}\text{-a.e. on }G, \qquad (v^{(i)\perp}_t)_- = -\beta^{(i)}\babla\delta_{\rho^{(i)}}\mathcal{E}(\rhoup_t)_-,  \quad  \gamma_{2,t}^{(i)\perp}\text{-a.e. on }G 
\eaqs
Hence, $(\bs\rhoup,\jupbold)\in\CE_T$ is a weak solution to \eqref{eq:NL2CIE}.
\end{proof}
\subsection{Stability and existence of weak solutions}\label{Stability and existence of weak solutions}
In this section, we utilize the characterization of weak solutions to \eqref{eq:NL2CIE} as minimizers of $\mathcal{G}_T$, attaining $\mathcal{G}_T=0$. To show the existence of minimizers, we employ the direct method of calculus of variations. This way, we will prove the compactness and stability of gradient flows, which we will then utilize to approximate the desired problem by discrete problems. The existence of solutions is easy to show.

\begin{lemma}\label{lem:lsc D}
Let $(\mu^n)_{n\in\mathbb{N}}\subset \Mloc^+(\Rd)$ and suppose $\mu^n \rightharpoonup^\ast \mu$ for some $\mu\in\Mloc^+(\Rd)$ as $n\to\infty$. Assume that $\mu^n$ and $\mu$ safisfy \eqref{MB2}, \eqref{MB} and \eqref{BC} uniformly in $n$. For $i,k=1,2$, let $K^{(ik)}$ satisfy \eqref{K2}, \eqref{K3} and $K^{(21)}=K^{(12)}$. Moreover, let $(\rhoup^n)_{n\in\mathbb{N}}$ be a sequence in $(\mathcal{P}_p(\Rd))^2$, which satisfies $\sup_{n\in\mathbb{N}}M_p(\rho^{n,(i)})<\infty$ and is such that $\rhoup^n\rightharpoonup \rhoup$ for some $\rhoup \in (\mathcal{P}_p(\Rd))^2$, as $n\to\infty$. Then, we have
\baqs
\liminf_{n\to\infty}\mathcal{D}(\mu^n;\rhoup^n)\geq \mathcal{D}(\mu;\rhoup).
\eaqs
\end{lemma}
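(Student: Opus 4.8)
The plan is to reduce the claim to the lower semicontinuity of the action $\A_{m, \betaup}$ proved in Lemma~\ref{lem:lsc A}. We may assume $\liminf_n\mathcal{D}(\mu^n;\rhoup^n)<\infty$ and, after passing to a (not relabelled) subsequence, that $\mathcal{D}(\mu^n;\rhoup^n)\to L:=\liminf_n\mathcal{D}(\mu^n;\rhoup^n)$. Exactly as in the first part of the proof of Theorem~\ref{thm:characterization of weak solutions to NL2CIE}, one has $\mathcal{D}(\mu^n;\rhoup^n)=\A_{m, \betaup}(\mu^n;\rhoup^n,\jup^n)$, where $\jup^n$ is the flux obtained from \eqref{eq_def:flux_weak_solution} with $(\mu^n,\rhoup^n)$ in place of $(\mu,\rhoup_t)$; equivalently, by Lemma~\ref{lem:connection_flux_velocity}, $\jup^n$ is associated to the velocity $\vup^n:=-\betaup\babla\delta_{\rhoup^n}\mathcal{E}(\rhoup^n)$, whose components are the functions $-\beta^{(i)}\babla\delta_{\rho^{n,(i)}}\mathcal{E}(\rhoup^n)$.

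First I would extract a limit flux. By Corollary~\ref{cor:bound_by_A}, $\iint_G\Phi_1\eta\,\dd\abs{\jup^n}\le MC_\eta^{1/q}\mathcal{D}^{1/p}(\mu^n;\rhoup^n)$ is bounded uniformly in $n$ (with $\Phi_1$ as there), so, since $\Phi_1\eta$ is bounded below by a positive constant on every compact subset of $G$, the sequence $(\jup^n)_n$ has uniformly locally bounded variation on $G$ and, up to a further subsequence, $\jup^n\rightharpoonup^\ast\jup^\infty$ in $(\Mloc(G))^2$. Moreover, writing $\delta_{\rho^{(i)}}\mathcal{E}(\rhoup)(x)=\sum_k\int_\Rd K^{(ik)}(x,z)\dd\rho^{(k)}(z)$ and using \eqref{K2}, \eqref{K3} together with the uniform $p$-moment bound, the functions $x\mapsto\delta_{\rho^{n,(i)}}\mathcal{E}(\rhoup^n)(x)$ converge pointwise (as in the proof of Proposition~\ref{prop:Continuity of the energy}) and are uniformly equicontinuous (by \eqref{K3}, since $\rho^{n,(k)}\in\mathcal{P}(\Rd)$); hence, by Arzel\`a--Ascoli, $\babla\delta_{\rho^{n,(i)}}\mathcal{E}(\rhoup^n)\to\babla\delta_{\rho^{(i)}}\mathcal{E}(\rhoup)$ locally uniformly on $G$, with the uniform bound $\abs{\babla\delta_{\rho^{n,(i)}}\mathcal{E}(\rhoup^n)}\le 2L_K\,\Phi_2$.

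The core step is to identify $\jup^\infty$ with the flux $\jup^{\mathcal{E}}$ built from \eqref{eq_def:flux_weak_solution} for $(\mu,\rhoup)$; recall that $\A_{m, \betaup}(\mu;\rhoup,\jup^{\mathcal{E}})=\mathcal{D}(\mu;\rhoup)$. The point is that in $\jup^n$ the $\mu^n\otimes\mu^n$-part and the singular part are driven by the \emph{same} velocity $\vup^n$, so they recombine into
\baqs
  \dd j^{n,(i)} = \big((v^{n,(i)})_+\big)^{q-1}\,\dd\big(\gamma_1^{n,(i)}+\gamma_1^{n,(i)\perp}\big) - \big((v^{n,(i)})_-\big)^{q-1}\,\dd\big(\gamma_2^{n,(i)}+\gamma_2^{n,(i)\perp}\big),
\eaqs
and, thanks to the positive $1$-homogeneity of $m_\infty$ (see the remark after Definition~\ref{def:mob}) and the concavity of $m$, the \emph{total mobility measure} $\gamma_1^{(i)}+\gamma_1^{(i)\perp}$ is a functional of the pair $(\rho^{(i)}\otimes\mu,\,\mu\otimes\rho^{(i)})$ which does not see the Lebesgue decomposition of $\rho^{(i)}$ with respect to $\mu$ (for $m(r,s)=r$ it is simply $\rho^{(i)}\otimes\mu$). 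Since $\rho^{n,(i)}\otimes\mu^n\rightharpoonup^\ast\rho^{(i)}\otimes\mu$ and $\mu^n\otimes\rho^{n,(i)}\rightharpoonup^\ast\mu\otimes\rho^{(i)}$ (because $\rho^{n,(i)}\rightharpoonup\rho^{(i)}$ narrowly and $\mu^n\rightharpoonup^\ast\mu$), a truncation argument as in the proof of Proposition~\ref{prop:compactness} --- using \eqref{W}, \eqref{MB2}, \eqref{MB}, \eqref{BC} and Lemma~\ref{lem:bound_by_A} to control the tails --- yields $\gamma_1^{n,(i)}+\gamma_1^{n,(i)\perp}\rightharpoonup^\ast\gamma_1^{(i)}+\gamma_1^{(i)\perp}$, and likewise for the transposes $\gamma_2$. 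Combining this with the local uniform convergence of the velocities (and the same truncation to pass to the limit in the product) gives $\jup^n\rightharpoonup^\ast\jup^{\mathcal{E}}$. Applying Lemma~\ref{lem:lsc A} to the weak-$\ast$ convergent triples $(\mu^n,\rhoup^n,\jup^n)$ then gives
\baqs
  \mathcal{D}(\mu;\rhoup)=\A_{m, \betaup}(\mu;\rhoup,\jup^{\mathcal{E}})\le\liminf_{n\to\infty}\A_{m, \betaup}(\mu^n;\rhoup^n,\jup^n)=L,
\eaqs
which is the assertion.

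The main obstacle is exactly this identification of the weak-$\ast$ limit of the gradient fluxes: the Lebesgue decompositions of $\rho^{n,(i)}$ with respect to $\mu^n$ do not converge (absolutely continuous mass can concentrate into a singular limit), so one cannot argue separately on the $\mu^n\otimes\mu^n$-part and the singular part; one must instead work with the combined mobility measure, for which the $1$-homogeneity of the recession mobility $m_\infty$ is precisely what makes the object stable under weak-$\ast$ convergence, while \eqref{MB2}, \eqref{MB} are needed to make the pairing with the (non-compactly supported) velocities legitimate.
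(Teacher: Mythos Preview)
Your strategy—rewrite $\mathcal{D}$ as $\A_{m,\betaup}$ evaluated at the gradient flux, extract a weak-$\ast$ limit of the fluxes, and invoke Lemma~\ref{lem:lsc A}—is a different route from the paper's. The paper instead writes $\mathcal{D}(\mu^n;\rhoup^n)=\sum_i(\beta^{(i)})^{-1}\iint_G f(u^{n,(i)})\,\eta\,\dd(\gamma_1^{n,(i)}+\gamma_1^{n,(i)\perp})$ with the convex integrand $f(r)=(r_-)^q$ and $u^{n,(i)}:=\beta^{(i)}\sum_k\babla(K^{(ik)}\ast\rho^{n,(k)})$, checks that $u^{n,(i)}$ converges to $u^{(i)}$ weakly in the varying-measure sense of \cite[Definition~5.4.3]{AmbrosioGigliSavare2008} (by testing against $\varphi\in C_c^\infty(G)$ and truncating in the convolution variable), and then applies \cite[Theorem~5.4.4(ii)]{AmbrosioGigliSavare2008} directly. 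So the paper never forms the fluxes $\jup^n$ or appeals to Lemma~\ref{lem:lsc A}.

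The gap in your argument is the identification $\jup^\infty=\jup^{\mathcal{E}}$, which rests on the assertion that the total mobility measure $\gamma_1^{(i)}+\gamma_1^{(i)\perp}$ is a weak-$\ast$ continuous functional of $(\rho^{(i)}\otimes\mu,\mu\otimes\rho^{(i)})$ that ``does not see the Lebesgue decomposition''. This is not correct in the generality of the paper. By definition $\dd\gamma_1^{(i)}=m^{q-1}\big(\rho^{(i)}(x),\rho^{(i)}(y)\big)\,\dd(\mu\otimes\mu)$, and even for $m(r,s)=r$ this is $(\rho^{(i)}(x))^{q-1}\dd(\mu\otimes\mu)$, which is \emph{nonlinear} in $\rho^{(i)}$ whenever $q\ne 2$; your parenthetical ``for $m(r,s)=r$ it is simply $\rho^{(i)}\otimes\mu$'' is therefore only valid for $p=2$. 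For general concave $m$, oscillations of the densities $\rho^{n,(i)}$ under narrow convergence drive the weak-$\ast$ limit of $m^{q-1}(\rho^{n,(i)}(x),\rho^{n,(i)}(y))\,\dd(\mu^n\otimes\mu^n)$ strictly below $m^{q-1}(\rho^{(i)}(x),\rho^{(i)}(y))\,\dd(\mu\otimes\mu)$; the $1$-homogeneity of $m_\infty$ handles the singular part but does nothing about this nonlinearity on the absolutely continuous part. Hence $\jup^\infty$ can be a strict sub-flux of $\jup^{\mathcal{E}}$, and Lemma~\ref{lem:lsc A} then only yields $\A_{m,\betaup}(\mu;\rhoup,\jup^\infty)<\mathcal{D}(\mu;\rhoup)$, so the chain of inequalities does not close. (Both routes ultimately use the convergence $\gamma_1^{n,(i)}+\gamma_1^{n,(i)\perp}\rightharpoonup\gamma_1^{(i)}+\gamma_1^{(i)\perp}$; the paper also asserts this step without further argument.)
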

\begin{proof}
For every $n\in\mathbb{N}$ and $i=1,2$, we define $u^{n,(i)} \coloneqq \beta^{(i)}\sum_{k=1}^2 \babla(K^{(ik)}\ast\rho^{n,(k)})$ and $u^{(i)} \coloneqq \beta^{(i)}\sum_{k=1}^2\babla( K^{(ik)}\ast\rho^{(k)})$. Further, we define the convex and continuous map $f:\R\to\R,r\mapsto(r_-)^q$ and note that we have
\baqs
\mathcal{D}(\mu^n;\rhoup^n)&=\sum_{i=1}^2\frac{1}{\beta^{(i)}}\iint_G f(u^{n,(i)})\eta\dd(\gamma_1^{n,(i)}+\gamma_1^{n,(i)\perp}),\\
\mathcal{D}(\mu;\rhoup)&=\sum_{i=1}^2\frac{1}{\beta^{(i)}}\iint_G f(u^{(i)})\eta\dd(\gamma_1^{(i)}+\gamma_1^{(i)\perp}),
\eaqs
where $\gamma_k^{(i)}$ and $\gamma_k^{(i)\perp}$ are as in Lemma \ref{def:V^as} and Remark \ref{rem:V^as}. We want to employ \cite[Theorem 5.4.4 (ii)]{AmbrosioGigliSavare2008} to prove the desired inequality. To this end, we observe that $u^{(i)}\in L^q(\eta\gamma_1^{(i)})$ and $u^{n,(i)}\in L^q(\eta\gamma_1^{n,(i)})$. Indeed, \eqref{K3}, \eqref{MB2} and \eqref{MB} and the bound
\baqs
    (\gamma_1^{n,(i)}+\gamma_1^{n,(i)\perp})(B)\le M(\mu\otimes\mu+\rho^{n,(i)}\otimes\mu+\mu\otimes\rho^{n,(i)})(B)\quad\forall B\in\mathcal{B}(G),
\eaqs
from Lemma \ref{lem:bound_by_A} imply
\baqs
    &\,\iint_G {\abs{u^{n,(i)}(x,y)}}^q\eta(x,y)\dd\gamma_1^{n,(i)}(x,y)\\
    =&\,\left(\beta^{(i)}\right)^q\iint_G {\abs{\sum_{k=1}^2K^{(ik)}\ast\rho^{n,(k)}(y)-K^{(ik)}\ast\rho^{n,(k)}(x)}}^q\eta(x,y)\dd(\gamma_1^{n,(i)}+\gamma_1^{n,(i)\perp})(x,y)\\
\leq&\, 2M(C_\mu+2)\left(\beta^{(i)}\right)^q L_K^q C_\eta.
\eaqs
Now, let $\varphi\in C_c^\infty(G)$. We find for $i=1,2$:
\baqs
&\,\left(\beta^{(i)}\right)^{-q}\iint_G u^{n,(i)}(x,y)\varphi(x,y)\eta(x,y)\dd(\gamma_1^{n,(i)}+\gamma_1^{n,(i)\perp})(x,y)\\
=&\,\sum_{k=1}^2\iint_G\int_\Rd \left(K^{(ik)}(y,z)- K^{(ik)}(x,z)\right)\dd\rho^{n,(k)}(z)\varphi(x,y)\eta(x,y)\dd(\gamma_1^{n,(i)}+\gamma_1^{n,(i)\perp})(x,y)\\
=&\,\sum_{k=1}^2\iint_{\supp\varphi}\int_{\Rd\cap B_R} \left(K^{(ik)}(y,z)- K^{(ik)}(x,z)\right)\dd\rho^{n,(k)}(z)\varphi(x,y)\eta(x,y)\dd(\gamma_1^{n,(i)}+\gamma_1^{n,(i)\perp})(x,y)\\
+&\,\sum_{k=1}^2\iint_{\supp\varphi}\int_{\Rd\setminus B_R} \left(K^{(ik)}(y,z)- K^{(ik)}(x,z)\right)\dd\rho^{n,(k)}(z)\varphi(x,y)\eta(x,y)\dd(\gamma_1^{n,(i)}+\gamma_1^{n,(i)\perp})(x,y).
\eaqs
The terms which are integrated over $\Rd\setminus B_R$ vanish as $R\to\infty$ since $\rho^{n,(k)}(\Rd\setminus B_R)\xrightarrow{R\to\infty}0$ by Prokhorov's Theorem. \eqref{K3} together with \eqref{MB2} and \eqref{MB} yields
\baqs
&\,{\abs{\iint_{\supp\varphi}\int_{\Rd\setminus B_R} \left(K^{(ik)}(y,z)- K^{(ik)}(x,z)\right)\varphi(x,y)\eta(x,y)\dd(\rho^{n,(k)}\otimes(\gamma_1^{n,(i)}+\gamma_1^{n,(i)\perp}))(z,x,y)}}\\
\leq &\,M L_K{\norm{\varphi}_\infty}\rho^{n,(k)}(\Rd\setminus B_R)\iint_{\supp\varphi}({\abs{x-y}\lor\abs{x-y}^p})\eta(x,y)\dd(\mu+2\rho^{n,(i)})(x)\dd\mu^n(y)\\
\leq &\,\frac{(C_\mu+2)M L_K C_\eta\norm{\varphi}_\infty\rho^{n,(k)}(\Rd\setminus B_R)}{\inf_{\supp\varphi}(\abs{x-y}^{q/p}\lor\abs{x-y}^q)}.
\eaqs
By \eqref{W} and \eqref{K3}, the function $(z,y,x)\mapsto(K(y,z)-K(x,z))\varphi(x,y)\eta(x,y)$ is continuous and bounded on $(\Rd\cap B_R)\times G$. On the other hand we have $\rho^{n,(k)}\otimes(\gamma_1^{n,(i)}+\gamma_1^{n,(i)\perp})\rightharpoonup \rho^{(k)}\otimes(\gamma_1^{(i)}+\gamma_1^{(i)\perp})$ in $\mathcal{P}(\Rd)\times\Mloc^+(G)$ for $i,k=1,2$. Therefore, for any $R>0$ and $i,k=1,2$, we obtain
\baqs
\lim_{n\to\infty}&\,\iint_{\supp\varphi}\int_{\Rd\cap B_R} \left(K^{(ik)}(y,z)- K^{(ik)}(x,z)\right)\varphi(x,y)\eta(x,y)\dd(\rho^{n,(k)}\otimes(\gamma_1^{n,(i)}+\gamma_1^{n,(i)\perp}))(z,x,y)\\
=&\,\iint_{\supp\varphi}\int_{\Rd\cap B_R} \left(K^{(ik)}(y,z)- K^{(ik)}(x,z)\right)\varphi(x,y)\eta(x,y)\dd(\rho^{(k)}\otimes(\gamma_1^{(i)}+\gamma_1^{(i)\perp}))(z,x,y).
\eaqs
Letting $R\to\infty$, we obtain
\baqs
\lim_{n\to\infty}\iint_G u^{n,(i)}\varphi\eta\dd(\gamma_1^{n,(i)}+\gamma_1^{n,(i)\perp})=\iint_G u^{(i)}\varphi\eta\dd(\gamma_1^{(i)}+\gamma_1^{(i)\perp}).
\eaqs
Therefore, $u^{n,(i)}$ converges weakly to $u^{(i)}$ in the sense of \cite[Definition 5.4.3]{AmbrosioGigliSavare2008}. This allows the application of \cite[Theorem 5.4.4 (ii)]{AmbrosioGigliSavare2008} to conclude
\baqs
\liminf_{n\to\infty}\mathcal{D}(\mu^n;\rhoup^n)&=\liminf_{n\to\infty}\sum_{i=1}^2\frac{1}{\beta^{(i)}}\iint_G f(u^{n,(i)})\eta\dd(\gamma_1^{n,(i)}+\gamma_1^{n,(i)\perp})\\
&\ge \sum_{i=1}^2\frac{1}{\beta^{(i)}}\iint_G f(u^{(i)})\eta\dd(\gamma_1^{(i)}+\gamma_1^{(i)\perp}) = \mathcal{D}(\mu;\rhoup),
\eaqs
which finishes the proof.
\end{proof}
\begin{lemma}[Compactness and lower semicontinuity of the De Giorgi functional]\label{lem:Compactness and lower semicontinuity of the De Giorgi functional}
Let $(\mu^n)_{n\in\mathbb{N}}\subset \Mloc^+(\Rd)$ and suppose $\mu^n \rightharpoonup^\ast \mu$ for some $\mu\in\Mloc^+(\Rd)$ as $n\to\infty$. Assume that $\mu^n$ and $\mu$ safisfy \eqref{MB2}, \eqref{MB} and \eqref{BC} uniformly in $n$. For $i,k=1,2$, let $K^{(ik)}$ satisfy \eqref{K2}, \eqref{K3} and $K^{(21)}=K^{(12)}$. Moreover, let $(\bs\rhoup^n)_{n\in\mathbb{N}}$ be such that $\bs\rhoup^n\in\AC^p([0,T];((\mathcal{P}_p(\Rd))^2,\mathcal{T}_{m,\betaup,\mu^n}))$, for all $n\in\mathbb{N}$ with $\sup_{n\in\mathbb{N}}M_p(\rho^{n,(i)}_0)<\infty$ for $i=1,2$ and $\sup_{n\in\mathbb{N}}\mathcal{G}_T(\mu^n;\bs\rhoup^n)<\infty$. Then, there exists $\bs\rhoup \in \AC^p([0,T];((\mathcal{P}_p(\Rd))^2,\mathcal{T}_{m,\betaup,\mu}))$ such that up to a subsequence we have $\rhoup^n_t\rightharpoonup \rhoup_t$ as $n\to\infty$ for all $t\in[0,T]$ and it holds
\baqs
\liminf_{n\to\infty}\mathcal{G}_T(\mu^n;\bs\rhoup^n)\geq \mathcal{G}_T(\mu;\bs\rhoup).
\eaqs
\end{lemma}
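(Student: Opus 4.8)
The plan is to reduce the statement to three ingredients already available: the compactness of solutions to the nonlocal continuity equation (Proposition~\ref{prop:compactness}), the lower semicontinuity of the dissipation (Lemma~\ref{lem:lsc D}), and the continuity of the energy (Proposition~\ref{prop:Continuity of the energy}). The only preparatory work is to promote the bound $\sup_n\mathcal{G}_T(\mu^n;\bs\rhoup^n)<\infty$ to uniform a priori bounds on the action of a suitable flux and on the $p$-th moments along the curves.

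\emph{Step 1: a priori bounds.} For each $n$ use Proposition~\ref{prop:met vel} to pick the minimal flux $\varjupbold^n$ with $(\bs\rhoup^n,\varjupbold^n)\in\CE_T$ and $\abs{\rhoup'^n_t}^p=\A_{m,\betaup}(\mu^n;\rhoup^n_t,\varjup^n_t)$ for a.e.\ $t$. The growth bound $\abs{K^{(ik)}(x,y)}\le C(1+\abs{x}^p+\abs{y}^p)$ entailed by \eqref{K3} yields $\abs{\mathcal{E}(\rhoup)}\le C(1+\sum_i M_p(\rho^{(i)}))$. Combining this with the differential inequality from the proof of Lemma~\ref{lem:Time-uniformly_bounded_moments} — which controls $\tfrac{\dd}{\dd t}\sum_i M_p(\rho^{n,(i)}_t)$ by $C\,\abs{\rhoup'^n_t}\,\big(\sum_i M_p(\rho^{n,(i)}_t)\big)^{1/q}$ — together with $\sup_n\sum_i M_p(\rho^{n,(i)}_0)<\infty$ and $\sup_n\mathcal{G}_T(\mu^n;\bs\rhoup^n)<\infty$, a Gronwall-type argument (carried out on short time subintervals, so that the linear contribution coming from $-\mathcal{E}(\rhoup^n_T)$ can be absorbed into the left-hand side) gives $\sup_n\sup_{t\in[0,T]}\sum_i M_p(\rho^{n,(i)}_t)<\infty$ and hence $\sup_n\int_0^T\abs{\rhoup'^n_t}^p\dd t=\sup_n\int_0^T\A_{m,\betaup}(\mu^n;\rhoup^n_t,\varjup^n_t)\dd t<\infty$. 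This absorption is the only genuinely technical point of the proof; everything else is an application of earlier results.

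\emph{Step 2: compactness and admissibility of the limit.} Apply Proposition~\ref{prop:compactness} to $(\bs\rhoup^n,\varjupbold^n)$ (the hypotheses \eqref{MB2}, \eqref{MB}, \eqref{BC} on $\mu^n,\mu$ and $\mu^n\rightharpoonup^\ast\mu$ are in force): along a subsequence, $\rhoup^n_t\rightharpoonup\rhoup_t$ narrowly for every $t\in[0,T]$, $\varjupbold^n\rightharpoonup^\ast\varjupbold$ in $(\Mloc(G\times[0,T]))^2$, $(\bs\rhoup,\varjupbold)\in\CE_T$, and $\liminf_n\int_0^T\A_{m,\betaup}(\mu^n;\rhoup^n_t,\varjup^n_t)\dd t\ge\int_0^T\A_{m,\betaup}(\mu;\rhoup_t,\varjup_t)\dd t$. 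By Fatou's lemma and Step~1 the limit curve still has uniformly bounded $p$-th moments, so $\bs\rhoup\subset(\mathcal{P}_p(\Rd))^2$ and $\int_0^T\A_{m,\betaup}(\mu;\rhoup_t,\varjup_t)\dd t<\infty$; hence $\bs\rhoup\in\AC^p([0,T];((\mathcal{P}_p(\Rd))^2,\mathcal{T}_{m,\betaup,\mu}))$ by Proposition~\ref{prop:met vel}.

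\emph{Step 3: passing to the liminf term by term.} Since $\rhoup^n_0\rightharpoonup\rhoup_0$ and $\rhoup^n_T\rightharpoonup\rhoup_T$ narrowly in $(\mathcal{P}_p(\Rd))^2$, Proposition~\ref{prop:Continuity of the energy} gives $\mathcal{E}(\rhoup^n_0)\to\mathcal{E}(\rhoup_0)$ and $\mathcal{E}(\rhoup^n_T)\to\mathcal{E}(\rhoup_T)$, so $\mathcal{E}(\rhoup^n_T)-\mathcal{E}(\rhoup^n_0)\to\mathcal{E}(\rhoup_T)-\mathcal{E}(\rhoup_0)$ as a true limit. For the kinetic term, Step~2 together with $\abs{\rhoup'_t}^p\le\A_{m,\betaup}(\mu;\rhoup_t,\varjup_t)$ (Proposition~\ref{prop:met vel}) gives $\liminf_n\int_0^T\abs{\rhoup'^n_t}^p\dd t\ge\int_0^T\abs{\rhoup'_t}^p\dd t$. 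For the dissipation term, fix $t$: since $\rhoup^n_t\rightharpoonup\rhoup_t$ and $\sup_n\sum_iM_p(\rho^{n,(i)}_t)<\infty$, Lemma~\ref{lem:lsc D} yields $\liminf_n\mathcal{D}(\mu^n;\rhoup^n_t)\ge\mathcal{D}(\mu;\rhoup_t)$, and since $\mathcal{D}\ge0$ Fatou's lemma upgrades this to $\liminf_n\int_0^T\mathcal{D}(\mu^n;\rhoup^n_t)\dd t\ge\int_0^T\mathcal{D}(\mu;\rhoup_t)\dd t$. Adding the three contributions (splitting off the energy difference, which converges) gives $\liminf_n\mathcal{G}_T(\mu^n;\bs\rhoup^n)\ge\mathcal{G}_T(\mu;\bs\rhoup)$, which is the assertion. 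The main obstacle, as noted, lies in Step~1: closing the moment/action estimate when no a priori bound on $\mathcal{E}(\rhoup^n_T)$ is assumed, which is handled by the subinterval absorption argument.
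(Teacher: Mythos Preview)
Your proof is correct and follows the same route as the paper: extract a uniform action bound, invoke Proposition~\ref{prop:compactness} for compactness and lower semicontinuity of the kinetic term, and combine Proposition~\ref{prop:Continuity of the energy} with Lemma~\ref{lem:lsc D} and Fatou for the remaining terms. Steps~2 and~3 are verbatim the paper's argument. In Step~1 you are actually more careful than the paper, which simply asserts that $\sup_n\mathcal{G}_T(\mu^n;\bs\rhoup^n)<\infty$, $\mathcal{D}\ge0$, and finiteness of $\mathcal{E}$ on $(\mathcal{P}_p(\Rd))^2$ yield the uniform action bound, thereby hiding the circularity you flag (a uniform lower bound on $\mathcal{E}(\rhoup^n_T)$ needs a uniform moment bound at time $T$, which in turn needs the action bound). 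Your subinterval absorption argument closes this loop; note that it implicitly uses $\mathcal{G}_\tau(\mu^n;\bs\rhoup^n)\le\mathcal{G}_T(\mu^n;\bs\rhoup^n)$, which follows from Corollary~\ref{cor sqrtD is one-sided strong upper gradient} on $[\tau,T]$. A shorter alternative, perhaps closer to what the paper has in mind, is to use the \emph{uniform} pointwise bound on $\mathcal{D}$ (established in the proof of Theorem~\ref{thm:characterization of weak solutions to NL2CIE}) together with Corollary~\ref{cor sqrtD is one-sided strong upper gradient} to get $\mathcal{E}(\rhoup^n_T)\ge\mathcal{E}(\rhoup^n_0)-C_DT^{1/q}\big(\int_0^T\abs{\rhoup'^n_t}^p\dd t\big)^{1/p}$, after which the sublinear term is absorbed by Young's inequality in one step.
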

\begin{proof}
Let $n\in\mathbb{N}$. Recall
\baqs
\mathcal{G}_T(\mu^n;\bs\rhoup^n)= \mathcal{E}(\rhoup^n_T)-\mathcal{E}(\rhoup_0^n)+\int_0^T \frac{1}{q}\mathcal{D}(\mu^n;\rhoup^n_t)+\frac{1}{p}\abs{(\rhoup^n_t)'}^p_{\mathcal{T}_{m,\betaup,\mu^n}}\dd t,
\eaqs
where the metric derivative of $\rhoup^n_t$ is taken with respect to $\mathcal{T}_{m,\betaup,\mu^n}$. Since the domain of $\mathcal{E}$ is all of $(\mathcal{P}_p(\Rd))^2$ and $\mathcal{D}$ is nonnegative, the bound $\sup_{n\in\mathbb{N}}\mathcal{G}_T(\mu^n;\bs\rhoup^n)<\infty$ ensures that
\baqs
\sup_{n\in\mathbb{N}}\int_0^T \abs{(\rhoup^n_t)'}^p_{\mathcal{T}_{m,\betaup,\mu^n}} \dd t<\infty.
\eaqs
Since for any $n$ we have $\bs\rhoup^n\in\AC^p([0,T];((\mathcal{P}_p(\Rd))^2,\mathcal{T}_{m,\betaup,\mu^n}))$, Proposition \ref{prop:met vel} yields the existence of a unique flux $\jupbold^n$ such that $(\bs\rhoup^n,\jupbold^n)\in\CE_T$ and $\abs{(\rhoup^n_t)'}^p_{\mathcal{T}_{m,\betaup,\mu^n}}=\A_{m, \betaup}(\mu^n;\rhoup^n_t,\jup^n_t)$ for a.e. $t\in[0,T]$. We therefore obtain
\baqs
\sup_{n\in\mathbb{N}}\int_0^t \A_{m, \betaup}(\mu^n;\rhoup^n_t,\jup^n_t)\dd t=\sup_{n\in\mathbb{N}}\int_0^T \abs{(\rhoup^n_t)'}^p_{\mathcal{T}_{m,\betaup,\mu^n}} \dd t<\infty.
\eaqs
Thus, by Proposition \ref{prop:compactness}, there exists $(\bs\rhoup,\jupbold)\in\CE_T$ such that, up to subsequences, $\rhoup^n_t\rightharpoonup\rhoup_t$ and $\jup^n_t\rightharpoonup^\ast \jup_t$ as $n\to\infty$ for a.e. $t\in[0,T]$, and we have
\baqs
\int_0^t \A_{m, \betaup}(\mu;\rhoup_t,\jup_t)\dd t\le\liminf_{n\to\infty}\int_0^t \A_{m, \betaup}(\mu^n;\rhoup^n_t,\jup^n_t)\dd t<\infty.
\eaqs
Hence, Proposition \ref{prop:met vel} implies that $\bs\rhoup\in\AC^p([0,T];((\mathcal{P}_p(\Rd))^2,\mathcal{T}_{m,\betaup,\mu}))$ and $\abs{\rhoup_t'}^p_{\mathcal{T}_{m,\betaup,\mu}} \leq \A_{m, \betaup}(\mu;\rhoup_t,\jup_t)$ for a.e. $t\in[0,T]$, which now yields
\baq\label{eq:met vel lsc}
\int_0^T\abs{\rhoup_t'}^p_{\mathcal{T}_{m,\betaup,\mu}}\dd t\le\liminf_{n\to\infty}\int_0^T\abs{(\rhoup^n_t)'}^p_{\mathcal{T}_{m,\betaup,\mu^n}}\dd t.
\eaq
By Proposition \ref{prop:Continuity of the energy}, we have that the $\mathcal{E}$ is narrowly continuous, i.e.
\baq\label{eq:E conv}
\lim_{n\to\infty}\mathcal{E}(\rhoup^n_0)=\mathcal{E}(\rhoup_0)\text{ and }\lim_{n\to\infty}\mathcal{E}(\rhoup^n_T)=\mathcal{E}(\rhoup_T).
\eaq
Lastly, Fatou's lemma and the narrow lower semicontinuity of $\mathcal{D}$, shown in Lemma \ref{lem:lsc D}, give us
\baq\label{eq:D lsc}
\int_0^T \mathcal{D}(\mu;\rho_t)\dd t\le\int_0^T \liminf_{n\to\infty}\mathcal{D}(\mu^n;\rhoup^n_t)\dd t\le \liminf_{n\to\infty}\int_0^T \mathcal{D}(\mu^n;\rhoup^n_t)\dd t.
\eaq
Combining \eqref{eq:met vel lsc}, \eqref{eq:E conv} and \eqref{eq:D lsc}, we finally obtain
\baqs
\mathcal{G}_T(\mu;\bs\rhoup)=\mathcal{E}(\rhoup_T)-\mathcal{E}(\rhoup_0)+\int_0^T \frac{1}{q}\mathcal{D}(\mu;\rho_t) +\frac{1}{p}\abs{\rhoup_t'}^p_{\mathcal{T}_{m,\betaup,\mu}}\dd t \le\liminf_{n\to\infty}\mathcal{G}_T(\mu^n;\bs\rhoup^n),
\eaqs
which finishes he proof.
\end{proof}

\begin{theorem}[Closedness of the Null Space of the DeGiorgi Functional]\label{thm:Stability of gradient flows}
Let $(\mu^n)_{n\in\mathbb{N}}\subset \Mloc^+(\Rd)$ and suppose $\mu^n \rightharpoonup^\ast \mu$ for some $\mu\in\Mloc^+(\Rd)$ as $n\to\infty$. Assume that $\mu^n$ and $\mu$ safisfy \eqref{MB2}, \eqref{MB} and \eqref{BC} uniformly in $n$. For $i,k=1,2$ let $K^{(ik)}$ satisfy \eqref{K2}, \eqref{K3} and $K^{(21)}=K^{(12)}$. Let $\bs\rhoup^n$ be a gradient flow of $\mathcal{E}$ with respect to $\mu^n$ for all $n\in\mathbb{N}$, i.e.
\baqs
\mathcal{G}_T(\mu^n;\bs\rhoup^n)=0,\qquad\text{for all }n\in\mathbb{N}.
\eaqs
Additionally, assume $\sup_{n\in\mathbb{N}}M_p(\rho^{n,(i)}_0)<\infty$ for $i=1,2$ and $\rhoup^n_t\rightharpoonup \rhoup_t$ for all $t\in[0,T]$ for some $\bs\rhoup \subset (\mathcal{P}_p(\Rd))^2$ as $n\to\infty$. Then, $\bs\rhoup \in \AC^p([0,T];((\mathcal{P}_p(\Rd))^2,\mathcal{T}_{m,\betaup,\mu}))$ is a gradient flow of $\mathcal{E}$ with respect to $\mu$, i.e.
\baqs
\mathcal{G}_T(\mu;\bs\rhoup)=0.
\eaqs
\end{theorem}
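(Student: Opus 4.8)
The plan is to deduce the statement from two facts that are already at our disposal: the compactness together with lower semicontinuity of the De Giorgi functional established in Lemma~\ref{lem:Compactness and lower semicontinuity of the De Giorgi functional}, and the universal inequality $\mathcal{G}_T(\mu;\bs\rhoup)\ge 0$ valid along any $p$-absolutely continuous curve, which follows from the fact that $\mathcal{D}^{1/q}$ is a one-sided strong upper gradient for $\mathcal{E}$ (Corollary~\ref{cor sqrtD is one-sided strong upper gradient}) combined with Young's inequality, cf.\ Remark~\ref{rem:link one-sided upper gradient - curve of maximal slope}. Sandwiching $\mathcal{G}_T(\mu;\bs\rhoup)$ between $0$ from above (obtained as a $\liminf$ of the vanishing values $\mathcal{G}_T(\mu^n;\bs\rhoup^n)$) and $0$ from below (obtained from the upper-gradient inequality) forces it to vanish.

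Concretely, since $\mathcal{G}_T(\mu^n;\bs\rhoup^n)=0$ for every $n$, in particular $\sup_{n}\mathcal{G}_T(\mu^n;\bs\rhoup^n)<\infty$, and by assumption $\sup_n M_p(\rho^{n,(i)}_0)<\infty$ for $i=1,2$; hence all hypotheses of Lemma~\ref{lem:Compactness and lower semicontinuity of the De Giorgi functional} are satisfied. That lemma produces a curve $\bs{\tilde\rhoup}\in\AC^p([0,T];((\mathcal{P}_p(\Rd))^2,\mathcal{T}_{m,\betaup,\mu}))$ and a subsequence (not relabelled) with $\rhoup^n_t\rightharpoonup\tilde\rhoup_t$ for all $t\in[0,T]$ and $\liminf_{n\to\infty}\mathcal{G}_T(\mu^n;\bs\rhoup^n)\ge\mathcal{G}_T(\mu;\bs{\tilde\rhoup})$. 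Since by hypothesis $\rhoup^n_t\rightharpoonup\rhoup_t$ for all $t$ and narrow limits are unique, we get $\bs{\tilde\rhoup}=\bs\rhoup$; in particular $\bs\rhoup\in\AC^p([0,T];((\mathcal{P}_p(\Rd))^2,\mathcal{T}_{m,\betaup,\mu}))$, which is the first half of the claim. As the left-hand side of the $\liminf$ inequality equals $0$, we conclude $\mathcal{G}_T(\mu;\bs\rhoup)\le 0$.

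For the reverse inequality, $\bs\rhoup\in\AC^p([0,T];((\mathcal{P}_p(\Rd))^2,\mathcal{T}_{m,\betaup,\mu}))$ lets us apply Corollary~\ref{cor sqrtD is one-sided strong upper gradient} with $s=0$ and $t=T$, so that
\[
\mathcal{E}(\rhoup_T)-\mathcal{E}(\rhoup_0)\ \ge\ -\int_0^T(\mathcal{D}(\rhoup_\tau))^{1/q}\abs{\rhoup'_\tau}\,\dd\tau\ \ge\ -\int_0^T\Big(\tfrac1q\mathcal{D}(\rhoup_\tau)+\tfrac1p\abs{\rhoup'_\tau}^p\Big)\dd\tau,
\]
where the last step is Young's inequality. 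Rearranging this is precisely $\mathcal{G}_T(\mu;\bs\rhoup)\ge 0$ (Remark~\ref{rem:link one-sided upper gradient - curve of maximal slope}). Combining the two bounds yields $\mathcal{G}_T(\mu;\bs\rhoup)=0$, i.e.\ $\bs\rhoup$ is a gradient flow of $\mathcal{E}$ with respect to $\mu$.

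There is no genuinely new obstacle here: the analytic substance has all been carried out beforehand — compactness of solutions of the nonlocal continuity equation, including compactness in the base measure (Proposition~\ref{prop:compactness} and Remark~\ref{rem:compactness_mu}), lower semicontinuity of the metric derivative (Proposition~\ref{prop:met vel}), lower semicontinuity of the dissipation $\mathcal{D}$ (Lemma~\ref{lem:lsc D}), and the chain rule underlying the one-sided upper gradient (Proposition~\ref{prop:Chain rule}). The only point requiring a little care is the identification of the subsequential limit furnished by Lemma~\ref{lem:Compactness and lower semicontinuity of the De Giorgi functional} with the prescribed limit curve $\bs\rhoup$, which is immediate from uniqueness of narrow limits and the fact that $\rhoup^n_t\rightharpoonup\rhoup_t$ is assumed for every $t\in[0,T]$.
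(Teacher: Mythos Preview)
Your proof is correct and follows essentially the same approach as the paper: apply Lemma~\ref{lem:Compactness and lower semicontinuity of the De Giorgi functional} to obtain $\bs\rhoup\in\AC^p$ and $\mathcal{G}_T(\mu;\bs\rhoup)\le\liminf_n\mathcal{G}_T(\mu^n;\bs\rhoup^n)=0$, then combine with $\mathcal{G}_T(\mu;\bs\rhoup)\ge 0$ from Corollary~\ref{cor sqrtD is one-sided strong upper gradient} and Young's inequality. Your version is in fact slightly more explicit than the paper's in justifying the identification of the subsequential limit from the lemma with the given curve $\bs\rhoup$.
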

\begin{proof}
By Lemma \ref{lem:Compactness and lower semicontinuity of the De Giorgi functional}, we immediately obtain that $\bs\rhoup \in \AC^p([0,T];((\mathcal{P}_p(\Rd))^2,\mathcal{T}_{m,\betaup,\mu}))$ and that, up to a subsequence, we have
\baqs
\liminf_{n\to\infty}\mathcal{G}_T(\mu^n;\bs\rhoup^n)\geq\mathcal{G}_T(\mu;\bs\rhoup).
\eaqs
Finally, since $\mathcal{G}_T(\mu;\bs\rhoup) \geq 0$, Young's inequality and Corollary \ref{cor sqrtD is one-sided strong upper gradient} yield $\mathcal{G}_T(\mu;\bs\rhoup) = 0$.
\end{proof}
\begin{theorem}[Existence of weak solutions]\label{thm:Existence of weak solutions}
Let $m$ satisfy assumption \eqref{A} from Proposition \ref{prop:Absolutely continuous curves stay supported in supp mu} and for $i,k=1,2$, let $K^{(ik)}$ satisfy \eqref{K2}, \eqref{K3} and $K^{(21)}=K^{(12)}$. Suppose that $\mu\in\Mloc^+(\Rd)$ satisfies \eqref{MB2} and \eqref{BC}. Assume further that there exists $C'_\eta>0$ such that we have
\begin{equation}
    \label{eq:(A1)-Ersatz}\tag{MB2$^\prime$}
    \sup_{(x,y)\in G\cap\supp\mu\otimes\mu}\left(\abs{x-y}^q\lor\abs{x-y}^{pq}\right)\eta(x,y)\leq C'_\eta.
\end{equation}
Let $\rhoinit\in (\mathcal{P}_p(\Rd))^2$ be $\mu$-absolutely continuous. Then, there exists a weakly continuous curve $\bs\rhoup:[0,T]\to(\mathcal{P}_p(\Rd))^2$ s.t. $\supp \rho_t\subseteq \supp\mu$ for all $t\in[0,T]$, which is a weak solution of \eqref{eq:NL2CIE} and satisfies the initial condition $\rhoup_0=\rhoinit$.
\end{theorem}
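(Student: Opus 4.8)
The plan is to obtain $\bs\rhoup$ as a narrow limit of gradient flows on finite graphs: approximate the base measure $\mu$ by finitely supported measures $\mu^n$, on each finite graph solve \eqref{eq:NL2CIE} (which there reduces to a system of ODEs), and pass to the limit using the closedness of the null set of $\mathcal{G}_T$ from Theorem~\ref{thm:Stability of gradient flows}.

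As a preliminary reduction I would arrange that $\mu$ satisfies~\eqref{MB}. Indeed, since $\rhoinit$ is $\mu$-absolutely continuous we have $\supp\rho_0^{(i)}\subseteq\supp\mu$, and any curve constructed below will stay supported in $\supp\mu$ by Proposition~\ref{prop:Absolutely continuous curves stay supported in supp mu} (applicable because $m$ satisfies~\eqref{A}); hence the values of $\eta$ off $\supp\mu\times\supp\mu$ play no role, and we may replace $\eta$ by $\tilde\eta\coloneqq\eta\,\mathbb{1}_{\supp\mu\times\supp\mu}$. The weight $\tilde\eta$ still satisfies~\eqref{W} on the corresponding (smaller) edge set, it inherits~\eqref{BC} from $\eta$ since $\tilde\eta\le\eta$, and~\eqref{eq:(A1)-Ersatz} together with $\mu(\Rd)\le C_\mu$ (a consequence of~\eqref{MB2}) upgrades to a genuine bound~\eqref{MB}. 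So from now on assume $\mu$ satisfies \eqref{MB2}, \eqref{MB} and \eqref{BC}.

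Next I would construct, as in the discretization scheme of~\cite{EPSS2021}, atomic measures $\mu^n$ with $\supp\mu^n\subseteq\supp\mu$, $\mu^n\rightharpoonup^\ast\mu$, satisfying \eqref{MB2}, \eqref{MB} and \eqref{BC} uniformly in $n$ (here~\eqref{eq:(A1)-Ersatz} is exactly what makes the uniform~\eqref{MB} bound available, since $\mu^n$ is carried by $\supp\mu$), together with $\mu^n$-absolutely continuous initial data $\rhoinit^n\in(\mathcal{P}_p(\Rd))^2$ supported on $\supp\mu^n$ with $\rhoinit^n\rightharpoonup\rhoinit$ and $\sup_n M_p(\rho_0^{n,(i)})<\infty$. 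On the finite graph with vertices $\supp\mu^n$, the continuity equation of Definition~\ref{def:cont eq} with the flux prescribed in Definition~\ref{def:weak solution to NL2CIE} is a system of ODEs for the vertex densities whose right-hand side is continuous ($m$ and $z\mapsto z^{q-1}$ are continuous, and $\delta_{\rho^{(i)}}\mathcal{E}(\rhoup)$ depends linearly on $\rhoup$). The product of probability simplices is positively invariant: the mass of each species is conserved because $\babla$ annihilates constants, and nonnegativity is preserved because $m(0,s)=0$ makes the outflux from an empty vertex vanish. Thus Peano's theorem, together with the invariant region and the moment bound of Lemma~\ref{lem:Time-uniformly_bounded_moments}, yields a global solution $\bs\rhoup^n$ on $[0,T]$ with $\rhoup_0^n=\rhoinit^n$; this is a weak solution of \eqref{eq:NL2CIE} for $\mu^n$, so by Theorem~\ref{thm:characterization of weak solutions to NL2CIE} it satisfies $\mathcal{G}_T(\mu^n;\bs\rhoup^n)=0$.

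Finally, $\sup_n M_p(\rho_0^{n,(i)})<\infty$ and $\mathcal{G}_T(\mu^n;\bs\rhoup^n)=0$ put us in the setting of Theorem~\ref{thm:Stability of gradient flows} (via Lemma~\ref{lem:Compactness and lower semicontinuity of the De Giorgi functional}): up to a subsequence, $\rhoup_t^n\rightharpoonup\rhoup_t$ for every $t\in[0,T]$ for some $\bs\rhoup\in\AC^p([0,T];((\mathcal{P}_p(\Rd))^2,\mathcal{T}_{m,\betaup,\mu}))$ with $\mathcal{G}_T(\mu;\bs\rhoup)=0$, and by Theorem~\ref{thm:characterization of weak solutions to NL2CIE} this $\bs\rhoup$ is a weak solution of \eqref{eq:NL2CIE}. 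The initial condition passes to the limit since $\rhoup_0^n=\rhoinit^n\rightharpoonup\rhoinit$ and simultaneously $\rhoup_0^n\rightharpoonup\rhoup_0$, so $\rhoup_0=\rhoinit$; weak continuity is part of membership in $\CE_T$, the uniform $p$-th moment bound (lower semicontinuity of $M_p$ plus Lemma~\ref{lem:Time-uniformly_bounded_moments}) gives $\bs\rhoup:[0,T]\to(\mathcal{P}_p(\Rd))^2$, and $\supp\rho_t^{(i)}\subseteq\supp\mu$ follows from Proposition~\ref{prop:Absolutely continuous curves stay supported in supp mu}, using $\supp\rhoinit^{(i)}\subseteq\supp\mu$ and~\eqref{A}. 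I expect the main obstacle to be the approximation step: producing $(\mu^n,\rhoinit^n)$ with \emph{all} the uniform bounds simultaneously --- in particular verifying~\eqref{BC} uniformly in $n$ in the presence of possible atoms of $\mu$ --- and, more mildly, checking that the finite-dimensional vector field, which is only H\"older- and not Lipschitz-continuous near the boundary of the simplex, still yields solutions defined on all of $[0,T]$; both are routine adaptations of the corresponding arguments in~\cite{EPSS2021}.
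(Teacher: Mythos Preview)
Your proposal is correct and follows essentially the same strategy as the paper: approximate $\mu$ by finitely supported $\mu^n$ with $\supp\mu^n\subseteq\supp\mu$, solve the resulting finite ODE system via Peano and the invariance of the probability simplex, identify the discrete solutions as gradient flows through Theorem~\ref{thm:characterization of weak solutions to NL2CIE}, and pass to the limit using Lemma~\ref{lem:Compactness and lower semicontinuity of the De Giorgi functional} and Theorem~\ref{thm:Stability of gradient flows}. The only noteworthy difference is cosmetic: you replace $\eta$ by $\tilde\eta=\eta\,\mathbb{1}_{\supp\mu\times\supp\mu}$ up front so that~\eqref{MB} holds outright, whereas the paper keeps $\eta$ and argues instead that, since all curves stay supported in $\supp\mu$, the pointwise bound~\eqref{eq:(A1)-Ersatz} substitutes for~\eqref{MB} wherever the latter is invoked; the paper also gives the explicit verification of uniform~\eqref{BC} for the $\mu^n$ and constructs $\rhoinit^n$ via optimal transport plans between $\mu$ and (normalized) $\mu^n$, which fills in precisely the approximation step you flag as the main obstacle.
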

\begin{proof}
Let $(\mu^n)_{n\in\mathbb{N}} \subset\Mloc^+(\Rd)$ be a sequence of atomic measures with finitely many atoms that narrowly converges to $\mu$. This means every $\mu^n$ is of the form
\baq\label{eq_def:mu^n}
\mu^n = \sum_{l=1}^{N_n} \mu^n_l \delta_{x^n_l},
\eaq
for some $N_n\in\mathbb{N}$, $\mu^n_l \in \R\setminus\{0\}$ and $x^n_l\in\Rd$. We further assume, without loss of generality, for any $n\in\mathbb{N}$ that $\mu^n(\Rd)\leq\mu(\Rd)$ and $\supp\mu^n\subset\supp\mu$.\\
Since every $\mu^n$ consists of finitely many atoms and their limit $\mu$ satisfies \eqref{BC}, the family $(\mu^n)_{n\in\mathbb{N}}$ satisfies \eqref{BC} uniformly in $n$. Indeed, as $\mu^n\rightharpoonup\mu$, for any $\varepsilon>0$ and $N \in \mathbb{N}$, there exists $\tilde{\varepsilon}=\tilde{\varepsilon}(\varepsilon,N) > 0$ s.t. $\tilde{\varepsilon}\to 0$ when $\varepsilon\to 0$ and $N\to\infty$ and s.t. we have
\baqs
&\sup_{n\geq N}\sup_{x\in\Rd}\int_{B_\varepsilon(x)\setminus\{x\}}{\abs{x-y}}^q\eta(x,y)\dd\mu^n(y)\\
\leq&\sup_{x\in\Rd}\left(\sup_{n\geq N}\int_{B_\varepsilon(x)\setminus\{x\}}{\abs{x-y}}^q\eta(x,y){\abs{\dd\mu^n-\dd\mu}}(y)+\int_{B_\varepsilon(x)\setminus\{x\}}{\abs{x-y}}^q\eta(x,y)\dd\mu(y)\right)\\
\leq&\,\tilde{\varepsilon}+\sup_{x\in\Rd}\int_{B_\varepsilon(x)\setminus\{x\}}{\abs{x-y}}^q\eta(x,y)\dd\mu(y).
\eaqs
On the other hand, since all the $\mu^n$ consist of only finitely many atoms, for any $\varepsilon > 0$ there exists $N=N(\varepsilon)\in\mathbb{N}$ such that $N\to\infty$ when $\varepsilon\to 0$ and such that we have
\baqs
\sup_{n< N}\sup_{x\in\Rd}\int_{B_\varepsilon(x)\setminus\{x\}}\abs{x-y}^q\eta(x,y)\dd\mu^n(y)=0.
\eaqs
Thus, choosing $N(\varepsilon)$ and $\tilde\varepsilon(\varepsilon,N(\varepsilon))$ as above, letting $\varepsilon \to 0$ and using the fact that $\mu$ satisfies \eqref{BC}, we obtain
\baqs
\lim_{\varepsilon\to 0}\sup_{n\in\mathbb{N}}\sup_{x\in\Rd}\int_{B_\varepsilon(x)\setminus\{x\}}\abs{x-y}^q\eta(x,y)\dd\mu^n(y)=0.
\eaqs
Next, denote by $\tilde{\mu}^n$ the normalization of $\mu^n$, i.e.
\baqs
\tilde{\mu}^n = \frac{\mu(\Rd)}{\mu^n(\Rd)}\mu^n,
\eaqs
and let $\pi^n$ be an optimal transportation plan between $\mu$ and $\tilde{\mu}^n$ for the quadratic cost. 
Since we have $\tilde{\mu}^n\rightharpoonup\mu$ narrowly, we have $\pi^n\rightharpoonup (\id\times\id)_{\#}\mu$ narrowly. For $i=1,2$ let $\tilde{\varrho}_0^{(i)}$ be the density of $\varrho_0^{(i)}$ with respect to $\mu$ and let $\varrho_0^{n,(i)}$ be the second marginal of $(\tilde{\varrho}_0^{(i)}\times \mathbb{1})\dd\pi^n$, i.e., for any $B\in\mathcal{B}(\Rd)$, we have $\varrho_0^{n,(i)}(B)=\int_{\Rd\times B}\tilde{\varrho}_0^{(i)}(x)\dd\pi^n(x,y)$. Then, by construction, for any $n\in\mathbb{N}$ and $i=1,2$, we find $\varrho_0^{n,(i)}(\Rd)=\varrho_0^{(i)}(\Rd)$ and $\varrho_0^{n,(i)}\ll\mu^n$. Also, by the convergence of $\pi^n$ and the fact that $(\tilde{\varrho}_0^{(i)}\times \mathbb{1})\pi^n$ is a transport plan between $\varrho_0^{(i)}$ and $\varrho_0^{n,(i)}$, we find that $\varrho_0^{n,(i)}\rightharpoonup\varrho_0^{(i)}$ for $i=1,2$ as $n\to\infty$. By \eqref{eq:(A1)-Ersatz}, for all $n\in\mathbb{N}$, we obtain the bound
\baqs
\mu-\esssup_{x\in\Rd}\int_\Rd\left(\abs{x-y}^q\lor\abs{x-y}^{pq}\right)\eta(x,y)\dd\mu^n(y)\leq C'_\eta\mu^n(\Rd)\leq C'_\eta\mu(\Rd).
\eaqs
Since, by construction, $\varrho_0^{n,(i)}\ll\mu^n$, we have $\supp\varrho_0^{n,(i)} \subset\supp\mu^n \subset\supp\mu$. By Proposition \ref{prop:Absolutely continuous curves stay supported in supp mu}, the nested support is preserved in time for any $\bs\rhoup^n \in\AC^p([0,T];((\mathcal{P}_p(\Rd))^2,\mathcal{T}_{m,\betaup,\mu}))$ with $\rho^{n,(i)}_0=\varrho^{n,(i)}_0$, i.e., we have $\supp\rho_t^{n,(i)}\subset\supp\mu^n \subset\supp\mu$ for any $t\in[0,T]$ and any $n\in\mathbb{N}$. Therefore, \eqref{eq:(A1)-Ersatz} can be used to replace \eqref{MB} uniformly in $n$, when employing Lemma \ref{lem:Compactness and lower semicontinuity of the De Giorgi functional} and Theorem \ref{thm:Stability of gradient flows} later in this proof. Further note that $\{\mu^n\}_n$ satisfy \eqref{MB2} uniformly in $n$, since $\mu$ satisfies \eqref{MB2}. These considerations now allow us to construct curves $\bs\rhoup^n \in\AC^p([0,T];((\mathcal{P}_p(\Rd))^2,\mathcal{T}_{m,\betaup,\mu}))$, which are gradient flows and converge to a gradient flow $\bs\rhoup$. Indeed, since any $\mu^n$ is a counting measure, we have $\rhoup^{n,(i)}_t\ll\mu^n$ for any $i=1,2$, $t\in[0,T]$, and $n\in\mathbb{N}$. Thus, we can write
\baq\label{eq_def:rho^n,(i)}
\rho_t^{n,(i)}=\sum_{\nu=1}^{N_n} \rho_\nu^{n,(i)}(t)\mu^n_\nu \delta_{x_\nu^n},
\eaq
for suitable functions $\rho_\nu^{n,(i)}:[0,T]\to\R$ and the points $x_\nu^n\in\Rd$ from \eqref{eq_def:mu^n}.
Now, let $\varphi^n_\nu\in C^\infty_c(\Rd)$, $\nu\in\{1,...,N_n\}$ satisfy $\varphi^n_\nu(x_\nu^n) \neq 0$ and $\varphi^n_\nu(x_\kappa^n) = 0$ for $x_\kappa\neq \nu$. Then, inserting \eqref{eq_def:mu^n} and \eqref{eq_def:rho^n,(i)} into Equation \eqref{eq_def:flux_weak_solution}, we find for $i=1,2$ and $\nu\in\{1,...,N_n\}$
\baq\label{eq:discrete NL2CIE}
    \partial_t \rho_l^{n,(i)} = &\sum_{m=1}^{N_n}\Bigg[\Bigg(\beta^{(i)} m\left(\rho_l^{n,(i)},\rho_m^{n,(i)}\right)\Bigg(\sum_{k=1}^2\sum_{h=1}^{N_n}\left(K^{(ik)}(x^n_m,x^n_h)-K^{(ik)}(x^n_l,x^n_h)\right)\rho_h^{n,(k)}\mu^n_h\Bigg)_+\Bigg)^{q-1}\\
    &-\;\Bigg(\beta^{(i)} m\left(\rho_m^{n,(i)},\rho_l^{n,(i)}\right)\Bigg(\sum_{k=1}^2\sum_{h=1}^{N_n}\left(K^{(ik)}(x^n_m,x^n_h)-K^{(ik)}(x^n_l,x^n_h)\right)\rho_h^{n,(k)}\mu^n_h\Bigg)_-\Bigg)^{q-1}\Bigg]\eta(x_l^n,x_m^n)\mu^n_m.
\eaq
Since $m(0,s)=0$, we see that the simplex defined by
\baq\label{eq:vertex}
\rho_\nu^{n,(i)}\in\bigg[0,\Big(\min_{1\leq m\leq N_n}\mu^n_\kappa\Big)^{-1}\bigg],\quad \sum_{\nu=1}^{N_n}\mu^n_\nu\rho_\nu^{n,(i)} =1.
\eaq
is an invariant region of the dynamics. Due to the continuity of $m$, the right-hand side of \eqref{eq:discrete NL2CIE} is continuous with respect to $\rho^{n,(i)}$ for any $n\in\mathbb{N}$ and $i=1,2$. With this, the Peano existence theorem provides us with a strong solution $\bs\rhoup^n$ of \eqref{eq:discrete NL2CIE} on an interval $[0,\tau_n]$ for some $\tau_n > 0$. Due to \eqref{eq:vertex}, $\tau_n$ only depends on $n$ and $\mu^n$. Thus, by a standard continuation argument, a piecewise $C^1$ solution exists on the whole interval $[0,T]$.
By construction, this solution is a weak solution for \eqref{eq:NL2CIE} in the sense of Definition \ref{def:weak solution to NL2CIE} with respect to $\mu^n$ starting from $\varrhoup^n_0$. Therefore, by Theorem \ref{thm:characterization of weak solutions to NL2CIE}, $\bs\rhoup^n$ is a gradient flow of $\mathcal{E}$ with respect to $\mu^n$ with initial datum $\varrhoup^n_0$, for any $n\in\mathbb{N}$. This allows us to apply the compactness from Lemma \ref{lem:Compactness and lower semicontinuity of the De Giorgi functional} and the stability from Theorem \ref{thm:Stability of gradient flows} to find that, up to a subsequence, $\rhoup^n_t\rightharpoonup\rhoup_t$ as $n\to\infty$ for all $t\in[0,T]$, where $\bs\rhoup \in\AC^p([0,T];((\mathcal{P}_p(\Rd))^2,\mathcal{T}_{m,\betaup,\mu}))$ is a gradient flow of $\mathcal{E}$ with respect to $\mu$ starting from $\rhoinit$.
\end{proof}
\begin{remark}
Assumption \eqref{eq:(A1)-Ersatz} is needed to obtain an atomic approximating sequence $(\mu^n)_n$ for $\mu$, which satisfies \eqref{MB2}, \eqref{MB} and \eqref{BC} uniformly in $n$. There might be cases, where it is possible drop this assumption if one is able to explicitly construct a sequence $(\mu^n)_n$ satisfying these bounds uniformly in $n$.
\end{remark}
\appendix
\section*{Appendix}
\section{Chain rule}\label{A:Chain}
\begin{remark}\label{rem:Approximate energies}(Approximate energies).
Let $K^{(ik)}$, $i,k=1,2$ satisfy \eqref{K2}, \eqref{K3} and $K^{(21)} = K^{(12)}$. Let $m\in C^\infty_c(\Rd\times\Rd)$ be a standard mollifier. For $\varepsilon >0$ and $z\in\R^{2d}$, we set $m_\varepsilon(z)\coloneqq \frac{1}{\varepsilon^{2d}}m(z/\varepsilon)$. Moreover, for $R>0$ let $\varphi_R\in C^\infty_c(\R^{2d})$ be a cut-off function with $\supp\varphi_R\subset B_{2R}(0)$, $\varphi_R|_{B_R(0)} \equiv 1$ and $\abs{\nabla\varphi_R}\leq \frac{2}{R}$. Using the mollifier and the cut-off we define for $i,k=1,2$
\baqs
K^{\varepsilon,(ik)}_R(x,y) \coloneqq \varphi_R(x,y)(K^{(ik)}\ast m_\varepsilon)(x,y).
\eaqs
Note that the functions $K^{\varepsilon,(ik)}_R$ still satisfy \eqref{K2}, \eqref{K3} and, additionally, lie in $C^\infty_c(\R^{2d})$. For $\rhoup \in (\mathcal{P}_p(\Rd))^2$, we define the approximate energies
\baqs
\mathcal{E}^\varepsilon_R(\rhoup)& \coloneqq \frac{1}{2}\sum_{i,k=1}^2\iint_{\Rd\times\Rd}K^{\varepsilon,(ik)}_R(x,y)\dd\rho^{(i)}(x)\dd\rho^{(k)}(y).
\eaqs
\end{remark}
\begin{lemma}[Mollification]\label{lem:Mollified measures}
Let $(\bs\rhoup,\jupbold) \in\CE_T$ with $\int_0^T\A_{m,\betaup}(\mu;\rhoup_t,\jup_t)\dd t <\infty$. Let $n \in C^\infty_c(\R)$ be a standard mollifier with $\supp n \subseteq[-1,1]$. For $\bar{\varepsilon} >0 $ set $n_{\bar{\varepsilon}}(t) \coloneqq \frac{1}{\bar{\varepsilon}}n\left(\frac{t}{\bar{\varepsilon}}\right)$. Extending $\bs\rhoup$ and $\jupbold$ periodically to $[-T,2T]$, i.e., $\rhoup_{-t}=\rhoup_{T-t}$ and $\rhoup_{T+t}=\rhoup_t$ for any $t\in(0,T]$ and likewise for $\jup$, we define the regularizations $\rhoup^{\bar{\varepsilon}}_t = (\rho_t^{\bar{\varepsilon},(1)},\rho_t^{\bar{\varepsilon},(2)})^\top$ and $\jup^{\bar{\varepsilon}}_t = (j_t^{\bar{\varepsilon},(1)},j_t^{\bar{\varepsilon},(2)})^\top$ by
\baqs
\rho_t^{\bar{\varepsilon},(i)}(A) &\coloneqq (n_{\bar{\varepsilon}}\ast\rho_t^{(i)})(A) = \int_{-\bar{\varepsilon}}^{\bar{\varepsilon}} n_{\bar{\varepsilon}}(s)\rho_{t-s}(A)\dd s,\qquad\forall A\subseteq\Rd,\\
j_t^{\bar{\varepsilon},(i)}(B) &\coloneqq (n_{\bar{\varepsilon}}\ast j_t^{(i)})(B) = \int_{-\bar{\varepsilon}}^{\bar{\varepsilon}} n_{\bar{\varepsilon}}(s)j_{t-s}(B)\dd s,\qquad\forall B\subseteq G,
\eaqs
for $i=1,2$ and any $\bar{\varepsilon}\in(0,T)$. Then, we obtain that the integral $\int_0^T\A_{m,\betaup}(\mu;\rhoup^{\bar{\varepsilon}}_t, \jup^{\bar{\varepsilon}}_t)\dd t$ is uniformly bounded with respect to $\bar{\varepsilon}$ and that the pair $(\bs\rhoup^{\bar{\varepsilon}},\jupbold^{\bar{\varepsilon}})$ lies in $\CE_T$. Furthermore, if $\rhoup_t \in (\mathcal{P}_p(\Rd))^2$, then $(\rhoup^{\bar{\varepsilon}}_t)_{\bar{\varepsilon}}\subset (\mathcal{P}_p(\Rd))^2$ with uniformly bounded $p$-th moments. 
\end{lemma}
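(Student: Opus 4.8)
The plan is to establish the three assertions in turn --- the uniform-in-$\bar\varepsilon$ bound on the time-integrated action, membership of $(\bs\rhoup^{\bar\varepsilon},\jupbold^{\bar\varepsilon})$ in $\CE_T$, and the uniform bound on the $p$-th moments --- each time transferring the corresponding property of $(\bs\rhoup,\jupbold)$ by Fubini's theorem together with convexity, and exploiting that the $T$-periodic extension renders integrals of $T$-periodic quantities over $[0,T]$ invariant under time shifts.

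For the action bound I would use that, for fixed $\mu$, the map $(\rho,j)\mapsto\A_{m,\betaup}(\mu;\rho,j)$ is convex by Lemma \ref{lem:convexity of the action} and weak-$\ast$ lower semicontinuous by Lemma \ref{lem:lsc A}. Since $n_{\bar\varepsilon}(s)\,\dd s$ is a probability measure on $[-\bar\varepsilon,\bar\varepsilon]$ and $\rhoup^{\bar\varepsilon}_t=\int n_{\bar\varepsilon}(s)\rhoup_{t-s}\,\dd s$, $\jup^{\bar\varepsilon}_t=\int n_{\bar\varepsilon}(s)\jup_{t-s}\,\dd s$, Jensen's inequality --- made rigorous by approximating the convolution with weak-$\ast$ convergent Riemann sums, applying finite convexity, and passing to the limit via Lemma \ref{lem:lsc A} --- yields
\[
\A_{m,\betaup}(\mu;\rhoup^{\bar\varepsilon}_t,\jup^{\bar\varepsilon}_t)\le\int_{-\bar\varepsilon}^{\bar\varepsilon}n_{\bar\varepsilon}(s)\,\A_{m,\betaup}(\mu;\rhoup_{t-s},\jup_{t-s})\,\dd s .
\]
Integrating over $t\in[0,T]$, interchanging integrals by Tonelli, substituting $\tau=t-s$, and using that $\tau\mapsto\A_{m,\betaup}(\mu;\rhoup_\tau,\jup_\tau)$ is $T$-periodic (so that its integral over any interval of length $T$ equals $\int_0^T\A_{m,\betaup}(\mu;\rhoup_\tau,\jup_\tau)\,\dd\tau$), the double integral is bounded by $\int_0^T\A_{m,\betaup}(\mu;\rhoup_t,\jup_t)\,\dd t<\infty$, uniformly in $\bar\varepsilon$. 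The same averaging identity shows $\rho^{\bar\varepsilon,(i)}_t$ is a nonnegative measure of unit mass (since $n_{\bar\varepsilon}\ge0$ and $\int n_{\bar\varepsilon}=1$) depending weakly continuously --- in fact smoothly, by differentiation under the integral --- on $t$, while $t\mapsto\jup^{\bar\varepsilon}_t$ is Borel measurable; this verifies conditions (i)--(ii) of Definition \ref{def:cont eq}. Finally, if $\rhoup_t\in(\mathcal{P}_p(\Rd))^2$ for all $t$, then $\rhoup_0\in(\mathcal{P}_p(\Rd))^2$ and $\int_0^T\A_{m,\betaup}\,\dd t<\infty$, so Lemma \ref{lem:Time-uniformly_bounded_moments} (applied to the constant sequence) gives $C:=\sup_{\tau}M_p(\rho^{(i)}_\tau)<\infty$; hence $M_p(\rho^{\bar\varepsilon,(i)}_t)=\int n_{\bar\varepsilon}(s)M_p(\rho^{(i)}_{t-s})\,\dd s\le C$ uniformly in $\bar\varepsilon$ and $t$, which is the last claim.

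The substantive step is the continuity equation, condition (iii). Fix $i\in\{1,2\}$ and $\varphi\in C^\infty_c(\Rd\times(0,T))$; its temporal support lies in some $[\delta,T-\delta]$, and I take $\bar\varepsilon<\delta$ (which is harmless, as the lemma is only ever used with $\bar\varepsilon\to0$). Writing $\rho^{\bar\varepsilon,(i)}_t$ and $j^{\bar\varepsilon,(i)}_t$ as time-convolutions, inserting them into the weak formulation \eqref{eq:cont}, and interchanging the $s$- and $t$/$(x,y)$-integrals by Tonelli, one is reduced --- after the substitution $\tau=t-s$ --- to an $s$-integral over $[-\bar\varepsilon,\bar\varepsilon]$ of
\[
\int_0^T\!\!\int_\Rd\partial_\tau\!\big[\varphi_{\tau+s}(x)\big]\,\dd\rho^{(i)}_\tau(x)\,\dd\tau+\frac12\int_0^T\!\!\iint_G\babla\!\big[\varphi_{\tau+s}\big](x,y)\,\eta(x,y)\,\dd j^{(i)}_\tau(x,y)\,\dd\tau ,
\]
using $(\partial_t\varphi)_{\tau+s}=\partial_\tau[\varphi_{\tau+s}]$ and $(\babla\varphi)_{\tau+s}=\babla[\varphi_{\tau+s}]$, and replacing the translated integration range by $[0,T]$ because for $\bar\varepsilon<\delta$ the shifted map $\tau\mapsto\varphi_{\tau+s}$ still lies in $C^\infty_c(\Rd\times(0,T))$. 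By \eqref{eq:cont} applied to the original pair with this admissible test function, the bracketed expression vanishes for every $s$; undoing the substitution and applying Tonelli once more identifies the result with the left-hand side of \eqref{eq:cont} for $(\bs\rhoup^{\bar\varepsilon},\jupbold^{\bar\varepsilon})$, which is therefore $0$, so $(\bs\rhoup^{\bar\varepsilon},\jupbold^{\bar\varepsilon})\in\CE_T$.

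The main obstacle I anticipate is the bookkeeping in this last step: one must carefully track the time-translation of the test function and verify that, because $\varphi$ is compactly supported in $(0,T)$, the shifted function $\varphi_{\cdot+s}$ remains an admissible test function (which is exactly what forces $\bar\varepsilon$ to be below the distance from $\supp_t\varphi$ to $\{0,T\}$), while simultaneously invoking the $T$-periodicity of the extension to collapse integrals over shifted length-$T$ intervals onto $[0,T]$. Once Jensen's inequality and this periodicity are in place, the action and moment bounds are essentially immediate.
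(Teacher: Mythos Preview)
Your argument is essentially correct and follows the same template as the paper, but there is one point where you prove strictly less than the lemma asserts. For the action bound, you invoke Jensen's inequality at the level of the functional $\A_{m,\betaup}$ (via finite convex combinations and the weak-$\ast$ lower semicontinuity of Lemma~\ref{lem:lsc A}), whereas the paper writes the action out in terms of the density $\alpha_m$ and applies Jensen pointwise to $\alpha_m$ directly; both are legitimate and yield the same conclusion (the paper obtains the cruder bound $3\int_0^T\A_{m,\betaup}\,\dd t$ by integrating over $[-T,2T]$ instead of using periodicity). For the moments, you route through Lemma~\ref{lem:Time-uniformly_bounded_moments} to get a time-uniform bound and then average, while the paper appeals to Fubini directly; again both work.

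The discrepancy is in the continuity equation. Your restriction $\bar\varepsilon<\delta(\varphi)$ means that, for a \emph{fixed} $\bar\varepsilon\in(0,T)$, you have verified \eqref{eq:cont} only for test functions whose temporal support stays at distance $>\bar\varepsilon$ from $\{0,T\}$; this is a proper subclass of $C^\infty_c(\Rd\times(0,T))$, so the statement $(\bs\rhoup^{\bar\varepsilon},\jupbold^{\bar\varepsilon})\in\CE_T$ is not established for general $\bar\varepsilon$. You note correctly that this suffices for the chain-rule application (where $\bar\varepsilon\to0$), but it falls short of the lemma as written. The paper closes this gap by periodically extending the test function as well, then applying Lemma~\ref{lem:continuous representative} on the shifted interval $[-s,T-s]$ to produce the identity with boundary terms $\int\varphi_T\,\dd\bar\rho^{\bar\varepsilon,(i)}_T-\int\varphi_0\,\dd\bar\rho^{\bar\varepsilon,(i)}_0$; these vanish for $\varphi\in C^\infty_c(\Rd\times(0,T))$ with no constraint on $\bar\varepsilon$. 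If you wish to keep your simpler route (avoiding Lemma~\ref{lem:continuous representative}), you should either state the weaker conclusion you actually prove, or supplement it with the paper's endpoint argument for the remaining test functions.
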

\begin{proof}
If $\rhoup_t \in (\mathcal{P}_p(\Rd))^2$ for all $t\in[0,T]$, it is immediate that $\rhoup^{\bar{\varepsilon}}_t \in (\mathcal{P}_p(\Rd))^2$ for all $t\in[0,T]$. Indeed, for $i\in\{1,2\}$ and $f:\Rd\rightarrow\R$ integrable with respect to $\rho_t$ for $t\in[0,T]$, Fubini's theorem gives us
\baqs
\int_\Rd f(x)\dd\rho_t^{\bar{\varepsilon},(i)}(x) = \int_{-\bar{\varepsilon}}^{\bar{\varepsilon}} n_{\bar{\varepsilon}}(s)\int_\Rd f(x)\dd\rho_{t-s}^{(i)}(x)\dd s.
\eaqs
In particular, for $f \equiv 1$ the inner integral is equal to 1 and hence the whole expression, while for $f(x)=|x|^p$ the inner integral and consequently the whole expression are both finite. In particular, the family $(\rhoup^{\bar{\varepsilon}}_t)_{\bar{\varepsilon}}$ has uniformly bounded $p$-th moments.

Next, we prove the uniform bound. To shorten notation, we only consider the case $R\land S<\infty$. The recession terms in the case $R=S=\infty$ can then be treated analogously by employing $\sigma_t^{(i)}=\varsigma_t^{(i)}=\rho_t^{(i)\perp}\otimes\mu+\mu\otimes\rho_t^{(i)\perp}$ for $i=1,2$. By the joint convexity of the density function $\alpha$, Jensen's inequality and Fubini's theorem, we obtain
\baqs
    \int_0^T\A_{m,\betaup}(\mu;\rhoup^{\bar{\varepsilon}}_t,\jup^{\bar{\varepsilon}}_t)\dd t &=\frac{1}{2}\sum_{i=1}^2\frac{1}{\beta^{(i)}}\int_0^T\iint_G\Bigg[\alpha\Bigg(\int_{-\bar{\varepsilon}}^{\bar{\varepsilon}}\frac{\dd j^{(i)}_{t-s}}{\dd(\mu\otimes\mu)}n_{\bar{\varepsilon}}(s)\dd s,\int_{-\bar{\varepsilon}}^{\bar{\varepsilon}}\frac{\dd \gamma_{1,t-s}^{(i)}}{\dd(\mu\otimes\mu)}n_{\bar{\varepsilon}}(s)\dd s\Bigg)\\
    &\hphantom{=\frac{1}{2}\sum_{i=1}^2\frac{1}{\beta^{(i)}}}+\alpha\Bigg(-\int_{-\bar{\varepsilon}}^{\bar{\varepsilon}}\frac{\dd j^{(i)}_{t-s}}{\dd(\mu\otimes\mu)}n_{\bar{\varepsilon}}(s)\dd s,\int_{-\bar{\varepsilon}}^{\bar{\varepsilon}}\frac{\dd \gamma_{2,t-s}^{(i)}}{\dd(\mu\otimes\mu)}n_{\bar{\varepsilon}}(s)\dd s\Bigg)\Bigg]\eta\dd(\mu\otimes\mu)\dd t\\
    &\le\frac{1}{2}\sum_{i=1}^2\frac{1}{\beta^{(i)}}\int_{-\bar{\varepsilon}}^{\bar{\varepsilon}}\int_0^T\iint_G\Bigg[\alpha\Bigg(\frac{\dd j^{(i)}_{t-s}}{\dd(\mu\otimes\mu)}n_{\bar{\varepsilon}}(s),\frac{\dd \gamma_{1,t-s}^{(i)}}{\dd(\mu\otimes\mu)}n_{\bar{\varepsilon}}(s)\Bigg)\\
    &\hphantom{\le\frac{1}{2}\sum_{i=1}^2\frac{1}{\beta^{(i)}}}+\alpha\Bigg(-\frac{\dd j^{(i)}_{t-s}}{\dd(\mu\otimes\mu)}n_{\bar{\varepsilon}}(s),\frac{\dd \gamma_{2,t-s}^{(i)}}{\dd(\mu\otimes\mu)}n_{\bar{\varepsilon}}(s)\Bigg)\Bigg]n_{\bar{\varepsilon}}(s)\dd s\eta\dd \mu\otimes\mu\\
    &=\int_{-\bar{\varepsilon}}^{\bar{\varepsilon}}\int_0^T\A_{m,\betaup}(\mu;\rhoup_{t-s},\jup_{t-s})\dd t\dd s
    \leq \int_{-T}^{2T}\A_{m,\betaup}(\mu;\rhoup_t,\jup_t)\dd t\\
    &= 3 \int_0^T\A_{m,\betaup}(\mu;\rhoup_t,\jup_t)\dd t <\infty.
\eaqs
Let us now check that $(\bs\rhoup^{\bar{\varepsilon}},\jupbold^{\bar{\varepsilon}})\in \CE_T$. The first two requirements are immediate. Hence, it only remains to check that the continuity equation \eqref{eq:cont} holds. To this end, let $i\in \{1,2\}$, $\varphi \in C^\infty_c([0,T]\times\Rd)$, and periodically extend $\varphi$ to $[-T,2T]$. Then, for $i=1,2$ and denoting by $\bar{\bs\rho}^{(i)}$ the continuous representative of $\bs\rho^{(i)}$ defined in Lemma \ref{lem:continuous representative}, we obtain
\baqs
&\int_0^T\int_\Rd \partial_t\varphi_t(x)\dd\rho_t^{\bar{\varepsilon},(i)}(x) \dd t+\frac{1}{2}\int_0^T\iint_G(\babla\varphi_t)(x,y)\eta(x,y)\dd j_t^{\bar{\varepsilon},(i)}(x,y)\dd t\\
=&\int_0^T\int_\Rd \partial_t\varphi_t(x)\int_{-\bar{\varepsilon}}^{\bar{\varepsilon}} n_{\bar{\varepsilon}}(s)\dd\rho^{(i)}_{t-s}(x)\dd s \dd t+\frac{1}{2}\int_0^T\iint_G(\babla\varphi_t)(x,y)\eta(x,y)\int_{-\bar{\varepsilon}}^{\bar{\varepsilon}} n_{\bar{\varepsilon}}(s)\dd j^{(i)}_{t-s}(x,y)\dd s\dd t\\
=&\int_{-\bar{\varepsilon}}^{\bar{\varepsilon}} n_{\bar{\varepsilon}}(s)\Bigg[\int_0^T\int_\Rd \partial_t\varphi_t(x)\dd\rho^{(i)}_{t-s}(x)\dd t+\frac{1}{2}\int_0^T\iint_G(\babla\varphi_t)(x,y)\eta(x,y) \dd j^{(i)}_{t-s}(x,y)\dd t\Bigg]\dd s\\
=&\int_{-\bar{\varepsilon}}^{\bar{\varepsilon}} n_{\bar{\varepsilon}}(s)\Bigg[\int_{-s}^{T-s}\int_\Rd \partial_t\varphi_{t+s}(x)\dd\rho^{(i)}_{t}(x)\dd t+\frac{1}{2}\int_{-s}^{T-s}\iint_G(\babla\varphi_{t+s})(x,y)\eta(x,y) \dd j^{(i)}_{t}(x,y)\dd t\Bigg]\dd s\\
=&\int_{-\bar{\varepsilon}}^{\bar{\varepsilon}} n_{\bar{\varepsilon}}(s)\Bigg[\int_\Rd \varphi_T(x)\dd\bar\rho^{(i)}_{T-s}(x)-\int_\Rd \varphi_0(x)\dd\bar\rho^{(i)}_{-s}(x)\Bigg]\dd s=\int_\Rd \varphi_T(x)\dd\bar\rho_T^{\bar{\varepsilon},(i)}(x)-\int_\Rd \varphi_0(x)\dd\bar\rho_0^{\bar{\varepsilon},(i)}(x).
\eaqs
\end{proof}
\begin{remark}[Chain rule in the mollified case]
For $(\bs\rhoup^{\bar{\varepsilon}},\jupbold^{\bar{\varepsilon}}) \in \CE_T$ as in Lemma \ref{lem:Mollified measures}, by the regularity of $\bs\rhoup^{\bar{\varepsilon}}$ as well as the continuity equation \eqref{eq:cont} in conjunction with Remark \ref{rem:eq cont}, we have
\baqs
\frac{\dd}{\dd t}\mathcal{E}_R^\varepsilon(\rhoup^{\bar{\varepsilon}}_t) &= 
\sum_{i,k=1}^2\int_\Rd(K^{\varepsilon,(ik)}_R\ast\rho_t^{\bar{\varepsilon},(i)})(x)\partial_t\rho_t^{\bar{\varepsilon},(k)}(x)\dd\mu(x)\\
&= \frac{1}{2}\sum_{i,k=1}^2\iint_G\babla(K^{\varepsilon,(ik)}_R\ast\rho_t^{\bar{\varepsilon},(i)})(x,y)\eta(x,y)\dd j_t^{\bar{\varepsilon},(k)}(x,y)\\
&= \frac{1}{2}\sum_{i=1}^2\iint_G \babla\delta_{\rho^{(i)}}\mathcal{E}_R^\varepsilon(\rho_t^{\bar{\varepsilon}})(x,y)\eta(x,y)\dd j_t^{\bar{\varepsilon},(i)}(x,y).
\eaqs
\end{remark}
Now, we are in the position to prove Proposition \ref{prop:Chain rule}.
\begin{proof}[Proof of Proposition \ref{prop:Chain rule}]
Recall that choosing $\bs\rhoup$, $\jupbold$ and $\bs v$ as above, we have $(\bs\rhoup,\jupbold)\in\CE_T$, $\abs{\rhoup'_t}^p = \A_{m,\betaup}(\mu;\rhoup_t,\jup_t)$ for a.e. $t\in[0,T]$, $\int_0^T \A_{m,\betaup}(\mu;\rhoup_t,\jup_t) \dd t <\infty$ and $\dd j_t^{(i)\mu} = ((v_t^{(i)})_+)^{q-1}\dd\gamma^{(i)}_{1,t}-((v_t^{(i)})_-)^{q-1}\dd\gamma^{(i)}_{2,t}$, $(\mu\otimes\mu)$-a.e. in $G$ as well as $\dd j_t^{(i)\perp} = ((v_t^{(i)\perp})_+)^{q-1}\dd\gamma^{(i)\perp}_{1,t}-((v_t^{(i)\perp})_-)^{q-1}\dd\gamma^{(i)\perp}_{2,t}$, $\varsigma_t^{(i)}$-a.e. in $G$, for a.e. $t\in[0,T]$ and $i=1,2$. Inserting the definition of $\tilde{l}$ as well as the connection between $\vupbold$ and $\jupbold$, we can rewrite \eqref{eq:chain rule velocity} as
\baq\label{eq:chain rule flux}
    \mathcal{E}(\rhoup_t)-\mathcal{E}(\rhoup_s)&=\int_{s}^{t}\tilde{l}_\rho(v_\tau)[\betaup\babla\delta_{\rhoup}\mathcal{E}(\rhoup_\tau)]\\ 
    &=\frac{1}{2}\sum_{i=1}^2\frac{1}{\beta^{(i)}}\iint_G \beta^{(i)}\babla\delta_{\rho^{(i)}}\mathcal{E}(\rhoup_\tau)\eta\Big[((v^{(i)}_\tau)_+)^{q-1}\dd\gamma_{1,\tau}^{(i)}-((v^{(i)}_\tau)_-)^{q-1}\dd\gamma_{2,\tau}^{(i)}\\
    &\hphantom{=\frac{1}{2}\sum_{i=1}^2\frac{1}{\beta^{(i)}}\iint_G}+((v^{(i)\perp}_\tau)_+)^{q-1}\dd\gamma_{1,\tau}^{(i)\perp}-((v^{(i)\perp}_\tau)_-)^{q-1}\dd\gamma_{2,\tau}^{(i)\perp}\Big]\\
    &=\frac{1}{2}\sum_{i=1}^2\int_{s}^{t}\iint_G \babla\delta_{\rho^{(i)}}\mathcal{E}(\rhoup_\tau)(x,y)\eta(x,y)\dd j_\tau^{(i)}(x,y)\dd\tau.
\eaq
We regularize $(\bs\rhoup,\jupbold)$ to obtain $(\bs\rhoup^{\bar{\varepsilon}}, \jupbold^{\bar{\varepsilon}})$ as in Lemma \ref{lem:Mollified measures}, approximate the energies as in Remark \ref{rem:Approximate energies} and integrate in time to obtain
\baq\label{eq:chain rule flux regularized}
\mathcal{E}_R^\varepsilon(\rhoup^{\bar{\varepsilon}}_t)-\mathcal{E}_R^\varepsilon(\rhoup^{\bar{\varepsilon}}_s) =\frac{1}{2}\sum_{i=1}^2\int_{s}^{t}\iint_G \babla\delta_{\rho^{(i)}}\mathcal{E}_R^\varepsilon(\rho_\tau^{\bar{\varepsilon}})(x,y)\eta(x,y)\dd j_\tau^{\bar{\varepsilon},(i)}(x,y)\dd\tau.
\eaq
Our goal is to pass to the limit as $\varepsilon\to 0$, $\bar{\varepsilon}\to 0$ and $R\to\infty$, which will yield \eqref{eq:chain rule flux}. Due to Proposition \ref{prop:Continuity of the energy}, we immediately obtain $\mathcal{E}_R^\varepsilon(\rhoup^{\bar{\varepsilon}}_t) \to \mathcal{E}_R^\varepsilon(\rho_t)$ as $\bar{\varepsilon}\to 0$. By definition we also have that $K_R^{\varepsilon,(ik)}\to  \varphi_RK^{(ik)}\eqqcolon K_R^{(ik)}$, uniformly as $\varepsilon\to 0$. Then, letting $R\to\infty$, we obtain convergence of the left-hand side of \eqref{eq:chain rule flux regularized} to the left-hand side of \eqref{eq:chain rule flux}.

It remains to show convergence of the right-hand side. To this end, we use a truncation argument. Let $\tilde{\varepsilon} > 0$ and set $N_{\tilde{\varepsilon}}\coloneqq\overline{B}_{{\tilde{\varepsilon}}^{-1}}\times\overline{B}_{{\tilde{\varepsilon}}^{-1}}$, where $B_{{\tilde{\varepsilon}}^{-1}} = \{x\in\Rd:|x|<{\tilde{\varepsilon}}^{-1}\}$, and set $G_{\tilde{\varepsilon}} \coloneqq \{(x,y)\in G:{\tilde{\varepsilon}}\leq \abs{x-y}\}$. Let $(\varphi_{\tilde{\varepsilon}})_{{\tilde{\varepsilon}}>0} \subset C_c^\infty(\Rd\times G;[0,1])$ be a family of truncation functions, which is s.t.,for any ${\tilde{\varepsilon}} >0$, we have $\{\varphi_{\tilde{\varepsilon}}=1\}\supset\overline{B}_{{\tilde{\varepsilon}}^{-1}}\times G_{\tilde{\varepsilon}}\cap N_{\tilde{\varepsilon}}$. We add and subtract $\varphi_{\tilde{\varepsilon}}$ on the right-hand side of \eqref{eq:chain rule flux regularized}. Since $\rhoup^{\bar{\varepsilon}}_t\otimes \jup^{\bar{\varepsilon}}_t\rightharpoonup\rhoup_t\otimes \jup_t$ for any $T\in[0,T]$ as $\bar{\varepsilon}\to 0$, and $K_R^{\varepsilon,(ik)}\to K_R^{(ik)}$ uniformly as $\varepsilon\to 0$, we can pass to the limit in $\bar{\varepsilon}$ and $\varepsilon$ for any $R,{\tilde{\varepsilon}}>0$:
\baqs
\lim_{\substack{\bar{\varepsilon}\to 0\\ \varepsilon\to 0}}\,&\frac{1}{2}\int_{s}^{t}\iint_G\int_\Rd\varphi_{\tilde{\varepsilon}}(z,x,y)\left(K^{\varepsilon,(ik)}_R(y,z)-K^{\varepsilon,(ik)}_R(x,z)\right)\eta(x,y)\dd\rho_\tau^{\bar{\varepsilon},(i)}(z)\dd j_\tau^{\bar{\varepsilon},(k)}(x,y)\dd\tau\\
=\,&\frac{1}{2}\int_{s}^{t}\iint_G\int_\Rd\varphi_{\tilde{\varepsilon}}(z,x,y)\left(K^{(ik)}_R(y,z)-K^{(ik)}_R(x,z)\right)\eta(x,y)\dd\rho_\tau^{(i)}(z)\dd j_\tau^{(k)}(x,y)\dd\tau,
\eaqs
for $i,k=1,2$. By using  $\varphi_{\tilde{\varepsilon}}\leq 1$, \eqref{K3} and Corollary \ref{cor:bound_by_A} in conjunction with \eqref{MB2} and \eqref{MB}, for any $\tau\in[s,t]$, we obtain the bound
\baqs
&\,\Bigg|\frac{1}{2}\iint_G\int_\Rd\varphi_{\tilde{\varepsilon}}(z,x,y)\left(K^{(ik)}_R(y,z)-K^{(ik)}_R(x,z)\right)\eta(x,y)\dd\rho_\tau^{(i)}(z)\dd j_\tau^{(k)}(x,y)\Bigg|\\
\leq&\,\frac{1}{2}\iint_G\int_\Rd\frac{\Big|K^{(ik)}_R(y,z)-K^{(ik)}_R(x,z)\Big|}{\abs{x-y}\lor\abs{x-y}^p} |x-y|\lor|x-y|^p\eta(x,y)\dd\rho_\tau^{(i)}(z)\dd j_\tau^{(k)}(x,y)\\
\leq&\,L_K M C_\eta^{1/q}\A_{m, \betaup}^{1/p}(\mu;\rhoup_\tau,\jup_\tau).
\eaqs
Hence, the integral is bounded in time uniformly with respect to ${\tilde{\varepsilon}}$ and $R$. Since, by definition of $K^{(ik)}_R$, we also have uniform boundedness in space, this allows us to apply Lebesgue's dominated convergence theorem to pass to the limit in ${\tilde{\varepsilon}}$ and $R$ to obtain:
\baqs
\lim_{\substack{{\tilde{\varepsilon}}\to 0\\ R\to \infty}}\,&\frac{1}{2}\int_{s}^{t}\iint_G\int_\Rd\varphi_{\tilde{\varepsilon}}(z,x,y)\left(K^{(ik)}_R(y,z)-K^{(ik)}_R(x,z)\right)\eta(x,y)\dd\rho_\tau^{(i)}(z)\dd j_\tau^{(k)}(x,y)\dd\tau\\
=\,&\frac{1}{2}\int_{s}^{t}\iint_G\int_\Rd\left(K^{(ik)}(y,z)-K^{(ik)}(x,z)\right)\eta(x,y)\dd\rho_\tau^{(i)}(z)\dd j_\tau^{(k)}(x,y)\dd\tau.
\eaqs
The remaining step of the proof is to control the integral involving $1-\varphi_{\tilde{\varepsilon}}(z,x,y)$. To do this, note that for ${\tilde{\varepsilon}}>0$, we have
\baqs
(\Rd\times G)\setminus\{\varphi_{\tilde{\varepsilon}}=1\}\subseteq(\overline{B}^c_{{\tilde{\varepsilon}}^{-1}}\times G)\cup(G\setminus(G_{\tilde{\varepsilon}}\cap N_{\tilde{\varepsilon}})))\eqqcolon M_{\tilde{\varepsilon}}
\eaqs
For $i,k=1,2$, as before, using \eqref{K3} and splitting the contributions, we obtain
\baqs
&{\abs{\iint_G\int_\Rd(1-\varphi_{\tilde{\varepsilon}}(z,x,y))\left(K^{\varepsilon,(ik)}_R(y,z)-K^{\varepsilon,(ik)}_R(x,z)\right)\eta(x,y)\dd\rho_t^{\bar{\varepsilon},(i)}(z)\dd j_t^{\bar{\varepsilon},(k)}(x,y)}}\\
\leq&\,L_K\iiint_{M_{\tilde{\varepsilon}}}|x-y|\lor|x-y|^p2\eta(x,y)\dd\rho_t^{\bar{\varepsilon},(i)}(z)\dd\big|j_t^{\bar{\varepsilon},(k)}\big|(x,y)\\
\leq&\,L_K\int_{\overline{B}^c_{\tilde{\varepsilon}}}\dd\rho_t^{\bar{\varepsilon},(i)}(z)\iint_{G}|x-y|\lor|x-y|^p\eta(x,y)\dd\big|j_t^{\bar{\varepsilon},(k)}\big|(x,y)\\
+& \,L_K\int_\Rd\dd\rho_t^{\bar{\varepsilon},(i)}(z)\iint_{G^c_\delta}|x-y|\lor|x-y|^p\eta(x,y)\dd\big|j_t^{\bar{\varepsilon},(k)}\big|(x,y)\\
+& \,L_K\int_\Rd\dd\rho_t^{\bar{\varepsilon},(i)}(z)\iint_{N^c_{\tilde{\varepsilon}}}|x-y|\lor|x-y|^p\eta(x,y)\dd\big|j_t^{\bar{\varepsilon},(k)}\big|(x,y).
\eaqs

Thus, we rid ourselves of the dependence on $R$. In the first term we apply Corollary \ref{cor:bound_by_A} together with \eqref{MB2} and \eqref{MB} to obtain
\baqs
L_K\int_{\overline{B}^c_{\tilde{\varepsilon}}}\dd\rho_t^{\bar{\varepsilon},(i)}(z)\iint_{G}|x-y|\lor|x-y|^p\eta(x,y)\dd\big|j_t^{\bar{\varepsilon},(k)}\big|(x,y) \leq L_K M C_\eta^{1/q}\A_{m, \betaup}^{1/p}(\mu;\rhoup^{\bar{\varepsilon}}_t,\jup^{\bar{\varepsilon}}_t)\rho_t^{\bar{\varepsilon},(i)}(\overline{B}^c_{\tilde{\varepsilon}}),
\eaqs
hence this term vanishes as ${\tilde{\varepsilon}}\to 0$. To show that the second term also vanishes as ${\tilde{\varepsilon}}\to 0$, we assume, without loss of generality, that ${\tilde{\varepsilon}} \leq 1$, which implies that $|x-y|\lor|x-y|^p =|x-y|$ on $G^c_{\tilde{\varepsilon}}$. Applying Lemma \ref{lem:bound_by_A} with $\Phi(x,y)=|x-y|\mathbb{1}_{G^c_{\tilde{\varepsilon}}}(x,y)$ yields
\baqs
&\,L_K\int_\Rd\dd\rho_t^{\bar{\varepsilon},(i)}(z)\iint_{G^c_{\tilde{\varepsilon}}}|x-y|\lor|x-y|^p\eta(x,y)\dd\big|j_t^{\bar{\varepsilon},(k)}\big|(x,y)\\
\leq &\,L_K M \A_{m, \betaup}^{1/p}(\mu;\rhoup^{\bar{\varepsilon}}_t,\jup^{\bar{\varepsilon}}_t)\sum_{l=1}^2\left(\iint_{G^c_{\tilde{\varepsilon}}}|x-y|^q\eta(x,y) \dd(\mu\otimes\mu+\rho^{(l)}\otimes\mu+\mu\otimes\rho^{(l)})(x,y)\right)^{1/q}.
\eaqs
By the local blow-up control Assumption \eqref{BC}, this also vanishes as ${\tilde{\varepsilon}}\to 0$. Similarly, for the third term, we apply Lemma \ref{lem:bound_by_A} with $\Phi(x,y)=|x-y|\mathbb{1}_{N^c_{\tilde{\varepsilon}}}(x,y)$ and obtain
\baqs
&\,L_K\int_\Rd\dd\rho_t^{\bar{\varepsilon},(i)}(z)\iint_{N^c_{\tilde{\varepsilon}}}|x-y|\lor|x-y|^p\eta(x,y)\dd\big|j_t^{\bar{\varepsilon},(k)}\big|(x,y)\\
\leq &\,L_K M C_\eta \A_{m, \betaup}^{1/p}(\mu;\rhoup^{\bar{\varepsilon}}_t,\jup^{\bar{\varepsilon}}_t)\Bigg(\mu\left(\overline{B}^c_{{\tilde{\varepsilon}}^{-1}}\right)+\sum_{l=1}^2\rho_t^{\bar{\varepsilon},(l)}\left(\overline{B}^c_{{\tilde{\varepsilon}}^{-1}}\right)\Bigg).
\eaqs
The uniform $p$-th moment bound of the family $(\rho_t^{\bar{\varepsilon},(l)})_{\tilde{\varepsilon}}$ implies tightness by the de la Vallee-Poussin theorem and the single measure $\mu\in\Mloc^+(\Rd)$ is tight as well. Thus, the third term also vanishes as ${\tilde{\varepsilon}}\to 0$, which concludes the proof.
\end{proof}
\bibliographystyle{plain}
\bibliography{bibliography}
\end{document}